\documentclass{article}
\usepackage[all]{xy}
\usepackage{amsmath, amscd, amsthm}
\usepackage{longtable}
\usepackage{amscd}
\usepackage{mathrsfs}
\usepackage{graphicx}
\usepackage{latexsym}
\usepackage{empheq}
\usepackage{tikz-cd}
\usepackage{wasysym}
\usepackage[T2A]{fontenc}
\usepackage[utf8]{inputenc} 
\usepackage{titlesec}
\titleformat{\subsection}[runin]{\bfseries}{\thesubsection}{.5em}{}
\titleformat{\section}[block]{\bfseries\large}{\thesection}{.5em}{}
\usepackage{amsmath, amsthm, xcolor,amsfonts,amssymb,latexsym}
\theoremstyle{plain}\newtheorem{Thm}{Theorem}
\newtheorem{Prop}{Proposition}
\newtheorem{Proposition}[Prop]{Proposition}

\newtheorem{Cor}{Corollary}
\newtheorem{Lemma}{Lemma}
\theoremstyle{definition}

\newtheorem{example}{Example}
\newtheorem{Example}[example]{Example}
\newtheorem{Remark}{Remark}
\newtheorem{problem}{Problem}

\newcommand{\nez}{\ne \{0\}}
\newcommand{\ez}{= \{0\}}
\newcommand{\al}{\alpha}

\newcommand{\ov}{\overline}

\newcommand{\tA}{{\widetilde{A}}}

\newcommand{\tC}{{\widetilde{C}}}

\newcommand{\bpm}{\begin{pmatrix}}
\newcommand{\epm}{\end{pmatrix}}

\newcommand{\ld}{\ldots}

\newcommand{\ve}{\varepsilon}
\newcommand{\vp}{\varphi}

\newcommand{\beq}{\begin{equation}}
\newcommand{\eeq}{\end{equation}}
\newcommand{\beas}{\begin{eqnarray*}}
\newcommand{\eeas}{\end{eqnarray*}}

\newcommand{\F}{\mathbf{F}}

\newcommand{\ZZ}{\mathbb{Z}}

\newcommand{\la}{\mathbb{\langle }}
\newcommand{\ra}{\mathbb{ \rangle }}
\newcommand{\N}{{\mathbb N}}

\newcommand{\cA}{\mathcal{A}}
\newcommand{\cB}{{\mathcal B}}
\newcommand{\cC}{\mathcal{C}}

\newcommand{\cF}{\mathcal{F}}

\newcommand{\cT}{\mathcal{T}}

\newcommand{\cP}{\mathcal{P}}

\newcommand{\cS}{\mathcal{S}}

\newcommand{\X}{\mathcal{X}}
\newcommand{\cX}{\mathcal{X}}

\newcommand{\frM}{\mathfrak{M}}
\newcommand{\frN}{\mathfrak{N}}

\newcommand{\frV}{\mathfrak{V}}
\newcommand{\frU}{\mathfrak{U}}
\newcommand{\frW}{\mathfrak{W}}

\newcommand{\V}{\mathfrak{V}}

\newcommand{\id}{\mathrm{id}}

\newcommand{\qvar}{\mathrm{qvar}}
\newcommand{\cont}{\mathrm{cont}}

\newcommand{\var}{\mathrm{var\, }}

\DeclareMathOperator{\End}{\mathrm{End}}

\DeclareMathOperator{\soc}{\mathrm{soc}}

\DeclareMathOperator{\Ker}{\mathrm{Ker}\,}
\DeclareMathOperator{\rank}{\mathrm{rank}\,}
\DeclareMathOperator{\Aut}{\mathrm{Aut}}

\DeclareMathOperator{\Card}{\mathrm{Card}}

\newcommand{\GL}{\mathrm{GL}}

\newcommand{\wg}{\mathrm{wt\,}}

\begin{document}

\title{Locally finite varieties of nonassociative algebras}
\author{Yuri Bahturin\\
Department of Mathematics and Statistics\\Memorial University of Newfoundland\\St. John's, NL, A1C5S7
\and Alexander Olshanskii\\Department of Mathematics\\1326 Stevenson Center\\Vanderbilt University\\Nashville, TN 37240}
\date{}
\maketitle
\begin{abstract}
We study locally finite varieties (=primitive classes) of linear algebras over finite fields. We do not assume that our algebras are associative or Lie. We are interested in the basic properties of finite algebras in these varieties such as: nilpotence, solvability, simplicity, freeness, projectivity, and injectivity. We are also interested in the numerical estimates of the ratio of the number of algebras with various classical properties to the total number of all algebras of a fixed dimension $n$. Among these properties are having no proper nontrivial subalgebras or  no nontrivial automorphisms, etc.
\end{abstract}
\tableofcontents

\section{Introduction, definitions and elementary properties}\label{esAYO Intro}

\subsection{Topic of the article.}

Given a field $\F$, a (linear) \emph{algebra $A$ over a field $\F$} (or an \emph{$\F$-algebra}) is a vector space over $\F$ with one  bilinear operation $A\times A\to A$, written as $(a,b)\mapsto ab$, where $a,b,ab\in A$. Linear algebras over fields form an important, yet particular case of \emph{universal} algebras where the underlying set $A$ is not a vector space and the number of operations $\underbrace{A\times\cdots\times A}_n\to A$ of various \emph{arities} $n$ is unlimited. Some of the questions asked in the theory of linear algebras had been first asked in the theory of universal algebras which lies on the boundary line between Algebra and Logic.

In this paper, we study finite algebras, that is, finite-dimensional algebras over finite fields. The general interest in the identities of finite universal algebras, in particular, finite rings, groups, semigroups, etc. dates back to the papers devoted to the Finite Basis Problem (FBP), asking: Do all identities of a finite (universal) algebra $A$ follow from a finite set of them? The positive answers were obtained initially in the classes of finite groups, associative and Lie rings in the papers Oates - Powel \cite{OaPo}, Kruse \cite{Kruse}, Lvov \cite{Lvov}, \cite{Lvov2}, Bahturin - Olshanskii \cite{BO}. Later, the problem was solved in the affirmative for  alternative, Jordan, Malcev and certain even more general finite algebras (see Medvedev \cite{Medvedev2}).
  
On the other hand, even earlier, in 1955, Lyndon \cite{Lyn} had built an example of a universal algebra with one binary operation of order 6 without FBP. Then Mursky \cite{Mur} had suggested an example of an algebra of order 3 without FBP. Later, Polin \cite{Polin} produced  examples of finite-dimensional linear algebras over any field without FBP.

A natural way to get into this stream is to consider finite universal algebras, in particular, semigroups, latices, and other non-linear algebraic structures. The most general approach to the identities of finite universal algebras can be found in the book \cite{HRM}.

The subject of our research has intermediate generality. We do not assume that the algebras satisfy
any classical identities like the associativity or the Lie identities. However we do assume the
linearity of the multiplication. The reader can generalize some of the results we formulate in this
paper to the case of related structures such as rings and $\Omega$-algebras. To be more consistent, we limit ourselves to a single bilinear operation and focus on finitely generated and locally finite varieties of linear algebras.

In the next subsections we give some basic definitions and results about varieties of algebras. They can be found in many texts, starting with the founding papers of Birkhoff \cite{Birk}, B. H. Neumann \cite{BHN}, A. I. Malcev \cite{AIM} and such seminal textbooks as Malcev's \cite{ASM}, Kurosh's \cite{KUR}, Cohn's \cite{PMC}, Hanna Neumann's \cite{HN}. Also see the educational survey by Kharlampovich and Sapir  \cite{KHS}.

\subsection{Absolutely free algebras.}\label{ssAFA} For any field $\F$ and a nonempty set $X$, one defines an (absolutely) free algebra $\cF(X)$ as follows. We first define the \textit{nonassociative monomials} in $X$ by induction on their \textit{degree}. The monomials of degree 1 are the elements of $X$. We write $\deg x=1$ if $x\in X$. Once the monomials of all degrees less than $n>1$ have been defined, the monomials of degree $n$ are all possible expressions of the form $(u)(v)$ where $\deg u=k\ge 1$ and $\deg v=n-k\ge 1$. The set of all nonassociative monomials in $X$ is denoted by $\Gamma(X)$. One has $\Gamma(X)=\bigcup_{n=1}^\infty \Gamma_n$ where $\Gamma_n$ is the set of monomials of degree $n$. If we drop parentheses on a nonassociative monomial $v$ we obtain an associative word, called the \emph{support} of $v$. Conversely, one can set brackets on an associative word $w$ to obtain all possible nonassociative monomials with support $w$.

The set $\Gamma(X)$ is endowed with an operation ${\scriptstyle\circ}$, as follows. If $u\in \Gamma_k$ and $v\in \Gamma_\ell$ then $u{\scriptstyle\circ} v=(u)(v)\in\Gamma_{k+\ell}$. With respect to this operation, $\Gamma(X)$ is generated by $X$. In practice, while using this operation, one omits the symbol $\scriptstyle\circ$ and all unnecessary parentheses. So, one writes $(x_1x_2)x_2$ instead of  
$((x_1){\scriptstyle\circ}(x_2)){\scriptstyle\circ}(x_2)$.
 
Finally, one defines the free algebra $\cF(X)$ as the linear span of $\Gamma(X)$ with coefficients in $\F$. The elements of $\cF(X)$ are often called the \textit{nonassociative polynomials}. The product in $\cF(X)$ is the extension by linearity of the product in $\Gamma(X)$, and so $\cF(X)$ is a linear algebra called the \emph{free non-associative algebra with the set of free generators $X$}. The cardinality of $X$ is called the rank of $\cF(X)$, (In this paper nonassociative polynomials have no terms of zero degree!)

If $L$ and $M$ are subspaces of an algebra, then $LM$ denotes the linear span of all products
$lm$, where $l\in L$ and $m\in L$. The \emph{power} $A^k$ of any algebra $A$ is defined by induction as $A^1=A$ and $A^k = \sum_{s+t=k}A^sA^t$. We list the well-known properties:

\begin{Lemma}\label{power} Given an algebra $A$ over $\F$, for any $i > 1$, we have 
$A^i \subset A^{i-1}$,  $A^{i-1}A\subset A^i$ and $A^{i-1}A\subset A^i$.
Thus, $A^{i-1}$ is an ideal of $A$. 
\end{Lemma}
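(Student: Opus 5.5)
The plan is to argue by induction on $i$, using only the recursive definition $A^k=\sum_{s+t=k}A^sA^t$ (sum over $s,t\ge 1$). The statement as printed repeats $A^{i-1}A\subset A^i$; the intended pair is surely $A^{i-1}A\subset A^i$ and $AA^{i-1}\subset A^i$. I will prove both, since together they give the ideal property.

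The two product inclusions follow directly from the definition and need no induction. Take $s=i-1$, $t=1$, so that $s+t=i$. Then $A^{i-1}A$ is one of the summands in the definition of $A^i$, hence $A^{i-1}A\subset A^i$. Symmetrically, $s=1$, $t=i-1$ gives $AA^{i-1}\subset A^i$.

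The inclusion $A^i\subset A^{i-1}$ is proved by induction on $i\ge 2$.
\begin{itemize}
\item For $i=2$ we have $A^2=AA\subset A=A^1$, since $A$ is closed under multiplication.
\item For $i>2$, assume $A^j\subset A^{j-1}$ for all $2\le j<i$; iterating gives $A^j\subset A^{j'}$ whenever $j'\le j<i$.
\item Take a summand $A^sA^t$ of $A^i$ with $s+t=i$, $s,t\ge 1$. At least one of $s,t$ is $\ge 2$; say $s\ge2$ (the other case is symmetric). Since $s<i$, the hypothesis gives $A^s\subset A^{s-1}$, so $A^sA^t\subset A^{s-1}A^t$.
\item The right-hand side is a summand of $A^{i-1}$, because $(s-1)+t=i-1$ and $s-1\ge1$.
\item Summing over all summands gives $A^i\subset A^{i-1}$.
\end{itemize}

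The ideal property then follows from the product inclusions. We have $A^{i-1}A\subset A^i\subset A^{i-1}$ and $AA^{i-1}\subset A^i\subset A^{i-1}$, so $A^{i-1}$ is a two-sided ideal. This holds for every $i>1$, and for $A^1=A$ it is trivial.

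The only step needing care is the inductive one, and specifically the choice of which factor to lower. One must pick the index that is at least $2$, so that the lowered index $s-1$ stays $\ge1$ and the product remains a genuine summand of $A^{i-1}$. No other obstacle is expected.
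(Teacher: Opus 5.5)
Your proof is correct and follows essentially the same route as the paper: induction on $i$, lowering whichever exponent in $A^sA^t$ is at least $2$ to get $A^i\subset A^{i-1}$, then reading off $A^{i-1}A\subset A^i$ and $AA^{i-1}\subset A^i$ directly from the definition. You are also right that the repeated inclusion in the statement is a typo for $AA^{i-1}\subset A^i$, which is the form the paper's proof actually uses.
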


\proof
Clearly, $A^2 \subset A = A^1$. If $i > 2$ and $j < i$ then by induction, we have $A^j \subset A^{j-1}$. Hence, for $k, \ell < i$, $k + \ell \ge 3$, we obtain $A^kA^\ell \subset A^{k-1}A^\ell$ or $A^kA^\ell \subset A^kA^{l-1}$. Recalling the definition, we have  $A^i \subset A^{i-1}$. By the definion, and this inclusion, we have
$AA^{i-1} = A^1A^{i-1}\subset A^i\subset A^{i-1}$. Similarly, $A^{i-1}A\subset A^i$,
and so $A^{i-1}$ is an ideal of $A$.
\endproof

  If $\cF_n$ is the linear span of $\Gamma_n$ then $\cF_k\cF_{\ell}\subset \cF_{k+\ell}$ and so we have the canonical $\N$-grading of the free algebra, where $\N$ is the additive semigroup of natural numbers, as follows 
\begin{equation}\label{eGrading}
\cF(X)=\bigoplus_{n=1}^\infty\cF_n
\end{equation}

The elements of $\cF_n$ are called \textit{homogeneous} of degree $n$. The powers $\cF^n=\bigoplus_{k=n}^\infty \cF_k$  form a \emph{descending filtration} in $\cF(X)$: 
\begin{equation}\label{ePSF}
\cF(X)=\cF^1\supset \cF^2\supset\cdots\supset \cF^n\supset\cdots\mbox{ such that }\bigcap_{n=1}^\infty \cF^n=\{ 0\}.
\end{equation}

Given a nonassociative monomial $u(x_1,\ld,x_n)$ and the elements $a_1,\ld,a_n$ of  any algebra $P$ over any field $\F$, one can use the induction on $\deg u$  and the operation of $P$ to define the \textit{value} $u(a_1,\ld,a_n)\in P$.  The value of a monomial $x_i$ is just $a_i$, while $(uv)(a_1,\ld,a_n)=u(a_1,\ld,a_n)v(a_1,\ld,a_n)$. Now the value of a nonassociative polynomial $w(x_1,\ld,x_n)$ which is a linear combination of several monomials with coefficients in $\F$,  is just the linear combination of the values of these monomials with the same coefficients. 

If $\vp:X\to P$ is any mapping then the evaluation of nonassociative polynomials which replaces each $x\in X$ by $\vp(x)\in P$ is a unique homomorphism of algebras $\bar\vp:\cF(X)\to P$, extending $\vp$. This property is called the \textit{universal property} of $\cF(X)$. An immediate consequence of this property is the following. Given any algebra $P$,  there exists $X$ such that $P$ is a homomorphic image of $\cF(X)$ under a homomorphism $\ve$ mapping $X$ onto a generating set of $P$. Taking $K=\Ker\ve$, we get $P\cong \cF(X)/K$. 

\subsection{Varieties and  free algebras.}\label{ssPC}

If $w(x_1,\dots, w_n)$ is a nonassociative polynomial and $w(a_1,\ld,a_n)=0$, for all elements $a_1,\ld,a_n$ of an algebra $P$, we say that $w(x_1,\ld,x_n)=0$ is an \textit{identical relation} or shortly an \textit{identity} in $P$. 

Now let us consider the free algebra of countable rank $\cF_\infty=\cF(x_1,\ld,x_n,\ld)$ over $\F$. Given a nonempty subset $V\subset\cF_\infty$, the class $\V$ of all algebras (over fixed field $\F$) satisfying all identities $v(x_1,\ld,x_n)=0$, where $v(x_1,\ld,x_n)\in V$, is called the \textit{variety of algebras defined by $V$}. It follows from this definition that a variety is closed under subalgebras, factor-algebras and Cartesian products of algebras.

The \emph{trivial} variety consists of zero algebra only; it is defined by the identical relation $x=0$.

Given a variety $\V$ and algebra $P$, one considers the ideal $\V(P)$ (more often denoted by a matching notation $V(P)$) of all values $v(a_1,\ld,a_n)$, where $a_1,\ld,a_n\in P$ and $v(x_1,\ld,x_n)=0$ is an identity in $\V$. By analogy with the group-theoretic notation, one calls $V(P)$ \textit{the verbal ideal corresponding to the variety $\V$}. $V(P)$ is the least ideal of $P$ such that $P/V(P)\in\V$. So $V(P)=\{ 0\}$ iff $P\in \V$. The ideal $V(P)$ is closed under all endomorphisms of $P$. Such ideals are called \textit{fully invariant}.

If $\F$ is a fixed field, then there is one-one Galois-type correspondence between the varieties over $\F$ and the fully invariant ideals of $\cF_\infty$:
 
\begin{Proposition}\label{pGC}
Given a variety $\V$, the corresponding verbal ideal $V(\cF_\infty)$ is fully invariant. Conversely, every fully invariant ideal is a verbal ideal for an appropriate variety $\V$.$\hfill\Box$
\end{Proposition}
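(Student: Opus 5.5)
The first assertion is nearly a restatement of the definition. Recall that $V(\cF_\infty)$ is the linear span of all values $v(a_1,\ld,a_n)$, with $a_i\in\cF_\infty$ and $v=0$ an identity of $\V$. It is an ideal by construction. To check full invariance, I will take an endomorphism $\vp$ of $\cF_\infty$. Since $\vp$ is a homomorphism, it commutes with evaluation of nonassociative polynomials, which one checks by induction on the degree of monomials and then extends by linearity. Hence
\[
\vp(v(a_1,\ld,a_n))=v(\vp(a_1),\ld,\vp(a_n)),
\]
which is again a value of the same identity. By linearity of $\vp$, the whole span is mapped into itself.

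For the converse, let $I$ be a fully invariant ideal of $\cF_\infty$. I will take $V=I$ itself as the defining set of identities and let $\V$ be the variety it defines. The goal is to show $V(\cF_\infty)=I$, proving both inclusions.

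For $I\subset V(\cF_\infty)$: each $w(x_1,\ld,x_n)\in I$ is an identity of $\V$. Evaluating it at $a_i=x_i$ gives $w$ itself, so $w$ is a value of an identity and therefore lies in $V(\cF_\infty)$.

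For $V(\cF_\infty)\subset I$, I plan to show that $\cF_\infty/I\in\V$. Once this holds, the minimality of the verbal ideal (the least ideal with quotient in $\V$, as noted above) gives $V(\cF_\infty)\subset I$. Alternatively, one can argue directly that every identity $v$ of $\V$ lies in $I$ and that every value of $v$ then lies in $I$ by full invariance. To see $\cF_\infty/I\in\V$, I take $w\in I$ in variables $x_1,\ld,x_n$ and elements $\bar a_1,\ld,\bar a_n$ of $\cF_\infty/I$ with lifts $a_i\in\cF_\infty$. By the universal property of $\cF_\infty$, there is an endomorphism $\vp$ sending $x_i\mapsto a_i$ for $i\le n$ and fixing the other generators. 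Then $w(a_1,\ld,a_n)=\vp(w)\in I$ by full invariance, so $w(\bar a_1,\ld,\bar a_n)=0$ in the quotient.

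The only step needing real care is the identity $\vp(v(a_1,\ld,a_n))=v(\vp(a_1),\ld,\vp(a_n))$, together with the existence of substitution endomorphisms. Both follow from the universal property and induction on monomial degree, so I expect no genuine obstacle. The one subtlety is the fact, quoted above, that $V(P)$ is the least ideal with $P/V(P)\in\V$. If one prefers not to rely on it, it can be bypassed by the direct argument: every identity $v$ of $\V$ vanishes on $\cF_\infty/I$ at the images of the generators, hence $v\in I$, and full invariance then puts all values of $v$ in $I$.
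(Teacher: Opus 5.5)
Your proof is correct and is the standard argument; the paper gives no proof of this proposition, treating it as well known, and the facts you rely on are exactly those stated just before it: $V(P)$ is closed under endomorphisms, and it is the least ideal of $P$ with $P/V(P)\in\V$. The only step you pass over quickly is why the span of values is an ideal, which holds because $v(x_1,\ld,x_n)x_{n+1}$ and $x_{n+1}v(x_1,\ld,x_n)$ are again identities of $\V$ whenever $v$ is.
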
  

If $U\subset\cF_\infty$ and $P$ is an algebra, one denotes by $U(P)$ the ideal of $P$ generated by all $u(a_1,\ld,a_n)$, where $a_1,\ld,a_n\in P$ and $u(x_1,\ld,x_n)\in U$.
 This is the \textit{verbal ideal of $P$ generated by the set of nonassociative polynomials $U$}. If $\frV(\cF_\infty )=U(\cF_\infty)$, then we say that $\V$ is defined by the identities $\{u=0\, |\,u\in U\}$. We also say that the set of identities $\{ u=0\,|\,u\in U\}$ is a \textit{basis of identities} of $\V$. 

Given a nonempty set $X$, the algebra $F(\V,X)=\cF(X)/V(\cF(X))$ is called a \textit{$\V$-free algebra with the free generating set $X$} or a \textit{ free algebra in $\V$ with the free generating set $X$}. If $\V$ is nonzero, $X$ is injectively embedded in $F(\V,X)$. Each of these algebras possesses the \textit{universal property within the variety $\V$}, namely, any map $\vp:X\to P\in\V$ extends to a unique homomorphism $\overline{\vp}:F(\V,X)\to P$. It follows that every algebra $P\in\V$ is isomorphic to a factor-algebra of an appropriate $F=F(\V,X)$. If we do not specify parameters $\V$ or $X$, then we simply say that $F$ is a \textit{relatively free algebra}.

For a $\V$-free algebra $F(\V,X)$, we also have a Galois-type correspondence between the subvarieties
of $\V$ and the verbal ideals of $F(\V,X)$. The join $\V_1\cup\V_2$ in the lattice of subvarieties of $\V$ is the smallest variety containing both subvarieties $\V_1$ and $\V_2$. The corresponding
verbal ideal of $F(\V,X)$ is the intersection $V_1(F(\V,X))\cap V_2(F(\V,X))$, and the intersection 
 $\V_1\cap\V_2$ corresponds the sum $V_1(F(\V,X))+ V_2(F(\V,X))$.

Examples of algebras, which are free in certain varieties, include free associative algebras, free Lie algebras, free commutative algebras, etc.

A useful property of relatively free algebras, following from the universal property, is given below.

\begin{Lemma}\label{lRelX} Let $0\ne v(x_1,\ld,x_n)\in\cF_\infty$ be a nonassociative polynomial and $F=F(\V,Y)$ a  free algebra in a variety $\V$ of algebras, with a free generating set $Y=\{ y_1,y_2,\ld\}$. Suppose $v(y_1,\ld,y_n)=0$ is a \emph{relation} among $y_1,\ld,y_n$ that holds in $F$. Then $v(x_1,\ld,x_n)=0$ is an \emph{identical relation} in $\V$. 
$\hfill\Box$
\end{Lemma}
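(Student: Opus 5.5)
The plan is to use the universal property of $F=F(\V,Y)$ inside $\V$ together with the fact that evaluation of nonassociative polynomials commutes with homomorphisms. Take an arbitrary algebra $P\in\V$ and arbitrary elements $a_1,\ld,a_n\in P$. We must show that $v(a_1,\ld,a_n)=0$.

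First, define a map $\vp:Y\to P$ by $\vp(y_i)=a_i$ for $i\le n$, and let $\vp(y_j)$ be arbitrary, say $0$, for $j>n$. Since $P\in\V$, the universal property of $F(\V,Y)$ recalled above gives a homomorphism $\overline{\vp}:F\to P$ extending $\vp$.

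Second, check that for any nonassociative polynomial $w(x_1,\ld,x_n)$ and any homomorphism $\psi:Q\to P$ we have $\psi(w(b_1,\ld,b_n))=w(\psi(b_1),\ld,\psi(b_n))$. For monomials this follows by induction on the degree, using the inductive definition of values: the case $\deg=1$ is trivial, and $\psi((uv)(b))=\psi(u(b))\psi(v(b))$ handles the inductive step. It then extends to polynomials by linearity of $\psi$.

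Applying this with $Q=F$, $\psi=\overline{\vp}$ and $b_i=y_i$ gives
\[
v(a_1,\ld,a_n)=v(\overline{\vp}(y_1),\ld,\overline{\vp}(y_n))=\overline{\vp}(v(y_1,\ld,y_n))=\overline{\vp}(0)=0 .
\]
Since $P$ and the $a_i$ were arbitrary, $v=0$ holds identically in every algebra of $\V$, which is exactly the claim.

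There is no real obstacle. The only point needing care is that $Y$ has at least $n$ elements, so that $y_1,\ld,y_n$ make sense, which is implicit in the statement. One may also remark that the trivial case $\V=\{0\}$ is consistent: there every identity holds.
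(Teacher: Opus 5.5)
Your proof is correct and is the argument the paper has in mind. The paper states this lemma without proof and only remarks that it follows from the universal property of $F(\V,Y)$. Your deduction is exactly that: you extend $y_i\mapsto a_i$ to a homomorphism $\overline{\vp}\colon F\to P$ and then use that evaluation of nonassociative polynomials commutes with homomorphisms.
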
  

Next we mention the following.

Any map of a set $X$ to a set $Y$ extends to a unique homomorphism $F(\V,X)\to F(\V,Y)$. In particular, the embedding  $X\subset Y$ extends to the embedding  $F(\V,X)\subset F(\V,Y)$ and  $F(\V,X)$ is a retract of $F(\V,Y)$ under the retraction extending $x\mapsto x$ for $x\in X$ and $y\mapsto 0$ for $y\in Y\setminus X$. 

The cardinality of $X$ is called the \textit{rank} of $F(\V,X)$. Two relatively free algebras in a nontrivial variety $\V$ are isomorphic if and only if they have the same rank.
So, with $\V$ fixed, we can simply write $F_n=F_n(\V)$ for the relatively free algebra of rank $n$ in the variety $\V$. In this case, we can choose $X=\{ x_1,\ld,x_n\}$.

Given a set of algebras $\cS$ over a field $\F$, the \emph{variety generated by $\cS$}, denoted by $\var \cS$, is the set of all algebras over $\F$, satisfying all identities satisfied by all algebras from $\cS$. If $\cS=\{P\}$, where $P$ is an algebra, then we simply write $\var\cS=\var P$. Equivalently, $\var \cS$ is the intersection of all varieties $\frV$ such that $\cS\subset\V$.

Our aim is to provide new evidence of a close connection between the structure of a finite
algebra $A$, the identities of $A$, and the properties of algebras in the variety $\var A$.
A variety generated by a finite algebra is called \emph{finitely generated}. Studying finitely 
generated varieties is the main topic of this paper. It is quite apparent that if $A$ is
nilpotent or solvable, or simple then these properties must have a strong influence on the
structure of other algebras in $\var A$. In addition to finitely generated varieties, we will look at the varieties where all finite algebras are injective, or projective, or, more generally, being subalgebras of free algebras.

\subsection{Two one-dimensional algebras.}\label{ssFermat} Let $A$ be a one-dimensional algebra over a field $\bf F$. If $A$ has a non-zero multiplication,
then there is a basis vector $e\in A$ such that $e^2 =e$, and then the mapping $e\mapsto 1$ extends
to an isomorphism of $A$ onto the field $F$, which is an algebra over itself. 

As a result, up to isomorphism, for every field $\F$, we have just two one-dimensional $\F$-algebras.  In this subsection, we look at the identities excluding a one-dimensional algebra from a variety.

For the first statement, we consider the identities of the form $x = f(x)$, where the nonassociative polynomial $f(x)$ depends on one variable $x$ and contains only monomias of degree at least $2$. We will call such identities \emph{Fermat's identities} in honor of Fermat's Little Theorem.

\begin{Prop}\label{pFermat} A variety $\frV$ of   algebras over a field $\F$ satisfies some Fermat's identity 
if and only if, it does not contain a one-dimensional algebra with zero multiplication.
\end{Prop}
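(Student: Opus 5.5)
One direction is immediate. Suppose $\frV$ satisfies a Fermat identity $x=f(x)$, where every monomial of $f$ has degree at least $2$. Let $Z=\F e$ be the one-dimensional algebra with zero multiplication. Every product in $Z$ vanishes, so the value of each monomial of degree $\ge 2$ at $e$ is $0$, and hence $f(e)=0$. The identity then forces $e=0$, which is a contradiction. So $Z\notin\frV$.

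For the converse, assume $Z\notin\frV$. I would work inside the relatively free algebra of rank one, $F_1=F(\frV,\{x\})=\cF(x)/V(\cF(x))$. Let $I=F_1^2$ be the image of $\cF^2$, i.e. the span of the images of monomials in $x$ of degree $\ge2$. By Lemma~\ref{power}, $F_1^2$ is an ideal of $F_1$. The quotient $F_1/F_1^2$ has zero multiplication and is spanned by the image of $x$. Since it is a factor algebra of $F_1\in\frV$, it lies in $\frV$. It has dimension at most $1$, and being a zero-multiplication algebra of dimension $\le1$, it is either $0$ or isomorphic to $Z$. As $Z\notin\frV$, it must be $0$. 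Thus $F_1=F_1^2$, so $x\in F_1^2$.

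Concretely, this means there is $f\in\cF^2$, a nonassociative polynomial in $x$ with all monomials of degree $\ge2$, such that $x-f(x)\in V(\cF(x))$. In other words, $x=f(x)$ holds as a relation in $F_1$. By Lemma~\ref{lRelX}, or directly from the definition of the verbal ideal, $x=f(x)$ is an identity of $\frV$. This is a Fermat identity.

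The only delicate point is the identification $F_1/F_1^2\in\{0,Z\}$. This needs $F_1^2$ to be an ideal, which Lemma~\ref{power} provides, and closure of varieties under factor algebras. Everything else is routine.
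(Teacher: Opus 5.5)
Your proof is correct. The first direction is the same as the paper's, but for the converse you take a different route.

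The paper argues contrapositively at the level of identities. If some identity of $\frV$, in any number of variables, has a monomial $x_i$ of degree $1$, then substituting $0$ for all other variables turns it into a Fermat identity. So when no Fermat identity holds, every identity of $\frV$ has only monomials of degree $\ge 2$. Every such polynomial vanishes on the one-dimensional zero algebra $Z$, and hence $Z\in\frV$.

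You instead work inside the rank-one free algebra $F_1$. There $F_1/F_1^2$ is the largest zero-multiplication quotient, and it is either $0$ or $Z$. Excluding $Z$ forces $F_1=F_1^2$, and lifting $x\in F_1^2$ back to $\cF(x)$ is literally a Fermat identity.

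What your version buys is that it never has to analyze multivariable identities or specialize them. Since $Z$ is one-generated, the rank-one free algebra already detects it. This is the same mechanism the paper itself uses for Proposition~\ref{pQuasi}, where $\F$ is exhibited as a quotient of $F_1$.

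What the paper's version buys is a slightly more explicit syntactic description. The identities of $Z$ are exactly the polynomials with no degree-one monomials. Hence $Z\in\frV$ precisely when $V(\cF_\infty)\subset\cF_\infty^2$, and any identity of $\frV$ with a nonzero linear term specializes directly to a Fermat identity.
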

\begin{proof} No Fermat's identity holds in one-dimensional algebra $A$ with zero multiplication,
since the right-hand side of the identity takes only zero values in $A$. Hence $A$ does not
belong to a variety satisfying a  Fermat's identity.

Assume now that no Fermat's identity holds in a variety $\frV$. Then there
are no identities $f(x_1,\ld,x_n)=0$ where the left-hand side involves a monomial $x_i$ of degree $1$. Indeed, otherwise, one can replace other variables in $f$ with zero. By Lemma \ref{lRelX}, this mapping extends to the endomorphism on the free $\frV$-algebra and gives a Fermat's identity in $\frV$. It remains to mention that every polynomial without monomials of degree 1 vanishes in the
one-dimensional algebra $A$ with zero multiplication, whence  $A\in \V$.
\end{proof}

The degree $d_i$ of a monomial $v$ with respect to a variable $x_i$ can easily  be defined by induction as the number of occurrences of $x_i$ in the support of $v$. If $|\F|=q$, we call a polynomial $f$ \textit{quasihomogeneous} if for every $x_i$, all the monomial summands of $f$ have the same degree with respect to $x_i$, taken modulo $q-1$.

\begin{Prop}\label{pQuasi} A variety $\V$ of algebras over a finite field $\F$ does not contain the one-dimensional algebra $\F$ with nonzero multiplication if and only if, $\V$ satisfies some quasihomogeneous identity in one variable with nonzero sum of coefficients of its monomial summands.
\end{Prop}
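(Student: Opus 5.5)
The proof splits into two directions, modelled on the proof of Proposition \ref{pFermat}. Throughout, $|\F|=q$, and $A=\F$ is the one-dimensional algebra with $e^2=e$; in $A$ the product is just field multiplication.

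\emph{Sufficiency.} Suppose $\V$ satisfies a quasihomogeneous identity $f(x)=\sum_j\alpha_j u_j(x)=0$ with $\sum_j\alpha_j\ne 0$. Each $u_j$ is a monomial in the single variable $x$, of degree $d_j$. Evaluating at $x=\lambda\in\F$ in $A$ gives $u_j(\lambda)=\lambda^{d_j}$, because $A$ is commutative and associative. Quasihomogeneity means all $d_j$ are congruent modulo $q-1$, and every $d_j\ge1$. Hence for $\lambda\ne0$ all $\lambda^{d_j}$ coincide, since $\lambda^{q-1}=1$. Taking $\lambda=1$ gives $f(1)=\sum_j\alpha_j\ne0$. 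So the identity fails in $A$, and $A\notin\V$.

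\emph{Necessity.} Suppose $A\notin\V$. Then some identity $g(x_1,\ld,x_n)=0$ of $\V$ fails in $A$, so there are $\lambda_1,\ld,\lambda_n\in\F$ with $g(\lambda_1,\ld,\lambda_n)\ne0$.

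First reduce to one variable. Substitute $x_i\mapsto\lambda_i x$; this is an endomorphism of the free algebra, so by Lemma \ref{lRelX} we get an identity $h(x)=g(\lambda_1x,\ld,\lambda_nx)=0$ in $\V$. Since $h(1)=g(\lambda_1,\ld,\lambda_n)\ne0$ in $A$, the sum of the coefficients of $h$ is nonzero, because $h(1)$ evaluated in $A$ is exactly that sum.

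Next make $h$ quasihomogeneous. Split $h=\sum_{r=0}^{q-2}h_r$, where $h_r$ collects the monomials whose degree is $\equiv r \pmod{q-1}$. I want each $h_r=0$ to be an identity of $\V$. Use the substitutions $x\mapsto\mu x$ for $\mu\in\F^*$, which are again endomorphisms and so give identities by Lemma \ref{lRelX}. This yields the identities $\sum_r\mu^r h_r(x)=0$ for every $\mu\in\F^*$. The matrix $(\mu^r)$, with $\mu\in\F^*$ and $0\le r\le q-2$, is a Vandermonde matrix on the $q-1$ distinct elements of $\F^*$, so it is invertible. Solving the linear system shows that each $h_r$ lies in the verbal ideal, i.e.\ $h_r=0$ holds in $\V$. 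Now $\sum_r h_r(1)=h(1)\ne0$, so some $h_r(1)\ne0$. That $h_r$ is quasihomogeneous in one variable with nonzero coefficient sum.

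The only delicate step is this separation of $h$ into its quasihomogeneous components via Vandermonde inversion over $\F^*$. It requires that the identities of $\V$ form a subspace closed under endomorphisms, which is exactly what Lemma \ref{lRelX} together with fully invariance (Proposition \ref{pGC}) provide.
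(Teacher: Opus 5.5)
Your proof is correct and follows essentially the paper's route: pass to one-variable identities, split them into quasihomogeneous components modulo $q-1$, and evaluate at $1$. The differences are minor. The paper argues contrapositively (if every quasihomogeneous identity has zero coefficient sum, then every one-variable identity vanishes at $x=1$, so $\F$ is a homomorphic image of $F_1(\V)$) and cites \cite[Section 8.4]{BOGP} for the splitting, whereas you argue directly, collapse a failing identity to one variable via $x_i\mapsto\lambda_i x$, and prove the splitting yourself by Vandermonde inversion over $\F^*$.
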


\begin{proof}

Note that an identity with nonzero  coefficient sum of the monomials is not satisfied in algebra $\F$. Indeed, by substituting the identity element in place of all variables, we obtain a nonzero element of the field.

Let conversely, a variety $\frV$ have no quasihomogeneous identities in one variable with nonzero sum of coefficients, that is, all quasihomogeneous identities in one variable $x$ holding in $\frV$  have zero sum of coefficients. Note that $x^m = x^n$ is the identity of $\bf F$ if $m\equiv n \mod(q-1)$. Hence the quasihomogeneous  polynomials  with zero sum of coefficients are in the kernel of any homomorphism of the one-generated $\frV$-free  algebra $F_1$ onto  $\F$ since under any substitution of any element of $\F$ in place of $x$  all monomials will take equal values. It follows that these identities hold in $\F$. 

But then also all identities in one variable  are in the kernel of any such homomorphism, since  \cite[Section 8.4]{BOGP} the left side of any  identity is the sum of quasihomogeneous polynomials, which  also identically equal zero in $\frV$. Hence the mapping of the variable $x$ into a nonzero element of $\F$ has a well-defined extension to a homomorphism, that is, the algebra $\F$ is isomorphic to the factor-algebra of the algebra $F_1$. Now it follows that $\F\in\frV$.

\end{proof}

\subsection{Subalgebras of direct powers of simple algebras.}\label{ssSDPSA}

A variety $\V$ is called {\it locally finite} if every finitely generated algebra
in $\V$ is finite (equivalently, the $\V$-free algebras of finite ranks are finite).
Recall that an algebra $A$ with nonzero multiplication is called \emph{simple} if it has no proper nozero ideals. For instance, a composition section of any algebra is a simple algebra. If $\V$ has a finite simple algebra $A$, then it contains the direct powers $\underbrace{A\times\cdots\times A}_n$ of $A$ for all $n=1,2,\ld$. These play an important role in the description of \emph{all} algebras in $\V$. For further references, this subsection presents an auxiliary information on the structure of such direct products. 

As usual, the factors of a direct 
product $P$ can be identified with ideals of $P$. Recall that a non-zero algebra $A$ is called
\emph{simple} if $A$  has no proper non-zero ideals. Also, we canonically identify algebras $A_1,\ld,A_m$ with the subalgebras, denoted by the same letters in their direct product so that $A_1\times\cdots\times A_m=A_1\oplus\cdots\oplus A_m$.

\begin{Lemma}\label{lDirectSimple} Let an algebra $A$ be the direct product of simple algebras: \\  $A = A_1\times\dots \times A_t$,
where $A_1,\ld,A_s$ have non-zero multiplication and $A_{s+1},\dots, A_t$ are one-dimensional with
zero multiplication. Then 
\begin{enumerate}
  \item[$\mathrm{(i)}$] every ideal $I$ of $A$ is the direct sum of some algebras from the first cluster and a subspace of the sum of the subalgebras $A_{s+1},\dots, A_t$.;
  \item[$\mathrm{(ii)}$] every factor-algebra of $A$ is isomorphic to a product $A_{i_1}\times\dots\times A_{i_r}$,
where $1\le i_1<\ld<i_r\le t$.
\end{enumerate}
\end{Lemma}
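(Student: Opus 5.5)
We use the fact that a simple algebra $A_i$ with nonzero multiplication satisfies $A_i^2=A_i$. Indeed, $A_i^2$ is an ideal of $A_i$ by Lemma~\ref{power}, and it is nonzero. We also use that the factors multiply trivially, $A_iA_j=0$ for $i\ne j$. Write $N=A_{s+1}\oplus\cdots\oplus A_t$. Then $AN=NA=0$, so every subspace of $N$ is an ideal of $A$.

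For (i), let $I$ be an ideal of $A$ and fix $i\le s$. We argue that either $A_i\subset I$ or $I$ has zero component in $A_i$. The projection $\pi_i(I)$ onto $A_i$ is an ideal of $A_i$, because for $a\in A_i$ and $x\in I$ we have $ax=a\pi_i(x)$, and similarly on the right. If $\pi_i(I)=0$ we are done. Otherwise $\pi_i(I)=A_i$, and then $A_iI=A_i\pi_i(I)=A_i^2=A_i$. Since $A_iI\subset I$ and $A_iI\subset A_i$, this gives $A_i\subset I$. Let $S$ be the set of $i\le s$ with $A_i\subset I$. For any $x\in I$, subtract its components in $A_i$, $i\in S$; these components lie in $I$. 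What remains lies in $I$ and has zero components in every $A_i$ with $i\le s$, $i\notin S$. Hence it lies in $I\cap N$. Therefore $I=\bigoplus_{i\in S}A_i\oplus (I\cap N)$, where $I\cap N$ is a subspace of $N$.

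For (ii), take the quotient by $I$ as in (i). Write $I\cap N=W$ and choose a complement $W'$ of $W$ in $N$. Then
\[
A/I\cong \bigoplus_{i\le s,\, i\notin S}A_i\oplus N/W ,
\]
and $N/W$ is an algebra with zero multiplication of dimension $r=\dim N-\dim W$. We need to identify $N/W$ with a product of $r$ of the factors $A_{s+1},\ld,A_t$. The subalgebras $A_{s+1},\ld,A_t$ span $N$, so some $r$ of them, say $A_{j_1},\ld,A_{j_r}$, together span a complement of $W$ in $N$. Their images in $N/W$ then give $N/W\cong A_{j_1}\times\cdots\times A_{j_r}$, because any two zero-multiplication algebras of the same dimension are isomorphic. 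The required indices are those $i\le s$ with $i\notin S$ together with $j_1,\ld,j_r$. This selection of spanning lines is the only point needing care, and it is standard basis extension. The main step overall is the dichotomy in (i), which rests on $A_i^2=A_i$.
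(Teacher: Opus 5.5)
Your proof is correct and takes the same route as the paper. The paper also argues that a nonzero projection of $I$ onto a factor $A_i$ with nonzero multiplication forces $A_i\subset I$ by multiplying with elements of $A_i$, and then gets (ii) from (i) via the homomorphism theorem; you supply the details it leaves implicit, and the choice of spanning lines in (ii) is not even needed, since $N/W$ has zero multiplication and dimension $r\le t-s$, so any $r$ of the factors $A_{s+1},\dots,A_t$ will do.
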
 

\proof If the projection of $I$ to $A_1$ is non-zero, then using multiplication by
the elements from $A_1$, one can obtain the containment $A_1\subset I$ since $A_1$ is simple
and has nonzero multiplication. Similar observation applied to $A_2,\dots, A_s$
proves the statement (i). The second statement follows from the first one and from
the homomorphism theorem.

\endproof

A subalgebra $B$ of a direct product $P = A_1\times\dots \times A_t=A_1\oplus\cdots\oplus A_m$ is called a {\it  subdirect product} of the algebras $A_1,\dots, A_t$ if $B$ projects {\it onto} each
of the factors $A_i$. 

\begin{Lemma}\label{lDirectSimple1} A subdirect product $B$ of simple algebras $A_1,\dots, A_t$ is isomorphic to the direct product $A_{i_1}\times\dots\times A_{i_r}$, where $1\le i_1<\ld<i_r\le t$.
\end{Lemma}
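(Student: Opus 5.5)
The plan is to argue by induction on $t$, peeling off the last factor $A_t$ and comparing $B$ with its projection to the first $t-1$ factors. For $t=1$ there is nothing to prove: a subdirect product of one algebra is that algebra itself. In what follows, ``simple'' is understood as in Lemma~\ref{lDirectSimple}: each $A_i$ is either simple with nonzero multiplication or one-dimensional with zero multiplication. In both cases the only subspaces of $A_i$ that are ideals are $\{0\}$ and $A_i$.

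The key step is to show that $B\cap A_t$ is an ideal of $A_t$. Here $A_t$ is identified with the corresponding direct summand of $P$. Take $k\in B\cap A_t$ and $a\in A_t$. Because $B$ projects onto $A_t$, there is $b\in B$ whose $t$-th component is $a$. Multiplication in $P$ is componentwise and $k$ has zero components outside $A_t$, so $bk=ak$ and $kb=ka$. Since $bk,kb\in B$, both $ak$ and $ka$ lie in $B\cap A_t$. Closure under addition and scalars is clear. Hence $B\cap A_t$ is an ideal of $A_t$, and so it equals either $\{0\}$ or $A_t$; in the zero-multiplication case this is simply because $\dim A_t=1$.

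Let $\pi:B\to A_1\times\cdots\times A_{t-1}$ be the projection. Its image $B'$ is a subdirect product of $A_1,\ld,A_{t-1}$, and $\Ker\pi=B\cap A_t$. There are two cases.
\begin{enumerate}
\item[$\mathrm{(a)}$] If $B\cap A_t=\{0\}$, then $\pi$ is injective and $B\cong B'$. By induction, $B'$ is isomorphic to a product of some of $A_1,\ld,A_{t-1}$.
\item[$\mathrm{(b)}$] If $A_t\subset B$, then for every $b\in B$ its $A_t$-component lies in $B$. Therefore $b-(\text{its }A_t\text{-component})\in B\cap(A_1\oplus\cdots\oplus A_{t-1})$, and this intersection is exactly $B'$. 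It follows that $B=B'\oplus A_t$ as algebras, since the two summands are orthogonal under multiplication in $P$. By induction, $B\cong A_{i_1}\times\cdots\times A_{i_r}\times A_t$.
\end{enumerate}

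I do not expect a real obstacle. The only point needing care is the ideal argument in the key step, which relies on surjectivity onto $A_t$ and on the componentwise product. The zero-multiplication factors need no special treatment, because their being one-dimensional already forces the dichotomy.
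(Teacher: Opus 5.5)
Your proof is correct and follows essentially the same route as the paper: induction on $t$, using surjectivity onto $A_t$ to show that $B\cap A_t$ is an ideal of $A_t$. That ideal is therefore either $\{0\}$, in which case $B$ is isomorphic to its projection onto $A_1\times\cdots\times A_{t-1}$, or all of $A_t$, in which case $B=B'\times A_t$. You also fill in details the paper leaves implicit, such as checking that $B\cap(A_1\oplus\cdots\oplus A_{t-1})$ coincides with the projection $B'$ and that $B'$ is again a subdirect product.
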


\proof Note that in the above notation, the intersection $B\cap A_t$ is an ideal in
both $B$ and $A_t$ since $B$ projects onto $A_t$. So it is either zero or the entire $A_t$.
In the first case, $B$ is isomorphic to the projection of it to $A_1\times\dots\times A_{t-1}$.
In the second case, $B= B'\times A_t$, where $B'$ is a subdirect product of the algebras
$A_1,\dots, A_{t-1}$. So in both cases one can use the obvious induction on $k$ to
complete the proof.

\endproof

  If $A$ is an algebra, $B$ is the direct power of $A$, that is, the set of all functions $f:\{1,\ld,n\}\to A$, then the \emph{diagonal} $\Delta$ of $B$ is the subalgebra of all constant functions $f$, that is, those for which there is $a\in A$ such that $f(i)=a$, for all $i=1,\ld,n$. Given any $n$-tuple of automorphisms $\tau=(\alpha_1,\ld,\alpha_n)$ of $A$, we consider the subalgebra $\Delta^\tau$, consisting of the functions $f^\tau$, $f\in\Delta$, given by $f^\tau(i)=\alpha_i(f(i))$; clearly,  $\Delta^\tau\cong\Delta$. If there is no confusion, the name of diagonal is used also when speaking about $\Delta^\tau$ in place of $\Delta$.  
  
  \begin{Lemma}\label{Diagonal}
  Let $S$ be a finite simple algebra, $S^2\nez$, $P$ and $Q$ are direct powers of $S$; We write $P=S(1)\oplus\cdots \oplus S(\ell)$ and $Q=S_1\oplus\cdots\oplus S_k$, where $S(i)$, $S_j$ are
  isomorphic to $S$. Assume that there is an injective homomorphism $\vp: P\to Q$. Then there are pairwise disjoint subsets $J_1,\ld,J_\ell$ of the set $\{ 1,\ld,k\}$ such that the image of each $S(j)$ is the diagonal $\Delta_j$ of the direct sum $\oplus_{i\in J_i}S_i$.  Also, $\vp(P)=\Delta_1\oplus\cdots\oplus \Delta_\ell$. 
  \end{Lemma}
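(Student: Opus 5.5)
The plan is to analyze each component $\vp_{j,i}=\pi_i\circ\vp|_{S(j)}: S(j)\to S_i$, where $\pi_i:Q\to S_i$ is the coordinate projection. Since $S$ is simple with $S^2\ne\{0\}$, the kernel of $\vp_{j,i}$ is an ideal of $S(j)\cong S$. So each $\vp_{j,i}$ is either zero or injective. Because $S$ is finite and $S(j)\cong S_i$, an injective $\vp_{j,i}$ is in fact an isomorphism $S(j)\to S_i$.

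Set $J_j=\{i:\vp_{j,i}\ne 0\}$. The first step is to show these sets are pairwise disjoint. For $j\ne j'$ we have $S(j)S(j')=\{0\}$ in $P$, hence $\vp_{j,i}(a)\vp_{j',i}(b)=0$ for all $a\in S(j)$, $b\in S(j')$. If $i\in J_j\cap J_{j'}$, both maps are onto $S_i$, which would force $S_iS_i=\{0\}$. This contradicts $S^2\ne\{0\}$.

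Next, each $J_j$ is nonempty because $\vp$ is injective. For $i\in J_j$, write $\vp_{j,i}=\alpha_i\circ\psi_j$, where $\psi_j:S(j)\to S$ is a fixed identification and $\alpha_i$ is an isomorphism $S\to S_i$. With respect to these identifications, $\vp(S(j))$ is the set of tuples $(\alpha_i(s))_{i\in J_j}$, with zero in all other coordinates. This is exactly the twisted diagonal $\Delta_j=\Delta^\tau$ of $\oplus_{i\in J_j}S_i$, in the sense of the paper's convention that allows automorphism twists.

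Finally, since $P=\oplus_j S(j)$, we get $\vp(P)=\sum_j\Delta_j$. The $J_j$ are disjoint, so the $\Delta_j$ live in disjoint sets of coordinates and the sum is direct.

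The step needing the most care is the disjointness of the $J_j$, since it is the only place where the orthogonality $S(j)S(j')=\{0\}$ and the nonzero multiplication of $S$ are used together. Apart from that, the only subtlety is phrasing ``diagonal'' up to the twisting automorphisms, which the paper explicitly permits.
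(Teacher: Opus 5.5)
Your proof is correct and follows essentially the same route as the paper. In both, each coordinate map $\pi_i\vp|_{S(j)}$ is either zero or an isomorphism onto $S_i$, $J_j$ is the set of coordinates where it is an isomorphism, and disjointness of the $J_j$ comes from $S(j)S(j')=\{0\}$ together with $S^2\ne\{0\}$. You also state explicitly two points the paper leaves implicit: each $J_j$ is nonempty because $\vp$ is injective, and the sum of the $\Delta_j$ is direct because they occupy disjoint coordinates.
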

  
  \begin{proof} Consider the projection $\pi_j$ of $\vp(S(1))$ to some $S_j$. Since these simple algebras
  have equal orders, either $\pi_i(\vp(S(1))\ez$ or $\pi_i(\vp(S(1))= S_j$. So we denote by $J_1$ the set
 of indexes $j$, such that the composition $\pi_j\vp$ is an isomorphism. Thus, we obtain a
  diagonal embedding of $S(1)$ into the sum $T(J_1)$ of the direct summands $S_j$ of $Q$ with $j\in J_1$.
  
  Similarly we have the diagonal embedding of every $S(i)$ into $T(J_i)$, $i=2,\dots, \ell$.
  If $j\in J_i\cap J_{i'}$ for some $j$, and $i\ne i'$, then $\pi_j\vp(S(i))= \pi_j\vp(S(i'))= S_j$.
  Since $\pi_j\vp$ is a homomorphism and $S(i)S(i')\ez$, we have to obtain $S_j^2\ez$. This contradiction
  completes the proof.
  
    \end{proof}
    
    \section{Nilpotent varieties}\label{esssNS}

\subsection{Equivalent definitions.}\label{ssEDN}

When one transfers the concept of nilpotency from classical algebras to
arbitrary non-associative algebras, there are different ways of 
generalisations. In this subsection we want to explain the correlation
of these definitions of nilpotency.

Let us call the descending series of subspaces
\begin{equation}\label{eCentFilt}
A = A_1 \supset A_2 \supset \ld 
\end{equation}
a \emph{central filtration} if $A_iA_j \subset A_{i+j}$ for any indices $i,j\ge 1$. The terms $A_i$ in (\ref{eCentFilt}) are ideals because $A_jA = A_jA_1 \subset A_{j+1}  \subset A_j$ and $AA_j \subset A_j$.
The lower central filtration is the power series of $A$. By Lemma \ref{power}, it is 
a central filtration.

\begin{Lemma}\label{lCFLCF} For any central series (\ref{eCentFilt}) and any $i$, we have $A^i\subset A_i$.
\end{Lemma}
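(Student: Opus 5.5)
We argue by strong induction on $i$, using only the definition $A^i=\sum_{s+t=i}A^sA^t$ and the central filtration property $A_kA_\ell\subset A_{k+\ell}$. This is the natural route because the powers are themselves defined recursively through products of lower powers, and the defining property of a central filtration matches that recursion term by term.

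For the base case $i=1$ we have $A^1=A=A_1$, so there is nothing to prove.

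Now let $i>1$ and assume $A^j\subset A_j$ for all $j<i$. By definition,
\[
A^i=\sum_{s+t=i,\ s,t\ge 1}A^sA^t .
\]
In each summand both $s$ and $t$ are strictly less than $i$, so the induction hypothesis gives $A^s\subset A_s$ and $A^t\subset A_t$. Since the product of subspaces is monotone in each argument ($L\subset L'$ and $M\subset M'$ imply $LM\subset L'M'$, as $LM$ is spanned by products $lm$), we get
\[
A^sA^t\subset A_sA_t\subset A_{s+t}=A_i,
\]
where the last inclusion is the central filtration condition. The subspace $A_i$ is closed under sums, so the whole sum $A^i$ lies in $A_i$, which completes the induction.

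There is no real obstacle here. The one point needing care is that the induction must be strong, over all $j<i$, since $A^i$ involves every lower pair $(s,t)$ and not just $A^{i-1}$. The descending property of the series is not even needed for the argument.
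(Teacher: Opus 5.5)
Your proof is correct and is essentially the paper's argument: strong induction on $i$, giving $A^i=\sum_{k+\ell=i}A^kA^\ell\subset\sum_{k+\ell=i}A_kA_\ell\subset A_i$. The only difference is that the paper also writes out the case $i=2$ separately, which is not needed.
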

\begin{proof}
Since $A^1 = A_1$, $A^2 = A_1^2\subset A_2$, using induction by $i\ge 2$, 
we have $A^k \subset A_k$, $A^\ell \subset A_{\ell}$ for $k + \ell = i$, it follows that
\[
 A^i =\sum_{k+\ell=i}A^kA^\ell\subset \sum_{k+\ell=i}A_kA_\ell\subset A_i.
\]
\end{proof}

For any algebra $A$, having central filtration (\ref{eCentFilt}) of length $c\ge 1$ (т.е. $A^{c+1} =A_{c+1}\ez$) we can define an upper central filtration $\{ Z_j\,|\, j\ge 1\}$ by induction,  starting with $Z_1 = A$. Suppose the subspaces $Z_j$ have been defined for $j < i$, where $i \ge 2$. Then, by definition, $Z_i$ consits of the elements $a\in A$ such that
\begin{equation}\label{eTwoStars}
b_1b_2\cdots b_sab_{s+1}\ld b_t = 0
\end{equation}
for any bracketing of the left hand side, if for some $i_1,\ld, i_t < i$ we have $b_j\in  Z_{i_j}$ and $\sum_{j=1}^{t} i_j \ge c - i + 1$.
\begin{Lemma}\label{lUpper}
$Z_i \subset Z_{i-1}$. The upper central filtration is a central filtration, hence consists of the ideals of $A$. For any central filtration (\ref{eCentFilt}) and any $i$, we have $A_i \subset Z_i$.
\end{Lemma}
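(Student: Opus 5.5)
The plan is to first establish a symmetric vanishing statement for the sets $Z_m$, from which all three assertions follow. The inclusion $Z_i\subset Z_{i-1}$ needs nothing extra and comes first.

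\emph{Monotonicity.} Let $a\in Z_i$. To show $a\in Z_{i-1}$ we must check (\ref{eTwoStars}) for factors $b_j\in Z_{i_j}$ with $i_j<i-1$ and $\sum i_j\ge c-i+2$. Such factors also satisfy $i_j<i$ and $\sum i_j\ge c-i+1$. So (\ref{eTwoStars}) holds because $a\in Z_i$, and hence $Z_i\subset Z_{i-1}$. The degenerate case $t=0$ only arises for $i\ge c+1$, where it forces $Z_i=\{0\}$; this is consistent with the inclusion. In particular $Z_m\subset Z_{m'}$ whenever $m\ge m'$.

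\emph{Key claim.} I will prove the following by induction on $M$. Let $x_1,\ld,x_m$ satisfy $x_r\in Z_{n_r}$, with no restriction on the indices $n_r$, and let $\sum n_r\ge c+1$ and $\max n_r=M$. Then every bracketed product of $x_1,\ld,x_m$ vanishes. If the maximum $M$ is attained by exactly one factor $x_{r_0}$, this is just (\ref{eTwoStars}) with $a=x_{r_0}$: the other indices are $<M$ and sum to at least $c-M+1$. The hard case is when two or more factors have index $M$. Lowering all but one of them to $M-1$ via monotonicity loses $1$ from the total for each such factor, so it only works if there is slack in $\sum n_r$.

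My first attempt at the tie case is an inner induction on $c+1-\sum$ together with the number of factors of maximal index. If necessary I will strengthen the claim so that the set of admissible index tuples is closed under this lowering. This tie case is the step I expect to be the main obstacle, since the definition of $Z_i$ deliberately uses the strict inequality $i_j<i$.

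\emph{Centrality.} Given the claim, take $a\in Z_i$ and $a'\in Z_j$. Any product as in (\ref{eTwoStars}) that contains the element $aa'$ together with factors $b$ of total index $\ge c-i-j+1$ can be expanded into a product of $a$, $a'$ and the $b$'s. The total index of these factors is $\ge c+1$, so the product vanishes. Hence $aa'\in Z_{i+j}$, i.e. $Z_iZ_j\subset Z_{i+j}$. As observed after (\ref{eCentFilt}), any central filtration consists of ideals, so the $Z_i$ are ideals.

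\emph{Comparison with $A_i$.} Finally I will prove $A_i\subset Z_i$ by induction on $i$. Let $a\in A_i$ and let $b_j\in Z_{i_j}$ with $i_j<i$ and $\sum i_j\ge c-i+1$. For every $m\le i$ we have $a\in A_m$, and by the induction hypothesis $A_m\subset Z_m$ for $m<i$. I will choose the indices so that the key claim applies. Alternatively, I will argue directly from $A_kA_\ell\subset A_{k+\ell}$ when the $b_j$ are replaced by elements of $A_{i_j}$, but this needs the reverse comparison and is likely weaker. The cleanest route is to reduce everything to the key claim, using that the products of elements of a central filtration with total index $\ge c+1$ lie in $A_{c+1}=\{0\}$. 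So the whole proof hinges on settling the tie case of the key claim.
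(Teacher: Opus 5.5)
The gap is the tie case of your key claim, and it cannot be closed because it is false. Your argument for $Z_i\subset Z_{i-1}$ is correct and coincides with the paper's, and you located the real crux correctly: both your centrality step and your comparison step rest on the tie case.

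\emph{Counterexample.} Take $c=4$ and $A=B\times C$. Let $B$ be any algebra of nilpotency class exactly $4$, for instance with basis $u_1,\dots,u_4$, products $u_ku_1=u_{k+1}$ for $k\le 3$, and all other products zero. Let $C$ have basis $x,y,z,p$ with $xy=z$, $zx=p$ and all other products of basis vectors zero, so $C^4=\{0\}$.
\begin{enumerate}
\item[$\mathrm{(i)}$] Since $i_j<2$ forces $b_j\in Z_1=A$, the set $Z_2$ consists of the elements whose products with at least $c-1=3$ further factors vanish.
\item[$\mathrm{(ii)}$] $C$ is an ideal with $C^4=\{0\}$ and $BC=CB=\{0\}$, so $C\subset Z_2$.
\item[$\mathrm{(iii)}$] Now take $x\in Z_2$, $y\in Z_2$, $x\in Z_1$. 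Their indices sum to $5=c+1$, and the maximum $2$ is attained twice. Yet $(xy)x=p\neq 0$.
\end{enumerate}
So no inner induction, and no strengthening of the set of admissible index tuples, can settle the tie case.

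\emph{The lemma itself fails.} The same algebra refutes the second and third assertions of the lemma as literally stated.
\begin{enumerate}
\item[$\mathrm{(i)}$] Since $x\in Z_2$ and $zx=p\neq 0$, the element $z=xy$ is not in $Z_3$: take $t=1$, $b_1=x$, $i_1=2\ge c-3+1$. So even $Z_2Z_1\subset Z_3$ fails.
\item[$\mathrm{(ii)}$] The chain $A\supset A^2\supset A^3+\langle z\rangle\supset A^4+\langle p\rangle\supset\{0\}$ is a central filtration whose third term is not contained in $Z_3$.
\end{enumerate}

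\emph{The paper's proof.} It passes over exactly the point you isolated. To get $ab\in Z_{i+j}$, it applies the definition of $Z_i$ to a product with extra factors $b\in Z_j$ and $b_k\in Z_{i_k}$, where only $i_k<i+j$; but that definition only controls factors of index $<i$. For $A_i\subset Z_i$, it treats $b_j\in Z_{i_j}$ as if $b_j\in A_{i_j}$. That uses the inclusion opposite to the induction hypothesis, which is the route you rightly set aside.

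\emph{No redefinition helps.} With $Z_{c+1}=\{0\}$, no choice of $Z_i$ can make the second and third assertions hold together. Take $c=3$ and $B'\times C'$, with $B'$ of class $3$ and $C'$ spanned by $x,y,z$ with only $xy=z$. Both $A\supset A^2+\langle x\rangle\supset A^3+\langle z\rangle\supset\{0\}$ and the same chain with $y$ in place of $x$ are central filtrations. No central filtration has both $x$ and $y$ in its second term, since $xy\neq 0$ would have to lie in the fourth term $\{0\}$.

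\emph{What survives.} $Z_i\subset Z_{i-1}$ holds, and each $Z_i$ is an ideal. The ideal property follows directly from the definition, not from centrality. For $i\ge 2$ and $d\in A$, a product containing $ad$ is a product containing $a$ and one more factor $d\in Z_1$, with $1<i$.
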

\begin{proof}
The first claim follows simply because $c-i+1 < c-(i-1)+1$. 

For the second one, let us assume that $a\in Z_i, b\in Z_j$ and consider any product $b_1b_2\cdots b_s(ab)b_{s+1}\ld b_t$ where $b_k\in  Z_{i_k}$ and
\[
 \sum_{k=1}^{t}  i_k \ge c - i - j + 1.
 \] 
Since
$b \in Z_j$, it follows that
\[
 j +
\sum_{jk1}^{t}i_k \ge c - i + 1.
\] 
By definition of $Z_i$, it follows from $a \in Z_i$ that $b_1b_2\cdots b_s(ab)b_{s+1}\ld b_t=0$.
So by definition of $Z_{i+j}$, it follows that $ab\in Z_{i+j}$. As a result, $Z_iZ_j\subset Z_{i+j}$.

Now if $a\in A_i$, then induction and the definition of the central filtration (\ref{eCentFilt}) imply that all the products (\ref{eTwoStars}) are the elements of $A_{c+1}$ provided that  $\sum_{j=1}^{t}i_j \ge c - i + 1$. So  we have $a \in Z_i$, proving the last claim of the lemma.
\end{proof}

We conclude that if $A$ has a finite central filtration reaching $\{ 0\}$, it follows from Lemmas \ref{lCFLCF} and \ref{lUpper} that the lengths of the lower and upper central filtrations in $A$ coincide. These lengths are called the nilpotency class of $A$.

By the inductive definition of the power series, $A^i$ is a linear span of all products of $i$ elements from $A$ with any bracketing. Therefore all algebras satisfying $A^{c+1}\ez$ form the variety
$\frN_c$ given by all identities $x_1x_2 \ld x_{c+1} = 0$ with all possible bracketing on the left hand side. In particular, $\frN_1$  consists of all algebras with zero product.

\medskip
We come across another definition  by considering an ascending annihilator series of an algebra $A$:
\begin{equation}\label{eThreeStars}
  \{ 0\}=A_0\subset A_1\subset\ld
\end{equation}
where, for any  $i > 0$ the subspace $A_i/A_{i-1}$ belongs to the annihilator of $A/A_{i-1}$. Using induction, we easily prove that each $A_i$ is an ideal of $A$. In particular, this is true for the upper annihilator series $\{ I_i\}$ where  $I_1 =
\{a \in A\,|\, ab = ba = 0\,\;\; \forall b\in A\}$ is the annihilator $I(A)$ of $A$ and  $I_j$ is the full preimage in $A$ of the annihilator $I_j/I_{j-1}$ of $A/I_{j-1}$. By induction on $i$, with trivial basis $j=0$, we always have $A_j \subset I_j$, for any $j\ge 0$. Indeed, if already if $A_{j-1}\subset I_{j-1}$, since $A_j/A_{j-1}\subset I(A/A_{j-1})$ it follows that $(A_j + I_{j-1})/I_{j-1}$ annihilates $A/I_{j-1}$, and so  $A_j \subset I_j$.

To define the lower annihilator series of an arbitrary algebra $A$, we define the depth of an arbitrary nonassociative monomial. We assign depth 0 to the monomials $x_i$ and we say that the depth of the monomial $uv$ equals to the maximum of the depths of $u$, $v$ plus 1. For instance, the depth of $(x_1x_2)(x_3x_4)$ equals 2. We define the subspace $A(i)$ as a linear span in $A$ of all values of monomials of depth $i$. For instance, $A(0) = A$, $A(1) = A^2$.

To compare this series withan arbitrary annihilator series, we enumerate the terms of an arbitrary descending annihilator series in the reverse order: 
\begin{equation}\label{eFourStars}
  A=A'_0\supset A'_1\supset A'_2\supset\ldots
\end{equation}
where for each $i>0$, the ideal $A'_{i-1}/A'_i$ belongs to the annihilator of $A/A'_i$.
\begin{Lemma}\label{lLAS}
For any $i > 0$, we have $A(i) \subset A(i-1)$, the descending series $A(i)$ is an annihilator series. In particular, all its terms are ideals. For any annihilator series (\ref{eFourStars}), we have $A(i)\subset A_i'$.
\end{Lemma}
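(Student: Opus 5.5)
The plan is to argue directly from the definition of depth. The basic observation is that a monomial of depth $i$, $i\ge 1$, has the form $uv$ where one factor has depth exactly $i-1$ and the other has depth at most $i-1$. We also need to show that a monomial of depth $j\ge i$ can be rewritten as a monomial of depth exactly $i$ with some of its arguments replaced by values of subwords. I will establish this auxiliary fact first by induction: if $w$ has depth $j\ge i$, then $w$ is obtained from some depth-$i$ monomial $w'$ by substituting monomials for its variables. To see this, note that each subtree of $w$ at height $j-i$ below the root can be collapsed to a new variable; equivalently, truncate the bracketing tree at level $i$ from the root along a longest branch. Consequently the values of $w$ lie in $A(i)$, so $A(i)$ is also the span of values of all monomials of depth at least $i$.

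With this in hand, the inclusion $A(i)\subset A(i-1)$ is immediate. Any depth-$i$ monomial is obtained from a depth-$(i-1)$ monomial by substituting products for variables, so every value of a depth-$i$ monomial is the value of a depth-$(i-1)$ monomial on suitable elements of $A$.

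Next I will prove $A(i-1)A+AA(i-1)\subset A(i)$. Take $w$ of depth $i-1$ and any $a\in A$. Then $w\cdot x$ is a monomial of depth $\max(i-1,0)+1=i$, so $w(a_1,\ld)\,a\in A(i)$, and symmetrically $a\,w(a_1,\ld)\in A(i)$. By linearity this gives the inclusion. Combined with $A(i)\subset A(i-1)$, it shows that each $A(i-1)$ is an ideal, hence every $A(i)$ is an ideal, and that $A(i-1)/A(i)$ lies in the annihilator of $A/A(i)$. So $\{A(i)\}$ is a descending annihilator series in the sense of (\ref{eFourStars}).

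Finally, I will show $A(i)\subset A'_i$ by induction on $i$. The base case is $A(0)=A=A'_0$. For the inductive step, assume $A(i-1)\subset A'_{i-1}$. A value of a depth-$i$ monomial is a product $uv$ of values of monomials $u,v$ of depths at most $i-1$, with one of them, say $u$, of depth exactly $i-1$. By the inductive hypothesis, $u\in A(i-1)\subset A'_{i-1}$. Since $A'_{i-1}/A'_i$ annihilates $A/A'_i$, we get $uv\in A'_i$, and the symmetric case is handled the same way. Spanning then gives $A(i)\subset A'_i$.

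The only delicate point is the truncation argument showing that deeper monomials have values in $A(i)$. The rest is routine.
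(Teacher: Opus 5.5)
Your proof is correct and follows essentially the same route as the paper. You get $A(i)\subset A(i-1)$ by showing that a depth-$i$ monomial arises from a depth-$(i-1)$ monomial by substitution, get $A(i-1)A+AA(i-1)\subset A(i)$ from the fact that $wx$ has depth $i$, and then prove $A(i)\subset A'_i$ by induction via the factorization $uv$ together with $A'_{i-1}A+AA'_{i-1}\subset A'_i$. Your tree-truncation argument makes explicit the step the paper dismisses with ``by induction,'' and taking a factor of depth exactly $i-1$ is slightly cleaner than the paper's ``depth $\ge i-1$.''
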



\begin{proof}
By induction, it follows from the definition of the depth, that any monomial of depth $i$ is the value of some other monomial of depth $i-1$. Thus the first claim follows. Since the products  $AA(i-1)$ and $A(i-1)A$ are the linear spans of some monomials of depth $i$, it follows that these products are in $A(i)$, proving that the lower annihilator series is indeed an annihilator series. Note further that $A(i)$ is the linear span of the values of monomials $uv$ in $A$ where one of $u,v$, say $u$ has the depth $\ge i - 1$. So we may assume $u\in A(i-1)$. Now, by induction, $A(i-1) \subset A'_{i-1}$ and then the value of $uv$ is an element of $A'_{i-1}A \subset A'_i$. Finally, we have $A(i) \subset A'_i$.
\end{proof}

As a result, we have obtained that if an annihilator series (\ref{eFourStars}) is finite and reaches $\{ 0\}$, that is, $A'_d=\{ 0\}$, for some $d$, then both upper and lower annihilator series have length at most $d$, that is, have the same length. We call this length the \emph{nilpotent depth of $A$}.

It follows from Lemma \ref{lLAS} that algebras of depth $\le d$ form a variety  $\frN(d)$, given by identities $w = 0$ for all multilinear monomials $w$ of degree $d+1$ and depth $d$. Indeed, any monomial of depth $d$ appears from a multilinear monomial of depth $d$ in $d+1$ variables under an appropriate substitution of nonassociative monomials in place of the variables. This substitution is trivial if $d=0$. Futher, if $w=uv$ where the depth of $w$ is $d$ and, say, the depth of $u$ is $d-1$ that is, for $u$, the substitution in  
monomial of degree $d$ and depth $d-1$ in $x_1,\ld,x_d$ is given, then we complement it by a substitution
$x_{d+1}\mapsto v$ in the monomial $(x_1\cdots x_d)x_{d+1}$.

\begin{Prop}\label{pNcNd}
\begin{enumerate}
  \item[$\mathrm{(i)}$] If an algebra $A$  has nilpotent  class $c$ then it has nilpotent depth $d\le c$.
  \item[$\mathrm{(ii)}$] If an algebra $A$ has nilpotent depth $c$, then it has
nilpotency class  $c\le 2^{d -1}+1$.
\end{enumerate}
\end{Prop}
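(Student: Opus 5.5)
Part (i) follows by comparing the lower annihilator series with the power series. Lemma \ref{lLAS} says $A(i)\subset A'_i$ for any annihilator series (\ref{eFourStars}). The power series $A=A^1\supset A^2\supset\cdots$ is such a series once reindexed: set $A'_i=A^{i+1}$. By Lemma \ref{power}, $A^{i}A\subset A^{i+1}$ and $AA^{i}\subset A^{i+1}$, so $A'_{i-1}/A'_i$ lies in the annihilator of $A/A'_i$. Hence $A(i)\subset A^{i+1}$. If $A^{c+1}=\{0\}$, then $A(c)=\{0\}$, and the depth is $d\le c$. Alternatively, one can argue directly on monomials: a monomial of depth $i$ has degree at least $i+1$, so it lies in $A^{i+1}$.

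For part (ii), I read the statement as: depth $d$ implies class $c\le 2^{d-1}+1$ (the letters in the statement are swapped). The plan is to bound the depth of any monomial by a function of its degree. A binary tree of depth $d'$ has at most $2^{d'}$ leaves, so a monomial of depth $\le d'$ has degree $\le 2^{d'}$. Conversely, every monomial of degree $n>2^{d-1}$ has depth $\ge d$, hence its value lies in $A(d)$, which equals $\{0\}$ by Lemma \ref{lLAS} and the definition of depth. The proof of Lemma \ref{lLAS} shows that monomials of larger depth also have values in $A(d)$, since $A(i)\subset A(i-1)$.

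Every element of $A^n$ is a linear combination of values of monomials of degree $n$ (the remark preceding the definition of $\frN_c$). So $A^{n}=\{0\}$ as soon as $n\ge 2^{d-1}+1$. This gives class at most $2^{d-1}$, which is within the stated bound $2^{d-1}+1$.

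The main obstacle is the indexing convention. Depth $d$ means $A(d)=\{0\}$ with $A(0)=A$. Matching this with the convention $d=0$ for a variable requires checking that the identities of $\frN(d)$ are exactly the monomials of depth $d$ in $d+1$ variables. The counting bound (degree $\le 2^{\text{depth}}$) is a routine induction: $\deg(uv)=\deg u+\deg v\le 2\cdot 2^{\max}$. I would state this inequality explicitly and then conclude.
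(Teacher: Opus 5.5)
Your proof is correct. In substance it is the paper's argument, but part (ii) is organized differently, so a short comparison is worthwhile.

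In (i), your main route and your alternative are the same observation. The main route applies Lemma \ref{lLAS} to the power series $A'_i=A^{i+1}$, which is an annihilator series by Lemma \ref{power}. The alternative notes that a monomial of depth $c$ has degree at least $c+1$, and this is the argument the paper uses directly.

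In (ii), the paper inducts on $d$ through the \emph{upper} annihilator. Since $A/I_1(A)$ has depth $d-1$, products of degree at least $2^{d-2}+1$ lie in $I_1(A)$. Any product $uv$ of degree $2^{d-1}+1$ has a factor of degree at least $2^{d-2}+1$, so $uv=0$. You make the same halving count once and for all on bracketings: a monomial of depth $k$ has degree at most $2^k$. You then use only the lower-series description $A(d)=\{0\}$ together with $A(i)\subset A(i-1)$. This avoids passing to quotient algebras, whereas the paper's version stays within the annihilator structure of $A$ itself.

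Both arguments actually give $A^{2^{d-1}+1}=\{0\}$, i.e.\ class at most $2^{d-1}$. The paper's induction hypothesis is this sharper bound, although it states $2^{d-1}+1$.

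The indexing issue you flag as the main obstacle does not arise, because nothing in your argument uses the identities defining $\frN(d)$.
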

\begin{proof}

(i) Every monomial of depth $d$ has degree at least $c+1$, an so it takes only zero
values in every algebra of nilpotency class $c$. So the first statement
follows from the above description on the identities defining the varieties
${\cal N}_c$ and ${\cal N}(d)$.

 (ii) Arguing by induction, we may assume that all products of degree at least $2^{d-2}+1$ are in  $I_1(A)$. But then in any product $uv$ of length $2^{d-1}+1$ one of $u,v$ has degree at least $2^{d-2}+1$ hence by induction belongs to  $I_1(A)$. Then $uv=0$, as needed. So, the nilpotent class of $A$ is at most $2^{d-1}+1$. 
 \end{proof}
 
 Now we are able to formulate, for references, the following
 
 \begin{Prop} \label{nilp} Given an algebra $A$ over a field $\bf F$, the following
 properties are equivalent.
\begin{enumerate}
  \item[$\mathrm{(i)}$] $A$ has a central series $A=A_1\ge A_2\ge \dots \ge A_{c+1}=\{ 0\}$ for some $c\ge 1$;
  \item[$\mathrm{(ii)}$] $A^{c+1}= \{0\}$ for some $c\ge 1$;
  \item[$\mathrm{(iii)}$] $A$ belongs to the variety ${\cal N}_c$ for some $c$;
  \item[$\mathrm{(iv)}$] $A$ has an annihilator series $\{0\}= A_0\subset  \dots\subset  A_d=A$ for some $d\ge 0$;
  \item [$\mathrm{(v)}$]$A$ belongs to the variety ${\cal N}(d)$ for some $d>0$.
\end{enumerate} 
 \end{Prop}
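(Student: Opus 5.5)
The plan is to organize the equivalences into two clusters and then link them. The first cluster is (i)$\Leftrightarrow$(ii)$\Leftrightarrow$(iii), which concerns degree. The second is (iv)$\Leftrightarrow$(v), which concerns depth. Proposition \ref{pNcNd} will serve as the bridge between the two clusters.

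For the first cluster, (ii)$\Rightarrow$(i) is immediate: the power series $A=A^1\supset A^2\supset\cdots\supset A^{c+1}=\{0\}$ is a central filtration by Lemma \ref{power}. Conversely, given a central series reaching $A_{c+1}=\{0\}$, Lemma \ref{lCFLCF} gives $A^{c+1}\subset A_{c+1}=\{0\}$, so (i)$\Rightarrow$(ii). Next, $A^{c+1}$ is the linear span of the values of all monomials of degree $c+1$ under all bracketings. We have $A^{c+1}=\{0\}$ if and only if all identities $x_1\cdots x_{c+1}=0$ hold in $A$. For the ``if'' direction we also need that monomials of higher degree vanish; this follows from $A^{k}\subset A^{c+1}$ for $k\ge c+1$ (Lemma \ref{power}). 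This gives (ii)$\Leftrightarrow$(iii).

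For the second cluster, (iv)$\Rightarrow$(v): renumber the given ascending annihilator series in reverse order to obtain a descending series of the form (\ref{eFourStars}) with $A'_d=\{0\}$. Lemma \ref{lLAS} then yields $A(d)=\{0\}$. Since $A(d)$ is spanned by the values of monomials of depth $d$, every such monomial vanishes on $A$, and in particular the multilinear ones defining $\frN(d)$ do. Conversely, for (v)$\Rightarrow$(iv), if $A\in\frN(d)$, then by the remark preceding Proposition \ref{pNcNd} every monomial of depth $d$ is a substitution instance of a multilinear monomial of degree $d+1$ and depth $d$, so $A(d)=\{0\}$. By Lemma \ref{lLAS}, the lower series $A=A(0)\supset A(1)\supset\cdots\supset A(d)=\{0\}$ is an annihilator series, and reversing it gives (iv). The only care needed here is the case $d=0$ versus $d>0$ in the indexing.

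The main step is the bridge between the clusters, and it is exactly Proposition \ref{pNcNd}. If $A$ has nilpotency class $c$, then part (i) shows that its depth is at most $c$, so we obtain (iv)/(v). If $A$ has finite nilpotent depth $d$, then part (ii) bounds the class by $2^{d-1}+1$, giving (ii). The one point I would check is that ``nilpotent depth $d$'' in that proposition is defined via the existence of a finite annihilator series. This is exactly hypothesis (iv), since the lengths of the upper and lower annihilator series agree by Lemma \ref{lLAS}. Assembling these implications proves all five properties equivalent.
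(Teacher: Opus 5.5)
Your proposal is correct and is essentially the paper's own argument. The paper gives no separate proof: it presents the proposition as a summary of Lemmas \ref{lCFLCF} and \ref{lLAS}, the identity descriptions of $\frN_c$ and $\frN(d)$, and Proposition \ref{pNcNd} as the bridge between the degree and depth conditions, just as you assemble them. One minor remark: your comment about higher-degree monomials in (ii)$\Leftrightarrow$(iii) is superfluous, because $A^{c+1}$ is already spanned by values of degree-$(c+1)$ monomials, so $A^{c+1}=\{0\}$ is literally the statement that the defining identities of $\frN_c$ hold in $A$.
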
 
 
This Proposition allows one to say that an algebra is {\it nilpotent}, by definition, if it has one of the properties (i) - (v) . A variety $\V$ of algebras is called {\it nilpotent} if every algebra $A\in \V$ is nilpotent. The nipotency class (depth) of
the algebras in a nilpotent variety is bounded from above, because if one gets
an infinite series $A_i$ of algebras in $\V$ with unbounded classes (depths), then the
Cartesian products of them is not nilpotent, being in the variety $\V$, a contradiction.
Thus the maximal $c$ (resp., $d$) such that $\V$ contains an algebra of nilpotent class
$c$ (of nilpotent depth $d$) is called the {\it nilpotency class} ({\it nilpotency depth})
of the variety $\V$.

\begin{Remark} \label{cequald} If we add the associative law to the identities defining the varieties
${\frN}_c$ and ${\frN}(c)$, respectively, then we obtain equivalent systems of identities,
that is, they define the same variety. Hence, the class and the depth of an associative
nilpotent algebra are equal. The same is true for nilpotent Lie algebras.

\end{Remark}

In the following lemma, we give some simple properties which are used throughout our treatment of algebras.

\begin{Lemma}\label{lNilpSimpleProp}
The following are true in any algebra $A$ over a field $\F$.
\begin{enumerate}
  \item[$\mathrm{(i)}$] If $A$ is nilpotent then the annihilator $I(A)$ has nontrivial intersection with any nontrivial ideal $J$ of $A$.
  \item[$\mathrm{(ii)}$] If an ideal $J$ belongs to the annihilator $I(A)$ of $A$ and $A/J$ is nilpotent then also $A$ is nilpotent
\end{enumerate}

\end{Lemma}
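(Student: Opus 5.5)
For (i) I will use the lower annihilator series $A(i)$ from Lemma \ref{lLAS}. Since $A$ is nilpotent, Proposition \ref{nilp} gives $A\in\frN(d)$ for some $d$, so $A(d)\ez$. The plan is to walk an element of $J$ down this series until it lands in the annihilator.

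Take $0\ne a\in J$. Because $A(0)=A$ and $A(d)\ez$, there is a largest $k$ with $J\cap A(k)\nez$; replace $a$ by a nonzero element of $J\cap A(k)$. Then $aA$ and $Aa$ lie in $J$, since $J$ is an ideal. They also lie in $A(k+1)$, since the products $A(k)A$ and $AA(k)$ are spanned by values of monomials of depth $k+1$ (as in the proof of Lemma \ref{lLAS}). So $aA$ and $Aa$ lie in $J\cap A(k+1)\ez$ by maximality of $k$. Hence $a\in I(A)\cap J$ and $a\ne 0$. The only point needing care is the inclusion $A(k)A\subset A(k+1)$, which is exactly what Lemma \ref{lLAS} provides.

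For (ii), assume $A/J$ is nilpotent. By Proposition \ref{nilp}(iv) it has an annihilator series $\{0\}=\bar A_0\subset\cdots\subset\bar A_d=A/J$. Let $A_i$ be the full preimage of $\bar A_i$ in $A$ for $i\ge 1$, and set $A_0=\{0\}$. I then check that
\[
\{0\}\subset J= A_1'\subset A_1\subset\cdots\subset A_d=A,
\]
where $A_1'$ denotes the preimage $J$ of $\bar A_0$, is an annihilator series of $A$.

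The first step $J/\{0\}$ lies in $I(A)$ by hypothesis. For the later steps, $A_i/A_{i-1}\cong\bar A_i/\bar A_{i-1}$ annihilates $A/A_{i-1}\cong (A/J)/\bar A_{i-1}$ by the correspondence theorem. The terms are ideals because preimages of ideals are ideals. This gives an annihilator series of length $d+1$ reaching $A$, so $A$ is nilpotent by Proposition \ref{nilp}(iv). Nothing here is a real obstacle; it is a routine check that annihilation conditions pass to preimages.
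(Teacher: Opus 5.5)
Your proof is correct and follows the paper's. In (ii) you do exactly what the paper does: pull back an annihilator series of $A/J$ and put $J$ at the bottom. Your indexing with $A_0=\{0\}$ next to $A_1'=J$ is a little muddled, but the intended series $\{0\}\subset J\subset A_1\subset\cdots$ is clear.

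Part (i) uses the same idea as the paper, namely pushing a nonzero element of $J$ deeper until it annihilates $A$. The paper runs this as an explicit iteration of multiplications, which must stop because nonzero products have boundedly many factors (Proposition~\ref{nilp}(ii)). You instead choose the maximal $k$ with $J\cap A(k)\nez$ in the lower annihilator series, which is a slightly cleaner form of the same argument.
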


\proof (i) If $J$ has an element $a$ which does not belongs to $I(A)$, then for some
$b\in A$ either $ab\in J\backslash \{0\}$ or  $ba\in J\backslash \{0\}$. In the first case,
if $ab\in I(A)$, then we are done. Otherwise we can find an element $c\in A$ such that
either $(ab)c\in J\backslash \{0\}$ or  $c(ab)\in J\backslash \{0\}$. Since by Proposition \ref{nilp} (ii) number of factors in non-zero products are bounded, iterating this
argument, we obtain a non-zero element in the intersection $J\cap I(A)$ .

(ii) Since the factor-algebra $A/J$ has a finite annihilator series $J/J\subset  J_1/J\subset \dots$,
the algebra $A$ has a finite annihilator series $\{0\} \subset  J\subset  J_1\subset \dots$. Therefore
 $A$ is nilpotent by Proposition \ref{nilp} (iv).

\endproof 

\subsection{Subideals.}\label{ssSubideals}

A subalgebra $B$ of an algebra $A$ is called a \emph{subideal} if there is a series of subalgebras $B=B_0\subset B_1\subset\cdots\subset B_s=A$ such that each $B_{i-1}$ is an ideal in $B_i$, for all $i=1,\ld,s$. The least length $s$ of all such series  (called the \emph{subideal series}) is called the subideal depth of $B$. Subideals of depth 1 are obviously the usual ideals.

One of characterizations of nilpotent varieties is the following.

\begin{Thm} \label{subidnilp} A variety of algebras $\V$ over a field is nilpotent if and only if
every subalgebra $B$ of any algebra $A\in\V$ is a subideal of $A$.
\end{Thm}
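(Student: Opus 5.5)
The plan is to prove the two implications separately. The forward direction uses an explicit subideal series built from the powers $A^k$. The converse passes to a relatively free algebra in $\V$ and extracts an identity of the form ``every $s$-fold multiplication vanishes''. That identity puts $\V$ inside some $\frN(s)$.

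\emph{Nilpotent $\Rightarrow$ subideals.} Let $\V$ be nilpotent of class $c$, $A\in\V$, and $B$ a subalgebra of $A$. I would set $B_k=B+A^{c+1-k}$ for $k=0,\ld,c$, so that $B_0=B+A^{c+1}=B$ and $B_c=B+A^1=A$. By Lemma \ref{power} the powers decrease, so the $B_k$ form an ascending chain. To check that $B+A^{m}$ is an ideal in $B+A^{m-1}$ (with $m=c+1-k\ge 2$), expand the product:
\[
(B+A^{m-1})(B+A^m)\subset BB+BA^m+A^{m-1}B+A^{m-1}A^m .
\]
Here $BB\subset B$, while $A^{m-1}B\subset A^{m-1}A\subset A^m$, and the remaining two terms also lie in $A^m$ by Lemma \ref{power}. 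The product in the other order is handled symmetrically. Hence $B$ is a subideal of depth at most $c$.

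\emph{Subideals $\Rightarrow$ nilpotent.} Take the $\V$-free algebra $F=F(\V,X\cup\{y\})$ with $X=\{x_1,x_2,\ld\}$ countable, and let $B$ be the subalgebra generated by $X$. By hypothesis there is a subideal series $B=B_0\subset\cdots\subset B_s=F$.

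Fix any $x_{i_1},\ld,x_{i_s}$ and any choice of sides. Consider the element
\[
w=m_{x_{i_1}}\cdots m_{x_{i_s}}(y),
\]
where each $m_{x}$ denotes either left or right multiplication by $x$. Since $y\in B_s$ and each $B_{j-1}$ is an ideal of $B_j$, an induction gives $m_{x_{i_s}}(y)\in B_{s-1}$, then the next multiplication lands in $B_{s-2}$, and so on, so $w\in B_0=B$. Thus $w=f(x_1,\ld,x_n)$ for some polynomial $f$ not involving $y$. This is a relation in $F$, so by Lemma \ref{lRelX} it is an identity of $\V$. Substituting $y\mapsto 0$ kills the left side, since $w$ is linear in $y$, and leaves $f$ unchanged. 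Hence $f=0$ is an identity, and therefore so is
\[
m_{z_1}\cdots m_{z_s}(y)=0
\]
for arbitrary elements $z_1,\ld,z_s,y$ and arbitrary choices of sides.

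The remaining step, which I expect to need the most care, is translating this identity into membership of $\V$ in some $\frN(d)$. I would show that every multilinear monomial of depth $s$ is a substitution instance of such an iterated multiplication. Follow a longest root-to-leaf path in the bracketing tree; it has length $s$. The leaf at its end plays the role of $y$, and at each of the $s$ nodes along the path the sibling subtree is a monomial substituted for the corresponding $z_j$, multiplied on the appropriate side. Therefore all monomials of depth $s$ vanish in $\V$, so $\V\subset\frN(s)$, and $\V$ is nilpotent by Proposition \ref{nilp}.
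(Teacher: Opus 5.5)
Your proof is correct and follows essentially the same route as the paper. In the converse you push $y$ down the subideal series of the subalgebra generated by $X$ inside $F(\V,X\cup\{y\})$ along a deepest path of the monomial, then annihilate the resulting element with the endomorphism $y\mapsto 0$, exactly as the paper does; the only cosmetic differences are that the paper uses the series $B+I_j$ built from the annihilator series in the forward direction (you use $B+A^{c+1-k}$), and it inducts on depth over multilinear monomials directly rather than first deriving the iterated-multiplication identities and then substituting.
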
 

\proof Let a variety $\V$ be nilpotent, $A\in\V$, and $B$ a subalgebra of $A$.
If an algebra $A$ is nilpotent of depth $d$, and $\{0\}\subset  I_1\subset \dots \subset  I_d=A$ is an
annihilator series in $A$, then the following is a subideal series:
\[
B\subset B+I_1\subset B+I_2\subset\cdots\subset B+I_d=A.
\]

For the converse claim, consider a $\frV$-free algebra $F=F_\frV(y,x_1,x_2,x_3,\ld)$ and choose a subalgebra $G=F_\frV(x_1,x_2,x_3,\ld)$. Suppose, there is a subideal series
\[
G=G_0\subset G_1\subset\ld\subset G_k = F,
\]    
 Now let  $w$  be any monomial of depth $d$, in which every generator occurs at most once. If we manage to show that $w$ vanishes in $F$ for $d=k$, then  by Proposition \ref{nilp} (v), the nilpotence of $F$ and $\V$ will be established.
 
 We proceed by induction on the depth $d$ to prove that one of variables $x_i$ in a monomial $w$ of depth $d$ depending on $x_1,x_2,\dots $ can be replaced
 with $y$ so that the value  obtained belongs to
$G_{k-d}$. Since $G_{k-1}$ is an
ideal in $F$ if $d=1$ and $w=x_1x_2$ it follows that $yx_2\in G_{k-1}$. Then let
$w=uv$ and, say, $u$ has depth $d-1$, and so one can replace one variable in $u$
with $y$ and obtain an element of $G_{k-d+1}$. Since $v\in G$ and $G_{k-d}$ is an ideal of $G_{k-d+1}$,  after this replacement the value of $w=uv$ will belong to $G_{k-d}$. Thus, after a single replacement  $x_i\mapsto y$, any polynomial $w$ of depth $k$  in which every generator occurs at most once, becomes an element $v$ of $G$ .

On the one hand, for any substitution  $y\mapsto 0$, the respective homomorphism annihilates $v$, but on the other hand, if a homomorphism is identical on  $G$ then  $v$ should be mapped into itself since $v\in G$. Hence, $v=0$ in $F$. Since $w$ is an automorphic image of $v$, by Lemma \ref{lRelX} we have $w=0$ in $\V$, which is what we wanted.
\endproof

\begin{Remark}\label{2k1} A subalgebra of an algebra $A$ of nilpotent 
depth $d$ is a subideal of $A$ of depth at most $d$ as it is explained in the first
part of the proof of Theorem \ref{subidnilp}. The second part shows that if, for some constant $k$, it is true that in every finitely generated algebra of a variety $\frV$, any subalgebra is a subideal of depth at most $k$ then the variety is nilpotent. It is sufficient to consider in place of $F$, a $\frV$-free algebra freely generated by $x_1,\dots,x_{2^k},y$.

\end{Remark}

The \emph{local} version of Theorem \ref{subidnilp} is the following.

\begin{Thm}\label{cSubNilp}
Every subalgebra $B$ of any finite algebra $A$ in a locally finite variety $\frV$ is a subideal of $A$ if and only if  $\frV$ is locally nilpotent.
\end{Thm}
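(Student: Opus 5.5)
The plan is to prove both directions directly. The direction from local nilpotence follows from the first half of the proof of Theorem \ref{subidnilp}. For the converse, I would rerun the second half of that proof inside a single finite relatively free algebra of fixed rank, so that the length of the subideal series becomes a fixed constant.

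\emph{Local nilpotence implies the subideal property.} Let $\frV$ be locally nilpotent and $A\in\frV$ finite. Then $A$ is finitely generated, hence nilpotent, so it has an annihilator series $\{0\}=I_0\subset I_1\subset\cdots\subset I_d=A$ by Proposition \ref{nilp}. For a subalgebra $B$ I take the series $B\subset B+I_1\subset\cdots\subset B+I_d=A$, exactly as in Theorem \ref{subidnilp}. Each term is an ideal in the next, because $I_j/I_{j-1}$ annihilates $A/I_{j-1}$.

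\emph{The subideal property implies local nilpotence: setting up.} Fix $n$; it suffices to show that the free algebra $F_n(\frV)$ is nilpotent. Let $F=F_\frV(y,x_1,\ld,x_n)$. This algebra is finite by local finiteness. Its subalgebra $G$ generated by $x_1,\ld,x_n$ is $\cong F_n(\frV)$. By hypothesis there is a subideal series
\[
G=G_0\subset G_1\subset\cdots\subset G_k=F,
\]
and the point is that $k$ is now a fixed number. I do \emph{not} restrict to multilinear monomials in distinct variables, since there may be only $n<2^k$ of them.

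\emph{Key step.} Let $w$ be any nonassociative monomial of depth $d\le k$ whose letters lie in $\{x_1,\ld,x_n\}$, repetitions allowed. I would show by induction on $d$ that replacing one suitably chosen \emph{occurrence} of a letter in $w$ by $y$ yields an element of $G_{k-d}$.
\begin{enumerate}
\item[$\mathrm{(a)}$] For $d=1$ we have $w=x_ix_j$, and $yx_j\in G_{k-1}$ because $G_{k-1}$ is an ideal of $F$ and $x_j\in G$.
\item[$\mathrm{(b)}$] For $w=uv$ with $u$ of depth $d-1$ (or symmetrically $v$), the modified $u'$ lies in $G_{k-d+1}$ by induction. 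Since $v\in G\subset G_{k-d}$ and $G_{k-d}$ is an ideal of $G_{k-d+1}$, we get $u'v\in G_{k-d}$.
\end{enumerate}
At $d=k$ the resulting element $v'$ lies in $G$.

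\emph{Concluding nilpotence.} The element $v'$ is linear in the single slot occupied by $y$.
\begin{enumerate}
\item[$\mathrm{(a)}$] The endomorphism of $F$ fixing the $x_i$ and sending $y\mapsto 0$ kills $v'$.
\item[$\mathrm{(b)}$] The same endomorphism is the identity on $G$, and $v'\in G$. Hence $v'=0$ in $F$.
\item[$\mathrm{(c)}$] Applying the endomorphism $y\mapsto x_j$, where $x_j$ is the replaced letter, gives back $w=0$ in $F$, hence $w=0$ in $G\cong F_n(\frV)$.
\end{enumerate}
Thus all monomials of depth $k$ in the generators vanish in $F_n(\frV)$. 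Every value of a depth-$k$ monomial on arbitrary elements expands into such monomials of depth at least $k$, and by Lemma \ref{lLAS} and Proposition \ref{nilp}(v) this gives $F_n(\frV)\in\frN(k)$. Hence every $n$-generated algebra in $\frV$ is nilpotent.

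\emph{Main obstacle.} The delicate point is the substitution argument with repeated letters: one must replace a single occurrence, not all occurrences of a variable. The two endomorphisms ($y\mapsto 0$, and $y\mapsto x_j$) must then be checked to act as claimed on that one slot. Relatedly, I must confirm that the reduction from arbitrary depth-$k$ values to generator monomials of depth at least $k$ is valid, using the inclusion $A(i)\subset A(i-1)$ from Lemma \ref{lLAS}.
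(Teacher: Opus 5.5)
Your proof is correct. The `if' direction matches the paper's (Remark~\ref{2k1}), but for `only if' you take a genuinely different route.

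\textbf{The paper's route.} The paper argues structurally. It takes a non-nilpotent finite $A\in\frV$ of minimal order, so all proper quotients are nilpotent and $I(A)=\{0\}$ by Lemma~\ref{lNilpSimpleProp}(ii). It then picks a minimal ideal $M$ with $AM\ne\{0\}$. Inside $A\times A$ it builds $C=B+(M\times M)$, where the diagonal $B$ is a maximal subalgebra of $C$ that is not an ideal, hence not a subideal.

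\textbf{Your route.} You localize the free-algebra argument of Theorem~\ref{subidnilp}. You work in the finite algebra $F_{\frV}(y,x_1,\ld,x_n)$, using the subideal depth $k=k(n)$ of $G\cong F_n(\frV)$. Instead of multilinear monomials, you substitute $y$ into a single occurrence of a letter and then undo it with $y\mapsto x_j$. This is exactly what removes the circular need for $2^k$ distinct generators in Remark~\ref{2k1}, since no uniform $k$ is available in the local setting. The two points you flagged are fine. The endomorphisms $y\mapsto 0$ and $y\mapsto x_j$ act as claimed on a monomial containing $y$ once. Generator-monomials of depth $>k$ vanish because each is built by multiplications from a depth-$k$ generator-submonomial along its deepest branch.

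\textbf{What each buys.} Your argument is purely syntactic and needs no minimal-counterexample reduction or ideal theory. It also gives a quantitative bound: the nilpotent depth of $F_n(\frV)$ is at most the subideal depth of $F_{\frV}(x_1,\ld,x_n)$ in $F_{\frV}(y,x_1,\ld,x_n)$. The paper's argument instead produces an explicit small witness $C\subset A\times A$ in which the subideal property already fails at the first step, because of a maximal non-ideal subalgebra. That same construction is reused in the proof of Theorem~\ref{exNonnilpMalcev}.
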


\proof The part 'if' follows from Remark \ref{2k1}. For 'only if', it suffices to prove
that given a finite non-nilpotent algebra $A\in\frV$, there exists a finite algebra
$C\in\frV$ containing a subalgebra $B$
which is not a subideal of $C$.  One may assume that all proper factor-algebras of $A$ are nilpotent
and $I(A)\ez$ by Lemma \ref{lNilpSimpleProp} (ii).

Let $M$ be a minimal ideal of $A$.  So $AM\nez$ (or $MA\ne \{ 0\}$), that is, there is $a\in A$ and $m\in M$ such that $am\ne 0$.

 Let $P = A_1\times A_2$ be the direct product of two copies of $A$ containing  the direct
 product $N$ of two copies $M_1$ and $M_2$ of $M$. 
  Denote by $B=\{(a,a)\,|\,a\in A\}$ the diagonal of $P$ and set $C = B+N$. Then $B$ is not an ideal in $C$ because $B M_1=(AM,\{ 0\})\not\subset B$.

Since $B+M_1 = C$, any subalgebra $D$ of $C$ strictly containing $B$  must contain a nonzero $(m_1,0)\in M_1$. Then $D$ has to contain the whole $M_1$ since $M_1$ is a
minimal ideal of $A_1$ and the products of the elements from $M_1$ by
an element $a\in A_1$ coincides with the products by the diagonal element
$(a,a)$. In this case, $D=C$, and so there is no subalgebras strictly  between $B$ and $C$. Hence $B$ is not a subideal of $C$. Since $C$ is finite and $C\in \V$, we obtain
the required counterexample.

\endproof

\subsection{Other criteria of nilpotency.}\label{ssNNandNS Var} We start with the annihilators in the algebras of nilpotent varieties.

\begin{Thm}\label{eclNonNilp} A variety of algebras $\frV$ over a field is nilpotent 
if and only if the annihilator of every nonzero algebra $A\in\frV$ is non-zero.
\end{Thm}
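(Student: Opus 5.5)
The "only if" direction is immediate. If $\frV$ is nilpotent, every nonzero $A\in\frV$ has a finite annihilator series $\{0\}=A_0\subset A_1\subset\cdots\subset A_d=A$ by Proposition \ref{nilp} (iv). Since $A\ne\{0\}$, the first nonzero term $A_1$ lies in the annihilator $I(A)$, so $I(A)\nez$. (Alternatively, apply Lemma \ref{lNilpSimpleProp} (i) with $J=A$.)

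For the converse I will argue by contraposition. Assume $\frV$ is not nilpotent; I will construct a nonzero algebra in $\frV$ with zero annihilator. Take the $\frV$-free algebra $F=F(\frV,X)$ of countable rank and its upper annihilator series $I_0=\{0\}\subset I_1\subset I_2\subset\cdots$, extended transfinitely: $I_{\lambda}=\bigcup_{\mu<\lambda}I_\mu$ at limit ordinals, and $I_{\alpha+1}/I_\alpha=I(F/I_\alpha)$. This ascending chain of ideals stabilizes at some ordinal $\gamma$ with $I_{\gamma+1}=I_\gamma$. The algebra $A=F/I_\gamma$ lies in $\frV$ and has zero annihilator by construction. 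It remains to show $A\ne\{0\}$, that is, $I_\gamma\ne F$.

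This is the main obstacle: a transfinite annihilator series reaching $F$ need not give nilpotency directly. I would exploit the grading and freeness of $F$ as follows.

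First, the finite terms $I_n$ of the upper annihilator series are fully invariant. Their definition is invariant under all endomorphisms for surjective ones, and in fact each $I_n$ can be described by identities of depth $n$ evaluated with one free variable fixed. Suppose $x_1\in I_n$ for some finite $n$. Then every monomial of depth $n$ in which $x_1$ occurs as a leaf at the bottom vanishes in $F$. By Lemma \ref{lRelX} and renaming variables, every multilinear monomial of depth $n$ vanishes, so $\frV\subset\frN(n)$ is nilpotent by Proposition \ref{nilp} (v). Hence non-nilpotence gives $x_1\notin I_n$ for every finite $n$.

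Transfinite steps are awkward, so I would instead work in a better-chosen algebra. Consider the Cartesian product $P=\prod_{n}F_n/ V_n$, where the $n$-th factor is the free algebra of $\frV\cap\frN_n$ (or simply non-nilpotent algebras $A_n\in\frV$ of growing depth). In a suitable quotient, elements of "infinite depth" survive. The idea is to define $A$ as the quotient of $F$ by the union $I_\omega=\bigcup_n I_n$ and show that $A$ has zero annihilator. If $\bar a\in I(F/I_\omega)$, then $aF+Fa\subset I_\omega$. If $F$ is taken of finite rank $r$ and $F$ is finitely generated as an algebra, then the products of $a$ with the $r$ generators land in some single $I_n$. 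An induction on the length of monomials then suggests $a\in I_{n+1}$, though one must check that the products $a u$ for general $u$ also land in $I_n$, using that $I_n$ is an ideal and the product is bilinear but not associative. This is the delicate point, and I expect it may need the identity $(au)$ with $u$ a product $u'u''$ handled via fully invariant substitutions rather than associativity.

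Granting $I(F/I_\omega)=\{0\}$, $A=F/I_\omega\in\frV$ is nonzero because $x_1\notin I_\omega$, which gives the required contradiction. If the finite-generation step fails, the fallback is to pass to the stabilized transfinite $I_\gamma$ and prove $x_1\notin I_\gamma$ by showing that each $I_\alpha$ contains no element of degree-1 component nonzero, using the $\N$-grading (\ref{eGrading}) and the degree-1 retraction argument.
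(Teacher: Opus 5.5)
Your ``only if'' direction is correct. So is your observation that in the free algebra $F$ of countable rank one has $x_1\notin I_n$ for every finite $n$.

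\textbf{The gap.} The step you leave as ``granting $I(F/I_\omega)=\{0\}$'' is the whole difficulty, and the route you propose for it fails. Switching to a free algebra $F_r$ of finite rank breaks the other half of your argument. The proof that $x_1\notin I_n$ needed $n+1$ distinct free generators to turn the relation into the multilinear depth-$n$ identities defining $\frN(n)$. In fact, for the locally nilpotent but non-nilpotent varieties of Example~\ref{ex2nm1} and Example~\ref{exxtop}, every $F_r$ is nilpotent. So $I_\omega(F_r)=F_r$ and your quotient is zero.

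Independently of this, knowing $ax_i\in I_n$ for all generators $x_i$ says nothing about $a(x_ix_j)$ in a nonassociative algebra, so the proposed ``induction on length'' does not go through. The fallback via (\ref{eGrading}) is also unavailable. That grading lives on the absolutely free algebra, and verbal ideals over a finite field need not be homogeneous: in the free algebra of $\var\F$ one has $x_1=x_1^q$, so ``degree-one component'' is not defined.

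\textbf{The missing idea.} Stay in countable rank and use one fresh variable. Suppose $\bar a\in I(F/I_\omega)$ with $a=a(x_1,\ld,x_k)$. Then $ax_{k+1}\in I_n$ and $x_{k+1}a\in I_m$ for some finite $n,m$.
\begin{enumerate}
\item[$\mathrm{(a)}$] Membership $c\in I_n$ means $T_n\cdots T_1c=0$ for all left and right multiplication operators $T_i$.
\item[$\mathrm{(b)}$] Take the $T_i$ to be multiplications by further fresh variables. By Lemma~\ref{lRelX}, the resulting relations are identities of $\frV$.
\item[$\mathrm{(c)}$] Substituting $x_{k+1}\mapsto b$ and the fresh variables by arbitrary elements gives $aF\subset I_n$ and $Fa\subset I_m$.
\item[$\mathrm{(d)}$] Hence $a\in I_{\max(n,m)+1}\subset I_\omega$.
\end{enumerate}
Thus $I_{\omega+1}=I_\omega$, no transfinite steps are needed, and $F/I_\omega$ is a nonzero algebra in $\frV$ with zero annihilator.

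\textbf{Comparison with the paper.} With this repair your argument is a correct proof, and it is more elementary than the paper's. The paper uses the finite basis property of nilpotent varieties and Zorn's lemma to pass to a non-nilpotent subvariety $\frU$ all of whose proper subvarieties are nilpotent. It then shows $I(F(\frU))\ez$ via full invariance of the annihilator and Lemma~\ref{lNilpSimpleProp}~(ii).
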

\begin{proof} The part `only if' follows from Proposition \ref{nilp} (iv).

To prove part `if', we assume that a variety $\frV$ is not nilpotent.
Then we will use the fact
that any nilpotent variety of algebras has finite basis of identities \cite{VLN}. As a result, the ideal of identities of a nilpotent subvariety of a variety $\frV$ cannot
be a proper union of an infinite increasing chain of verbal ideals in the
$\frV$-free algebra $F(\frV)$ of infinite rank.   Using Zorn's lemma and the Galois type correspondence between  the subvarieties of $\frV$ and the verbal ideals of $F(\V)$, we
conclude that  there is a non-nilpotent subvariety $\frU$ in $\frV$ whose proper
subvarieties are all nilpotent.
          
  Now let $F=F(\frU)$ be a free algebra of countable rank in $\frU$. The annihilator $I=I(F)$ is fully invariant, hence an ideal of identities of a subvariety $\frW$ of $\frU$ (see \cite[Proposition 1]{BOGP}),
  and the suvariety $\frW$ is proper in $\frU$ if $I\ne \{ 0\}$,  so it is nilpotent. Also, the factor-algebra $F/I$ is free in $\frW$, hence nilpotent. But then by Lemma \ref{lNilpSimpleProp} also $F$ is nilpotent, a contradiction. Thus  $I=\{ 0\}$, and so $F$ is the required
  counter-example.   
\end{proof}

\begin{Thm}\label{exNonnilpMalcev} Let $\frV$ be a locally finite variety of
  algebras. Then the following properties are equivalent.
\begin{enumerate}
  \item[$\mathrm{(i)}$] The variety $\frV$ is locally nilpotent;
  \item[$\mathrm{(ii)}$] For any $\frV$-free algebra $F$ of finite rank, any set of elements $Y$ linearly independent modulo $F^2$ is a freely generating set of $F$.
\end{enumerate}
\end{Thm}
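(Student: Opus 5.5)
We prove (i)$\Rightarrow$(ii) by a Nakayama-type argument and (ii)$\Rightarrow$(i) by contraposition.

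\emph{(i)$\Rightarrow$(ii).} Let $F=F_n(\frV)$ be free of rank $n$ with free generators $X=\{x_1,\ld,x_n\}$. Since $\frV$ is locally finite, $F$ is finite. Since $\frV$ is locally nilpotent, $F$ is nilpotent, say $F^{c+1}\ez$. Since $F^2$ is spanned by products of generators of degree $\ge2$, the images of $x_1,\ld,x_n$ form a basis of $F/F^2$, so $\dim F/F^2=n$.

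Let $Y$ be linearly independent modulo $F^2$. Then $|Y|\le n$; complete $Y$ to a set $Y'$ of $n$ elements forming a basis of $F$ modulo $F^2$. Let $B$ be the subalgebra generated by $Y'$, so $B+F^2=F$. We show by induction on $k$ that $B+F^{k}=F$ for all $k$. Products of $k$ elements of $F=B+F^2$ lie in $B+F^{k+1}$, because every product containing a factor from $F^2$ has degree $\ge k+1$. Hence $F^k\subset B+F^{k+1}$, and so $B+F^{k}\subset B+F^{k+1}$. Taking $k=c+1$ gives $B=F$. (Alternatively, invoke Theorem \ref{cSubNilp} or a Frattini-type argument.)

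Now map $x_i\mapsto y_i'$; this extends to an endomorphism $\varphi$ of $F$, which is surjective because $Y'$ generates $F$. As $F$ is finite, $\varphi$ is an automorphism, so $Y'$ freely generates $F$. Then $Y$ is a subset of a free generating set, and hence freely generates the free subalgebra $F(\frV,Y)$, which is a retract of $F$. If (ii) is meant only for $|Y|=n$, this already suffices.

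\emph{(ii)$\Rightarrow$(i).} Suppose $\frV$ is not locally nilpotent. Then some $F=F_n(\frV)$ is finite and not nilpotent. The chain $F^k$ stabilizes at some $F^m=F^{m+1}\ne\{0\}$, and then $F^m=(F^m)^2$ since $F^{m+1}\subset$ the span of products involving $F^{m}$… Rather than deriving this, we use it directly: because $F^m\subset F^2$ is nonzero and $F^m\subset F^{k}$ for all $k$, we have $F^m\subseteq\bigcap_k F^k\ne\{0\}$, in contrast to the free-algebra filtration in (\ref{ePSF}).

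The plan is to take $0\ne u\in F^m$ and set $Y=\{x_1+u,x_2,\ld,x_n\}$. This set is linearly independent modulo $F^2$, so by (ii) it freely generates $F$. Hence $x_1\mapsto x_1+u$, with $x_i\mapsto x_i$ for $i\ge 2$, defines an automorphism. This alone gives no contradiction, so we replace $Y$ by a non-generating set. Choose $Y=\{x_1+u\}$ in the free algebra $F_1$; this requires knowing that $F_1$ itself is not nilpotent.

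\emph{The main obstacle} is to produce $Y$ independent modulo $F^2$ that fails to generate. We intend to use $Y=\{x_1-u,\,x_2,\ld,x_n\}$ with $u\in F^m$ chosen so that $x_1-u$ lies in the proper subalgebra generated by $x_2,\ld,x_n$ together with a suitable nonnilpotent piece, or, most simply, to choose $u$ such that the subalgebra $B$ generated by $Y$ satisfies $B\cap F^m\ne F^m$.

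A cleaner route is a counting argument on $F=F_n$, which is finite. If (ii) held, then every $Y$ of size $n$ that is a basis modulo $F^2$ would freely generate $F$. The number of such $Y$ is $|F^2|^n\cdot|\GL_n(\F)|$, and each of them determines a distinct automorphism. Every automorphism of $F$ induces an automorphism of $F/F^{m}$, which is nilpotent and therefore satisfies the (i)$\Rightarrow$(ii) part. Comparing $|\Aut F|$ with this count shows that the kernel of $\Aut F\to\Aut(F/F^m)$ must act transitively on the lifts $x_i+F^m$.

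Using $F^m=(F^m)^2$ and the annihilation argument of Lemma \ref{lNilpSimpleProp}, we then derive that $F^m$ lies in the subalgebra generated by any such lift set, and obtain a contradiction via a minimal non-nilpotent quotient as in the proof of Theorem \ref{cSubNilp}: a diagonal-type element $x_1+u$ generating a proper subalgebra. This last step is where we expect most of the work to lie.
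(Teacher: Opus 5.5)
\textbf{Verdict.} Your proof of (i)$\Rightarrow$(ii) is correct, but the direction (ii)$\Rightarrow$(i) is not proved: the step that would produce a contradiction is left open.

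\textbf{The direction (i)$\Rightarrow$(ii).} Your argument is self-contained, whereas the paper simply cites Malcev. The induction $F=B+F^k$, combined with the Hopfian property of the finite algebra $F$, does the job. One point needs a word: you only show that $x_1,\ldots,x_n$ span $F$ modulo $F^2$, not that they are independent modulo $F^2$. Independence follows by mapping $F$ onto the one-dimensional algebra with zero product, which lies in every nonzero locally nilpotent variety.

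\textbf{The direction (ii)$\Rightarrow$(i).} You say yourself that producing a set independent modulo $F^2$ which fails to generate $F$ is ``the main obstacle''. None of your preliminary steps gets you there:
\begin{itemize}
\item The claim $F^m=(F^m)^2$ for the stable power is false. Take the algebra $A$ with basis $a,b$ and the single nonzero product $ab=b$. Then $A^k=\langle b\rangle$ for all $k\ge 2$, but $(A^2)^2=0$, and the same happens in the free algebras of $\var A$.
\item A nonzero $\bigcap_k F^k$ contradicts nothing, since (\ref{ePSF}) concerns absolutely free algebras.
\item Passing to $F_1$ fails, because $F_1$ can be nilpotent in a variety that is not locally nilpotent. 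In any variety of Lie algebras, $x^2=0$, so $F_1$ has zero product.
\item Counting automorphisms only restates (ii) and yields no contradiction.
\end{itemize}

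\textbf{The missing idea.} You need a finite algebra $C\in\frV$ with a proper subalgebra $\Delta$ such that $\Delta+C^2=C$, and then you pull it back to $F$. This is how the paper does it.
\begin{itemize}
\item Take a non-nilpotent finite $A\in\frV$ of minimal order and a minimal ideal $M$ of $A$. Then $A/M$ is nilpotent, so $M\not\subset I(A)$ by Lemma \ref{lNilpSimpleProp}(ii).
\item In $A\times A$, let $\Delta$ be the diagonal, $N=M_1\oplus M_2$ the two copies of $M$, and $C=\Delta+N$.
\item Each $M_i$ is a minimal ideal of $C$, because products of $M_i$ with $C$ are just products of $M$ with $A$.
\item Since $M\not\subset I(A)$, either $\Delta M_i\ne 0$ or $M_i\Delta\ne 0$. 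So $C^2\cap M_i$ is a nonzero ideal of $C$ inside $M_i$, hence equals $M_i$.
\item Therefore $N\subset C^2$, so $\Delta+C^2=C$, while $\Delta\ne C$.
\item Let $\varphi\colon F\to C$ be an epimorphism from a free algebra of finite rank. Then $H=\varphi^{-1}(\Delta)$ is a proper subalgebra with $H+F^2=F$, because $\varphi(F^2)=C^2$ and $H\supseteq\Ker\varphi$.
\item A basis of $F$ modulo $F^2$ chosen inside $H$ does not generate $F$, which violates (ii).
\end{itemize}
Your closing allusion to a ``diagonal-type'' element and to the proof of Theorem \ref{cSubNilp} points toward this construction. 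What is missing is exactly the decisive fact $N\subset C^2$ and the transfer back to $F$.
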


\begin{proof} (i)$\to$ (ii) This is a well-known property of relatively free nilpotent
algebras over any field, proved by  A. I. Malcev (Theorems 1 and 2 in \cite{AIM}).

 (ii)$\to $(i)
Arguing by contradiction, we have
a non-nilpotent $F=F_n(\frV)$. Then we will construct a proper subalgebra $H$, such that $F = H + F^2$ and therefore a basis of $H$ modulo $F^2$ will be a non-generating set which generates $F$ modulo $F^2$ .

Since $\frV$ is not locally nilpotent, there is a non-nilpotent finite algebra in $\frV$ of minimal order. Let  $M$ be a minimal ideal of $A$. Then $B=A/M$ is nilpotent. Since $A$ is non-nilpotent, by Lemma \ref{lNilpSimpleProp} (ii), $M$ does not annihilate $A$.   Now we use the construction from the
proof of Theorem \ref{cSubNilp}. Namely, as in that proof, let
$P = A_1\times A_2$ be the direct product of two copies of $A$ containing  the direct
 product $N$ of two copies $M_1$ and $M_2$ of $M$, 
 $B=\{(a,a)\,|\,a\in A\}$ the diagonal of $P$,   $C = B+N$. We have $BM_1\ne \{ 0\}$, and $M_1$ is
 a minimal ideal in $C$. 

Since $BM_1\subset C^2\cap M_1$, the latter intersection is not zero. Therefore we
have $C^2\cap M_1=M_1$, because $M_1$ is a minimal ideal of $C$, whence $M_1\subset  C^2$.
 All the above can be said about $M_2$. It follows that $N\subset  C^2$. This latest containment and a strict containment $B\subset C$ can be used as follows.

Choose a $\frV$-free algebra $F$ such that there exists an epimorphism $\vp:F\to C$. Let us denote by $H$ the complete preimage of $B$ and by $K$ the complete preimage of $N$, both under $\vp$. Then $F=H+K$. Since $N \subset C^2 =\vp(F^2)$, it follows that $K\subset H+F^2$. As a result, $F = H + K \subset  H + K +F^2 = H +F^2$, as needed since $H\ne F$.
\end{proof}

\subsection{A residual property of finite relatively free nilpotent algebras.}\label{ssACFA}

In this subsection we show one more feature of finite nilpotent algebras.

Let $\cal S$ be a set of algebras. An algebra $B$ is {\it residually an algebra in
$\cal S$} if for every non-zero element $b\in B$ and some algebra $S\in \cal S$, there is an epimorphism $\vp: B\to S$ such that $\vp(b)\ne 0$. If $\cS=\{ A\}$ and $B$ is residually an algebra in $\cS$, then we say that \emph{$B$ is residually $A$}.

Suppose an $m$-dimensional algebra $A$ is nilpotent of class $c$ and has a central series, a flag $A=A_0\supset A_1\supset\ldots\supset A_m \ez$, which is a refinement of the power series $A\supset A^2\supset A^3\supset\cdots$. Let  $E = (e_1,\ld,e_m)$ be a basis in $A$, compatible with the flag, that is, $e_i\in A_{i-1}$, for all $i\ge 1$.

An analogue of the following lemma is known in Group Theory, but its proof there is more complex.

\begin{Lemma}\label{lPolFun}  Let  $p=p(x_1,\ld,x_k)$ be a nonassociative polynomial. Then the $j$th coordinate of the value $p(y_1,\ld,y_k)$ for $y_1,\ld,y_k \in A$ is a polynomial function $f_j$  of degree $\le c$ with zero constant term in the coordinates of  the
vectors $y_1,\ld,y_k$ in basis $E$. Here $j=1,\ld,m$.
\end{Lemma}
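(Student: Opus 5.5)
The plan is to reduce to monomials and then use the compatibility of the basis $E$ with the flag. Since $p$ is a linear combination of nonassociative monomials, and a linear combination of polynomial functions of degree $\le c$ with zero constant term is again such a function, it suffices to treat one monomial $u(x_1,\ld,x_k)$. Monomials of degree $>c$ take only zero values in $A$, because $A^{c+1}\ez$. So we may assume $\deg u\le c$.

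For the monomial case, write $y_i=\sum_{s=1}^m \lambda_{is}e_s$ and expand the value $u(y_1,\ld,y_k)$ by multilinearity of the product. We get
\[
u(y_1,\ld,y_k)=\sum \lambda_{i_1s_1}\cdots\lambda_{i_ds_d}\, u_{s_1\ld s_d},
\]
where $d=\deg u$, and $u_{s_1\ld s_d}\in A$ is the value of the monomial with each occurrence of a variable replaced by the corresponding basis vector $e_{s_r}$. These values are fixed elements of $A$, independent of the $y_i$. Taking the $j$th coordinate of each one gives a fixed scalar. Hence the $j$th coordinate of $u(y_1,\ld,y_k)$ is a homogeneous polynomial of degree exactly $d\le c$ in the coordinates $\lambda_{is}$. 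In particular it has zero constant term, since $d\ge1$.

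This argument does not actually need the flag. Bilinearity alone gives polynomiality with degree bounded by $\deg u$, and nilpotency of class $c$ bounds the relevant degrees by $c$. The compatibility of $E$ with the flag would matter for a finer statement, namely that $f_j$ depends only on the first coordinates, or is triangular: $f_j=\lambda_j+$ (terms in coordinates with smaller index) for $p=x$. That is what the group-theoretic analogue needs. If that finer form is needed later, I would prove it by induction along the flag, using $A_iA_j\subset A_{i+j}$-type inclusions from the power-series refinement.

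The only potential obstacle is bookkeeping: checking that the degree bound $c$, and not merely $\deg p$, is attained. This is handled by discarding monomials of degree $>c$ as above.
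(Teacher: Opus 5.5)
Your proof is correct, but it takes a different route from the paper's.

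\textbf{Your route.} You expand each monomial $u$ of $p$ by multilinearity. This shows directly that its contribution to every coordinate is a homogeneous polynomial of degree $\deg u$, possibly zero. You then discard the monomials of degree $>c$ using $A^{c+1}=\{0\}$. This works in any basis and never uses the flag.

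\textbf{The paper's route.} The paper assigns to each $e_j$ the weight $\mathrm{wt}\,e_j=s$ when $e_j\in A^s\setminus A^{s+1}$. Compatibility of $E$ with the power series forces $c_{jk}^{\ell}=0$ unless $\mathrm{wt}\,e_\ell\ge \mathrm{wt}\,e_j+\mathrm{wt}\,e_k$. By induction on the degree of the monomial, the paper proves the sharper bound $\deg f_j\le \mathrm{wt}\,e_j$. The bound $\deg f_j\le c$ then follows because all weights are at most $c$.

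\textbf{What each buys.} The refinement the flag actually gives is this weighted degree bound. It yields $\sum_j \deg f_j\le \sum_j \mathrm{wt}\,e_j$, which is in general smaller than $cm$ in the Warning-theorem count of Lemma~\ref{lPolFun1}. It is not the triangularity you speculate about. However, Lemma~\ref{lPolFun1} only uses the crude bound $cm$, so your shorter argument fully suffices for the paper's purposes.

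One small correction: ``degree exactly $d$'' should read ``homogeneous of degree $d$ or zero''. This is a harmless slip, since only the upper bound matters.
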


\begin{proof} Let us assign to each $e_i$ a weight $\wg e_j=s$ if $e_j\in A^s\backslash A^{s+1}$. Since for any $A^k$, a part of the basis $E$ is a basis of $A^k$, if the $\wg e_j=s$ and $\wg e_k=t$, then 
\[
e_je_k = \sum_\ell c_{jk}^\ell e_\ell\mbox{ where } c_{jk}^\ell=0\mbox{ if }\wg e_\ell< s+t.
\]

For the proof, we may assume that $p$ is a monomial in $x_1,\ld,x_k$. We will use induction over the degree of $p$ to prove a stronger claim, namely, that  $f_j$ has degree $\le s$, if the weight of $e_j$ equals $s$. If $\deg p = 1$, then the claim is obvious since in this case, the value of the function $f_j$ equals to the $j$th coordinate of  $y_1$.

Now let us assume that  $p = uv$, and that we have proved for the monomials  $u,v$ of lesser degrees that the values of $u$ can be written as $S_1=\sum_j \lambda_j e_j$, where $\lambda_j$ is a polynomial in the coordinates of  $y_1,\ld,y_k$, such that the degree of each polynomial $\lambda_j$ is at most the weight of $e_j$. Similarly, the values of $v$ are given by the sum  $S_2=\sum_\ell \mu_\ell e_\ell$.

When we multiply two summands of these sums,  $\lambda_j e_j$ and $\mu_\ell e_\ell$
with $\wg e_j =s$, $\wg e_\ell=t$, we obtain $\lambda_j \mu_l \sum_p c_{jl}^p e_p$, where the only vectors $e_p$ with nonzero coefficients are those, whose weights are at least $\wg e_j+\wg e_\ell=s+t$. At the same time, the degrees of their coefficients do not exceed   $\deg \lambda_j + \deg \mu_\ell\le s+t$. Similar restriction on the degrees of coefficients will hold when we consider the product  $S_1S_2$. The proof is complete.
\end{proof}

One more lemma we will need is the following.

\begin{Lemma}\label{lPolFun1} Let $A$ be an $m$-dimensional algebra of nilpotent class $c$ over a field $\F$, $|\F|=q$. Assume that a nonassociative polynomial $p=p(x_1,\ld,x_k)$ takes value  $b\in A$ after some evaluation in $A$. If $k\ge m(c+1)$, then there exists a generating set of elements $a_1,\ldots, a_k\in A$, such that $p(a_1,\ld,a_k) = b$.
\end{Lemma}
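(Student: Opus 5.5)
We plan to use Lemma \ref{lPolFun} together with a counting argument over the finite field $\F$. Fix one evaluation $p(c_1,\ld,c_k)=b$. Write each variable's coordinates in the basis $E$. By Lemma \ref{lPolFun}, the map $(y_1,\ld,y_k)\mapsto p(y_1,\ld,y_k)$ has coordinates $f_1,\ld,f_m$ that are polynomial functions of degree $\le c$ in $km$ coordinates. We do not need generation in the strong sense: for a nilpotent algebra, by Theorem \ref{exNonnilpMalcev} (i)$\to$(ii) applied to $A$ itself (or rather the standard fact behind it, that a subset generating $A$ modulo $A^2$ generates $A$), it suffices that $a_1,\ld,a_k$ span $A$ modulo $A^2$. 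So the goal is to find $a_i$ with $p(a_1,\ld,a_k)=b$ whose images in $A/A^2$ span.

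The key device will be a Chevalley--Warning type argument. Consider the system of equations $f_j(y_1,\ld,y_k)=b_j$, $j=1,\ld,m$, in $N=km$ unknowns over $\F$, each of degree $\le c$. The sum of the degrees is at most $mc<N$ because $k\ge m(c+1)>mc$ (indeed $N=km\ge m^2(c+1)$). By the Chevalley--Warning theorem, the number of solutions is divisible by the characteristic $p$ of $\F$. More usefully, I would apply Ax--Katz/Warning's second theorem, or simply Warning's bound: a nonempty solution set has at least $q^{N-mc}$ elements. The "bad" tuples, those whose images in $A/A^2$ fail to span the space $A/A^2$ of dimension $r\le m$, lie in a union of sets of the form $\{(y_i): \bar y_i\in H\ \forall i\}$ for hyperplanes $H$ of $A/A^2$. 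Each such set is a linear subspace of codimension $k$ in $\F^{N}$. To compare counts, I would rather restrict: fix all but suitable coordinates, or more cleanly, apply Warning's theorem to the system augmented by the constraint. Better: intersect with the affine subspace obtained by prescribing that $y_{k-r+1},\ld,y_k$ be arbitrary while requiring the first block to solve the equations.

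Concretely, the planned route is the following. Choose elements $e_1,\ld,e_r$ spanning $A$ modulo $A^2$. Consider tuples of the form $y_i=c_i+t_i e_{\sigma(i)}$-type perturbations... this gets messy, so I would instead count: the number of solutions is $\ge q^{N-mc}$ (Warning), while the number of bad tuples with $p=b$ is at most the number of all bad tuples, which is at most $\frac{q^r-1}{q-1}\,q^{N-k}$ (one term for each hyperplane $H$). Since $N-mc\ge N-k+m$ (because $k\ge mc+m$), we get $q^{N-mc}\ge q^{N-k}q^{m}>\frac{q^r-1}{q-1}q^{N-k}$. Hence some solution is good, i.e.\ spans modulo $A^2$ and thus generates $A$. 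The main obstacle is justifying the Warning lower bound $q^{N-\sum\deg}$ for nonempty solution sets of polynomial systems, which I would quote as Warning's second theorem. I would also have to check that the functions $f_j$ are genuinely polynomials (not just functions) of the stated degree; Lemma \ref{lPolFun} provides exactly this, with degrees reduced so each variable appears with exponent $<q$ if needed.
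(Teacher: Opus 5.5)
Your proposal is correct and follows the paper's argument. You apply Warning's second theorem to the degree-$\le c$ coordinate system from Lemma \ref{lPolFun}, which has a known solution, to get at least $q^{km-mc}$ tuples with value $b$; this exceeds the at most $q^{km-k+m}$ non-generating tuples (counted via hyperplanes) precisely when $k\ge m(c+1)$. The only difference is that you count only hyperplanes containing $A^2$ and then need the fact that a set spanning a nilpotent algebra modulo $A^2$ generates it, whereas the paper just observes that any non-generating tuple lies in some hyperplane of $A$, which avoids that extra step.
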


\begin{proof}

Assume $p(c_1,\ld,c_k) = b$, for some $c_1,\ld,c_k\in A$. We define the function $f(y_1,\ld,y_k) = p(c_1+y_1,\ld,c_k+y_k) - p(c_1,\ld,c_k)$ from the $k$-th direct power of $A$ to $A$, for which it is obvious that $f(0,\ld,0) = 0$. All solutions of the system of equations  
\[
f(y_1,\ld,y_k) = 0
\]
 in the coordinates in $E$ can be presented in the following way. Remember that $0$ on the right hand side has $m$ coordinates. According to Lemma \ref{lPolFun}, the solutions in $km$ coordinates of the vectors $y_1,\ld,y_k$ can be described as the solutions of a system of  $m$ algebraic equations $p_j = 0$, where each  $p_j$ is an ordinary (associative-commutative) polynomial of degree at most $c$. Since the set of $km$ zeros is the solution of the system, each $p_j$ has constant term 0.

The sum $d$ of degrees of all these polynomials do not exceed $cm$. By the Warning's Second Theorem \cite{W}, the number of solutions of this system is at least $q^{km - d}\ge q^{m(k-c)}$. It follows that $p$ takes value $b$ for at least $q^{m(k-c)}$ tuples of vectors $a_1,\ld, a_k$  of the form $c_1+y_1,\ld,c_k+y_k$, where  $f(y_1,\ld,y_k)=0$.

Thus, the proof will be complete if we check that $q^{m(k-c)}$ is greater than the number of ordered subsets of the elements $y_1,\ld,y_k$, which \emph{do not} generate $A$. Each such subset is contained in a subspace of dimension $(m-1)$ of the vector space $A$. The number of these subsets is equal to $q^{k(m-1)}$, while the number of subspaces is less than $q^m$. Hence the number of non-generating subsets is less than  $q^{km-k+m}$. Since $k\ge m(c+1)$, we finally have $m(k-c)\ge km-k+m$. The proof is complete.
\end{proof}

\begin{Thm}\label{cApprox}
If $A$ is a finite nilpotent algebra then all free algebras $F_n$ of ranks $n\ge m(c+1)$ in $\var A$ are residually-$A$ algebras. Here $m=\dim A$ and $c$ is the nilpotency class of $A$.
\end{Thm}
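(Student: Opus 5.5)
The plan is to reduce the statement to Lemma \ref{lPolFun1} via the universal property of $F_n=F_n(\var A)$ with free generators $x_1,\ld,x_n$, where $n\ge m(c+1)$.

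First, take a nonzero element $w\in F_n$ and write it as $w=p(x_1,\ld,x_n)$ for some nonassociative polynomial $p$. Since $F_n$ is free in $\var A$, if $p(a_1,\ld,a_n)=0$ held for all $a_1,\ld,a_n\in A$, then $p=0$ would be an identity of $A$. It would therefore hold in $\var A$, forcing $w=0$ in $F_n$. So there is some evaluation with $p(c_1,\ld,c_n)=b\ne 0$ in $A$. This is the point where Lemma \ref{lRelX} and the definition of $\var A$ are used.

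Second, apply Lemma \ref{lPolFun1} with $k=n\ge m(c+1)$. It yields a generating set $a_1,\ld,a_n$ of $A$ with $p(a_1,\ld,a_n)=b\ne 0$. By the universal property, the map $x_i\mapsto a_i$ extends to a homomorphism $\vp:F_n\to A$. Its image contains a generating set of $A$, so $\vp$ is an epimorphism, and $\vp(w)=b\ne0$. This is exactly the definition of $F_n$ being residually $A$.

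The only delicate point, already absorbed by Lemma \ref{lPolFun1} through the Chevalley--Warning counting, is that the witnessing evaluation must be surjective rather than arbitrary. There are two small bookkeeping checks to make. One is that $c$ here is the nilpotency class, so that Lemma \ref{lPolFun} applies with a flag refining the power series. The other is the trivial case $A=\{0\}$, or $m=0$, where $F_n=\{0\}$ and there is nothing to prove.
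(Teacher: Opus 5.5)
Your proposal is correct and is essentially the paper's proof. You take a nonvanishing evaluation of $p$ in $A$, which exists because $p=0$ is not an identity of $A$. You upgrade it via Lemma~\ref{lPolFun1} to a generating tuple with the same nonzero value, and extend $x_i\mapsto a_i$ to an epimorphism $F_n\to A$ that does not kill $p$. One minor point: your first step only needs the definition of $\var A$, not Lemma~\ref{lRelX}, which goes in the opposite direction; this does not affect the argument.
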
 
\proof
Assume that  $p(x_1,\ld, x_n)\ne 0$ in the algebra $F_n = F(x_1,\dots, x_n)$. Then
$p(x_1,\dots, x_n)=0$ is not an identity of the algebra $A$ since $A$ generates
 $\var A$. Hence there are elements $a'_1,\ld,a'_n\in A$ such that
 $b=p(a'_1,\ld, a'_n)\ne 0$. By Lemma \ref{lPolFun1}, there are elements $a_1,\dots,
 a_n$ generating $A$ such that $b=p(a_1,\ld, a_n)$. Thus, the mapping $x_1\mapsto a_1,\dots, x_n\mapsto a_n$ extends to an epimorphism $\vp: F_n\to A$ such that
 $\vp(p(x_1,\ld, x_n)) = p(a_1,\ld, a_n) =b \ne 0$. So $F_n$ is a residually-$A$ 
 algebra.
 
\endproof

\section{Finite algebras in locally finite varieties}\label{esGF}

\subsection{Birkhoff's construction.}\label{ssBCN}

The following is a finite version of the fundamental G. Birkhoff's Theorem \cite{Birk}.

Let $\frV$ be a variety generated by a finite algebra $A$ over a finite field $\F$. Given a finite set $X=\{ x_1,\ld,x_n\}$ we define the subset $\Phi$ of the direct power $A^X$ as the set of \emph{nonzero} functions from $X$ to $A$. Obviously, $|\Phi|={|A|^n-1}$. We consider the Cartesian product $B$ of $|\Phi|$ copies of $A$. It is convenient to view  $B$ as the set of functions on $\Phi$ with values in $A$. The ``componentwise'' product $fg$ of $f,g\in B$  is given by $(fg)(\vp)=f(\vp)g(\vp)$
where $\vp\in\Phi$. We will call $B$ the $n$th \emph{Birkhoff's algebra of $A$}, denoted by $\cB_n(A)$. We call the dimension $\beta_A(n)$ of $B$ over the ground field $\F$ the \emph{Birkhoff's function of $A$}. We have $\beta_A(n)=(\dim A)(|A|^n-1)$.

For each  $x\in X$, the function $\bar{x}\in B$ is defined by $\bar{x}(\al)=\al(x)$, for all $\alpha\in\Phi$. Then the map $\iota:F(\frV,X)\to B$, given by $\iota(x)=\bar{x}$ is a homomorphism of $F=F(\frV,X)\to B$. Actually, this is a monomorphism. Indeed, if $x_1,\ld,x_n\in X$ and $w(x_1,\ld,x_n)\in \Ker\iota$  then for any nonzero function $\vp:\{x_1,\ld,x_n\}\to A$, we would have 
\[
\iota(w(x_1,\ld,x_n))(\vp)=w(\vp(x_1),\ld,\vp(x_n))=0,
\]
meaning that $w(x_1,\ld,x_n)=0$ is identity in $A$, hence $w(x_1,\ld,x_n)$ is a zero element of $F$. We can restate the conclusion to the above argument, as follows.

\begin{Thm}\label{etBirkhoof_Estimate}\emph{[Birkhoff]}
Let $\frV=\var A$ be a finitely generated variety, where $A$ is a finite   algebra. Then the free  $\frV$-algebra $F_n=F(x_1,\ld,x_n,\V)$ of rank $n$ is isomorphic to a subalgebra in the Birkhoff's algebra $\cB_n(A)$ of $A$. So $\dim F_n\le\beta_A(n)$, or $|F_n|\le |A|^{|A|^n-1}$. The same estimate is true for any $n$-generated algebra $C\in\var A$:
\begin{equation}\label{eBirkhoff}
   \dim C\le(\dim A)(|A|^n-1)\mbox{ or }|C|\le |A|^{|A|^n-1}. 
 \end{equation}
\end{Thm}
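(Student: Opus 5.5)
The plan is to read the theorem off the construction carried out just before it, and then to pass from free algebras to arbitrary $n$-generated algebras through the universal property.

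First, I would establish the embedding $F_n\hookrightarrow \cB_n(A)$. The map $x\mapsto \bar x$ extends to a homomorphism $\iota:\cF(X)\to B=\cB_n(A)$ by the universal property of the absolutely free algebra. Since $B$ is a Cartesian power of $A$, it lies in $\V=\var A$, so $\iota$ factors through $F(\V,X)=\cF(X)/V(\cF(X))$.

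For injectivity, suppose $w(x_1,\ld,x_n)$ lies in the kernel. Then $w(\vp(x_1),\ld,\vp(x_n))=0$ for every nonzero $\vp\in\Phi$. For $\vp=0$ the value is automatically $0$, because nonassociative polynomials have no constant term. So $w=0$ is an identity of $A$, hence of $\V$, and $w$ is zero in $F_n$.

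The one point needing care is exactly this exclusion of the zero function from $\Phi$. It is harmless only because of the convention that polynomials have no degree-zero terms, and I would state that explicitly.

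The dimension count is then immediate. We have $\dim B=(\dim A)\cdot|\Phi|=(\dim A)(|A|^n-1)$, and so $|B|=|A|^{|A|^n-1}$. The subalgebra $\iota(F_n)$ has dimension and cardinality bounded by these quantities.

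Finally, let $C\in\var A$ be generated by $c_1,\ld,c_n$. The universal property of $F_n$ within $\V$ gives a homomorphism $F_n\to C$ with $x_i\mapsto c_i$. Its image is a subalgebra containing the generators, so the map is onto. Hence $\dim C\le\dim F_n$ and $|C|\le|F_n|$, which yields (\ref{eBirkhoff}).

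I do not expect any real obstacle here. The only subtle points are that $B\in\V$, which follows from closure of varieties under Cartesian products, and the handling of the zero evaluation noted above.
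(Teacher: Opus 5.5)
Your proposal is correct and follows the paper's own argument. You embed $F_n$ into the Cartesian power of $A$ indexed by the nonzero evaluations via $x\mapsto\bar{x}$, get injectivity because kernel elements are identities of $A$, and obtain the bound for an arbitrary $n$-generated $C$ by viewing $C$ as a homomorphic image of $F_n$. Your explicit remarks that the Birkhoff algebra lies in $\var A$ and that omitting the zero evaluation is harmless (nonassociative polynomials have no constant term) simply spell out what the paper leaves implicit.
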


\begin{proof}
  The first inequality in (\ref{eBirkhoff}) follows because $C$ is a factor-algebra of $F_n$, hence 
  \[
  \dim C\le\dim F_n\le \beta_A(n)= (\dim A)(|A|^n-1).
  \]
  If $\F=q$ then 
  \[
  |C|=q^{\dim C}\le q^{(\dim A)(|A|^n-1)}=(q^{\dim A})^{|A|^n-1}\le |A|^{|A|^n-1},
  \]
  as claimed.
\end{proof}

\begin{Cor} \label{cc} Let $\V$ be the variety generated by a finite   algebra $A$.
Then the orders of chief factors (of  composition factors) in any algebra $B\in \V$
do not exceed the maximum of the orders of the chief factors (resp., of  composition factors)
of the algebra $A$.

\end{Cor}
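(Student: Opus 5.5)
We reduce the claim about an arbitrary $B\in\V$ to chief (resp. composition) factors of subalgebras of finite direct powers of $A$, and then push the bound down through such powers. First we reduce to finitely generated $B$. Let $U/W$ be a chief factor of $B$ and pick $u\in U\setminus W$. The statement concerns only orders, and every finite chief factor of $B$ can be detected inside a finitely generated subalgebra. Concretely, $U/W$ is a minimal ideal of $B/W$. For a finite set of elements we take the subalgebra $B'$ generated by them together with the elements witnessing the relevant products, and then $(U\cap B')/(W\cap B')$ has a chief factor whose order is at least as large as any prescribed finite portion of $U/W$. Hence it suffices to treat finitely generated $B$. The same reduction works for composition factors.

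\textbf{Step 1: place $B$ inside a power of $A$.} By Theorem \ref{etBirkhoof_Estimate}, a finitely generated $B$ is a factor-algebra $F_n/K$ of a subalgebra $F_n$ of the finite direct power $\cB_n(A)=A^N$. So the claim for $B$ follows from the claim for subalgebras of $A^N$, because chief and composition factors of $F_n/K$ are among those of $F_n$. For composition factors this is Jordan--Hölder, since every composition series of $F_n/K$ lifts to part of one of $F_n$. For chief factors we use that chief factors of a quotient are chief factors of $F_n$, by the correspondence of ideals.

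\textbf{Step 2: subalgebras of $A^N$ (the main obstacle).} We induct on $N$. Let $D\subset A\times A^{N-1}$ and let $\pi$ be the projection to the last $N-1$ factors. Then $D\cap\Ker\pi=D\cap A$ is an ideal of $D$ contained in the first factor $A$, and $D/(D\cap A)$ is isomorphic to a subalgebra of $A^{N-1}$. Refining the chain $0\subset D\cap A\subset D$ to a chief (composition) series, the upper factors are handled by induction on $N$.

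The lower part needs more care. A chief factor of $D$ lying between $0$ and $D\cap A$ has the form $U/W$ with $U,W\subset A$. In general $U/W$ is not a chief factor of $A$, nor even of a subalgebra of $A$: the multiplications available come only from $D$, whose projection to $A$ is a subalgebra $A_1$. So we need a bound in terms of $A$ itself. For composition factors we note that $U/W$ is a simple $D$-bimodule, and $D$ acts through $A_1$. We then argue that each composition factor of a subalgebra $A_1\subset A$, and of $A_1$-modules inside $A$, is bounded by the composition factors of $A$. This is where the argument is weakest. If the paper intends ``composition factors'' in the linear sense, meaning simple sections under the multiplication algebra, we would compare $U/W$ with a composition series of $A$ taken as an $A$-bimodule restricted to $A_1$. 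Restricting the $A$-bimodule structure to $A_1$ can only refine the series, so every such factor is a subquotient of an $A$-composition factor and is therefore no larger.

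The chief-factor case is analogous. Here $U/W$ is a minimal $D$-invariant section. We intersect it with a chief series of $A$ and take the first index where it meets nontrivially. This produces an injection of a nonzero $D$-subsection of $U/W$ into a chief factor of $A$, and minimality of $U/W$ then forces $|U/W|\le$ the order of that chief factor of $A$. The delicate point, which I expect to be the real obstacle, is showing that the image of $U/W$ in a chief factor of $A$ is compatible with the $D$-action via $A_1\subset A$. This compatibility is what makes the image an invariant section rather than an arbitrary one.
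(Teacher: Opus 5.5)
You have the paper's overall route: embed a finitely generated $B$ as a quotient of a subalgebra of $A^N$ via Birkhoff, then pass to quotients by Jordan--H\"older. But the step the paper passes through, that subalgebras of direct powers satisfy the bound, is exactly the one you leave open in the lower part of Step~2. The obstacle you name there does not exist.

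Let $A=A_0\supset A_1\supset\cdots\supset A_k=\{0\}$ be a chief series of $A$, and let $U/W$ be a chief factor of $D$ with $W\subset U\subseteq D\cap(A\times\{0\})$. For $d\in D$ and $u$ in the first factor, $du=\pi_1(d)u$ and $ud=u\,\pi_1(d)$. Since each $A_i$ is an ideal of $A$, each $U\cap A_i$ is an ideal of $D$. So by minimality $(U\cap A_i)+W$ is either $W$ or $U$. Take the largest $i$ with $(U\cap A_i)+W=U$; then $U\cap A_{i+1}\subseteq W$. Hence $U/W\cong (U\cap A_i)/(U\cap A_i\cap W)$ is a quotient of $(U\cap A_i)/(U\cap A_{i+1})$, which embeds linearly in $A_i/A_{i+1}$. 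This gives $|U/W|\le|A_i/A_{i+1}|$.

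Only an order count is needed, so no ``compatibility of the image with the $D$-action'' ever arises. Note also that the map runs from a piece of a chief factor of $A$ onto $U/W$, not as an injection of $U/W$ into a chief factor. You can also skip the induction on $N$: intersect $D$ with the chief series of $A^N$ obtained by concatenating chief series of the factors, whose factors are copies of those of $A$. Refine to a chief series of $D$ and apply Jordan--H\"older.

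For composition factors the reasoning is off target. A composition factor $U/W$ is a simple algebra with $W$ an ideal of $U$, not a $D$-invariant section. So the ``simple $D$-bimodule'' and bimodule-restriction arguments do not apply. What you need is this: by Jordan--H\"older, the composition factors of $D$ below $D\cap A$ are those of the algebra $D\cap A$, which is a subalgebra of $A$. Intersect it with a composition series $A=A_0\supset A_1\supset\cdots$ of $A$. Each $D\cap A_{i+1}$ is an ideal of $D\cap A_i$, and $(D\cap A_i)/(D\cap A_{i+1})$ embeds as a subalgebra of the simple algebra $A_i/A_{i+1}$. Refining this series gives the bound.

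Finally, your opening reduction from arbitrary to finitely generated $B$ is only gestured at, and ``the same reduction works for composition factors'' is unsupported. The paper's proof of this corollary treats only finitely generated, hence finite, $B$. The infinite case appears later, in the paper's lemma on monoliths. There one picks elements of $U$ pairwise distinct and nonzero modulo $W$, and a finite subalgebra $C$ in which each of them and their differences generates the others modulo the ideal $W\cap C$. This forces a chief factor of $C$ that is too large. If you keep that reduction, state it that precisely.
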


\proof Using Jordan - H\" older Theorem for the algebras, we see first that the statement is true if $B$ is a direct power of $A$, second, it is true for subalgebras of  direct powers,
and so by Theorem \ref{etBirkhoof_Estimate}, for $\V$-free algebras of finite rank, and third, it is
inherited by their factor-algebras, which completes the proof.
 
\endproof

\begin{Remark} \label{subBirk} In many cases, the \emph{double exponential} estimate $|F_n|\le |A|^{|A|^n-1}$ can be improved. This was noticed as early as in 1937 in a paper by B. H. Neumann \cite{BHN}. 
 
To see this, 
let us denote the extension of $\lambda\in\Phi$  to $F_n$
by the same letter and consider for each $\lambda$ the subalgebra $A_\lambda=\lambda(F_n)\subset  A$. Then each function from the image $\iota(F_n)$ belong
to the space of dimension  
\begin{equation}\label{eBirkSub}
\sum_{\lambda\in\Phi}\dim A_\lambda\le(|A|^n-1)\dim A\le \beta_A(n).
\end{equation}

It follows that if an algebra $A$ has a proper nonzero subalgebra then the first inequality in (\ref{eBirkSub}) is strict and the dimension $d(n)$ of $F_n(\frV)$ is bounded from above by a function strictly smaller than Birkhoff's function. 

We treat this question in more detail in Theorem \ref{etFreeSpectrum1}. 

\end{Remark}

\subsection{Enveloping algebras and varieties.}\label{sssEAV}
 With each algebra $A$ over a field $\bf F$, one can associate three algebras of linear operators on $A$.

  The algebra $U_L(A)$ is a subalgebra in $\End_{\bf F} A$ generated by the operators $L(a)$ of left multiplication by the elements $a\in A$. Similarly, $U_R(A)$ is a subalgebra in $\End_{\bf F} A$ generated by the operators $R(a)$ of right multiplication by the elements $a\in A$. Finally, $U(A)$  is a subalgebra in $\End A$, generated by all the operators of both left and right multiplication $L(a), R(a)$. We will call $U_L(A)$, $U_R(A)$, and $U(A)$ the \emph{left, right, and (two-sided) associative envelopes} of $A$.

 Having in mind Remark \ref{cequald}, we state the following Proposition giving one more definition of nilpotency of \emph{nonassociative} algebras in terms of their \emph{associative} enveloping algebras.

\begin{Prop} An algebra $A$ is nilpotent of depth $d\ge 1$ if and
only if the associative algebra $U(A)$ is nilpotent of class $d-1$.
\end{Prop}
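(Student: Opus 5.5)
The plan is to express the lower annihilator series $A(i)$ of Lemma~\ref{lLAS} directly in terms of powers of the associative algebra $U(A)$. Let $M\subset\End_{\F}A$ be the linear span of all operators $L(a)$, $R(a)$, $a\in A$. Then $U(A)$ is generated by $M$, and $U(A)^i$ is the span of all products $T_1\cdots T_j$ with $j\ge i$ and every $T_k\in M$. I will prove the identity
\[
A(i)=M^iA=U(A)^iA\qquad (i\ge 0),
\]
with the convention that $M^0$ and $U(A)^0$ act as the identity, so that $A(0)=A$.

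\emph{Step 1: $A(i)=MA(i-1)$ for $i\ge1$.} For the inclusion $\subset$, take a monomial $w=uv$ of depth $i$. One of the factors, say $u$, has depth exactly $i-1$, so its value lies in $A(i-1)$. The value of $w$ is then $R(v)u\in MA(i-1)$, or symmetrically $L(u)v$ when $v$ is the factor of depth $i-1$. For the inclusion $\supset$, take $a\in A$ and $m$ the value of a monomial of depth $i-1$. Then $L(a)m=am$ and $R(a)m=ma$ are values of monomials of depth $i$, because $a$ is the value of a variable of depth $0\le i-1$.

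\emph{Step 2: pass from $M$ to $U(A)$.} Induction on $i$ using Step 1 gives $A(i)=M^iA$. A product of $j>i$ operators from $M$ maps $A$ into $A(j)$, and $A(j)\subset A(i)$ by Lemma~\ref{lLAS}. Hence $U(A)^iA=M^iA=A(i)$.

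\emph{Step 3: translate nilpotency.} $A$ has nilpotent depth $d$ precisely when $A(d)=\{0\}$ and $A(d-1)\ne\{0\}$; this is the definition via the lower annihilator series, justified by Lemma~\ref{lLAS}. By Step 2, $A(d)=\{0\}$ means $U(A)^dA=0$, and since $U(A)^d$ consists of operators on $A$, this is equivalent to $U(A)^d=\{0\}$. Likewise $A(d-1)\ne\{0\}$ is equivalent to $U(A)^{d-1}\ne\{0\}$. Together these say exactly that the associative algebra $U(A)$ is nilpotent of class $d-1$.

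The converse direction follows from the same equalities read backwards. If $U(A)$ is nilpotent of class $d-1$, then $A(d)=U(A)^dA=0$, so $A$ is nilpotent. Moreover $A(d-1)=U(A)^{d-1}A\ne 0$, because $U(A)^{d-1}$ is a nonzero algebra of operators on $A$. Hence the depth of $A$ is exactly $d$.

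\emph{Edge case $d=1$.} Here $A$ has zero multiplication. Then $M=0$, so $U(A)=\{0\}$, which has class $0$, while $A(0)=A\ne0$; the convention for $U^0$ makes this case consistent.

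The only point that needs real care is Step 1. The inclusion $A(i)\subset MA(i-1)$ requires that every monomial of depth $i$ has a factor of depth exactly $i-1$, which holds by the definition of depth. All the remaining steps are formal.
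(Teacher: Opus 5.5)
Your proof is correct and is essentially the paper's argument. Both rest on the fact that left and right multiplications move one step along the annihilator/depth filtration: the paper shows $U(A)^d=0$ using an annihilator series of length $d$, and for the converse reads depth-$d$ monomials as $d$ successive multiplication operators. Your identity $A(i)=U(A)^iA$ packages both inclusions at once and also gives exactness of $d$ and $d-1$, which the paper only obtains implicitly by combining its two ``at most'' bounds (its converse should also read monomials of depth $d$ and $d$ operators, not $d-1$).
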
 

\proof Let $A$ be nilpotent of depth $d$. By Proposition \ref{nilp} (iv), there is
an annihilator series $\{ 0\}=A_0\subset  A_1\subset \dots\subset  A_{d-1}\subset  A_d = A$. By the definition of this series, $L(a)(b)$ and $R(a)(b)$ belong to $A_{i-1}$ if $a\in A$ and $b\in A_i$, $i\ge 1$.
Therefore the same is true if one replaces the operators $L(a)$ and $R(a)$ with any elements
of $U(A)$. Thus, by induction, the product of any $d$ elements of $U(A)$ is zero  so that $U(A)$ is an associative algebra of nilpotent class at most $d-1$.

Assuming now that $U(A)$ is a nilpotent algebra of class $d-1$, we can see that the value of every monomial $w$ of degree $d$ and depth $d-1$ equals zero in $A$. Indeed, the computation of
a value of $w$ in $A$ can be interpreted as a consecutive application of $d-1$ operators
of left and right multiplication, whence $A\in {\frN}(d)$.
\endproof

Given a locally finite variety of algebras $\frV$, one can consider three varieties of associative algebras $\frM_L(\frV)$, $\frM_R(\frV)$, and $\frM(\frV)$,  generated by all left, respectively, right and two-sided associative envelopes of finite algebras $A\in\frV$.

\begin{Prop}\label{pCorr1} If $A$ is a finite algebra then the variety $\frM(\var A)$ is
generated by the finite associative algebra $U(A)$.
\end{Prop}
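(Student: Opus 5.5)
We need to show that $\frM(\var A)=\var U(A)$. By definition, $\frM(\var A)$ is the variety generated by the algebras $U(C)$ for all finite $C\in\var A$. Since $A\in\var A$, clearly $U(A)\in\frM(\var A)$, so $\var U(A)\subseteq\frM(\var A)$. For the reverse inclusion it suffices to show that $U(C)\in\var U(A)$ for every finite $C\in\var A$. The plan is to follow $C$ through the three closure operations in Birkhoff's construction (Theorem \ref{etBirkhoof_Estimate}): $C$ is a factor-algebra of a free algebra $F_n$, and $F_n$ embeds in a finite direct power $A^N$. So I will check separately how $U$ behaves under direct powers, subalgebras and factor-algebras.

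\emph{Direct powers.} Let $P=A^N=A_1\oplus\cdots\oplus A_N$. Each $L(a)$ and $R(a)$, $a\in P$, preserves every summand $A_i$. Hence $U(P)$ embeds into $\prod_i\End_\F A_i$, and its image lies in the set of tuples $(u_1,\ld,u_N)$ with $u_i\in U(A_i)$. This is because $L(a)$ acts on $A_i$ as $L(a_i)$, and this property is preserved under sums and products. Therefore $U(P)$ is isomorphic to a subalgebra of $U(A)^N$, so $U(P)\in\var U(A)$.

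\emph{Subalgebras.} Let $B\subset P$ be a subalgebra. Consider the subalgebra $U_B(P)\subset U(P)$ generated by $L(b),R(b)$ for $b\in B$. Every element of $U_B(P)$ leaves $B$ invariant, so restriction to $B$ gives an associative homomorphism $U_B(P)\to\End B$. Its image is exactly $U(B)$, since generators go to generators. Thus $U(B)$ is a factor of a subalgebra of $U(P)$, so $U(B)\in\var U(A)$.

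\emph{Factor-algebras.} Let $I$ be an ideal of $B$ and $C=B/I$. Every element of $U(B)$ leaves $I$ invariant, so it induces an operator on $B/I$. This yields a surjective homomorphism $U(B)\to U(C)$ with $L(b)\mapsto L(b+I)$ and $R(b)\mapsto R(b+I)$. Hence $U(C)\in\var U(A)$.

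Combining the three steps with $C\cong F_n/K$ and $F_n\subset\cB_n(A)=A^{|A|^n-1}$ gives $U(C)\in\var U(A)$ for all finite $C\in\var A$. This proves $\frM(\var A)\subseteq\var U(A)$, and hence equality. I expect the only delicate point to be the direct-power step. There one must see that $U(P)$ lands inside $\prod U(A_i)$ as a subalgebra, not merely inside $\prod\End A_i$, and it is checked on generators as above. Finiteness of $U(A)$ is immediate because $U(A)\subset\End_\F A$ and $A$ is finite.
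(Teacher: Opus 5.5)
Your proposal is correct and follows essentially the paper's route. Like the paper, you check that $U$ of a factor-algebra, of a subalgebra and of a direct product arises from $U$ of the original algebra by the corresponding operation, and then invoke Birkhoff's Theorem~\ref{etBirkhoof_Estimate}; the only differences are cosmetic (you use the concrete chain $C\cong F_n/K$, $F_n\subset A^{N}$ and embed $U(A^N)$ into $U(A)^N$, whereas the paper shows $U(C\times D)\cong U(C)\times U(D)$ for a product of two algebras).
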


\begin{proof} 

Let $I$ be an ideal in the algebra $A$. Obviously, $I$ is invariant under the action of $U(A)$. Thus, $\vp\in U$ naturally acts on $A/I$ by $\vp(a+I)=\vp(a)+I$. 
Hence we have a natural map $U(A)\to U(A/I)$, which is an epimorphism.

Now if $B$ is a subalgebra in $A$, then the elements $R(b)$, $L(b)$, $b\in B$, generate a subalgebra $U'$ of $U(A)$ which leaves $B$ invariant. The homomorphism of restriction, maps $U'$ onto $U(B)$. Thus $U(B)\in \var U(A)$.

If $B=C\times D$ then there are two epimorphisms of restriction from $U(B)$ onto $U(C)$ and $U(D)$. The intersection of their kernels is trivial. As a result, $U(B)$ is isomorphic to the direct product of $U(C)$ and $U(D)$.

By Birkhoff's Theorem \ref{etBirkhoof_Estimate}, any finite algebra $B\in\var A$  can be obtained from $A$ using Birkhoff's operators of taking subgroups, homomorphic images and direct products. So it follows from the above that for any finite $B\in \var A$, the algebra $U(B)$ can be obtained from $U(A)$ by similar operations. Hence $U(B)\in\var U(A)$. 

\end{proof}

\begin{Remark}\label{rFRW} It is possible to define $\frW$  with the help of any, not necessarily finite algebras in $\frV$. This follows from the fact, that if a polynomial in the  linear operators of, say, left multiplications, defined by the elements $a_1,\ld,a_s$, acts trivially on any finitely generated subalgebra $\la a_1,\ld,a_s,\ld,a_t\ra$ of a locally finite algebra $A$ in $\frV$, then it acts trivially on the whole of $A$.
\end{Remark}

\subsection{Minimal representation of subdirect products.}\label{esMinRep}

By Birkhoff's Theorem \ref{etBirkhoof_Estimate}, a free algebra $F$ in $\var A$ is a subalgebra of a Cartesian power of  the algebra $A$. Therefore $F$ is a subdirect product of algebras isomorphic to subalgebras of $A$. Subdirect products of a family of algebras is not defined up to isomorphism. So in this subsection we define \emph{minimal} representations and discuss their propertiess.

Recall that a {\it section} of an algebra $A$ is a factor-algebra of a subalgebra of $A$. All sections of an algebra $A$, except for $A/\{0\}$ are called {\it proper} sections of $A$.
Here we call a set $\cal S$ of finite non-zero algebras over a field $\bf F$  {\it sectionally closed} if for every $S\in \cal S$ and every non-zero section $T$ of $S$,  the set $\cal S$ contains an  isomorphic copy of the algebra $T$. If a finite algebra $A$ is isomorphic to a subdirect product, where the factors isomorphic
to some algebras from $\cal S$ then we will write $A\in \texttt{sd}(\cal S)$.

If $A$ is a subdirect product of $S_1,\dots, S_m$, then we may assume that $|S_1|\ge|S_2|\ge\cdots\ge|S_m|$.
Such a representation of $A$ as a subdirect product of algebras $S_i$ from $\cal S$ is called {\it minimal}
if the sequence $(|S_1|,|S_2|,\ldots,|S_m|)$ is minimal with respect to the lexicographical ordering
in comparison to other such representations of $A$ as an element of the class $\texttt{sd}(\cal S)$.

To formulate the next well-known Proposition, we recall that an algebra $S$ is called {\it monolitic} if the intersection $M(S)$ of all non-zero ideals of $S$ is non-zero. The unique minimal ideal $M(S)$ is called the {\it monolith} of $S$. Every algebra $S$ is a sudirect product of  the
monolithic factor-algebras of $S$. Therefore if some $S_i$ in a representation of $A\in \texttt{sd}(\cal S)$ 
  is not monolithic, i.e. $S_i$ is a subdirect product of its proper factor-algebras, then
  one can choose a lexicographically lesser representation for $A$. As the consequence we have
  
  \begin{Prop}\label{minsd} Let $\cal S$ be a sectionally closed set of finite algebras and suppose that
  a finite algebra $A$ belongs to the class \texttt{sd}(\cal S). Then  all the factors $S_i$ in the
  minimal representation of $A$ with respect to $\cal S$, are monolithic algebras.
  
  \end{Prop}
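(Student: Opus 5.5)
We argue by contradiction, following the remark that precedes the statement. Fix a minimal representation of $A$ as a subdirect product of $S_1,\dots,S_m\in\cal S$, ordered so that $|S_1|\ge\cdots\ge|S_m|$. Suppose some factor $S_i$ is not monolithic. We will produce another representation of $A$ in $\texttt{sd}(\cal S)$ whose order sequence is lexicographically smaller. This contradicts minimality.

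The first step is to decompose the non-monolithic factor. Since $S_i$ is finite and not monolithic, the intersection of its nonzero ideals is zero. Hence there are finitely many nonzero ideals $J_1,\dots,J_r$ of $S_i$, with $r\ge 2$, such that $J_1\cap\cdots\cap J_r=\{0\}$. We may discard any $J_k$ whose removal keeps the intersection zero, so we can assume $r\ge2$ and every $J_k\ne\{0\}$. Then $S_i$ embeds, via the natural maps, as a subdirect product of the quotients $S_i/J_1,\dots,S_i/J_r$. Each $S_i/J_k$ is a proper section of $S_i$, and it is nonzero: if $S_i/J_k$ were zero, then $J_k=S_i$, and dropping it would leave the intersection unchanged. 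Since $\cal S$ is sectionally closed, each $S_i/J_k$ is isomorphic to an algebra in $\cal S$, and $|S_i/J_k|<|S_i|$.

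The second step is to compose the two embeddings. Let $\pi_i$ be the $i$-th projection on $A$. Replace the coordinate $\pi_i$ by the $r$ maps $A\to S_i\to S_i/J_k$. Each of these maps is surjective. The combined map of $A$ into $\prod_{j\ne i}S_j\times\prod_k S_i/J_k$ is still injective: if $a$ lies in every kernel, then $\pi_i(a)\in\bigcap_k J_k=\{0\}$, and $\pi_j(a)=0$ for $j\ne i$. So $A$ is a subdirect product of the new family, which consists of members of $\cal S$.

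The third step is the lexicographic comparison. Reorder the new sizes decreasingly. The entries of size greater than $|S_i|$ are unchanged, and one entry of size $|S_i|$ is removed. All the inserted entries have size strictly less than $|S_i|$. The two sequences therefore agree up to the position where the old sequence has its last copy of $|S_i|$, and at that position the new entry is strictly smaller. This holds even though the new sequence is longer.

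The only point that needs care is making sure this comparison is correct given the length change and ties among equal sizes. The first differing position in the sorted sequences shows the new sequence is smaller. Minimality also requires that a minimal representation exists, but that is implicit in the statement: the set of representations is nonempty, and lexicographic order on such sequences is well-founded. That holds because entries are bounded by $|A|$, and the number of factors can be bounded by discarding redundant factors without increasing the sequence.
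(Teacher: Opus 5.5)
Your proof is correct and is the same argument as the paper's: write a non-monolithic factor $S_i$ as a subdirect product of its proper nonzero quotients, which lie in $\mathcal{S}$ up to isomorphism by sectional closedness, and substitute them to get a lexicographically smaller size sequence, contradicting minimality. Your version adds the details the paper leaves implicit: the irredundant family of ideals, injectivity of the new embedding, the lexicographic comparison with ties and the change of length, and existence of a minimal representation.
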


Combining Proposition \ref{minsd} with Theorem \ref{etBirkhoof_Estimate} we obtain

\begin{Cor}\label{etBirkhoof_General} Let $\frV$ be a  variety of algebras over a finite field $\F$. Assume that $\frV$ is generated by a finite sectionally closed set of finite algebras $\cal S$. Then any free algebra of finite rank in $\frV$ is a subdirect product of finitely many monolithic algebras in $\cal S$.
\end{Cor}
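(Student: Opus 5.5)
The corollary follows from Theorem \ref{etBirkhoof_Estimate} (extended to a finite set of generators) together with Proposition \ref{minsd}. Let $\cS=\{S^{(1)},\ld,S^{(r)}\}$ be the given finite, sectionally closed generating set. First I note that $\frV=\var\cS=\var A$ for the single finite algebra $A=S^{(1)}\times\cdots\times S^{(r)}$. Indeed, $A$ satisfies exactly the identities common to all $S^{(j)}$. Each $S^{(j)}$ is a retract, hence a subalgebra, of $A$, and $A\in\frV$.

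Hence Theorem \ref{etBirkhoof_Estimate} applies. The free algebra $F_n$ embeds into the Birkhoff algebra $\cB_n(A)$, which is a finite direct power $A^{\Phi}=\prod_{\vp\in\Phi}\prod_j S^{(j)}$. So $F_n$ is a subalgebra of a finite direct product of copies of algebras from $\cS$.

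Next I would convert this embedding into a subdirect representation with factors in $\cS$. For each coordinate $\pi$ of the product (indexed by $\vp\in\Phi$ and $j$), the image $\pi(F_n)$ is a subalgebra of $S^{(j)}$. This is a section of an algebra in $\cS$. If $\pi(F_n)\ne\{0\}$, sectional closedness gives an isomorphic copy of it in $\cS$. Coordinates with $\pi(F_n)=\{0\}$ are discarded; this does not affect injectivity, since they carry no information. The result is that $F_n$ is a subdirect product of finitely many algebras isomorphic to members of $\cS$, i.e.\ $F_n\in\texttt{sd}(\cS)$. The case $F_n=\{0\}$ (trivial variety) is vacuous: it is the empty subdirect product.

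Finally, since $F_n$ is finite and lies in $\texttt{sd}(\cS)$, it admits a minimal representation. The set of admissible sequences $(|S_1|,\ld,|S_m|)$ is nonempty, and lexicographic order on finite nonincreasing sequences drawn from the finitely many values $|S|$, $S\in\cS$, is well-founded. Proposition \ref{minsd} then says every factor in that minimal representation is monolithic. This yields the claim with finitely many monolithic factors from $\cS$.

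The main subtlety is the well-foundedness needed for a minimal representation to exist. Lexicographic order on sequences of unbounded length is not automatically well-founded. The fix is to bound the length $m$: after deleting duplicate coordinate projections (whose kernels coincide), there are at most as many distinct factors as distinct ideals of the finite algebra $F_n$. Alternatively, one can argue that the descending procedure in Proposition \ref{minsd} terminates, since replacing a non-monolithic factor by strictly smaller quotients can happen only finitely often, all orders being bounded by $\max|S|$ and the number of factors by the number of ideals of $F_n$.
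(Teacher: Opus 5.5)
Your proposal is correct and follows the paper's route: the paper obtains the corollary by combining Birkhoff's embedding (Theorem \ref{etBirkhoof_Estimate}) with Proposition \ref{minsd}. Your steps — passing to the single algebra $S^{(1)}\times\cdots\times S^{(r)}$, turning coordinate images into members of $\cS$ via sectional closedness, and showing a minimal representation exists — are exactly the details the paper leaves implicit, and the well-foundedness worry was already settled by your own observation, since lexicographic order on nonincreasing sequences with values in a finite set is lexicographic order on multiplicity vectors in $\mathbb{N}^k$, which is a well-order.
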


We will also refer to the following properies of minimal representations.

\begin{Lemma}\label{minrep} (\emph{Minimal representations}) Let $S$ be a sectionally
closed set of finite algebras and a finite algebra A has a minimal representation with respect 
to $\cal S$ 
\begin{equation} \label{rep}
A\subset P = \prod_{i=1}^m S_i, \mbox{ such that }|S_1|\ge|S_2|\ge\cdots\ge|S_m|,
\end{equation}
Then for all $i=1,\ld,m$, the following are true:
\begin{enumerate}
\item[$\mathrm{(i)}$] $S_i$ is a monolithic algebra,
\item[$\mathrm{(ii)}$] the projection of $A$ into $S_i$ equals $S_i$,
\item [$\mathrm{(iii)}$] the intersection $A\cap S_i$ is an ideal of $S_i$,
\item[$\mathrm{(iv)}$] the monoliths $M(S_i)$ are minimal ideals in $A$.
\end{enumerate}
\end{Lemma}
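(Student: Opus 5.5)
Each of the four claims will come from the same device: if it failed, we would produce a representation of $A$ in $\texttt{sd}(\cal S)$ that is lexicographically smaller than (\ref{rep}), contradicting minimality. Since $\cal S$ is sectionally closed, the algebras used in the new representation automatically have isomorphic copies in $\cal S$. Claim (i) is exactly Proposition \ref{minsd}. Claim (ii) holds because $A$ is a subdirect product of the $S_i$. So the work lies in (iii) and (iv).

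For (iii), fix $i$ and set $J=A\cap S_i$, where $S_i$ is identified with the corresponding ideal of $P$. Let $a\in A$ and $s\in J$. Since $S_i$ is an ideal of $P$, we have $as\in S_i$, and $as\in A$ because $A$ is a subalgebra. Also $as=\pi_i(a)s$, because the other coordinates of $s$ vanish. By (ii), $\pi_i(a)$ ranges over all of $S_i$, so $S_iJ\subset J$ and $JS_i\subset J$. Hence $J$ is an ideal of $S_i$. This step needs no minimality at all.

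For (iv), fix $i$ and let $M=M(S_i)$. The first goal is to show $M\subset A$. Assume not. Then $J=A\cap S_i$ is an ideal of $S_i$ not containing the monolith, so $J=\{0\}$, since every nonzero ideal of $S_i$ contains $M$. In that case the projection of $A$ onto $\prod_{j\ne i}S_j$ is injective, because its kernel is $A\cap S_i=\{0\}$. Deleting $S_i$ therefore gives a subdirect representation of $A$ whose size sequence is obtained by removing one entry. One checks that this sequence is lexicographically smaller. This is the point I expect to be the main obstacle: removing an entry lowers the sequence at the first position where it differs, except in the edge case where the removed factor is the last entry and the shorter sequence is a prefix of the old one. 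The ordering must be read so that a proper prefix counts as smaller; equivalently, one compares by total length first or pads with zeros. I would state this convention explicitly, since with zero-padding the deletion always yields a strictly smaller sequence. Either way this contradicts minimality, so $M\subset A$.

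Once $M\subset A$, the ideal property in $A$ follows quickly. We have $AM=\pi_i(A)M=S_iM\subset M$, and similarly $MA\subset M$, so $M$ is an ideal of $A$. For minimality, let $0\ne K\subset M$ be an ideal of $A$. The same identity $AK=S_iK$ (and $KA=KS_i$) shows that $K$ is an ideal of $S_i$. Since $M$ is the unique minimal ideal of $S_i$, this forces $K=M$. Hence $M(S_i)$ is a minimal ideal of $A$, completing (iv).
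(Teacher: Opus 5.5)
Your proof is correct and takes essentially the same route as the paper. Claim (i) comes from Proposition~\ref{minsd}; claim (iii) comes from replacing products with elements of $S_i$ by products with elements of $A$ via the surjective projection; and claim (iv) rests on the same dichotomy: either $A\cap S_i\ne\{0\}$, so it contains $M(S_i)$, or $S_i$ can be deleted, contradicting minimality. The differences are minor: you read (ii) directly off the definition of $\texttt{sd}(\mathcal{S})$ instead of deriving it from minimality by replacing $S_i$ with the projection of $A$, and you make explicit the prefix convention for the lexicographic order and the check that $M(S_i)$ is a minimal ideal of $A$, both of which the paper leaves implicit.
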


\proof The first property is given by Lemma \ref{minsd}.  The second one follows from
the minimality of the representation (\ref{rep}) since the factor $S_i$ can be replaced
with the projection of $A$ to $S_i$. 

Since $A$ projects onto the whole $S_i$, any product of an element $b\in A\cap S_i$
by an element from $S_i$ can be replaced with a product by an element from $A$,
and property (iii) follows.

For the same reason, if $A$ contains a non-zero element $b\in S_i$, it has to contain the whole 
monolith $M_i$, and we are done.  Otherwise $A\cap S_i=\{ 0\}$, and it follows that $A$ is isomorphic to the projection of $A$ to the product of all factors of $P$ except for $S_i$, which
contradicts  the minimality of the representation (\ref{rep}). So the proof of (iv) is
complete.

\endproof

\subsection{Variety generated by a finite simple algebra.}\label{sSKA} 

A non-zero finite algebra $A$ is called {\it critical} if it does not belong to
the variety generated by all proper sections of $A$. If $A$ is not critical then the variety
$\var A$ is generated by the proper sections of $A$. Therefore any locally  finite variety $\frV$
can be generated by a set of critical algebras from $\frV$. However a variety generated by a finite algebra over a field can contain infinitely many subvarieties \cite{OVL}. So we raise the following

\begin{problem}\label{pbmInfNumberSubvar} Can a variety generated by a finite algebra over a field contain \emph{uncountably} many subvarieties?
\end{problem}

A critical algebra is monolithic since otherwise there are two non-zero ideals with zero intersection, and the algebra is a subdirect product of two proper factor-algebras.

The following theorem clarifies the structure of finite algebras in a variety
generated by a finite simple algebra.  It follows from Corollary \ref{cc} that a \emph{finite simple algebra is critical}. (Moreover, a non-zero finite algebra $A$ is critical if the product of any two non-zero ideals of $A$ is non-zero \cite{BDDF}.)

\begin{Thm}\label{tSDI} Let $C$ be a finite algebra in the variety $\frV=\var A$, where $A$ is a finite simple algebra. Then the following are true
\begin{enumerate}
\item[$\mathrm{(i)}$] $C$  is the direct sum of two ideals:  $C=P\oplus Q$. The ideal $P$ is a direct power of the algebra $A$ (possibly zero), while $Q$ belongs to the variety  $\frU$ generated by all proper subalgebras of  $A$. 
\item[$\mathrm{(ii)}$]  If $C=F$ is a free algebra of rank $n$ in $\frV$ then $F=P\oplus Q$, where $P$ is a direct power of $A$ and $Q$ is a $\frU$-free algebra of rank $n$. 
\end{enumerate} 
\end{Thm}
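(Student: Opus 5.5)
We reduce (i) to free algebras of finite rank and then pass to quotients, so we prove (ii) first in a slightly weaker form and derive everything else from it.

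\emph{Structure of $F=F_n(\frV)$.} By Corollary \ref{etBirkhoof_General}, applied to the sectionally closed set $\cS$ of nonzero sections of $A$, the free algebra $F$ has a minimal representation $F\subset \prod_{i=1}^m S_i$ with monolithic $S_i\in\cS$. Lemma \ref{minrep} tells us that $F$ projects onto each $S_i$. We split the indices into two groups. The first group consists of those $i$ with $S_i\cong A$. Since $|S_i|=|A|$, such an $S_i$ is a section of $A$ of full order, hence equal to $A$ itself. The second group consists of those $i$ for which $S_i$ is a proper section of $A$. Each of these lies in $\var$ of a proper subalgebra of $A$, hence in $\frU$: a proper quotient of $A$ is zero because $A$ is simple, and a section of a proper subalgebra is in $\frU$.

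Let $\pi$ be the projection of $F$ onto the product $\prod_{S_i\cong A}S_i$. Its image is a subdirect product of copies of the simple algebra $A$. By Lemma \ref{lDirectSimple1}, this image $P$ is isomorphic to a direct power of $A$. Let $Q'$ be the image of $F$ in the product of the second group; then $Q'\in\frU$. Thus $F$ is a subdirect product of $P$ and $Q'$.

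\emph{Splitting off $P$.} Consider the kernel $K=\Ker(F\to Q')$, which is an ideal of $F$. Its image $\pi(K)$ is an ideal of $P$. We claim that $\pi(K)=P$. If this holds, then $F\cong P\times Q'$: indeed $K\cap\Ker\pi=0$, and $K$ together with $\Ker\pi$ generates $F$, since $\pi(K)=P$ and $\Ker\pi$ surjects onto $Q'$. Then $Q=\Ker\pi$ is an ideal isomorphic to $Q'\in\frU$, and $F=K\oplus Q$ with $K\cong P$.

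To prove the claim, suppose $\pi(K)\neq P$. Since $P$ is a direct power of $A$, Lemma \ref{lDirectSimple} (i) shows that some simple factor $A_j$ of $P$ is not contained in $\pi(K)$, and so $\pi(K)$ projects to zero in $A_j$. Hence the composite epimorphism $F\to P\to A_j\cong A$ kills $K$, and therefore factors through $Q'\in\frU$. This makes $A$ a homomorphic image of an algebra of $\frU$, so $A\in\frU$. But a finite simple algebra is critical, as follows from Corollary \ref{cc} and is noted right before the theorem, so $A\notin\frU$. This contradiction proves the claim. We expect the claim to be the main obstacle; the criticality of $A$ is exactly what is needed here. (If $A$ has zero multiplication, then $A$ is one-dimensional, its only proper subalgebra is $0$, and the statement is easy to verify directly.)

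\emph{Freeness of $Q$.} Since $Q\cong F/K$ lies in $\frU$ and is generated by the images of $x_1,\ld,x_n$, every map from these generators into an algebra $D\in\frU\subset\frV$ extends to a homomorphism $F\to D$. This homomorphism kills $K\cong P$, because $\Hom(A,D)=0$: any image of $A$ in $D$ is either $0$ or a copy of $A$, and $A\notin\frU$. So the homomorphism factors through $Q$, and $Q$ is $\frU$-free of rank $n$. This proves (ii).

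\emph{Deducing (i).} A finite $C\in\frV$ is finitely generated, so $C$ is a quotient of some $F_n=P\oplus Q$ by an ideal $J$. Write $\bar P$ and $\bar Q$ for the images of $P$ and $Q$ in $C$. Then $\bar P$ is a quotient of a direct power of $A$, hence again a direct power of $A$ by Lemma \ref{lDirectSimple} (ii), and $\bar Q\in\frU$. Both are ideals and $C=\bar P+\bar Q$. Moreover $\bar P\bar Q=0$, so $\bar P\cap\bar Q$ is an ideal of $\bar P$ annihilated by $\bar P$. By Lemma \ref{lDirectSimple} (i) this intersection is $0$, because $A$ has nonzero multiplication (the one-dimensional zero case again being trivial). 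Hence $C=\bar P\oplus\bar Q$, which proves (i).
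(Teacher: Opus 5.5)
Your proof is correct. Its overall architecture matches the paper's: embed $F_n$ subdirectly into a product of sections of $A$, split off the copies of $A$, identify the complement as $\frU$-free, then pass to quotients. The one genuinely different step is how you show that the copies of $A$ split off.

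The paper takes a \emph{minimal} representation and applies Lemma~\ref{minrep}\,(iv). The monolith of each simple factor $A_i\cong A$ is $A_i$ itself, so $A_i\subset F$. Hence the whole product $A_1\times\cdots\times A_s$ lies inside $F$, and $F=P\oplus(F\cap H)$ follows at once, without yet using $A\notin\frU$.

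You instead work with an arbitrary subdirect embedding $F\subset P\times Q'$ and argue, in effect, by Goursat's lemma. The subdirect product is the full direct product because $P$ and $Q'$ have no common nonzero quotient: any nonzero quotient $P/\pi(K)$ is a nonzero direct power of $A$, and it would be a quotient of $F/K\cong Q'\in\frU$, which is impossible since $A\notin\frU$. This dispenses with the lexicographic minimality; you only use that $F$ projects onto each factor, plus Lemma~\ref{lDirectSimple1} to recognize $\pi(F)$ as a direct power. The price is that you invoke the criticality of $A$ already in the splitting step. The paper needs $A\notin\frU$ only afterwards, to show $\frU(F)=P$.

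The remaining steps are the same facts in different language. For freeness, you verify the universal property directly, while the paper computes $Q\cong F/\frU(F)$. For (i), the paper shows that every ideal $I$ of $P\oplus Q$ equals $(I\cap P)\oplus(I\cap Q)$. You show $\bar P\cap\bar Q=0$ using $\bar P\bar Q=0$ and the fact that no nonzero ideal of a direct power of $A$ (with $A^2\ne\{0\}$) is annihilated by that power. Both arguments rest on Lemma~\ref{lDirectSimple}\,(i).
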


\proof One may assume that $A^2\ne \{ 0\}$ since otherwise the statement of the theorem
is obviously true. We first prove Claim (ii).

By Birkhoff's argument, $F\in \texttt{sd}(A)$, and one can choose a minimal representation of $F$
according to Proposition \ref{minsd}. Hence $F$  is the subdirect product of $P=A_1\times\cdots\times A_s$, where  $A_i\cong A$, for all $i=1,\ld,s$, and several proper sections  of $A$, that is,  $F \subset  K = P \oplus H$,  where $H \in \frU$. Since
the monolith of a simple algebra is the whole algebra, by Lemma \ref{minrep} (iv), we have $P\subset F$. It follows that $F= P\oplus Q$, where $Q= F\cap H\in \frU $.

The orders of the composition sections of finite algebras in $\frU$ are strictly less
than $|A|$ by Corollary \ref{cc}. Therefore $\frU$  is a proper subvariety of $\frV$
and the algebra $F/\frU(F)$ is a $\frU$-free algebra of rank $n$.
On the one hand, since $Q\in \frU$, we have $\frU(F)\subset  P$  and so $\frU(Q)=\{ 0\}$.
At the same time, since $A\notin\frU$, we have that $\frU(A_i)$ is a non-zero ideal of $A_i$.
This ideal has to be equal to the whole of simple algebra $A_i$, for every $i$, whence
$\frU(F)\ge \frU(P) = P$. Thus, $\frU(F) =P$ and  $Q\cong F/P = F/\frU(F)$ is a
$\frU$-free algebra of rank $n$, as required. 

Now to prove part (i), we should consider the factor-algebras $F/I = (P\oplus Q)/I$.
Then by Lemma \ref{lDirectSimple} (ii), it suffices to prove that any ideal $I$ is the direct sum of an ideal of $P$ and an ideal of $Q$. 

If for some $j$, we have $A_jI\ne \{ 0\}$ or $IA_j\ne \{ 0\}$,
then the intersection $I\cap A_j$ is a non-zero ideal of $A_j$, that must be equal $A_j$
and so $A_j\subset I$. Otherwise $A_jI=IA_j= \{ 0\}$, and so the projection of $I$ to
$A_j$ has to be zero, since $A_j$ is a simple algebra with non-zero multiplication.
It follows that the projection of $I$ to $P$ is equal to the intersection $I\cap P$.
In this case we automatically obtain that the projection of $I$ to $Q$ equals $I\cap Q$,
and so $I= (I\cap P)\oplus (I\cap Q)$, as desired.

\endproof

\begin{Remark}\label{noapprox} If an algebra $A$ has a proper non-zero subalgebra, then the ideal $Q$ in Theorem \ref{tSDI} (ii) is not zero, but its image is zero under any homomorphism $F\to A$ since $A$ is simple and $A\notin\frU$.
Hence ($\var A$)-free algebras of arbitrary finite rank are not residually-$A$, in contrast to the statement
of Theorem \ref{cApprox}.
\end{Remark}

\begin{Remark}\label{rAnnihilators}
As a side remark, the statement of Theorem \ref{tSDI}  does not hold in the case of  algebras of infinite dimension from $\var A$. Generally speaking, any finite-dimensional ideal $I$ in a relatively free algebra $F = F(x_1,\ld)$ of infinite rank (over any field) belongs to the annihilator of this algebra, in particular, its product is zero.  
  
  Indeed, one may assume, that $I \subset F(x_1,\ld,x_n)$. Since the right annihilator of $I$ in the linear span $\la x_{n+1}, x_{n+2},\ld\ra$ has finite codimension, there exists a nontrivial linear combination of the elements $x_{n+1},\ld$, annihilating $I$. This element can be included as $y_{n+1}$ in some free generating set of the form $\{x_1,\ld,x_n, y_{n+1},\ld\}$. Hence for any $v(x_1,\ld,x_n)\in I$  we obtain a relation $v(x_1,\ld,x_n) y_{n+1} = 0$.  By Lemma \ref{lRelX}, this relation must be an identical relation in $F$, hence $IF=\{ 0\}$. Similarly, $FI = \{ 0\}$.
  \end{Remark}  
  
  We call a non-zero algebra {\it minimal} if it has no proper nonzero subalgebras. Clearly, every minimal algebra is simple, and we have the following
  
  \begin{Cor} \label{minalg} A finite algebra $A$ over a field $\bf F$ is minimal if and only
  if every finite algebra $B$ of $\var A$ is isomorphic to a direct power of $A$.
  \end{Cor}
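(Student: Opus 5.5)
The plan is to treat the two directions separately, with Theorem~\ref{tSDI} carrying the forward direction.

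\emph{Minimal implies direct powers.} Assume $A$ is minimal. Then $A$ is simple: any nonzero proper ideal would be a proper nonzero subalgebra. The one exception is the case $A^2=\{0\}$; here minimality forces $\dim A=1$, and $A$ is still simple in the sense used in Lemma~\ref{lDirectSimple}.

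Take a finite algebra $B\in\var A$. By Theorem~\ref{tSDI}(i), $B=P\oplus Q$, where $P$ is a direct power of $A$ and $Q\in\frU$, the variety generated by all proper subalgebras of $A$. Since $A$ is minimal, its only proper subalgebra is $\{0\}$, so $\frU$ is the trivial variety and $Q=\{0\}$. Hence $B=P$ is a direct power of $A$.

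The degenerate case $A^2=\{0\}$, $\dim A=1$ should be handled directly. Here $\var A$ is the variety of algebras with zero multiplication (the class $\frN_1$), and every finite algebra in it is a direct power of $A$, since it is just a vector space with zero product. We will also allow the zero power, so that the zero algebra is covered.

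\emph{Direct powers imply minimal.} Assume every finite algebra of $\var A$ is isomorphic to a direct power of $A$. Let $B$ be a nonzero subalgebra of $A$. Then $B$ is a finite algebra in $\var A$, so $B\cong A^k$ for some $k\ge 1$. Comparing orders gives $|A|^k=|B|\le |A|$, so $k=1$ and $B=A$. Thus $A$ has no proper nonzero subalgebra and is minimal. We also need $A\neq 0$; if zero powers are allowed, the statement has to be read with $A$ nonzero.

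The only point needing care is the first direction: checking that Theorem~\ref{tSDI} applies when $A^2=\{0\}$, and that the variety generated by the empty family of nonzero proper subalgebras is trivial. This is routine, because the proof of Theorem~\ref{tSDI} already assumes $A^2\neq\{0\}$ and disposes of the other case separately.
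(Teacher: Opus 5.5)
Your proposal is correct and matches the paper's proof. Minimality makes the variety $\frU$ of Theorem~\ref{tSDI}(i) trivial, so every finite algebra in $\var A$ is a direct power of $A$. Conversely, comparing orders shows that a proper nonzero subalgebra of $A$ cannot be a direct power of $A$. Your separate treatment of the case $A^2=\{0\}$ and of the zero power only spells out edge cases the paper leaves implicit.
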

  
  \proof A proper subalgebra of a finite algebra $A$ is not isomorphic to a
  direct power of $A$, which proves the part 'if'. Now if we assume that $A$ is
  minimal, then the variety $\frU$ generated by the proper subalgebras
  of $A$ is trivial. So the part `only if' follows from Theorem \ref{tSDI} (i).
  \endproof
  
 \begin{Remark}\label{infnot}
 If $A$ is a minimal finite algebra with nonzero product then the free
algebra $F$ of infinite rank in $\var A$ is not the direct sum of copies of $A $. Indeed, by Remark \ref{rAnnihilators}  $F$ has no nonzero finite ideals since by Corollary \ref{minalg} the annihilators of non-zero finite subalgebras of $F$ are trivial. 
\end{Remark}

  A variety $\frV$ of algebras over a field $\F$ is called {\it minimal} if it is nontrivial and contains no proper  nontrivial subvarieties. 
  
  \begin{Cor}\label{minvar} A locally finite variety $\frV$ is minimal if and only if
  it is generated by one finite minimal algebra.
  
  \end{Cor}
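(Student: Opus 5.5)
We prove the two directions separately. The argument rests on the facts already established: every locally finite variety is generated by its critical algebras, and finite minimal algebras generate varieties whose finite members are exactly direct powers (Corollary \ref{minalg}).

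\emph{If.} Suppose $\frV=\var A$ with $A$ a finite minimal algebra, and let $\frW\subset\frV$ be a nontrivial subvariety. Since $\frV$ is locally finite, so is $\frW$. Choose a nonzero $\frW$-free algebra $F_1(\frW)$ of rank one; it is finite and lies in $\frV$. By Corollary \ref{minalg}, $F_1(\frW)\cong A^k$ for some $k\ge 1$. Hence $A$, being a factor of $A^k$ (a projection), belongs to $\frW$. This gives $\frV=\var A\subset\frW$, so $\frW=\frV$. Nontriviality of $\frV$ is clear because $A\ne\{0\}$, so $\frV$ is minimal.

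\emph{Only if.} Let $\frV$ be minimal and locally finite. Since $\frV$ is nontrivial, it contains a nonzero finite algebra, for instance a nonzero free algebra of finite rank. Among all nonzero finite algebras in $\frV$, pick one, $A$, of least order. Any nonzero proper subalgebra of $A$ would be a smaller nonzero finite algebra in $\frV$, so none exists and $A$ is minimal. Now $\var A$ is a nontrivial subvariety of $\frV$, so minimality of $\frV$ forces $\var A=\frV$. Thus $\frV$ is generated by the finite minimal algebra $A$.

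The only step needing care is in the ``if'' direction: we must produce a finite nonzero algebra in the subvariety $\frW$. Free algebras of finite rank in $\frW$ do this, since they are finite by local finiteness and nonzero because $\frW$ is nontrivial. Note also the case where $A$ is one-dimensional with zero multiplication. It is still minimal in the sense used here, and Corollary \ref{minalg} applies to it as well, so the argument needs no separate treatment of that case.
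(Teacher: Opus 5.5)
Your proof is correct and follows essentially the same route as the paper's. For the ``if'' direction you exhibit a nonzero finite algebra in the subvariety (you make it explicit as $F_1(\frW)$), identify it as a direct power of $A$ via Corollary~\ref{minalg}, and conclude $A\in\frW$; for the ``only if'' direction you take a nonzero algebra of least order, observe that it is minimal, and use minimality of $\frV$ to get $\var A=\frV$.
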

  
  \proof Let $\frV$ be generated by a finite minimal algebra $A$. Since $\frV$
  is locally finite, a non-zero subvariety $\frU$ of $\frV$ contains a non-zero finite
  algebra $B$, which by Corollary \ref{minalg} is a direct power of $A$, and therefore
  $\frU\supset \var B = \var A =\frV$. So $\frV$  has no  proper nontrivial subvarieties.
  
  Conversely, let $A$ be a non-zero algebra of minimal dimension in a locally
  finite variety $\frV$. Then $A$ is a minimal algebra and $\var A=\frV$ if $\frV$
  is minimal. This completes the proof of the corollary.
  
  \endproof
  
  The argument of the previous paragraph shows that every non-zero locally
  finite\footnote{Using Zorn's lemma, one can drop the condition `locally finite'} variety contains a minimal variety. However, we have
  
  \begin{Prop}\label{finset} 
The set of non-isomorphic minimal algebras in a locally finite variety $\frV$ is finite.  
 \end{Prop}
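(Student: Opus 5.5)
The plan is to show that every minimal algebra in $\frV$ is a homomorphic image of one fixed finite algebra, namely the $\frV$-free algebra $F_1=F_1(\frV)$ of rank one. That is enough: a finite algebra has only finitely many ideals, hence only finitely many factor-algebras up to isomorphism.

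First, a minimal algebra $A$ is generated by any of its nonzero elements. Indeed, if $0\ne a\in A$, the subalgebra generated by $a$ is a nonzero subalgebra of $A$, and by minimality it equals $A$. So $A$ is one-generated.

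Second, by the universal property of $F_1$, the map $x_1\mapsto a$ extends to a homomorphism $F_1\to A$. Its image is the subalgebra generated by $a$, which is $A$, so this homomorphism is surjective and $A\cong F_1/K$ for some ideal $K$ of $F_1$.

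Third, since $\frV$ is locally finite, $F_1$ is finite. Over a finite field it has finitely many subspaces, hence finitely many ideals $K$. Therefore the minimal algebras of $\frV$ fall into finitely many isomorphism classes. In fact this gives an explicit bound: their number is at most the number of ideals of $F_1$.

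The only step needing any care is the first: we must ensure the generated subalgebra is nonzero, and it is, since it contains $a\ne0$. Otherwise there is no real obstacle. One might worry that the ground field varies, but all algebras in $\frV$ are over the fixed finite field $\F$. Simplicity of minimal algebras is not even needed for this argument.
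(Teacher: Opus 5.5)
Your proof is correct and follows the paper's argument: a minimal algebra is generated by any nonzero element, so it is a homomorphic image of the finite one-generated free algebra $F_1(\frV)$, and $F_1(\frV)$ has only finitely many ideals. Your version only makes explicit two steps the paper leaves implicit, namely that the homomorphism $F_1\to A$ is surjective and that the ideals of $F_1$ are finite in number.
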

\begin{proof}
  A minimal algebra is generated by any non-zero element. Hence it is a homomorphic image 
  of the one-generated $\frV$-free algebra $F_1$. This observation completes the proof
   since the one-generated algebra $F_1$ must be finite in a locally finite variety. 
\end{proof}

 \begin{Example}\label{eMinVar} The first examples of minimal algebras have been given in
 Subsection \ref{ssFermat}. They are one-dimensional.
 
 For a two-dimensional example, let $\F=\ZZ_2$ and  $A$ be an algebra over $\F$ with basis $(e,f)$ and the multiplication table $ee=f$, $ef=0$, $fe = e$, $ff=e$.  Since $(e+f)(e+f)= f$, no square of a nonzero element $a$ is a multiple of $a$. Thus, $A$ has no proper subalgebras. 
 
 One can find infinite series of finite minimal algebras in \cite{A}.
 
 \end{Example} 
 
\begin{problem}\label{pbm_one} Does there exist a minimal infinite-dimensional algebra? 
\end{problem}

\begin{problem}\label{pbm_two} Can a locally finite variety contain infinitely many pairwise non-isomorphic finite simple algebras?
  \end{problem}
\begin{problem}\label{pbm_three} Does an infinite locally finite, simple algebra have
   infinitely many pairwise non-isomorphic  finite simple sections ?
\end{problem}
   
   An answer in the affirmative to the group-theoretical version of  Problem \ref{pbm_three} was
   given in [OK]. In the case of Lie algebras see \cite{BHS}.
   
    \subsection{Computation of some free products.}\label{sFPFGV} 
    
  The {\it free product} of two algebras $B$ and $C$ in a variety $\frV$ is an algebra $D\in \frV$ such
  that there exist homomorphisms $\beta:B\to D$ and $\gamma: C\to D$ satisfying the following conditions:
  \begin{enumerate}
    \item[$\mathrm{(i)}$] $D$ is generated by the subalgebras $\beta(B)$ and $\gamma(C)$,
    \item[$\mathrm{(ii)}$]  For any homomorphisms $\beta':B\to E$ and $\gamma':C\to E$, where $E\in \frV$, there exists a homomorphism $\delta: D\to E$ such that $\beta' =\delta\beta$ and $\gamma' =\delta\gamma$.
  \end{enumerate}

   The free product $D=B*C$ exists and unique up to isomorphism (\cite{ASM}, Section 12). It is finite if
   $B$ and $C$ are finite algebras in a locally finite variety $\frV$. Therefore one can view $B*C$
   as an algebra of maximal order in $\frV$ generated by two isomorphic copies of  $B$ and $C$. In this section, we consider an example of computation of the free product in a locally
   finite variety $\frV$. We may identify $B$ and $C$ with their isomorphic images in $D$.
   
   \begin{Lemma} \label{frprpowers} Let $\frV$ be a variety generated by a simple algebra $A$, $B$ and $C$  two finite direct powers of the algebra $A$. Then the free product $D=B*C$ in $\frV$ is also a
   direct power of $A$.
   \end{Lemma}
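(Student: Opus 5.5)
We will use Theorem \ref{tSDI}(i) together with Lemma \ref{Diagonal}. We may assume $A^2\nez$: if $A^2\ez$ then $A$ is one-dimensional with zero product, $\frV$ consists of algebras with zero multiplication, and $B*C=B\oplus C$ is again a direct power of $A$. The free product $D=B*C$ is finite, since $\frV$ is locally finite and $D$ is generated by two finite subalgebras. By Theorem \ref{tSDI}(i), $D=P\oplus Q$, where $P$ is a direct power of $A$ and $Q\in\frU$, the variety generated by all proper subalgebras of $A$. It therefore suffices to prove $Q=\{0\}$.

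Let $\pi:D\to Q$ be the projection along $P$; it is a homomorphism. The image $\pi(B)$ is a factor-algebra of the direct power $B$ of $A$, so by Lemma \ref{lDirectSimple}(ii) it is itself a direct power of $A$. On the other hand $\pi(B)\subset Q\in\frU$. Since $A\notin\frU$ (the composition factors of algebras in $\frU$ have order less than $|A|$ by Corollary \ref{cc}), a nonzero direct power of $A$ cannot lie in $\frU$, because it has $A$ as a homomorphic image. Hence $\pi(B)=\{0\}$, and likewise $\pi(C)=\{0\}$. As $D$ is generated by $B$ and $C$ (condition (i) in the definition of the free product) and $\pi$ is a homomorphism, $\pi(D)=\{0\}$, so $Q=\{0\}$ and $D=P$ is a direct power of $A$.

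The step needing most care is to check that Theorem \ref{tSDI}(i) applies to $D$ with the decomposition into ideals, so that the projection onto $Q$ is a genuine homomorphism. This is immediate because $P$ and $Q$ are complementary ideals with $PQ=QP=\{0\}$. The argument needs no explicit description of the exponent of $D$. If one wants more, Lemma \ref{Diagonal} describes the embeddings of $B$ and $C$ as sums of diagonals, which could be used to compute that exponent, but it is not needed for the statement.
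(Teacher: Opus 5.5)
Your proposal is correct and is essentially the paper's proof. Both split $D=P\oplus Q$ via Theorem~\ref{tSDI}(i), show that the projections of $B$ and $C$ into $Q\in\frU$ vanish because $A\notin\frU$, and conclude $Q=\{0\}$ since $B$ and $C$ generate $D$; the only differences are cosmetic (your separate treatment of $A^2=\{0\}$, and projecting all of $B$ at once via Lemma~\ref{lDirectSimple}(ii) rather than one direct factor at a time).
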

   
   \proof By Theorem \ref{tSDI} (i), $D$ is a direct sum of two ideals $P$ and $Q$, where $P$ is a
   direct power of $A$ and $Q$ belongs to a proper subvariety $\frU$ of $\var A$. Consider
   the projection of $B$ to $Q$. If the image of a direct factor $A'\cong A$ of $B$ is not zero,
   then this image is isomorphic to $A$, because $A$ is a simple algebra, but $A\notin \frU$, a contradiction.
   So $B$ and also $C$ are subalgebras of $P$, and $Q=\{ 0\}$ since $D$ is generated by $B$ and $C$.
   \endproof
   
   In the example below, we assume that the finite simple algebra $A$ has no nontrivial automorphisms.
   Obviously, this condition is  limiting. However, we will show in Section \ref{esGPNA} that almost every $n$-dimensional non-associative algebra over a finite field is simple and has trivial
   automorphism group.

\begin{Prop}\label{pFreeProduct}
Let $A$ be a finite simple   algebra without nontrivial automorphisms and $A^2\nez$. Assume
that an algebra $B$ (an algebra $C$) is isomorphic to the product of $k$  (resp., of $\ell$) copies of $A$.
Then the free  product $D=B*C$ in $\frV = \var A$ is isomorphic to the direct product of  $(k+1)(\ell +1) -1$ copies of $A$.
\end{Prop}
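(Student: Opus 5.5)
By Lemma \ref{frprpowers}, the free product $D=B*C$ in $\frV=\var A$ is a direct power of $A$. Write $D=S_1\oplus\cdots\oplus S_N$ with $S_j\cong A$. The task is to compute $N$. The plan has two parts: an upper bound via Lemma \ref{Diagonal}, and a matching lower bound by exhibiting a suitable target algebra $E$.

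For the upper bound, write $B=B_1\oplus\cdots\oplus B_k$ and $C=C_1\oplus\cdots\oplus C_\ell$ with $B_i,C_j\cong A$. Since $B$ embeds in $D$, Lemma \ref{Diagonal} gives pairwise disjoint subsets $J_1,\ld,J_k\subset\{1,\ld,N\}$ such that each $B_i$ is diagonally embedded in $\oplus_{s\in J_i}S_s$. Because $A$ has no nontrivial automorphisms, each such diagonal embedding is unique: the composite of projection and embedding $B_i\to S_s$ is the unique isomorphism $A\to A$. Similarly $C$ gives disjoint sets $J'_1,\ld,J'_\ell$. Now label each index $s\in\{1,\ld,N\}$ by a pair $(i,j)\in\{0,\ld,k\}\times\{0,\ld,\ell\}$. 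Here $i$ is the index with $s\in J_i$, or $0$ if there is none, and $j$ is defined likewise. Two indices with the same label are indistinguishable. The projections $\pi_s,\pi_{s'}$ agree on the generators $B\cup C$ (again using that $A$ has no nontrivial automorphisms), hence on all of $D$. Then $D$ would lie in a proper diagonal of $S_s\oplus S_{s'}$, which contradicts the direct-sum decomposition, since the projection of $D$ onto $S_s\oplus S_{s'}$ is all of it. Finally, label $(0,0)$ is impossible: $\pi_s$ would kill both $B$ and $C$, hence all of $D$. Therefore $N\le (k+1)(\ell+1)-1$.

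For the lower bound, let $E$ be the direct power of $A$ indexed by the pairs $(i,j)\ne(0,0)$. Define $\beta':B\to E$ whose $(i,j)$ coordinate is the projection $B\to B_i\cong A$ when $i\ge1$, and $0$ when $i=0$. Define $\gamma':C\to E$ similarly using $j$. By the universal property there is $\delta:D\to E$ with $\beta'=\delta\beta$ and $\gamma'=\delta\gamma$. The main step is to show that $\delta$ is surjective, i.e.\ that $\beta'(B)$ and $\gamma'(C)$ generate $E$. The image $\delta(D)$ is a subalgebra of $E$ projecting onto each factor, because each coordinate has a nonzero image of $B_i$ or $C_j$. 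By Lemma \ref{lDirectSimple1} it is a direct power of $A$, and by Lemma \ref{Diagonal} it is a sum of diagonals over a partition of the index set. If two distinct coordinates $(i,j)\ne(i',j')$ lay in the same diagonal block, then, say with $i\ne i'$, we could take an element of $B_i$. Its value is the identity-image at coordinate $(i,j)$ and $0$ at coordinate $(i',j')$, which is impossible inside a diagonal. So all blocks are singletons, $\delta(D)=E$, and $N\ge(k+1)(\ell+1)-1$. Combining the two bounds gives the claim.

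I expect the main obstacle to be the rigidity used in the upper bound: that two indices with equal label have identical projections. This needs the uniqueness of isomorphisms $A\to A$, applied carefully to the diagonals coming from Lemma \ref{Diagonal}. It also needs the observation that two projections agreeing on generators agree on $D$. For $s\in J_i$ only, $\pi_s$ is zero on $C$, and this must be checked. The lemma says the images of different $C_j$ land on disjoint index sets, and the $S_s$ with $s$ outside $\cup_j J'_j$ receive zero projection of $C$, because $\pi_s(C)$ is an ideal-like image that is either $0$ or a diagonal copy. This follows from the proof of Lemma \ref{Diagonal}.
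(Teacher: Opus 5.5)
Your proof is correct. The upper bound is the paper's argument in different words, and the lower bound takes a genuinely different route.

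\textbf{Upper bound.} The paper intersects the two partitions into blocks $J_{ii'}=J_i\cap J'_{i'}$. Since the diagonal is unique when $\Aut A$ is trivial, $B$ and $C$ project into the diagonal $\Delta_{ii'}$ of each block. So the subalgebra $\bigoplus\Delta_{ii'}$ contains $B$ and $C$, hence equals $D$; this forces nonempty blocks to be singletons and $J_{00}=\varnothing$. Your injectivity-of-labels argument uses the same rigidity.

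\textbf{Lower bound.} The paper uses the same target algebra $E=\bigoplus_{(i,j)\ne(0,0)}A_{ij}$, with $B_i$ embedded along row $i$ and $C_j$ along column $j$. It proves generation by direct multiplication. For $i,j\ge 1$, the product of the row-$i$ diagonal and the column-$j$ diagonal is concentrated in coordinate $(i,j)$ and equals $A_{ij}^2=A_{ij}$. These $k\ell$ summands, together with the $k$ row and $\ell$ column diagonals, already give $(k+1)(\ell+1)-1$ independent copies of $A$. You argue structurally instead:
\begin{itemize}
\item the image is subdirect, hence a direct power by Lemma \ref{lDirectSimple1};
\item it is then a sum of diagonals over a partition of the coordinates by Lemma \ref{Diagonal};
\item elements of $\beta'(B_i)$ or $\gamma'(C_j)$ separate any two coordinates, so every block is a singleton.
\end{itemize}

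\textbf{Comparison.} The paper's version is explicit: it shows exactly which products produce each summand and needs only $A^2=A$. Yours avoids computation and reuses the same separation principle as your upper bound, at the cost of invoking two structural lemmas. Neither lower-bound argument uses triviality of $\Aut A$; that hypothesis is needed only for the upper bound.

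\textbf{Minor point.} Both your write-up and the paper leave implicit that $\beta$ and $\gamma$ are injective, which Lemma \ref{Diagonal} requires. This follows from the universal property applied with $\beta'=\id_B$ and $\gamma'=0$.
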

\begin{proof} We can write $B$ as $A(1)\oplus\dots\oplus A(k)$ with $A(i)\cong A$ for $i=1,\dots,k$ and
by Lemma \ref{frprpowers}, we have similar decomposition $D= A_1\oplus\dots\oplus A_{t}$. By Lemma \ref{Diagonal}, there are disjoint subsets $J_1,\dots,J_k$ of the set $J=\{1,\dots,t\}$ such that
$A(i)$ is the diagonal $\Delta_i$ of the sum $\sum_{j\in J_i} A_j$. It follows that the set $J$ can be partitioned into $k+1$ subsets $J_0, J_1,\ld,J_k$, where $J_0$ complements the union of all other subsets $J_i$ in $J$. Hence any element from $B$  has zero projection onto $A_m$, if $m\in J_0$.

Similar partition $J=J'_0\cup\ld\cup J'_{\ell}$ is defined by  the subalgebra $C$ of $D$. Let $J_{ii'}=J_i\cap J'_{i'}$. In this case, the projection of any element from $B$ or $C$ to the  sum $\sum_{s\in J_{ii'}} A_j$ belongs to the diagonal $\Delta_{ii'}$ of this sum. Here  we use a simple fact that the direct power of an algebra without nontrivial automorphisms has a unique diagonal.

 Hence the sum of the diagonals $\Delta_{ii'}$ for all disjoint subsets $J_{ii'}$ contains both $B$ and $C$, hence equals to the whole $D$. Moreover, the subset $J_{00}$ is empty since $B$ and $C$ generate
 $D$. Some of other subsets  $J_{ii'}$ could be empty too, but in any case the number of the subalgebras  $\Delta_{ii'}\cong A$ is at most the number of the subsets $J_{ii'}$ with $(i,i')\ne (0,0)$, and so $t\le (k+1)(l+1)-1$.

To obtain the converse inequality, it is sufficient to construct an algebra in $\var $A generated by the images of the algebras $B$ and $C$ and having dimension $(k\ell+k+\ell)\dim A$. Our preceding  argument suggests the following approach. Let us enumerate the copies $A_{ii'}$  of the algebra $A$ by two indexes, $i=0,\ld,k$, $i'=0,\ld,\ell$, $(i,i')\ne (0,0)$, and consider their direct sum $D$.
  
 We isomorphically map each the summand $A(i)$ of $B$, $(i=1,\ld,k)$ onto the diagonal of the sum $A_{i0}\oplus\dots\oplus A_{il}$. It is obvious, that all these maps extend to a monomorphism $B\to D$.  By mapping the summands of $C$ into the diagonals of the sum  $A_{0j}\oplus\cdots\oplus A_{kj}$, we obtain an embedding $C\to D$.
 
 The product of such two ``orthogonal'' diagonals for $i\ne 0$,  $j\ne 0$ is  $A_{ij}^2 = A_{ij}$ since $A_{ij}$ is a simple
 algebra with non-zero multiplication. Hence we have $A_{ij}\subset  D$ for $i,j\ne 0$. The diagonal of the sum $A_{i0}+\cdots+A_{il}$ is also in $B\subset  D$ for $i=1,\ld,k$. Similarly we get $\ell$ diagonals $A_{0j}+\cdots+A_{kj}$ in $D$. Consequently, $D$ contains at least $kl +k +l$ direct summands isomorphic
 to $A$, and so $t\ge (k+1)(l+1)-1$, as desired.
 
\end{proof}

The second part of the proof provides us with the construction of the free product $B*C$.
 By induction, we have
 \begin{Cor}\label{c1} If $B_1,\ld, B_s$ are direct sums of $k_1,\ld,k_s$, resp.,  copies of the
 algebra $A$ from Proposition \ref{pFreeProduct}, then the free product $B_1*\dots*B_s$ is a direct sum of  $t=(k_1+1)\dots(k_s+1) - 1$ copies of $A$.
 
 \end{Cor}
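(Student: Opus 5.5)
Induction on $s$, using Proposition \ref{pFreeProduct} as the inductive step. For $s=1$ the claim is trivial: $t=(k_1+1)-1=k_1$. For $s=2$ it is exactly Proposition \ref{pFreeProduct}.

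For the inductive step, assume the claim for $s-1$ factors. Then $D'=B_1*\dots*B_{s-1}$ is a direct sum of $t'=(k_1+1)\cdots(k_{s-1}+1)-1$ copies of $A$. The first task is to check that $B_1*\dots*B_s\cong D'*B_s$ in $\frV=\var A$, i.e.\ that the free product is associative. I would do this directly from the universal property in the definition. Condition (i) holds because $D'*B_s$ is generated by the image of $D'$ together with the image of $B_s$, and $D'$ is generated by the images of $B_1,\ld,B_{s-1}$. For condition (ii), take homomorphisms $B_i\to E$ with $E\in\frV$. They induce first a homomorphism $D'\to E$, and then, together with $B_s\to E$, a homomorphism $D'*B_s\to E$ compatible with all the given maps. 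Uniqueness up to isomorphism (\cite{ASM}) then identifies the two algebras. Here the $s$-fold free product is understood via the same universal property with $s$ algebras; equivalently, one may take the iterated binary product as its definition.

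Now apply Proposition \ref{pFreeProduct} with $k=t'$ and $\ell=k_s$. The hypotheses still hold: $A$ is simple, has no nontrivial automorphisms, and $A^2\nez$; the proposition needs nothing further about the factors. It gives
\[
D'*B_s\cong A^{(t'+1)(k_s+1)-1},\qquad (t'+1)(k_s+1)-1=(k_1+1)\cdots(k_s+1)-1,
\]
which is the required count. Degenerate factors with some $k_i=0$ (the zero algebra) are harmless: their free product with anything is that other algebra, and the factor $k_i+1=1$ is consistent with this.

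The main obstacle is the associativity step. The paper only defines the binary free product, so the multi-factor free product has to be interpreted carefully, and the identification via the universal property is the one genuinely new point. The numerical computation afterwards is routine.
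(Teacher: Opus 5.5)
Your proposal is correct and is essentially the paper's argument: the paper states only that the corollary follows by induction from Proposition~\ref{pFreeProduct}, and your inductive step applies that proposition with $k=t'$ and $\ell=k_s$. Your check that $B_1*\dots*B_s\cong(B_1*\dots*B_{s-1})*B_s$ via the universal property supplies a detail the paper leaves implicit.
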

 
 In other words, we have obtained the precise upper bound for the dimensions of any algebra of the variety, which is generated by subalgebras $B_1,\ld, B_s$.
 
 \medskip
\begin{Example}\label{eFReeProd}
 For example, two copies of  the algebra $A$ from Proposition \ref{pFreeProduct} in any algebra $B\in \var A$ cannot generate a subalgebra greater than  $A\times A\times A$. 
 
 Another example is a two-dimensional algebra from Example \ref{eMinVar}. One only need to check that algebra has no notrivial automorphisms.

  Also, let $B$ be an algebra satisfying the identities
  \[
 xy=yx,\;(xy)z = x(yz),\mbox{ and }x^q=x\mbox{ where }q=|{\bf F}|.
  \]
If $B$ is generated by $s$ subalgebras, each isomorphic to a finite field $\bf F$,  then $B\cong \F^t$ where $t\le 2^s-1$, and this estimate is precise.
\end{Example} 

\subsection{Characteristically simple algebra.}\label{essCSA} The properties of characteristically simple and verbally simple finite groups are well-known. But a subalgebra  invariant with respect to all automorphisms of an algebra is not necessarily an ideal, as opposed to normal subgroups in Group Theory. So the case of algebras requires a special treatment. 
  
  Recall that a subalgebra $B$ of an algebra $A$ is called \emph{characteristic} if $B$ is invariant under all automorphisms of $A$. We say that an  algebra $A$ is \emph{strongly characteristically simple}, for short a CS-algebra, respectively, \emph{weakly characteristically simple}, for short CW-algebra, if  $A$ has no nonzero proper subalgebras, respectively, ideals, which are invariant under all of its automorphisms. We also say that $A$ is \emph{verbally simple} if it has no nonzero proper verbal ideals.
  
  A finite direct power $P=\underbrace{A\oplus\cdots\oplus A}_k$ of a simple algebra $A$ is both a CW-algebra and verbally
  simple by Lemma \ref{minalg} (i), since there are automorphisms permuting the direct factors
  of $P$. Conversely, let $B$ be a finite $CW$-algebra and $M$ be a minimal
  ideal of $B$. Then for any automorphism $\alpha$ of $B$ the image $\alpha(M)$ is a minimal ideal as well, and so the sum of such images over all $\alpha$ must be equal to $B$
  since the sum is a characteristic ideal of $B$. This sum is a direct some of
  some ideals isomorphic to $M$. Then any ideal of $M$  is an ideal of $B$,
  and the minimality of $M$ implies that $M$ is simple. So a finite algebra is CW (or verbally simple) if and only if it is a direct power of a finite simple algebra.
  
  Speaking about $CS$-algebras, first of all, any algebra with zero multiplication is CS. Also, a minimal algebra is a simple CS-algebra. As for nonsimple CS-algebras, we  prove the following.
  
\begin{Thm}\label{pcscw}
A finite nonsimple algebra $P$ with nonzero multiplication is a CS-algebra if and only if
 $P=\underbrace{A\oplus\cdots\oplus A}_k$, $k>1$, where $A$ is a simple $CS$-algebra which is not a minimal algebra without nontrivial automorphisms. 
\end{Thm}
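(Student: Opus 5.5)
The plan is to prove both directions through one structural fact: every automorphism of $P=A\oplus\cdots\oplus A$ ($k$ copies of a simple $A$ with $A^2\nez$) permutes the direct summands, up to automorphisms of $A$. This follows from Lemma \ref{Diagonal} applied to an automorphism $\vp:P\to P$. Each summand $A(j)$ maps onto the diagonal of $\oplus_{i\in J_j}A_i$, with the $J_j$ pairwise disjoint. Counting dimensions forces $|J_j|=1$ for all $j$. Hence $\Aut P=\Aut(A)\wr S_k$.

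\emph{Necessity.} Assume $P$ is finite, nonsimple, with $P^2\nez$, and is a CS-algebra.
\begin{enumerate}
\item[$\mathrm{(1)}$] A characteristic ideal is in particular a characteristic subalgebra, so $P$ is a CW-algebra.
\item[$\mathrm{(2)}$] By the discussion preceding the theorem, $P\cong A^k$ for a finite simple $A$. Since $P$ is nonsimple, $k>1$, and $A^2\nez$ because $P^2\nez$.
\item[$\mathrm{(3)}$] $A$ is CS: if $B$ were a proper nonzero $\Aut(A)$-invariant subalgebra of $A$, then $B\oplus\cdots\oplus B$ would be invariant under $\Aut(A)\wr S_k=\Aut P$, contradicting that $P$ is CS.
\item[$\mathrm{(4)}$] $A$ is not a minimal algebra with trivial automorphism group: if it were, $\Aut P=S_k$. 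Then the diagonal $\Delta$ is a proper nonzero characteristic subalgebra, since $k>1$.
\end{enumerate}

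\emph{Sufficiency.} Let $A$ be simple and CS, not minimal with trivial $\Aut A$, and let $D\ne\{0\}$ be a characteristic subalgebra of $P=A_1\oplus\cdots\oplus A_k$.
\begin{enumerate}
\item[$\mathrm{(1)}$] First observe that $A$ must have a nontrivial automorphism. If $\Aut A$ were trivial, every subalgebra of $A$ would be characteristic, so the CS property would make $A$ minimal, which is excluded.
\item[$\mathrm{(2)}$] The projection $\pi_i(D)$ is a subalgebra of $A_i$. It is invariant under $\Aut(A)$ acting on the $i$th coordinate only, because those maps are automorphisms of $P$. Some $\pi_i(D)$ is nonzero, and coordinate permutations then make all of them nonzero. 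Since $A$ is CS, $\pi_i(D)=A_i$ for every $i$, so $D$ is a subdirect product.
\item[$\mathrm{(3)}$] As in Lemma \ref{lDirectSimple1}, each $D\cap A_i$ is an ideal of $A_i$. If one of them is nonzero, it equals $A_i$; permuting coordinates gives $A_i\subset D$ for all $i$, so $D=P$.
\item[$\mathrm{(4)}$] Otherwise $D\cap A_i=\{0\}$ for all $i$. Applying Lemma \ref{Diagonal} to the embedding $D\to P$, where $D$ is a direct power of $A$ by Lemma \ref{lDirectSimple1}, we get $D=\oplus_s\Delta^{\tau_s}_s$. Here $\Delta^{\tau_s}_s$ is a twisted diagonal in $\oplus_{i\in J_s}A_i$, and the $J_s$ partition $\{1,\ld,k\}$ into blocks with $|J_s|\ge2$.
\end{enumerate}

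The main obstacle is ruling out this last case. I would pick a block $J_1$ and two indices $i\ne j$ in it, and a nontrivial $\alpha\in\Aut A$. Let $\sigma$ be the automorphism of $P$ that applies $\alpha$ on coordinate $j$ only. Then $D$ contains both $d$ and $\sigma(d)$ for each $d\in\Delta^{\tau_1}_1$. Their difference lies in $D\cap\oplus_{i\in J_1}A_i=\Delta^{\tau_1}_1$, which holds because the decomposition is direct. This difference has zero $i$th coordinate but a nonzero $j$th coordinate when $\alpha(a)\ne a$. That is impossible, since in a twisted diagonal every coordinate determines all the others. Hence the last case cannot occur, $D=P$, and $P$ is a CS-algebra.
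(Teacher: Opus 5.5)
Your proof is correct and follows essentially the same route as the paper. For necessity: CS $\Rightarrow$ CW $\Rightarrow$ a direct power of a simple $A$, then the subalgebra $B\oplus\cdots\oplus B$ and the diagonal. For sufficiency: full projections via the CS property of $A$, then Lemmas~\ref{lDirectSimple1} and~\ref{Diagonal}, and a nontrivial automorphism of $A$ to rule out diagonals. The only real variation is in the last step. You twist a single coordinate inside one block, using $D\cap\bigoplus_{i\in J_1}A_i=\Delta_1^{\tau_1}$. The paper instead first uses permutations to force $D$ to be a single diagonal of all of $P$; your version makes explicit the paper's remark that a diagonal is characteristic only when it is unique.
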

  
 \begin{proof} Let $P$ be a finite CS-algebra. Then it is also a CW-algebra,
 and so  $P=A_1\oplus\cdots\oplus A_k$, where $A_i$ are isomorphic copies of
 a simple algebra $A$. Since $P^2\ne \{ 0\}$, we have that $A^2\ne \{ 0\}$. If $A$ has a proper characteristic subalgebra $B\ne \{ 0\}$ then the sum of the $k$ copies $B_i\subset  A_i$ is a proper characteristic subalgebra of $P$ which is true because any automorphism of $P$ permutes the factors $A_i$, as it follows from Lemma \ref{lDirectSimple} (i). Thus, $A$ must be a $CS$-algebra. 
 
 It is not possible that $A$ is minimal and has no non-trivial automorphisms. Indeed, then
 $P$ has a unique diagonal $D$, which is proper characteristic subalgebra of $P$,
 a contradiction.
 
 Conversely, assume that $A$ is a finite simple $CS$-algebra and
 $P=\underbrace{A\oplus\cdots\oplus A}_k$, where $k\ge 2$ and $A_i$ are isomorphic copies of
 $A$. If $k>1$ we also assume that $A$ is not a minimal algebra with trivial automorphism group. 
 
 Consider a non-zero characteristic subalgebra $B\subset  P$. It has to have isomorphic
 projections $B_i$ to each of the factors $A_i$. If $B_1$ is a proper subalgebra
 of $A_1$ then there is an automorphism $\alpha$ of $A_1$ such that $\alpha(B_1)\ne B_1$,
 because $A_1$ is a CS-algebra. It follows that $B\ne \beta(B)$, where $\beta$ acts
 on $A_1$ as $\alpha$ and acts identically on $A_2,\ld, A_k$. So $B$ is not
 a characteristic subgroup of $P$, a contradiction.
 
 Thus, $B$ projects surjectively onto every $A_i$.
 By Lemma \ref{minalg}, $B$ is itself isomorphic
 to a direct power of $A$. Obviously $B$ cannot be a characteristic subalgebra of $A$ if
 $B$ is just the  product of some subalgebras $A_i$ and $B\ne P$. So by Lemma \ref{Diagonal}, one of the
 direct factors $A'\cong A$ of $B$ is a diagonal of a the direct sum $A_{i_1}\oplus\dots\oplus A_{i_s}$, where $s\ge 2$, and other direct factors of $B$ have zero projections to each of
 $A_{i_1},\dots, A_{i_s}$. Since $B$ is characteristic in $P$ and the automorphisms
 of $P$ can arbitrarily permute the factors $A_i$, it follows that $B$ is just a diagonal of the whole $P$. However a diagonal is invariant under all automorphisms of $P$ iff it is unique, that
 is iff (since $k\ge 2$) the algebra $A$ has no non-trivial automorphisms. Since $A$ is CS, in this case, $A$ also has no non-zero proper subalgebras. For this exception,
 we obtain that $B=P$, and the theorem is proved.
 
 \end{proof}
 
 Note that a finite field $\bf F$, as an algebra over itself, is minimal and
 has no non-identical automorphisms. Therefore the product ${\bf F}\times {\bf F}$
 is not a CS-algebra; the diagonal is a characteristic subalgebra of it.
  In the associative case, $CS$ refers only to the algebras with zero multiplication and the field of coefficients (as an algebra over itself), that is, the condition looks too strong.
  The two-dimensional minimal algebra from Example \ref{eMinVar} is CS, but its direct square is
  not CS, because  this algebra has no non-trivial
  automorphisms. The following is an example of a two-dimensional simple CS-algebra admitting
  non-trivial automorphisms.
  
  \begin{example}\label{eVSA}
A very symmetric algebra is an algebra $A$ over $\ZZ_2$ with a basis $(e,f)$ and the multiplication table
\[
e^2=e,  f^2=f,  ef=fe =e+f.
\]
It satisfies the identities $xy=yx$,  $x^2=x$. It is not difficult to verify that $A$ is
simple and every permutation of non-zero elements of $A$, extended with $0\mapsto 0$, is
an automorphism of $A$. Therefore, by Theorem \ref{pcscw}, all finite direct powers of $A$ are CS-algebras. $A$ is not minimal, and so we raise the following
\end{example}

\begin{problem}\label{pbm_four} Does there exist a minimal algebra $A$ with nonzero multiplication of order at least three such that the automorphism group $\Aut A$ acts transitively on the set of
nonzero elements of $A$? 
\end{problem}

Such an algebra has to be simple and relatively free, answering  [14, Problem 2.89] by Ryabukhin and
Flori.

\subsection{Semidirect products and extremal algebras.}\label{essSDP}

An algebra $C$ is a {\it semidirect sum} of an ideal $A$ and a subalgebra $B$ if $C = A+B$
and $A\cap B = \{ 0\}$.
Semidirect sums are an important tool, which is used in the study of extensions of algebras. Since solvable algebras appear as extensions of smaller dimension, semidirect sums are essential for the study of solvable algebras, as well. They are also important in the study of projectivity and injectivity of algebras in locally finite varieties, see Sections \ref{inj}, \ref{ssProjective}.

Let $A$ be an ideal with zero multiplication (abelian ideal) of an algebra $C$ over a field $\F$  and $B=C/A$. We set $\overline{c}= c+A\in B$, for any $c\in C$. Then the vector space sum $S=B+ A$ becomes a semidirect sum $B\ltimes A$ if one sets
\begin{equation}\label{eqSemiDirect}
(\overline{c_1},a_1)(\overline{c_2},a_2)=(\overline{c_1c_2},a_1c_2+c_1a_2),\mbox{ where }a_1,a_2\in A,\: c_1,c_2\in C.
\end{equation}
 
One denotes $S$ by $B\ltimes A$. An interesting, yet not immediately obvious fact, is the following.

\begin{Prop}\label{epSemidirect} 
If $\frV=\var C$ then $S=B\ltimes A\in \frV$.
\end{Prop}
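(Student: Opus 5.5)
We will realize $S=B\ltimes A$ as a subalgebra of a homomorphic image of a direct power of $C$; since $\frV$ is closed under these operations, this gives $S\in\frV$. The natural device is the ``dual numbers'' construction. We use the fact that $A$ is an ideal with $A^2=\{0\}$.

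\emph{Step 1: a homomorphic image of $C\times C$.} Consider $C\times C$ and the subset
\[
K=\{(a,0)\,|\,a\in A\}+\{(0,a)\,|\,a\in A\}\cdot 0 ,
\]
or more precisely the subspace $K=A\times\{0\}$. It is an ideal of $C\times C$, since $A$ is an ideal of $C$. The quotient $(C\times C)/K\cong B\times C$ is not yet what we want, so we work instead with a suitable subalgebra.

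\emph{Step 2: the subalgebra $T$.} Let
\[
T=\{(c,c+a)\,|\,c\in C,\ a\in A\}\subset C\times C .
\]
It is a subalgebra, since
\[
(c_1,c_1+a_1)(c_2,c_2+a_2)=(c_1c_2,\ c_1c_2+c_1a_2+a_1c_2+a_1a_2),
\]
where $a_1a_2=0$ and $c_1a_2+a_1c_2\in A$. Thus $T\in\frV$.

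\emph{Step 3: the quotient.} In $T$ take $I=\{(a,a)\,|\,a\in A\}$. It is an ideal of $T$, because
\[
(c,c+a')(a,a)=(ca,\ ca+a'a)=(ca,ca),
\]
using $a'a=0$, and similarly on the other side.

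\emph{Step 4: identification with $S$.} Define $\psi:T/I\to S$ by
\[
(c,c+a)+I\mapsto(\overline{c},a).
\]
To check that $\psi$ is well defined, note that an element $(c,c+a)$ determines $c$ and $a$ uniquely, and adding $(a_0,a_0)$ replaces the pair $(c,a)$ by $(c+a_0,a)$; this leaves $\overline{c}$ and $a$ unchanged. Kernel and bijectivity follow the same way: $(\overline{c},a)=(0,0)$ forces $c\in A$ and $a=0$, so the element lies in $I$; surjectivity is clear. Comparing the product computed in Step 2 with formula (\ref{eqSemiDirect}) shows that $\psi$ is multiplicative: the first component gives $\overline{c_1c_2}$ and the difference of the components gives $c_1a_2+a_1c_2$. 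Hence $S\cong T/I\in\frV$.

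The only delicate point is confirming that the product in (\ref{eqSemiDirect}) is independent of the chosen representatives. This is built into the isomorphism: a change $c\mapsto c+a_0$ alters $a_1c_2$ by $a_1a_0=0$. Otherwise the argument is routine verification.
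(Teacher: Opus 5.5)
Your proof is correct and is essentially the paper's: your $T=\{(c,c+a)\}$ is the paper's $\widetilde{C}+(A\times A)$, your $I$ is its $\widetilde{A}$, and your $\psi$ is the inverse of the paper's isomorphism $(\overline{c},a)\mapsto(c+a,c)+\widetilde{A}$, with the two coordinates swapped. Two minor edits: delete the abandoned Step 1, which is garbled and not used; and correct your opening sentence, since you actually realize $S$ as a homomorphic image of a subalgebra of $C\times C$, not as a subalgebra of a homomorphic image.
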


\begin{proof} It is enough to show that $S$ is isomorphic to a section of the Cartesian square of $C$. So we set $P=C\times C$ and $Q=A\times A\subset P$. Consider the diagonal subalgebras $\tC=\{(c,c)\,|\, c\in C\}$, and $\tA=\{(a,a)\,|\, a\in A\}$. Next, take $T=\tC+Q$. Then $\tA$ is an ideal in $T$ since $Q$ is an algebra with zero multiplication. So we set $R=T/\tA$. It  remains to prove that $S\cong R$.
We define $\vp:S\to R$ by setting
\[
\vp(\overline{c},a)= (c+a,c)+\tA, \mbox{ where }c\in C\mbox{ and }a\in A.
\]
This setting is well-defined because if we replace $c$ by $c+a'$ then on the right-hand side we will have 
\[
(c+a'+a,c+a')+\tA=(c+a,a)+(a',a')+\tA=(c+a,c)+\tA. 
\]
If $\vp(\bar{c},a)=0$ than $(c+a,c)\in\tA$ so  $a=0$ an $c\in A$, that is $\bar{c}=0$. So $\vp$ is injective, and since $\dim S = \dim C$, it is an isomorphism of vector spaces. Finally,
\begin{eqnarray*}
&&\vp((\ov{c_1},a_1)(\ov{c_2},a_2))=\vp((\ov{c_1c_2},a_1c_2+c_1a_2))=(c_1c_2+a_1c_2+c_1a_2,c_1c_2)+\tA\\
&&=((c_1+a_1,c_1)+\tA)((c_2+a_2,c_2)+\tA)=\vp(\ov{c_1},a_1)\vp(\ov{c_2},a_2).
\end{eqnarray*}
Thus, $\vp$ is an isomorphism of $S$ and $R$, and so $S\in\frV$.
\end{proof}

\begin{Remark}\label{AtA} Let $A'$ be the image of the ideal $A$ under the isomorphism $\vp$.
Then for any product $ac$ (or $ca$) the   image under $\vp$ is a product $\vp(a)b$ (resp.,
$b\vp(a)$ for some $b\in B$, as this is seen in the equation (\ref{eqSemiDirect}). Therefore, if
the ideal $A$ is minimal in $C$, the ideal $A'$ is minimal in $S$.

\end{Remark}

Generally, $\var S$ can be a proper subvariety in   $\var C$. For example, suppose an associative algebra $C$ is 1-generated by $a$ and given by one relation $a^3=0$. Then $\dim A=2$. Let $A$ be an ideal of $C$ generated by  $a^2$. In this case the semidirect sum $S=B+A$ in question is the sum of two one-dimensional ideals, with zero product. Thus $S$ satisfies $xy=0$ which is not an identity of $C$.

On the other  hand, it is natural to ask if an arbitrary algebra $C$ with ideal $A$ and factor-algebra $B$ can be embedded in a semidirect sum $S'=A'+ B$, where $A'\in \var A$  (in particular, has zero product if $A$ has zero product).  The embedding of this kind are widely used in the Group Theory and in Lie algebra and can be built using so called \emph{wreath products}. The analogues of wreath products in the varieties of linear algebras given by permutational identities had been suggested in \cite{Burg}. In particular, any   algebra $C$ of that kind with ideal $A$ and factor-algebra $B$ is isomorphic to a subalgebra $C'$ in the semidirect sum  $B\ltimes A'$ where $\var A'= \var A$,  $C'\cap A' = A$ и $A'+C' = A'+ B$.

In the remainder of this section we will study a class of algebras, which are close to critical algebras.

A finite algebra $A$ is called \emph{extremal} if the variety $\var A$ 
cannot be  generated by a set of algebras having smaller orders than $A$.

Obviously, extremal algebras are critical, so they are monolithic, and every locally finite variety is generated by its extremal algebras. 

So we assume that $C$ is an extremal algebra with an abelian monolith $A$ and consider the semidirect sum $S=B\ltimes A$ provided in Proposition \ref{epSemidirect}, where $B=C/A$.

\begin{Thm}\label{pExtremal} If $C$ is an extremal algebra with an abelian monolith $A$ and $\var S = \var C$, then  $A$ has a semidirect complement $D$ in $C$, that is, $C=D\ltimes A$.
\end{Thm}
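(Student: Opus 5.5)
We argue by contradiction: assume $A$ has no complement in $C$, and show that $\var S$ can then be generated by algebras of order smaller than $|C|$. Since $\var S=\var C$, this contradicts the extremality of $C$. Note that $|S|=|C|$, and that by Remark \ref{AtA} the image $A'$ of $A$ is a minimal abelian ideal of $S$ with complement $B$, so $S=B\ltimes A'$.

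First we show that $S$ itself is not extremal, or more precisely that $S\in\var\{\text{proper sections of } S\}$ or $S$ is not monolithic. Suppose $S$ is critical, hence monolithic. Then $A'$, being a minimal ideal, is its monolith. Since $A'$ is abelian, $A'$ acts on $S$ only through the $B$-module structure given by \eqref{eqSemiDirect}. We then compare $S$ with $C$ via the construction in the proof of Proposition \ref{epSemidirect}. There $S\cong T/\tA$, where $T=\tC+Q\subset C\times C$, and $|T|=|C||A|$.

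The main step is the following. Since $\var C=\var S$, the algebra $C$ lies in $\var S$. By Birkhoff (Theorem \ref{etBirkhoof_Estimate}) and the extremality of $C$, $C$ must be a section of a subdirect product of copies of $S$ and of algebras of order $<|C|$. Choose a minimal representation, as in Lemma \ref{minrep}, of a finite algebra $E\in\var S$ together with an epimorphism $E\to C$. Take $E$ to be a finitely generated free algebra in $\var S$ mapping onto $C$, written as a subdirect product of monolithic factors from the sectionally closed set generated by $S$. If no factor isomorphic to $S$ were needed, then $C$ would lie in the variety generated by smaller algebras, contradicting extremality. So some factor $S_i\cong S$ appears. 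Project $E$ onto the product of those factors and use the kernel $K$ of $E\to C$.

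The key point is this. The preimage of the monolith $A$ of $C$ meets the sum of the copies of the monolith $A'$ (Lemma \ref{minrep}(iv)), while the preimage of $B$-complements, namely the subalgebras $B\subset S_i$, gives a subalgebra of $E$ whose image in $C$ supplements $A$. Concretely, let $H\subset E$ be the intersection of $E$ with the product of the complements $B_i\subset S_i$ and the remaining smaller factors. Then we aim to show $E=H+N$, where $N$ is the sum of the copies of $A'$ lying in $E$. Hence $C=\bar H+A$. Minimality of $A$ then forces either $\bar H\cap A=0$, which gives the desired complement $D=\bar H$, or $A\subset\bar H$, so $\bar H=C$. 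In the latter case $C$ is an image of $H$, and $H$ lies in the variety generated by $B$ and the smaller factors, all of order $<|C|$, contradicting extremality.

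The hardest step is establishing $E=H+N$, i.e. that $E$ contains the monoliths $A'_i$ of the $S$-factors and is generated modulo them by its intersection with the complements. For this we would use Lemma \ref{minrep}(ii)–(iv): $E$ projects onto each $S_i=B_i\oplus A'_i$ and contains each $A'_i$. Then any $e\in E$ can be corrected by an element of $\bigoplus A'_i\subset E$ so that its $S$-components lie in $B_i$, which gives $e\in H+N$.
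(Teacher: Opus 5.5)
Your argument is essentially the paper's proof. Your $H=E\cap\bigl(\prod_i B_i\times\prod_j S_j\bigr)$, with $S_j$ the factors not isomorphic to $S$, is the paper's $Q=F\cap T$, and the decomposition $E=H+N$, the equality $C=\bar H+A$, and the dichotomy $\bar H\cap A\in\{\{0\},A\}$ with $\bar H=C$ excluded by extremality are exactly its steps. Your second paragraph and the contradiction framing can be dropped; the announced goal there is actually false, since $S$ is extremal because $|S|=|C|$ and $\var S=\var C$.

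Do make explicit two one-line facts you leave implicit. First, $\bar N\subseteq A$: each $A'_i$ is a minimal ideal of $E$ by Lemma~\ref{minrep}(iv), so its image in $C$ is $\{0\}$ or the unique minimal ideal $A$. Second, $\bar H\cap A$ is an ideal of $C=\bar H+A$ because $A^2=\{0\}$, which is exactly where the abelian hypothesis is used.
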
 

\begin{proof} Let $\frV=\var C$. Since $\var S = \var C = \frV $ and $|S|=|C|$, it follows that $S$ is also an extremal algebra. The ideal $A$, viewed also as an ideal of $S$, remains minimal by Remark \ref{AtA}.

Now let $C$ be $n$-generated for some $n$ and $F$ a $\frV$-free algebra of rank $n$. Since $F\in\var S$, by Theorem \ref{etBirkhoof_Estimate} and Lemma \ref{minrep}, one can represent $F$ as a subdirect product in the direct product $P$ of monolithic sections of $S$. Some of the factors of $P$ must be isomorphic to $S$ itself, for otherwise, $P$ hence $F$ and $S$, would belong to a variety generated by algebras of smaller order than $S$, contradicting the extremality of $S$. 

Let us choose a minimal representation for $F$. Mark the monoliths of these factors $A_1,\ld, A_t$. By Lemma \ref{minrep} (iv), $F$ contains each of the monoliths, hence their direct sum $N$.

Since each $S_i$ is a semidirect sum with kernel $A_i$, also $P$ is a semidirect sum with kernel $N$, $P = N + T$, for some subalgebra $T\subset P$. Note that $T$ belongs to a proper subvariety $\frU\subset\frV$, which is generated by other sections of $P$ and the algebra  $S/A=B$.

Now since $F$ contains $N$, its natural projection to $T$ coincides with $Q=F\cap T$. It follows that $F$ is a semidirect sum $F = Q \ltimes N$, where $Q\in \frU$. Also, all $A_i$ remain to be minimal ideals in $F$, since $F$ project surjectively onto each $S_i$, $i=1,\ld,t$.

Let us remind that there is an epimorphism $\vp: F\twoheadrightarrow C$.  Since $F/N \in \frU$ and $C\notin \frU$, the ideal $N$ cannot belong to the kernel of $\vp$. Hence, $\vp(N)$ must contain the monolith $A$ of $C$. Since $N$ is the sum of minimal ideals in $F$, it must be mapped onto the sum $\vp(N)$ of minimal ideals in $C$. There is only one such ideal in $C$, and so  $\vp(N) =A$.

It follows that $C = \vp(F) = \vp(N) +\vp(Q)= A +Q'$, где $Q' =\vp(Q)\in \frU$. Thus, $Q'\ne C$, and then  $Q'\cap A \ne A$. Since $Q'\cap A$ is an ideal in $Q'$, and $A$ is abelian, it follows that $Q'\cap A$ is an ideal in $C$. But $A$ is a monolith of $C$ and so $Q'\cap A=\{ 0\}$. As a result, we split $C$ over $A$ by $Q'$,  $C = Q'\ltimes A$, as claimed.
\end{proof}

Theorem \ref{pExtremal} works for all critical algebras $C$ we are familiar with since all of them
are extremal. Thus, we have

\begin{problem}\label{pbCritExtrem} Can a variety $\var C$, where $C$ is a finite critical   algebra, be generated by a set of algebras whose orders are less than the order of $A$?  In other words, is any critical algebra extremal?
\end{problem}

 \section{Miscellaneous properties of locally finite varieties}\label{sSPIinLFV}
 
 \subsection{Socle height of algebras in finitely generated varieties.}\label{sSHA}
The socle $S_1=\soc(A)$ of a finite algebra (or ring) $A$ is the sum of all its minimal ideals (hence, the direct sum of some of them).  We define the second socle $S_2$,  by saying that $S_2/S_1=\soc(A/S_1)$, and so on. As a result, we obtain the ascending series of $A$, ending in $A$. We call this series a socle series of $A$. The length of the socle series of $A$ is called the \emph{socle height} of $A$. An equivalent definition of the socle height is the length of the shortest  ascending series of ideals: 
\[
\{ 0\}=J_0\subset J_1\subset J_2\subset\cdots,\mbox{where for all }k=1,2,\ld,\;\;  J_k/J_{k-1}\subset\soc(A/J_{k-1}).
\]

\begin{Thm}\label{tSHA} The socle heights of finite algebras in a finitely generated variety  are uniformly bounded.
\end{Thm}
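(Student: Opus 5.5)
The plan is to prove the bound first for finite subdirect products of sections of the generating algebra, and then pass to arbitrary finite algebras of the variety through free algebras and quotients.

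Let $\frV=\var A$ with $A$ finite. Let $\cS$ be the set of all nonzero sections of $A$, taken up to isomorphism; it is finite and sectionally closed.

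\emph{Reduction to free algebras.} Socle height does not increase under homomorphic images. Let $\{ 0\}=J_0\subset J_1\subset\cdots\subset J_h=F$ be a series with $J_k/J_{k-1}\subset\soc(F/J_{k-1})$, and let $\psi:F\to C$ be an epimorphism. The image of a minimal ideal under a surjective homomorphism is either zero or a minimal ideal. Applying this to $F/J_{k-1}\to C/\psi(J_{k-1})$ shows that $\psi(J_k)/\psi(J_{k-1})$ lies in $\soc(C/\psi(J_{k-1}))$. So $C$ has socle height at most $h$. Every finite $C\in\frV$ is a quotient of some $F_n(\frV)$, and by Corollary \ref{etBirkhoof_General} each $F_n$ lies in $\texttt{sd}(\cS)$. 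It therefore suffices to bound the socle height of all finite algebras in $\texttt{sd}(\cS)$ uniformly.

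\emph{Main claim.} If $C\in\texttt{sd}(\cS)$ has a representation whose factors all have order at most $m$, then the socle height of $C$ is at most the number of integers in $\{2,\ld,m\}$ that occur as orders of algebras in $\cS$. In particular it is at most $|A|$.

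The proof is by induction on $m$. Take a minimal representation $C\subset P=\prod_{i=1}^t S_i$ as in (\ref{rep}). By Lemma \ref{minrep}, each $S_i$ is monolithic and each monolith $M_i=M(S_i)$ lies in $C$ and is a minimal ideal of $C$. Hence $N=\bigoplus_i M_i\subset C$ and $N\subset\soc(C)$. The natural map $C\to\prod_i S_i/M_i$ has kernel $C\cap\bigoplus_i M_i=N$, and it is surjective on each factor because $C$ projects onto each $S_i$ (Lemma \ref{minrep} (ii)). After discarding the zero factors, $C/N$ is a subdirect product of the nonzero quotients $S_i/M_i$. These lie in $\cS$ because $\cS$ is sectionally closed, and each satisfies $|S_i/M_i|<|S_i|$, so all of them have order strictly less than $m$. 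The representation obtained is not necessarily minimal, but the induction only needs some representation with smaller maximal order. Since $N\subset\soc(C)$, the socle height of $C$ is at most $1$ plus the socle height of $C/N$, and the induction closes. The base case is $C=0$.

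The point needing most care is that the series produced by the induction is a valid socle-type series. Concretely, we need the preimage in $C$ of the socle series of $C/N$, placed on top of $N$, to satisfy the defining condition $J_k/J_{k-1}\subset\soc(C/J_{k-1})$. This holds by the equivalent definition of socle height given before the statement, since $J_1=N\subset\soc(C)$ and the higher terms are the preimages of a valid series for $C/N$. Two further points must be checked: that one-dimensional monoliths with zero multiplication cause no trouble (they are still minimal ideals, so Lemma \ref{minrep} applies unchanged), and that the quotient-monotonicity step is applied correctly. Combining the main claim with the reduction gives a uniform bound of $|A|$ on the socle height of every finite algebra in $\var A$.
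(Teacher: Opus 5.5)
Your proof is correct, but it organizes the induction differently from the paper.

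The paper inducts on the size of a family $\cT$ of monolithic sections of $A$ that is closed under taking monolithic sections and generates the variety. At each step it:
\begin{enumerate}
\item picks one $B\in\cT$ that is not a proper section of another member and cannot be discarded;
\item takes $n$ large enough that $B$ must occur among the factors of the minimal representation of the free algebra $G_n$ of $\var\cT$;
\item factors out only the sum $N$ of the monoliths of the copies of $B$;
\item applies the induction hypothesis to the proper subvariety $\var(\cT\setminus\{B\})$, which contains $G_n/N$.
\end{enumerate}

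You instead stay inside \texttt{sd}$(\cS)$, factor out the monoliths of \emph{all} factors of a minimal representation at once, and measure progress by the largest order of a factor. This avoids choosing an essential ``top'' algebra $B$, the condition that $n$ be large, and the repeated passage to free algebras of subvarieties.

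It also gives an explicit bound: the number of distinct orders of nonzero sections of $A$, hence at most $\dim A$. This bound is attained, e.g.\ by the associative algebra $x\F[x]/(x^{k+1})$ of dimension $k$, whose socle height equals its nilpotent depth $k$. Since the factors of minimal representations are monolithic, your count can even be restricted to orders of monolithic sections. That refined count is no larger than the bound implicit in the paper's induction, namely the number of isomorphism types of monolithic sections of $A$.

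One step you use tacitly should be stated. After passing to a minimal representation, its factors must still have order at most $m$. This holds because the orders are listed in decreasing order, and lexicographic minimality forces the first (largest) one to be at most the largest order in the representation you started from.
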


\begin{proof} Let $\V=\var A$, $A$ a finite algebra. Since under an epimorphism of algebras $P\to Q$, the socle $\soc(P)$ is mapped inside the socle $\soc(Q)$, it is enough to prove that the socle heights of free algebras $F_n(\V)$, $n=1, 2,\ld$, are uniformly bounded.

  Let us remind that a locally finite variety is generated by its monolithic algebras, in particular, $\var A$  is generated by the set $\cS$ of all monolithic sections of $A$. We consider the partially ordered set $\X$ of subsets $\cT\subset\cS$ which contains all monolithic sections of its members. For each such $\cT$, we consider the variety $\var\cT$. There are only finitely many varieties $\var\cT$, and to prove our theorem, we can apply induction by $|\cT|$. The basis of induction is obvious, when $\cT$ is empty. Let $\cT$ be nonempty. We may assume that $\var\cT$ is not generated by proper subsets $\cT'\subset \cT$. 

Using the minimal representation, see Lemma \ref{minrep}, we represent the free algebra $G_n$ of $\var\cT$ as the subdirect product of algebras in $\cT$. Note that in $\cT$, there is an algebra $B$, which is not a proper section of other algebras in $\cT$. Moreover, one may assume, that $B$ cannot be removed from $\cT$, without decreasing the variety generated by $\cT$. Therefore the set $\cT'=\cT\backslash\{B\}\in\X$ and the socle lengths of finite algebras in $\frU=\var\cT'$ are uniformly bounded.

As a result, if $n$  is not less than the number of generators in $B$, then $B$ must be a factor of the direct product $D$ containing the minimal representation of the $\frV$-free algebra $G_n$. This follows because $B$ is a homomorphic image of $G_n$ and $B\notin \frU$. By Lemma \ref{minrep} (iv), the monolith $М$ of $B$ is a minimal ideal of $G_n$. 

Thus, we can see that for all direct factors $B_i$ of $D$, isomorphic to $B$, their monoliths $M_i$ are contained in the socle of $G_n$. Let $N$ be the sum of all $M_i$. If we form $D/N$, then by the choice of $\cT'$ and $B$, it follows that $D/N$ belongs to the variety $\frU$. Hence also $G_n/N\in\frU$. By induction, all finite algebras in $\frU$ have socle height at most $h$, for some integer $h$. This  also applies to  $G_n/N$. Since $N$ belongs to the socle of an algebra  $G_n$, the socle height of $G_n$ is at most $h+1$ for all $n$, completing the proof of the theorem.
\end{proof}

Using this theorem, we can define the \emph{socle height of a finitely generated variety} $\frV$ as maximum of socle heights of finite algebras in $\frV$.

\medskip

\begin{Example}\label{eSocle} Let be $A$ a nilpotent algebra. On the one hand, since every one-dimensional subspace of the annihilator $I(A)$ is a minimal ideal in $A$, we have $I(A)\subset\soc(A)$. On the other hand, by Lemma \ref{lNilpSimpleProp} (i), every minimal ideal of $A$ is contained in  $\soc(A)$. Hence $\soc(A)=I(A)$. Using induction, we conclude that the socle series of $A$ is the annihilator series of $A$, and so \emph{the socle height of $A$ is equal to the nilpotent depth of $A$} (see Subsection \ref{esssNS}).
\end{Example}

\begin{Remark}\label{nilvar} It follows from Theorem \ref{tSHA} and Example \ref{eSocle} that finite
nilpotent algebras in a finitely generated variety $\frV$ have uniformly bounded nilpotent
depths (and by Proposition \ref{pNcNd}, bounded nilpotent classes).  Since  $\frV$ is a locally finite variety, the same upper bound holds for all nilpotent algebras from $\frV$. Being a subclass of $\V$ closed under all three Birkhoff operations, \emph{the class of all nilpotent algebras in a finitely generated variety $\frV$ is a subvariety of $\frV$}.

\end{Remark}

\subsection{Socle height 1 and inherently semisimple algebras.}\label{essMASH}


We call a finite-dimensional algebra $A$ \emph{inherently semisimple} if any subalgebra of $A$ (including $A$ itself) is the direct product of simple algebras. Any minimal algebra is inherently semisimple but there are many more, even simple algebras, with this property. For example, any finite field extension $\bf E$ of the base field $\bf F$, is a simple inherently semisimple algebra. Another example is the simple algebra having three one-dimensional subalgebras from Example \ref{eVSA}. Moreover, almost all $n$-dimensional algebras have this property because, as we will show in Section \ref{esGPNA}, being simple and having no subalgebras of dimension greater than 1 is a generic property.

Obviously, any subalgebra of an inherently semisimple algebra is inherently semisimple.

\begin{Remark} Inherently semisimple algebras need not be semisimple in the traditional sense of associative or Lie algebras because they include algebras with zero multiplication.
\end{Remark}

\begin{Thm}\label{epSSSA}
The following properties of a non-zero locally finite variety $\frV$ are equivalent.
\begin{enumerate}
  \item [$\mathrm{(i)}$] In any finite algebra $A\in\frV$, given an ideal $I$, there is an ideal $J$ such that $A=I\oplus J$;
  \item [$\mathrm{(ii)}$] Any finite $A\in\frV$ is the direct product of simple algebras;
  \item [$\mathrm{(iii)}$] The socle height of $\frV$ equals 1;
  \item [$\mathrm{(iv)}$] $\frV$ is generated by a family of inherently semisimple algebras.
  \item[$\mathrm{(v)}$] All finite algebras in $\frV$ are inherently semisimple.
  \item[$\mathrm{(vi)}$] In any algebra $A\in\frV$, given a finite ideal $I$, there is an ideal $J$ such that $A=I\oplus J$;.
  \end{enumerate}
\end{Thm}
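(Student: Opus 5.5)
We will prove the cycle (i)$\Rightarrow$(ii)$\Rightarrow$(iii)$\Rightarrow$(ii)$\Rightarrow$(v)$\Rightarrow$(iv)$\Rightarrow$(ii), and separately (i)$\Leftrightarrow$(vi).

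\emph{Step (i)$\Rightarrow$(ii).} Induct on $\dim A$. If $A$ is simple, or $A$ is one-dimensional with zero product, we are done. Otherwise take a minimal ideal $I$ and write $A=I\oplus J$ by (i). Ideals of $I$ are ideals of $A$, because $IJ=JI=\{0\}$. Hence $I$ is simple, or one-dimensional with zero product, and we apply induction to $J\cong A/I\in\frV$.

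\emph{Step (ii)$\Rightarrow$(i).} Use Lemma~\ref{lDirectSimple}(i): an ideal $I$ of $A=A_1\times\cdots\times A_t$ is a sum of some simple factors with nonzero multiplication, plus a subspace of the zero-product part. A complement $J$ is the sum of the remaining such factors and a linear complement inside the zero-product part; it is an ideal. The same argument gives (vi) for finite $A$.

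\emph{Step (ii)$\Leftrightarrow$(iii).} A direct product of simple algebras is its own socle, so (ii) gives socle height $1$. Conversely, height $1$ means $A=\soc(A)$ is a direct sum of minimal ideals $M_i$. Each $M_i$ annihilates the others, so it is simple or one-dimensional with zero product.

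\emph{Step (ii)$\Rightarrow$(v)$\Rightarrow$(iv).} Subalgebras of finite algebras in $\frV$ lie in $\frV$, so (v) follows from (ii). Then (iv) holds because $\frV$ is locally finite and hence generated by its finite algebras.

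\emph{Step (iv)$\Rightarrow$(ii).} This is the main point. A finite algebra in $\frV$ is a factor of some $F_n(\frV)$, and by Lemma~\ref{lDirectSimple}(ii) factors of direct products of simple algebras are again such products. So it suffices to treat $F_n$. Since $F_n$ is finite, finitely many identities separate it from the full free algebra, and those are already witnessed by a finite subfamily. Thus $F_n$ lies in $\var\{A_1,\ld,A_k\}$ with the $A_i$ inherently semisimple. By the Birkhoff embedding (Theorem~\ref{etBirkhoof_Estimate}, applied to $A_1\times\cdots\times A_k$), $F_n$ is a subalgebra of a finite direct product of the $A_i$. It is therefore a subdirect product of subalgebras of the $A_i$, each of which is a product of simple algebras. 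By Lemma~\ref{lDirectSimple1}, $F_n$ is a direct product of simple algebras.

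The subtle points are this finiteness reduction and Lemma~\ref{lDirectSimple1} itself. In that lemma's induction, with zero-product one-dimensional factors, $B\cap A_t$ is either $0$ or $A_t$ because $\dim A_t=1$, so the lemma still applies.

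\emph{Step (i)$\Rightarrow$(vi).} Let $A\in\frV$ be arbitrary and $I$ a finite ideal. Apply (i) in finite subalgebras $C\supseteq I$ generated by $I$ and finitely many further elements. Note that $I$ is an ideal of $C$, and $C$ is finite by local finiteness. In each such $C$ there are complements $J_C$. By (ii) the complement is canonical: it is the annihilator of $I$ in $C$ intersected appropriately. More precisely, take $J=\{a\in A: aI=Ia=0\}$ when $I^2=I$. In general, split $I=I_1\oplus I_0$ with $I_0\subset I(A)$ zero-product, handle $I_1$ via annihilators and $I_0$ by a linear complement chosen compatibly. I expect this compatibility/compactness step to be the main obstacle. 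I would first try the annihilator definition and check $A=I\oplus J$ elementwise inside a finite $C$ containing $I$ and the given element. The converse (vi)$\Rightarrow$(i) is trivial.
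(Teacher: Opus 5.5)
Your arguments for (i)$\Leftrightarrow$(ii)$\Leftrightarrow$(iii)$\Leftrightarrow$(iv)$\Leftrightarrow$(v) are correct and follow the paper. In (iv)$\Rightarrow$(ii) you also supply the reduction to finitely many generating algebras (one witnessing algebra for each nonzero element of the finite $F_n$), which the paper skips.

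\textbf{The gap is (i)$\Rightarrow$(vi).} You leave it as a plan, and the plan for the zero-product part fails as stated.

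The annihilator half is fine. Put $I_1=I^2$ and $J_1=\{a\in A: aI_1=I_1a=\{0\}\}$. Work inside a finite subalgebra $C\supseteq I$; it is a direct product of simple algebras, and $I_1$ is a sum of its factors with nonzero product. This shows that $I_1$ and $J_1$ are ideals of $A$ and $A=I_1\oplus J_1$.

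But an arbitrary linear complement $L$ of $I_0$ in $J_1$ is not an ideal. Since $I_0$ annihilates $A$, we have $J_1^2=L^2$, so $L$ is even a subalgebra only if $L\supseteq J_1^2$.

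The missing idea is that $I_0\cap A^2=\{0\}$. Every element of $A^2$ is a finite sum of products, so it lies in $C^2$ for some finite $C\supseteq I$. In a direct product of simple algebras $C^2\cap\mathrm{Ann}(C)=\{0\}$, and $I_0\subseteq\mathrm{Ann}(C)$.

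Now choose $J$ with $J_1^2\subseteq J\subseteq J_1$ and $J_1=I_0\oplus J$. This is possible since $I_0\subseteq J_1$ and $I_0\cap J_1^2=\{0\}$. Because $I_1J_1=J_1I_1=\{0\}$, we get $AJ+JA\subseteq J_1^2\subseteq J$. So $J$ is an ideal and $A=I\oplus J$.

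This is essentially the paper's route. The paper peels off simple ideals of $I$ one at a time. For those with nonzero product it uses $A=I_1\oplus\mathrm{Ann}(I_1)$; for the one-dimensional ones it uses a complement containing $A^2$.

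\textbf{Alternative fix.} The compactness step you were worried about is not an obstacle. It gives an independent proof of (i)$\Rightarrow$(vi) that does not even need (ii).

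For a finite subalgebra $C\supseteq I$, let $\mathcal{J}_C$ be the set of ideals $J$ of $C$ with $C=I\oplus J$. This set is finite, and nonempty by (i). For $C\subseteq C'$, the map $J'\mapsto J'\cap C$ sends $\mathcal{J}_{C'}$ into $\mathcal{J}_C$: if $c=i+j'$ with $c\in C$ and $i\in I$, then $j'=c-i\in C$.

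By local finiteness, the finite subalgebras containing $I$ form a directed set. So this inverse system of nonempty finite sets has a compatible family $(J_C)$, and $\bigcup_C J_C$ is an ideal of $A$ complementing $I$.

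Either route closes the gap, but as written your proposal does not establish (vi).
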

\begin{proof}
$\mathrm{(vi)}\to\mathrm{(i)}\to\mathrm{(ii)}\to \mathrm{(iii)}$ are obvious.

$\mathrm{(ii)}\to \mathrm{(i)}$ follows from Lemma \ref{lDirectSimple} (i).

$\mathrm{(iii)}\to \mathrm{(ii)}$  Any finite algebra $A\in\frV$ is the direct sum of its minimal ideals. These ideals must be simple themselves because their ideals are the ideals of the whole algebra.

$\mathrm{(ii)}\to \mathrm{(v)}\to \mathrm{(iv)}$ are obvious.


$\mathrm{(iv)}\to \mathrm{(ii)}$ By Lemma \ref{lDirectSimple} (ii), it is enough to prove that any  $\frV$-free algebra $F_n$ is the direct product of simple algebras. By Lemma \ref{minalg}, $F_n$ is contained as a subalgebra in the finite direct product of inherently semisimple algebras. Because subalgebras of inherently semisimple algebras are inherently semisimple themselves, this embedding of $F_n$ can be subdirect in the direct product of simple algebras in $\frV$. But by Lemma \ref{lDirectSimple},  a subdirect product of finitely many simple algebras is isomorphic to the direct product of some of them.

$\mathrm{(ii)}, \mathrm{(i)}\to \mathrm{(vi)}$ If $I$ is a simple direct factor of a finite algebra $A$, then the annihilator  $J$  of $I$ it is either $A$ or the direct complement of $I$. In both cases it is an ideal of $A$. Hence the same is true if an ideal $I$ is the direct product of several simple ideals of $A$. Then the condition (ii) and Lemma \ref{lDirectSimple} (i) imply that the annihilator of any ideal $I$ of a finite algebra $A\in\frV$ is an ideal of $A$. Since the variety $\frV$ is locally
finite, it follows that an annihilator of a finite ideal in arbitrary algebra $A\in\frV$ is an ideal.

By the same Lemma, if $I$ is a simple ideal of a finite algebra $A\in\frV$ and $I^2\ne \{ 0\}$, then
$A$ is the direct sum of $I$ and the annihilator of $I$ in $A$, and if $I^2=\{ 0\}$, then $I\cap A^2=\{ 0\}$. Since $\frV$ is locally finite, the same alternative holds for a finite simple ideal in any algebra $A\in\frV$. In both cases, we have a complementary ideal to $I$ in $A$. Indeed, if $I\cap A^2=\{ 0\}$, then the vector apace $I+A^2/A^2$ has a direct complement $J/A^2$ in the factor-algebra $A/A^2$, and the ideal $J$ is the direct complement of $I$ in $A$.

Let now a finite ideal $I$ of an algebra $A\in\frV$ be the direct sum of simple ideals $I_1,\dots, I_k$. Then by property (i), $I_1$ is an ideal in every finite subalgebra of $A$. Since $A$ is a locally finite algebra $I_1$ is an ideal of the whole algebra $A$. 
 
For a simple ideal $I_1$, we have already obtained a complementary ideal: $A= I_1\oplus J$. Now
the intersection $I\cap J$ is an ideal of $J$ whose order is less than $|I|$. By induction on the order, we have $J=(I\cap J)\oplus E$ for some ideal $E$, whence $A = I\oplus E$, as required.

\end{proof}

\begin{Prop}\label{clMinSub}
Let $\frV$ be a locally finite variety of socle height 1. Then all subideals of any algebra $A\in\frV$ are ideals in $A$.
\end{Prop}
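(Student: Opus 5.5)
By induction on the subideal depth, it suffices to show the following: if $B$ is an ideal of $C$ and $C$ is an ideal of $A\in\frV$, then $B$ is an ideal of $A$. Indeed, once this case is known, a subideal series $B=B_0\subset B_1\subset\cdots\subset B_s=A$ shortens step by step. Apply the case to $B_{s-2}\subset B_{s-1}\subset A$ to see that $B_{s-2}$ is an ideal of $A$, then repeat.

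I first reduce to finite algebras. Let $b\in B$ and $a\in A$. The subalgebra $A'$ generated by $a$, $b$ and finitely many auxiliary elements is finite, because $\frV$ is locally finite. Set $C'=C\cap A'$ and $B'=B\cap A'$. Then $C'$ is an ideal of $A'$ and $B'$ is an ideal of $C'$. If the statement holds in the finite algebra $A'$, then $ab$ and $ba$ lie in $B'\subset B$. So we may assume that $A$ is finite.

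For finite $A$, Theorem \ref{epSSSA}(i) gives an ideal $J$ with $A=C\oplus J$. Hence $JC\subset J\cap C=\{0\}$ and $CJ=\{0\}$. Since $C$ is a finite algebra in $\frV$, Theorem \ref{epSSSA}(i) applied to $C$ gives an ideal $E$ of $C$ with $C=B\oplus E$, so $BE=EB=\{0\}$. Now take $a\in A$ and write $a=c+j$ with $c\in C$ and $j\in J$. For $b\in B$ we get $ab=cb+jb=cb\in B$, because $jb\in JC=\{0\}$ and $cb\in CB\subset B$. Symmetrically $ba\in B$. So $B$ is an ideal of $A$.

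The direct-sum decompositions do all the work once we are in the finite case, so the main point needing care is the reduction to finite algebras. One must check that intersecting with a finitely generated subalgebra preserves the relation ``ideal of an ideal'', and it does, since the ideal conditions are closed under intersection with $A'$. Alternatively, one can skip the reduction entirely: Theorem \ref{epSSSA}(vi) applies directly to a finite $B$ or $C$ in an arbitrary $A$, but $B$ need not be finite, so the local argument above is the safer route.
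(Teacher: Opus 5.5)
Your proof is correct and follows essentially the paper's route: reduce the subideal to the case of an ideal of an ideal, pass to a finite (finitely generated) subalgebra using local finiteness, and settle the finite case with Theorem \ref{epSSSA}. The difference is cosmetic. In the finite case you use only the complementing ideal $J$ of $C$ from part (i), and the splitting $C=B\oplus E$ is never actually used in your computation. The paper instead invokes the product-of-simples structure from part (ii) together with the description of ideals in Lemma \ref{lDirectSimple}(i).
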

\begin{proof}
 
If $A$ is finite then by  Theorem \ref{epSSSA} (ii), the algebra $A$ is the direct product
 of simple algebras. It follows from the description of the ideals in $A$ provided by Lemma \ref{lDirectSimple} (i),
 that an ideal $J$ of an ideal $I$ of the algebra $A$ is itself an ideal of $A$. If this property fails
 for an ideal $I$ and a subideal $J\subset  I$ of an infinite algebra, then there is a finitely generated
 subalgebra $B\subset  A$ such that the subideal $B\cap J$ is not an ideal in $B$. Since  $\frV$ is locally finite variety, we have that $B$ is a finite
 algebra, a contradiction. The proposition is proved.
 
\end{proof}

\subsection{Injective algebras in locally finite varieties.}\label{inj}

\begin{Lemma} Let $\frV$ be a variety of algebras generated by a finite algebra $A$.
Then the order of the monolith $M$ of any monolithic algebra $B\in\frV$ does not exceed the
maximum $m$ of the orders of all chief sections of $A$.

\end{Lemma}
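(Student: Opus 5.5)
The plan is to reduce to finite monolithic algebras and then invoke Corollary \ref{cc}.

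First, suppose $B\in\frV$ is monolithic with monolith $M$. Pick any nonzero element $b\in M$. Since $\frV=\var A$ is locally finite, I want a finitely generated subalgebra $B'$ of $B$ that still sees the monolith. The obstacle is that $M$ need not be finitely generated as an ideal in a useful way, and a subalgebra of $B$ need not be monolithic.

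So I would argue by contradiction. Assume $|M|>m$. Then $M$ contains a finite subset $Y$ with $|Y|>m$; I would take $Y$ to be $m+1$ distinct elements, or a subspace of dimension $>\log_q m$.

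For each nonzero $y\in Y$, the ideal of $B$ generated by $y$ contains $M$, hence contains every other element $y'\in Y$. So each $y'$ is expressed through finitely many left and right multiplications applied to $y$, with coefficients from $B$, plus linear combinations. This uses only finitely many elements of $B$. Let $C$ be the subalgebra of $B$ generated by $Y$ together with all these finitely many multipliers, for all pairs $y,y'\in Y$. Then $C$ is finitely generated, so it is finite by local finiteness, and $C\in\frV$.

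Next, let $J$ be the ideal of $C$ generated by the span of $Y$. Since every nonzero $y\in Y$ generates, inside $C$, an ideal containing all of $Y$, I claim $J/K$ has order $>m$ for a suitable chief factor. To see this, take a chief series of $C$ passing through $J$, and let $K$ be the largest ideal in that series below $J$ that does not contain all of $Y$. Cleaner still: let $K$ be a maximal ideal of $C$ contained in $J$ that misses some element of $Y\setminus\{0\}$. In fact any ideal of $C$ containing a nonzero element of the span of $Y$ contains all of $Y$ (arrange $Y$ to be a subspace and use all its nonzero elements). So $K\cap \langle Y\rangle=\{0\}$, and $\langle Y\rangle$ injects into $J/K$.

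Any ideal $L$ with $K\subsetneq L\subseteq J$ contains an element outside $K$. This is the main obstacle: I need $L$ to meet $\langle Y\rangle$. I would handle it by choosing $K$ maximal among ideals of $C$ inside $J$ meeting $\langle Y\rangle$ trivially. Then any strictly larger such $L$ meets $\langle Y\rangle$, hence contains $Y$, hence equals $J$. So $J/K$ is a chief factor of $C$ of order at least $|\langle Y\rangle|>m$.

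Finally, Corollary \ref{cc} says the chief factors of $C\in\var A$ have order at most $m$, which is a contradiction. Hence $|M|\le m$. If $M$ is finite, one can simply take $Y=M$ from the start.
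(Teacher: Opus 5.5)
Your argument is correct and is essentially the paper's proof: capture the finitely many multipliers witnessing that each nonzero element of a large finite piece of $M$ generates the rest of it as an ideal, pass to the resulting finite subalgebra $C$, and find a chief section of $C$ into which that piece injects, contradicting Corollary~\ref{cc}. The differences are cosmetic. You take a finite subspace $Y\subseteq M$ where the paper takes $m$ nonzero elements together with their pairwise differences, so injectivity of $Y\to J/K$ replaces the paper's distinctness of cosets. You also explicitly justify the existence of the required chief section by taking $K\subseteq J$ maximal with $K\cap Y=\{0\}$, a step the paper only asserts.
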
\label{chieffact}

\proof The following is similar to the `chief centralizer argument' used for the groups in \cite{KN} (see also \cite[Section 5.2]{HN}). If $B$ is finite, this follows from Corollary \ref{cc}. So we need to deal with the case where $M$ is a monolith of an infinite-dimensional algebra $B$. Assume that $M$ contains $m$ different non-zero elements $b_1,\ld, b_m$. Then the differences $b_{ij} = b_i-b_j$ belong to $M$ and are also non-zero for all $i>j$. Since $M$ is minimal, it is generated, as an ideal, by each of these $ \frac{m(m+1)}{2}$ elements in $B$. Let $S$ be the set of these elements. Since one
needs finitely many element of $B$, to show that one of the element of $S$ belongs to an ideal generated
by another element of $S$, there is a finitely generated, and so finite, subalgebra $C\subset  B$
such that the 'ideal closure' of any element from $S$ in $C$ contains $S$. So there is
a chief section $L/K$ of $C$ such that $S\subset L$ and $S\cap K=\varnothing$. Since $b_i-b_j\notin K$
for $i\ne j$, the non-zero cosets $b_1+K,\dots, b_m+K$ are pairwise different, and so $|L/K|>m$, a contradiction. 

\endproof 

\begin{Prop} \label{resi}If a finitely generated variety $\frV= \var A$ has socle height $1$,
then every algebra $B\in \frV$ is residually finite.
\end{Prop}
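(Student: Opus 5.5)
Fix a nonzero element $b\in B$; we must find an ideal $K$ of $B$ with $b\notin K$ and $B/K$ finite. Every algebra is a subdirect product of subdirectly irreducible (monolithic) algebras. So first choose, by Zorn's lemma, an ideal $K$ maximal with respect to $b\notin K$. Then $B/K$ is monolithic, and its monolith $M$ is the ideal generated by $b+K$. It therefore suffices to show that every monolithic algebra $C\in\frV$ is finite. Apply this to $C=B/K$ and take the canonical epimorphism $B\to B/K$, which does not kill $b$.

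So let $C\in\frV$ be monolithic with monolith $M$. By the preceding Lemma, $M$ is finite, since $|M|$ is at most the maximal order of a chief section of $A$. Now use socle height $1$ through Theorem \ref{epSSSA}(vi): $\frV$ is locally finite, so every finite ideal $I$ of any algebra in $\frV$ has a complementary ideal $J$ with $C=I\oplus J$. Apply this to $I=M$ to get $C=M\oplus J$. If $J\ne\{0\}$, then $J$ is a nonzero ideal meeting $M$ trivially. This contradicts the fact that $M$ is contained in every nonzero ideal. Hence $J=\{0\}$ and $C=M$ is finite, which completes the argument.

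The main point to check is that Theorem \ref{epSSSA}(vi) applies to arbitrary, possibly infinite, algebras of $\frV$. We need this because $C=B/K$ may a priori be infinite. Condition (vi) is stated exactly for any $A\in\frV$ and any finite ideal, and (iii)$\Rightarrow$(vi) is established there, so this is the key step. The only other ingredient is finiteness of the monolith, which is precisely what the Lemma just proved gives. The reduction to monolithic quotients via a maximal ideal avoiding $b$ is standard.
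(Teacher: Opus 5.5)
Your argument is correct and is essentially the paper's proof. You reduce to a monolithic algebra, get finiteness of the monolith from the preceding lemma, and use Theorem~\ref{epSSSA}(vi) to split the monolith off as a direct summand, which forces the complement to vanish; the one difference is that you spell out, via Zorn's lemma and an ideal maximal with respect to avoiding $b$, the reduction the paper states only as ``one may assume that $B$ is monolithic.''
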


\proof One may assume that the algebra $B$ is monolithic. By Lemma \ref{chieffact}, the
monolith $M$ of $B$ is finite. By Theorem \ref{epSSSA}, $B = M\oplus J$ for some ideal $J$
of $B$, whence $J=\{ 0\}$ since $B$ is monolithic. Thus $|B|=|M| < \infty$, which proves the proposition.

\endproof

Recall that a subalgebra $A$ is a \emph{retract} in an algebra $B$ if there is an ideal $I$ in $A$ such that $B=A+I$ and $A\cap I = \{ 0\}$, that is, this sum is semidirect.
An algebra $A$ is called \emph{injective} in $\frV$  if it is a retract in any  algebra of $\frV$ where it is contained as a subalgebra. 

\begin{Thm}\label{epRetracts}
The following properties of a locally finite variety $\frV$ are equivalent.
\begin{enumerate}
  \item [$\mathrm{(i)}$]  Any finite subalgebra $A$ of a finite  algebra $B\in\frV$ is a retract.
  \item [$\mathrm{(ii)}$] $\frV$ is generated by a finite set $\cA$ of finite minimal algebras.
  \item [$\mathrm{(iii)}$] Every finite algebra is injective in $\frV$.
  \end{enumerate}
\end{Thm}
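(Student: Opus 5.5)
The plan is to prove (i)$\to$(ii)$\to$(iii)$\to$(i). The last implication is trivial, because a finite algebra of $\frV$ that is injective is in particular a retract of every finite algebra containing it. A retract $B=A+I$ with $I$ an ideal and $A\cap I=\{0\}$ gives a homomorphism $B\to A$ that is the identity on $A$, with kernel $I$. I will use this reformulation throughout.

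\textbf{(i)$\to$(ii).} Since $\frV$ is locally finite, it is generated by its critical finite algebras (Subsection \ref{sSKA}), and these are monolithic. So it suffices to show that every finite monolithic $B\in\frV$ with monolith $M$ is a minimal algebra.
\begin{enumerate}
\item[(a)] Let $C$ be a proper nonzero subalgebra of $B$. By (i) it is a retract, $B=C+I$, and $I\ne\{0\}$. Hence $I\supset M$, and therefore $C\cap M=\{0\}$.
\item[(b)] The monolith $M$ is a nonzero subalgebra of $B$ with $M\cap M\ne\{0\}$. By (a) it cannot be proper, so $M=B$ and $B$ is simple.
\item[(c)] Now every proper nonzero subalgebra $C$ meets $M=B$ trivially, which is absurd. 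So $B$ has no proper nonzero subalgebras, that is, $B$ is minimal.
\end{enumerate}
By Proposition \ref{finset} there are only finitely many such algebras up to isomorphism, which gives (ii).

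\textbf{(ii)$\to$(iii), finite case.} Let $\frV=\var\cA$ with $\cA$ a finite set of minimal algebras. By Birkhoff's construction (Theorem \ref{etBirkhoof_Estimate}, applied to the product of the algebras of $\cA$), each $F_n(\frV)$ is a subalgebra of a finite direct product of algebras from $\cA$. The projection of any subalgebra onto a minimal factor is either zero or the whole factor. Hence $F_n$ is a subdirect product of some of the factors, and by Lemma \ref{lDirectSimple1} it is a direct product of minimal algebras. By Lemma \ref{lDirectSimple} (ii) the same holds for every finite $B\in\frV$.

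Now take finite $A\subset B=S_1\oplus\cdots\oplus S_k$, where $A=T_1\oplus\cdots\oplus T_\ell$ with each $T_i$ minimal. Lemma \ref{Diagonal} is stated for a single type $S$, but its argument adapts to mixed types: a nonzero projection of a minimal $T_i$ onto a minimal $S_j$ is onto and hence an isomorphism, and products of distinct $T_i$ vanish. So each $T_i$ is a twisted diagonal over a set $J_i$ of coordinates, and the sets $J_i$ are pairwise disjoint.
\begin{enumerate}
\item[(a)] Pick one index $j_i\in J_i$ for each $i$.
\item[(b)] The coordinate projection $B\to\bigoplus_i S_{j_i}$ is a homomorphism whose restriction to $A$ is an isomorphism.
\item[(c)] Composing with the inverse of that isomorphism gives a retraction $B\to A$; its kernel is the required ideal complement.
\end{enumerate}

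\textbf{(ii)$\to$(iii), arbitrary $B$, and the main obstacle.} This is the step I expect to be hardest: for $B\in\frV$ arbitrary and $A$ finite, there is no finite direct decomposition of $B$ to work with. I would use a compactness argument.
\begin{enumerate}
\item[(a)] For each finite subalgebra $C$ with $A\subset C\subset B$, let $R(C)$ be the set of retractions $C\to A$. Each $R(C)$ is finite, and it is nonempty by the finite case.
\item[(b)] The finite subalgebras containing $A$ form a directed family, by local finiteness. Restriction maps $R(C')\to R(C)$ for $C\subset C'$.
\item[(c)] An inverse limit of nonempty finite sets over a directed family is nonempty (Tychonoff, or K\"onig's lemma when $B$ is countable).
\item[(d)] A compatible family of retractions glues to a homomorphism $B\to A$ that is the identity on $A$. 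This makes $A$ a retract of $B$, i.e. $A$ is injective in $\frV$.
\end{enumerate}
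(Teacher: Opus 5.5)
\textbf{The gap.} It is in the finite case of (ii)$\to$(iii), where you assert that the sets $J_i$ are pairwise disjoint. Lemma \ref{Diagonal} assumes $S^2\ne\{0\}$, and you drop that hypothesis when adapting it. Its argument only shows that $j\in J_i\cap J_{i'}$ forces $S_j^2=\pi_j(T_i)\pi_j(T_{i'})=\pi_j(T_iT_{i'})=\{0\}$. That is a contradiction only when $S_j$ has nonzero multiplication. However, the one-dimensional algebra with zero multiplication is minimal and may belong to $\cA$.

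Here is a counterexample. In the variety of algebras with zero product, let $B=S_1\oplus S_2\oplus S_3$ with each $S_j$ one-dimensional, and let $A$ be spanned by $(1,1,0)$ and $(0,1,1)$. No nonzero vector $(a,a+b,b)$ of $A$ is supported on a single coordinate, so any two nonzero vectors of $A$ have overlapping supports. Hence no decomposition $A=T_1\oplus T_2$ has disjoint $J_1,J_2$. Your step (a) may then pick $j_1=j_2$, and the projection in step (b) is not injective on $A$.

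\textbf{The repair.} Separate the zero-product part.
\begin{enumerate}
\item[(a)] The zero-product factors of $B$ span the annihilator $I(B)$.
\item[(b)] A zero-product $T_i$ projects to zero on every $S_j$ with $S_j^2\ne\{0\}$, by minimality of $S_j$.
\item[(c)] A $T_i$ with $T_i=T_i^2\ne\{0\}$ projects to zero on every zero-product $S_j$.
\end{enumerate}
So $A=A_1\oplus A_0$, where $A_1$ lies in the sum $B_1$ of the factors with nonzero multiplication and $A_0=A\cap I(B)$. Your diagonal argument is valid inside $B_1$. Any linear complement of $A_0$ in $I(B)$ is automatically an ideal of $B$. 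Together these give the kernel of a retraction $B\to A$.

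Alternatively, use the paper's induction on the number $k$ of summands of $B$, which needs no diagonal analysis. By minimality, $A\cap S_1$ is $\{0\}$ or $S_1$.
\begin{enumerate}
\item[(a)] If it is $\{0\}$, complement $(A+S_1)/S_1$ in $B/S_1$.
\item[(b)] If it is $S_1$, complement $A\cap(S_2\oplus\cdots\oplus S_k)$ in $S_2\oplus\cdots\oplus S_k$.
\end{enumerate}
In both cases one reduces to $k-1$ summands.

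\textbf{The rest is correct.} Your (i)$\to$(ii), going through critical hence monolithic algebras, is a streamlined version of the paper's argument. Your inverse-limit argument for infinite $B$ is a genuinely different route. The paper uses socle height $1$ and Proposition \ref{resi} to find a finite-codimensional ideal $I$ with $A\cap I=\{0\}$, and then passes to $B/I$. Your compactness argument needs no residual finiteness, and in fact proves (i)$\Rightarrow$(iii) directly in any locally finite variety.
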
 
\proof $\mathrm{(i)}\to \mathrm{(ii)}$. The property (i) implies, that every ideal $A$ of a
finite algebra $B\in\frV$ can be directly complemented in $B$ with an ideal $C$. It follows that
$B$ is a direct product of finite simple algebras, and so the variety $\frV$ is generated by
a set of finite simple algebras. A proper non-zero subalgebra of a simple algebra cannot be a
retract. Therefore, all finite simple algebras in $\frV$ are minimal. We obtain property(ii) since
by Proposition \ref{finset}, the set of pairwise non-isomorphic minimal algebras of a locally
finite variety is finite.

 $\mathrm{(ii)}\to \mathrm{(iii)}$. Let $A$ be a finite subalgebra of an algebra  $B\in\frV$.
 Since $\frV$ has socle height $1$ by Theorem \ref{epSSSA}, and it is finitely generated by property (ii),  the algebra
 $B$ is residually finite by Proposition \ref{resi}. It follows that $B$ has an ideal $I$ of
 finite index such that $A\cap I=\{ 0\}$, because $A$ is a finite subalgebra.
 
 It suffices to prove that the factor-algebra $A+I/I$ is a retract in the finite algebra $B/I$,
 because if the ideal $J/I$ complements $A+I/I$ in $B/I$, then the ideal $J$ complements
 $A$ in $B$. So one may assume that $B$ is a finite algebra.  By Theorem \ref{epSSSA}, $B$ is the direct sum of  simple minimal algebras.
 
 So let $B=S_1\oplus\dots\oplus S_k$, where the simple ideals $S_i$ are minimal algebras.
 If $A\cap S_1=\{ 0\}$, then by induction the factor-algebra $A+S_1/S_1$ can be complemented
 with an ideal $J/S_1$ in $B/S_1$, whence $J$ is a complement of $A$ in $B$.
 Otherwise $A\cap S_1=S_1$ since $S_1$ is a minimal algebra. Again, by induction, there
 is a complement $J$ of $A\cap (S_2\oplus\dots\oplus S_k)$ in $S_2\oplus\dots\oplus S_k$,
 which is also a complement of $A$ in $B$. We have proved that the subalgebra $A$ is a retract of $B$.
 
 $\mathrm{(iii)}\to \mathrm{(i)}$ follows from the definitions.
 
\endproof

\subsection{Projective algebras in finitely generated varieties.}\label{ssProjective}

An algebra $B$ is called \emph{projective} in a variety  $\frV$ if it is contained as a retract in a $\frV$-free algebra. In particular, $\frV$-free algebras are projective. The equivalent definition
says that $B$ is projective, if for any epimorphism $\alpha: A\to B$, where $A\in\frV$, the is a
homomorphism $\beta: B\to A$ such that $\alpha\beta = \id_B$.
 
Given an algebra $B$, we say that an ideal $I$ of $B$ is \emph{semidirectly complementable} in $B$ if there is a subalgebra $C$ in $B$ such that $B=C\oplus I$.  We call an algebra $A$ \emph{nice} if  any ideal $I$ of any subalgebra $B\subset A$ is directly complementable in $B$. It follows
from this definition, that all sections of a nice algebra are nice. For example, the associative algebra of $2\times 2$ matrices, any $2$-dimensional Lie algebra and any $3$-dimensional simple Lie algebra are nice, while nilpotent algebras of class $c\ge 2$ are not.

\begin{Thm}\label{tBenign} Let a variety $\frV$ be generated by a finite family ${\cal A} = \{A_i\,|\,i=1,\ld,m\}$ of finite algebras $A_i$. Then the following are equivalent.
\begin{enumerate}
  \item[$\mathrm{(i)}$] All finite algebras of $\frV$ are projective in $\frV$;
 \item[$\mathrm{(ii)}$] All finite algebras of $\frV$ are nice;
  \item[$\mathrm{(iii)}$] All algebras of $\cA$ are nice.
\end{enumerate}
\end{Thm}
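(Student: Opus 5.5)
The plan is to read ``nice'' as the paper's examples indicate: every ideal $I$ of every subalgebra $B$ has a \emph{semidirect} complement, i.e.\ a subalgebra $C$ with $B=C+I$ and $C\cap I=\{0\}$. I will prove (i)$\to$(ii)$\to$(iii)$\to$(ii)$\to$(i). The two implications (i)$\to$(ii) and (ii)$\to$(i) are formal consequences of local finiteness. The real work is (iii)$\to$(ii), which amounts to showing that niceness survives the Birkhoff operations.

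\textbf{(i)$\to$(ii).} Let $B$ be a subalgebra of a finite algebra of $\frV$, and let $I$ be an ideal of $B$. Then $B/I$ is finite and lies in $\frV$, so it is projective by (i). Hence the natural epimorphism $\alpha:B\to B/I$ has a section $\beta$ with $\alpha\beta=\id$. The image $C=\beta(B/I)$ is a subalgebra, and it satisfies $C\cap I=\{0\}$ and $C+I=B$.

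\textbf{(ii)$\to$(iii).} This is immediate, since the algebras of $\cA$ are finite algebras of $\frV$.

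\textbf{(ii)$\to$(i).} Let $\alpha:A\to B$ be an epimorphism with $B$ finite and $A\in\frV$. Choose finitely many preimages of the elements of $B$. They generate a subalgebra $A'$, which is finite because $\frV$ is locally finite, and $\alpha(A')=B$. By (ii), the ideal $\Ker\alpha\cap A'$ of $A'$ has a complementary subalgebra $C$. Then $\alpha|_C:C\to B$ is an isomorphism, and its inverse is the required $\beta$.

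\textbf{(iii)$\to$(ii).} This is the main step. By Theorem \ref{etBirkhoof_Estimate}, applied to each $A_i$ and then combined, every finite algebra of $\frV$ is a factor-algebra of a subalgebra of a finite direct product of algebras from $\cA$. So it suffices to check three facts.
\begin{enumerate}
\item[(a)] Sections of nice algebras are nice. For a subalgebra this is clear from the definition. For a factor-algebra $A/J$, take a subalgebra $B/J$ with ideal $I/J$. Then $I$ is an ideal of $B$ and has a complement $C$ in $B$. The image $(C+J)/J$ is a subalgebra with $(C+J)/J+I/J=B/J$. It meets $I/J$ trivially, since $(C+J)\cap I=(C\cap I)+J=J$ by the modular law, using $J\subset I$.
\item[(b)] A direct product $A_1\times A_2$ of two nice algebras is nice. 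I prove this by induction on $|B|$ for subalgebras $B\subset A_1\times A_2$ and ideals $I$ of $B$.
\begin{itemize}
\item Let $K=B\cap(A_1\times 0)$, which is the kernel of the projection $\pi_2:B\to B_2\subset A_2$. Since $B_2$ is nice, $(I+K)/K$ has a complement $D/K$ in $B/K$. Then $D+I=B$ and $D\cap I\subset K$.
\item If $D\ne B$, the induction hypothesis applied to $D$ gives a complement $C'$ of the ideal $D\cap I$ in $D$. Then $C'+I\supset C'+(D\cap I)=D$, so $C'+I=B$, and $C'\cap I=C'\cap(D\cap I)=\{0\}$.
\item If $D=B$, then $I\subset K\subset A_1\times 0$. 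Multiplying $I$ by $b\in B$ only involves the first coordinate $\pi_1(b)$, so $I$ is an ideal of $B_1=\pi_1(B)\subset A_1$. Since $A_1$ is nice, $I$ has a complement $E$ in $B_1$. Put $C=B\cap(E\times A_2)$. For $b=(e+i,b_2)\in B$ we have $b-(i,0)\in C$, so $C+I=B$. Also $C\cap I\subset (E\cap I)\times 0=\{0\}$.
\end{itemize}
\item[(c)] By induction on the number of factors, (b) extends to all finite direct products. Together with (a), every finite algebra of $\frV$ is then nice.
\end{enumerate}
The delicate point, and the one I would verify most carefully, is the reduction in (b) from $B$ to the subalgebra $D$, in particular checking $D\cap I\subset K$. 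If that inclusion failed, the case $D=B$ would not force $I\subset A_1\times 0$ and the argument would break.
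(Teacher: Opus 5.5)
Your proof is correct. Your (i)$\to$(ii) is the same as the paper's, but you handle the hard direction by a genuinely different route.

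The paper never proves (iii)$\to$(ii) directly. It proves (iii)$\to$(i) inside a finite free algebra $F$:
\begin{enumerate}
\item it closes $\cA$ under monolithic sections and takes a minimal subdirect representation of $F$ (Lemma~\ref{minrep});
\item it splits $F=N\rtimes D$, where $N$ is the sum of the monoliths of the factors (minimal ideals of $F$) and $D$ lies in a proper subvariety generated by proper factor-algebras;
\item it invokes Lemma~\ref{lBenProj} together with an induction over closed subfamilies of sections.
\end{enumerate}
The conclusion is that every finite algebra is a retract of a free one.

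You instead show that finite nice algebras are closed under subalgebras, quotients and finite direct products, and the only real work is your product lemma (b). Your check that $D\cap I\subset K$ is sound: since $K\subset D$, the complement condition gives $D\cap(I+K)=K$. So both cases go through, and Birkhoff's embedding plus local finiteness finish the cycle.

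Your route is shorter and more elementary. It needs no monoliths, no minimal representations and no induction on closed subfamilies. That induction also has a tacit step: the paper must pass from ``ideals of free algebras of the smaller variety are complementable'' to the same property for the non-free algebra $D\cong F/N$, which goes through projectivity. Your argument also gives a slightly stronger fact, independent of any particular generating family: the finite nice algebras form a class closed under sections and finite direct products.

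The paper's route, in exchange, yields structural information about free algebras. It produces the splitting $F=N\rtimes D$ and the complementation lemma for $N\oplus D$, in the same spirit as the arguments of Theorems~\ref{pExtremal} and~\ref{tSHA}.
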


\begin{proof}
$\mathrm{(i)\to(ii)}$ If a finite algebra $A\in\frV$ is not nice then there is a subalgebra $B$ in $A$ in which there is an ideal $I$ without a complement in $B$. If $C=B/I$ then $C$ is not projective.

$\mathrm{(ii)\to(iii)}$ This is a trivial implication.

$\mathrm{(iii)\to(i)}$
Suppose all algebras $A_i\in\cA$ are nice.
To proceed, we need an auxiliary result.
\begin{Lemma}\label{lBenProj}
Let $B$ be a finite algebra, $B=N\oplus D$, where $N$ is the direct sum of minimal ideals of $B$ and $D$ a subalgebra. Suppose that every ideal of $D$ is semidirectly complementable in $D$.  Then each ideal $L$ of $B$ is semidirectly complementable in $B$.
\end{Lemma}
\begin{proof}
First, any ideal  $J\subset  N$ of $B$ is directly complementable by an ideal of $B$ inside $N$.
Indeed, if $J\ne N$, then there is a minimal ideal $M$ of $B$ from the direct decomposition of $N$ such
that $J\cap M=\{ 0\}$. Using  induction on the codimension of $J$, one may assume that the ideal
$J\oplus M$ of $B$ is complementable in $N$, hence so is $J$.

Returning to $L$, by the hypotheses, the ideal $(L+N)/N$ in $B/N\cong D$ is semidirectly complementable by a subalgebra $C/N$, where $C = N \oplus (D\cap C)$ is a semidirect sum. Set $E=D\cap  C$. Then,
\begin{equation}\label{eCAP}
 E\cap(L+N) = E\cap (C\cap (L+N))= E\cap N \ez.
\end{equation} 

So we have $B = (L+N) \oplus E$. Next we can write $N$ as the direct sum of ideals $L\cap N$ and $K$ of $B$, as noted above. Hence, $B = L +(K+E)$. If  $x\in L\cap(K+E)$ then $x = y+z$ where $y\in K$, $z\in E$, so that $z \in L+N$, and then by (\ref{eCAP}) we should have $z=0$. In this case, using $L\cap K=\{ 0\}$, we derive also $x=y=0$. As a result, the sum $L +(K+E)$ is semidirect, completing the proof of the lemma.
\end{proof}
Now we return to the proof of Theorem \ref{tBenign}. Let $A$ be a finite algebra in $\frV$. We have $A\cong F/J$ where $F=F_n(\frV)$, for some natural $n$. Since $\frV$ is locally finite,  $F$ is finite. So it is sufficient to prove that each ideal in each finite free, hence also every finite algebra in $\frV$, has semidirect complement.

As noted in Section \ref{esGF}, every finite algebra belongs to the variety generated by its monolithic sections.  We also have noted above that all sections of nice algebras are nice. So we may assume that all algebras $A_i$ are monolithic. Even more: we may assume that the set $\cA$ is closed under taking monolithic sections of its members.

Clearly there are only finitely many subsets with these properties in $\cA$. Accordingly, we have only finitely many subvarieties generated by these subsets. Let us consider only minimal closed subsets $\cal B$, i.e. such that the varieties generated by such $\cB$ are not generated by proper closed subsets of $\cB$.  This allows us to proceed by induction and assume that the desired property of ideals in free algebras of a variety $\frV$ already holds for the ideals of free algebras of any proper subvariety $\frV'$ of $\frV$ generated
by a closed system of monolithic sections of algebras in $\cA$ (in other words these ideals are semidirectly complementable in the free algebras $F_n(\frV')$). The basis of induction is an evident  case where  $\cal A$ is empty and the variety contains only zero algebra.

So, let $\frU$ be a nonzero variety generated by one of minimal subset $\cal A'$ of  $\cal A$. By Birkhoff's construction, a finitely generated free algebra  $F$ of $\frU$ is a subdirect product of a finite direct product $P$ of algebras $A'_i\in\cal A'$.

As usual, we may assume this emebedding minimal (see Section \ref{esMinRep}) and so by Lemma \ref{minrep} (iv), the monoliths $N_i$ of the factors $A'_i$ are minimal ideals in $F$. Since the factors $A'_i$ are nice, we have semidirect factorizations $A'_i = N_i+S_i$ and $P=N+S$, where $N$ is the product of monoliths $N_i$, while $S$ is the product of   $S_i$. Since $F$ contains $N$, we can say that the whole algebra $F$ is  a semidirect product of an ideal $N$  and $D\subset  S$.

Notice that $S$ and $D$ belong to the variety $\frW$ generated by proper factor-algebras of the algebras $A'_i$. So, $\frW$ is generated by a proper closed subset  ${\cal A}''$ of ${\cal A}'$.  By induction, the ideals of $D$ are semidirectly complementable in $D$.  Now we can apply Lemma \ref{lBenProj} to $F$, proving that each ideal of $F$ is semidirectly complementable. Applying induction, we complete the proof.
\end{proof}

\begin{problem}\label{eFinProj} Let $\frV$ be a locally finite variety of algebras  over a field.
Is it true, that all finite projective  algebras in $\frV$ are $\frV$-free if and only if  $\frV$ is locally nilpotent?
 \end{problem}

\subsection{Anti-Schreier varieties.}\label{ssAntiSchreier}

All finite algebras in the varieties considered in Subsection \ref{ssProjective} are ebmeddable 
in free algebras of those varieties. So one can modify Condition (i) in Theorem \ref{tBenign} by requiring that \emph{all} algebras in a variety $\frV$ can be embedded in free algebras. We can call such varieties \emph{anti-Schreier}. This contrasts with once popular \emph{Schreier} varieties where all subalgebras of free algebras were free. Similar to Schreier varieties, Anti-Schreier varieties are scarce. 

\begin{Thm}\label{tEmbedToFree}
Let $\frV$ be a variety of algebras over any field. If every algebra $A\in\frV$ can be embedded in a $\frV$-free algebra then $\frV$ is the trivial variety.
\end{Thm}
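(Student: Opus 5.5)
The plan is to assume $\frV$ is nontrivial and to exhibit an algebra in $\frV$ that cannot sit inside any $\frV$-free algebra. Only relations inside free algebras will be used, converted into identities by Lemma \ref{lRelX}. Since $F(\frV,X)\subset F(\frV,Y)$ for $X\subset Y$, every embedding may be assumed to land in a free algebra $F=F(\frV,Y)$ with $Y$ infinite and as large as needed. I split into two cases according to Proposition \ref{pFermat}.

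\emph{Case 1: $\frV$ satisfies a Fermat identity $x=f(x)$.} Then no algebra of $\frV$ contains a nonzero element $a$ with $a^2=0$: otherwise every monomial of degree $\ge 2$ in $a$ would vanish, giving $a=f(a)=0$. So in this case I would look for a nonzero algebra in $\frV$ whose image in a free algebra is forced to contain a square-zero element, contradicting the Fermat identity. The natural test algebras are the one-dimensional algebra $\F$, which by Proposition \ref{pQuasi} lies in $\frV$ unless $\frV$ has a suitable quasihomogeneous identity, and the direct powers of minimal algebras of $\frV$. For these I expect a substitution argument of the same kind as in Case 2 to work: writing an embedded idempotent-like element as a polynomial in finitely many free generators, one substitutes for the remaining generators. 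I am least sure of the details of this case.

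\emph{Case 2: $\frV$ has no Fermat identity.} By Proposition \ref{pFermat}, the one-dimensional algebra with zero multiplication lies in $\frV$, and hence so does every algebra $Z$ with zero multiplication, of any dimension. I choose $Z$ of dimension larger than the number of free generators on which any given finite set of elements depends; in fact it is enough that $Z$ be infinite-dimensional. Suppose $Z\subset F$. Pick a nonzero $u\in Z$; it is a polynomial $u(x_1,\ld,x_n)$ in finitely many generators. Because $Z$ is infinite-dimensional and $F(\frV,\{x_1,\ld,x_n\})$ is a retract of $F$, I can choose $v\in Z$ whose image under the retraction killing $x_1,\ld,x_n$ is nonzero. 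We have the relations $uv=vu=0$.

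Next I apply suitable endomorphisms of $F$ fixing $x_1,\ld,x_n$ to the relation $uv=0$, in the spirit of the proof of Theorem \ref{subidnilp} and of Remark \ref{rAnnihilators}. The goal is to extract a relation $u(x_1,\ld,x_n)\,y=0$ with $y$ a new free generator. Lemma \ref{lRelX} then makes $u(x_1,\ld,x_n)y=0$, and symmetrically $yu(x_1,\ld,x_n)=0$, identities of $\frV$. In particular $u$ is a nonzero element of the annihilator of the free algebra, and the identity $u(x_1,\ld,x_n)y=0$ holds in all of $\frV$.

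To finish, I apply the hypothesis once more, now to an algebra that violates this identity. For example, take the free algebra $F(\frV,\{x_1,\ld,x_n,y\})$ placed inside a suitable semidirect-type extension built as in Proposition \ref{epSemidirect}, with a zero-multiplication module in which $u$ acts nontrivially. If the extension cannot be made to lie in $\frV$, I would instead iterate the argument to push $u$ into $\bigcap_k F^k$, which should force $u=0$.

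The main obstacle is the step extracting $u\,y=0$ from $uv=0$. The element $v$ may mix the old variables $x_1,\ld,x_n$ with new ones, and the identities of $\frV$ need not be homogeneous. Over a finite field, Proposition \ref{pQuasi} gives only quasihomogeneous decompositions rather than homogeneous ones. I would first try to isolate a single new variable by specializing all other new variables to $0$ and using linearity in the chosen one where that is available.
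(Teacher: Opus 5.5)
\textbf{The target is wrong, and this shows where the argument must break.} Read literally (``trivial variety''), the statement fails for $\frN_1$, the variety defined by $xy=0$. In $\frN_1$ every algebra is itself free on a basis, so the hypothesis holds. The paper's proof concludes only that $xy=0$ holds in $\frV$, and that is the most anyone can prove.

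\textbf{Case 2 uses the wrong test algebras.} Zero-product algebras cannot serve as the witness. Besides $\frN_1$, consider $\frN_2$, which is also a Case 2 variety. There $F(\frN_2,Y)^2$ is a zero-product subalgebra of dimension $|Y|^2$, so every zero-product algebra embeds in a free algebra. The relation you hope to extract, for instance $(x_1x_2)y=0$, is then just a true identity of $\frV$. More generally, anything you obtain through Lemma \ref{lRelX} holds in all of $\frV$, so no algebra of $\frV$ can ``violate'' it. A nonzero annihilator in a free algebra is no contradiction either, since nilpotent varieties always have one.

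\textbf{The choice of $v$ is unjustified.} $Z$ can be infinite-dimensional and still lie in the kernel of the retraction killing $x_1,\ld,x_n$. For example, in $\frN_2$ the algebra $\mathrm{span}\{x_1y\mid y\in Y\}$ is an infinite-dimensional zero-product subalgebra killed by $x_1\mapsto 0$.

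\textbf{Case 1 contains no argument.} A finite-substitution approach cannot work there. The variety $\var\F$ satisfies $x=x^q$, yet by Theorem \ref{tSFAMV} every countable algebra of $\var\F$ embeds in the countable-rank free algebra. So the witness must have large cardinality, and cardinality has to enter the proof essentially.

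\textbf{The missing idea: the right witness plus a counting argument.} The paper takes $H=\bigoplus_{\alpha\in I}F_\alpha$. Each $F_\alpha$ is $\frV$-free on generators $x_{\alpha,\beta}$, $\beta\in J$, with $|I|=|J|=\aleph$ larger than the cardinality $\aleph'$ of the countable-rank free algebra. In $H$, products between different summands vanish, while distinct generators of one summand multiply freely. Suppose $H\subset F(Y)$.
\begin{itemize}
\item The automorphisms of $F(Y)$ permuting $Y$ have at most $\aleph'$ orbits.
\item By pigeonhole, one orbit contains $\aleph$ of the generators $x_{\alpha,\beta}$ for each of $\aleph$ indices $\alpha$. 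All of these have the form $v(y_{i_1},\ld,y_{i_m})$ for a single fixed $v$.
\item Thinning the index sets (a sunflower-type argument on the variables that occur) yields $x_{\alpha,\beta}$, $x_{\alpha,\beta'}$ and $x_{\alpha',\beta''}$ with $\alpha\ne\alpha'$. It also yields a well-defined endomorphism of $F(Y)$ sending $(x_{\alpha,\beta},x_{\alpha',\beta''})$ to $(x_{\alpha,\beta},x_{\alpha,\beta'})$.
\item Since $x_{\alpha,\beta}x_{\alpha',\beta''}=0$ in $H$, its image gives $x_{\alpha,\beta}x_{\alpha,\beta'}=0$ for two distinct free generators of $F_\alpha$. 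Lemma \ref{lRelX} then gives $xy=0$ in $\frV$.
\end{itemize}
This handles both of your cases at once, with no Fermat-type dichotomy needed.
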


\begin{proof} Let us choose a cardinality  $\aleph$ which is greater than the cardinality  $\aleph'$ of a free algebra of countable rank in $\frV$. Let $F_{\alpha}$ be a $\frV$-free algebra of rank  $\aleph$ with free generators $x_{\alpha,\beta}$, $\alpha \in I$, $\beta \in J$, $|I|=|J|= \aleph $.  Let us denote by  $H$ the direct sum 
\[
H=\bigoplus_{\alpha \in I} F_{\alpha}.
\]
Assume that $H$ can be isomorphically  embedded in a $\frV$-algebra $F$ with a free generating set $Y$: $F = F(Y)$.

We denote by $G$ the subgroup of $\Aut F$, permuting the free generators. It is obvious that every element of $F$ can be mapped by an automorphism from $G$ into an element of a fixed subalgebra $F(Z)$, with a countable set of free generators $Z\subset Y$. As a results, the cardinality of the set of orbits of the action of the group $G$ on $F$ does not exceed $\aleph'$.

We fix one of these $G$-orbits $C$ and for $\alpha\in I$ define a subset  $J'_{\alpha}\subset J$ as the set of
all indices $\beta$ such $x_{\alpha,\beta}\in C$. Then, by definition, the subset $I'= I'(C)\subset I$  consists of all indices  $\alpha\in I$ such that  the cardinality of $ J'_{\alpha}$ is $\aleph$.

Assume that the cardinality of $I'(C)$ is less than $\aleph$ for every orbit $C$. Since the number of $G$-orbits is at most $\aleph'$, the union $L$ of all subsets $I'(C)$ has cardinality less than $\aleph$, while for $\alpha\not\in L$ all subsets of  $J'_{\alpha}(C)$ do not cover the set $\{x_{\alpha,\beta}\}_{\beta\in J}$. In this case, some element $x_{\alpha,\beta}$ does not belong to any orbit, a contradiction.
It then follows that there exists an orbit $C$ for which the set $I'= I'(C)$ has cardinality $\aleph$. 

There is an element $v = v(z_1,\dots, z_m)\in F(Z)$ such that for each element  $x_{\alpha,\beta}$ with index  $(\alpha,\beta)\in \bigcup_{\alpha\in I'}(\alpha, J'_{\alpha})=K$ there is an automorphism in $G$ mapping this element to $v$. We will choose some subsets of $K$ to obtain additional properties of elements $x_{\alpha,\beta}$ with indices $(\alpha,\beta)$ from these subsets.

For $(\alpha,\beta)\in K$, the element $v(z_1,\dots,z_m)$ is the image of $x_{\alpha,\beta}$ under an automorphism
permuting the generators of $Y$. So we have $x_{\alpha,\beta} = v(y_{i_1},\dots,
y_{i_m})$, where the variables $y_{i_1},\dots,
y_{i_m}$ are different elements of $Y$, depending on $(\alpha,\beta)$. If we fix $\alpha\in I_{\alpha},$
then in the infinite set $J_{\alpha}$, one can choose an infinite subset $J_{\alpha}'$ such
that either all variables $y_{i_1}=y_{i_1}(\beta)$ are different for different $\beta\in J_{\alpha}'$,
or all of them are equal. Let us denote this set of variables (with one or infinitely many elements) by $Y_{\alpha,1}$. For the same reason, one may assume that the same property holds for
the variables in $v$ with numbers $2,\ld,m$, which give sets $Y_{\alpha,i}$ for $i=2,\dots,m$. Keeping the same notation $J_{\alpha}'$, we obtain the subset $K'= \bigcup_{\alpha\in I'}(\alpha,J'_{\alpha})\subset K$.

Choosing an infinite subset  $I'' \subset I'$, one may assume, without loss of generality, that the subset $Y_{\alpha,i}$ is infinite if $i=1,\dots,k$ for $\alpha\in I''$ and have only one variable $y_i = y_i(\alpha)$ if $\alpha\in I''$ and $i>k$. Then one can choose two different $\alpha $ and $\alpha'$ in $I''$ such that $y_i(\alpha)=y_i(\alpha')$ for some values of $i>k$, say for $i=k+1,\dots, \ell$, and $y_i(\alpha)\ne y_j(\alpha')$ for $i,j>\ell$.

Now the infinite sets $J'_{\alpha}$ and $J'_{\alpha'}$ can be replaced with their infinite
subsets $J''_{\alpha}$ and $J''_{\alpha'}$  such that the corresponding  $Y''_{\alpha,i}$ and $Y''_{\alpha',i}$ have empty intersection for $i\le k$ and both do not contain the one-element sets $Y_{\alpha,i}$ $Y_{\alpha',i}$ for $i>k$.

Thus, one can choose two different $\beta$ and $\beta'$ from $J''_{\alpha}$ so that
\[
x_{\alpha,\beta}= v(y_{11},\dots,y_{1k}, y'_{k+1},\dots,y'_m),\; x_{\alpha,\beta'}= v(y_{21},\dots,y_{2k}, y'_{k+1},\dots,y'_m),
\]
where all $2k$ variables $y_{ij}$ are pairwise different and also different from the variables $y_k'$.
Similarly for  $\alpha'$ and some $\beta\in J''_{\alpha'}$, we choose 
\[
x_{\alpha',\beta''}= v(y_{31},\dots,y_{3k}, y''_{k+1},\dots,y''_m),
\]
where each $y_{3j}$ is different from all other letters in the expression for $x_{\alpha,\beta}$, $x_{\alpha,\beta'}$, and $x_{\alpha',\beta''}$, while  $y''_j=y'_j$, if $k+1\le j\le l$ and  $y''_j$ is different from all other letters in the expression for $x_{\alpha,\beta}$, $x_{\alpha,\beta'}$, and $x_{\alpha',\beta''}$, if $j>\ell$.

 This choice of $y_{ij}$ and $y_k$ makes the following map well defined:
\begin{eqnarray*}
  &&y_{11}\mapsto y_{11},\dots, y_{1k}\mapsto y_{1k},\, y'_{k+1}\mapsto y'_{k+1},\dots, y'_{m}\mapsto y'_{m}\\
   &&y_{31}\mapsto y_{21},\ld, y_{3k}\mapsto y_{2k},\, y''_{k+1}\mapsto y'_{k+1},\dots, y''_{m}\mapsto y'_{m}.
   \end{eqnarray*}  
Since all $y_{3i}$ are unique, while  $y''_{i}$ are either unique or coincide with $y'_{i}$, it follows that the above map is well defined.

The above map of free generators extends to an endomorphism of  $F$, mapping the pair $(x_{\alpha,\beta}, x_{\alpha',\beta''})$ into the pair $(x_{\alpha,\beta}, x_{\alpha,\beta'})$. Here the product of the elements of the first pair is zero because they belong to different direct summands $F_{\alpha}$ and $F_{\alpha'}$ of the algebra $H$. But then zero is also the product of two different free generators of $F_{\alpha}$, proving that the product in all algebras in $\frV$ is zero.
\end{proof}

\begin{Remark}\label{rCAEFA}
There exist locally finite varieties with nonzero multiplication in which any {\it countable}
 algebra can be embedded in a free algebra. An example will be given later in Section \ref{sssFAMV}.
\end{Remark}

\subsection{Solvable varieties.}\label{ssSolvable}

It is customary to define solvable algebras in two ways: first, by identical relations and, second, by certain series of ideals.

\medskip

Our calculations below will be preformed in an absolutely free algebra $\cF_\infty$ of countable rank with the free set of generators $x_1,x_2,\ld$. The identities $s_k(x_1,\ld,x_{2^k})=0$ are defined by induction. For $k=1$, we set $s_1(x_1,x_2)=x_1x_2$. For greater values of $k$, we set
\[
s_k(x_1,\ld,x_{2^k})=s_k(x_1,\ld,x_{2^{k-1}})s_k(x_{2^{k-1}+1},\ld,x_{2^{k}}).
\]

Also by induction, one defines the \emph{commutator series} of an algebra $A$, namely, $A^{(1)}=A$, and
$A^{(k)}= (A^{(k-1)})^2$ for $k>1$.

\begin{Prop}\label{solv} Let $A$ be an algebra. Then the following are equivalent.
\begin{enumerate}
  \item[$\mathrm{(i)}$] $A^{(\ell+1)}=\{ 0\}$ for some $\ell\ge 0$;
  \item[$\mathrm{(ii)}$] there exists a \emph{solvable} series of subspaces
\begin{equation}\label{defsol}
A=A_1\supset A_2\supset\ld\supset A_{\ell-1}\supset A_{\ell+1}=\{0 \},
\end{equation}
such that 
$A_{j}^2\subset A_{j+1}$, for all $j=1,\ld,\ell$,
  \item[$\mathrm{(iii)}$] $A$ satisfies the identity $s_{\ell}(x_1,\ld,x_{2^{\ell}})=0$.
\end{enumerate}
\end{Prop}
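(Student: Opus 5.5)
We prove the cycle (i)$\Rightarrow$(ii)$\Rightarrow$(i), and separately (i)$\Leftrightarrow$(iii), keeping the same index $\ell$ throughout. We read the recursive definition of $s_k$ as $s_k = s_{k-1}(x_1,\ld,x_{2^{k-1}})\,s_{k-1}(x_{2^{k-1}+1},\ld,x_{2^k})$, so $s_k$ is the balanced bracketing of $2^k$ distinct variables. The index conventions need some care. With $A^{(1)}=A$, the identity $s_1=x_1x_2=0$ means $A^{(2)}=\{0\}$, so $s_\ell$ corresponds to $A^{(\ell+1)}$, and this matches (i). For $\ell=0$ we read $s_0=x_1$, so the identity says $A=\{0\}$.

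For (i)$\Rightarrow$(ii), take $A_j=A^{(j)}$. Then $A_j^2=A^{(j+1)}=A_{j+1}$ by definition, and $A_{\ell+1}=\{0\}$. For (ii)$\Rightarrow$(i), we show $A^{(j)}\subset A_j$ by induction on $j$. The case $j=1$ is trivial. For the step, $A^{(j+1)}=(A^{(j)})^2\subset A_j^2\subset A_{j+1}$. Hence $A^{(\ell+1)}\subset A_{\ell+1}=\{0\}$.

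For (i)$\Leftrightarrow$(iii), the key step is the lemma that, for every $k\ge 1$, $A^{(k)}$ equals the linear span of all values $s_{k-1}(a_1,\ld,a_{2^{k-1}})$ with $a_i\in A$. We prove it by induction on $k$. The case $k=1$ is immediate. For the step, $A^{(k+1)}=A^{(k)}A^{(k)}$ is spanned by products $uv$ with $u,v\in A^{(k)}$. Expanding $u$ and $v$ as linear combinations of $s_{k-1}$-values and using bilinearity, each product becomes a sum of products of two $s_{k-1}$-values in disjoint sets of arguments. Each such product is exactly an $s_k$-value, because the two halves of $s_k$ use disjoint variables and so can be evaluated independently. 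Conversely, every $s_k$-value is such a product, so it lies in $A^{(k+1)}$. With the lemma, $A^{(\ell+1)}=\{0\}$ holds if and only if all values of $s_\ell$ vanish, which is the identity $s_\ell=0$.

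We expect no real obstacle. The only points needing attention are the off-by-one bookkeeping between $\ell$, $\ell+1$ and $2^\ell$, which should be fixed uniformly as above, and the remark that the variables in the two halves of $s_k$ are distinct. That remark is what allows arbitrary pairs of elements of $A^{(k)}$ to be realized as a single $s_k$-value.
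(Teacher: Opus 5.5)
Your proof is correct. It uses the same mechanism as the paper's proof, namely multilinearity of $s_k$ together with the recursion $s_k=s_{k-1}s_{k-1}$ in disjoint variables, but it is organized differently.

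The paper runs the cycle (i)$\Rightarrow$(ii)$\Rightarrow$(iii)$\Rightarrow$(i) and proves the last two implications by induction on $\ell$. For (ii)$\Rightarrow$(iii), it applies the induction hypothesis to $A_2$, which contains $A^2$. It then substitutes the products $a_1a_2,\ld,a_{2^\ell-1}a_{2^\ell}$ into the variables of $s_{\ell-1}$. For (iii)$\Rightarrow$(i), it shows that $s_{\ell-1}=0$ holds in $A^2$, because $A^2$ is spanned by products, and then uses $A^{(\ell+1)}=(A^2)^{(\ell)}$. So the paper peels off the innermost (leaf) level of $s_\ell$, whereas your lemma splits $s_k$ at the root.

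Your route buys three things:
\begin{itemize}
\item You get (ii)$\Rightarrow$(i) from the comparison $A^{(j)}\subset A_j$. This needs neither an induction on the length nor the fact that $A_2$ is closed under multiplication, which the paper's induction uses implicitly.
\item Your explicit description of $A^{(k)}$ as the span of the values of $s_{k-1}$ gives both directions of (i)$\Leftrightarrow$(iii) at once. It is also a slightly sharper statement than what the paper's argument extracts.
\item Your convention $s_0=x_1$ covers the case $\ell=0$ allowed in (i). The paper's definition of $s_k$ starts at $k=1$ and leaves that case implicit.
\end{itemize}
Your reading of the recursion as $s_k=s_{k-1}s_{k-1}$ is the intended one. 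The displayed definition in the paper has a typo, and the correct form is used later in the paper.
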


\proof (i)$\to$ (ii) is true since the commutator series is solvable.

(ii)$\to$ (iii) Clearly, $s_1(x_1,x_2)=0$ if $\ell =1$ since $A^2=\{ 0\}$ in this case.
By induction, the ideal $A_2$ from (\ref{defsol}) satisfies the identity $s_{\ell-1}(x_1,\ld,x_{2^{\ell-1}})=0$ if $\ell>1$. Since the products $a_1a_2,\ld, a_{2^{\ell}-1}a_{2^{\ell}}$ are in $A^2= A^{(2)}$ for any $a_i\in A$, one can replace the variables of $s_{\ell-1}$ with these $2^{\ell-1}$ products and obtain the identity $s_{\ell}(x_1,\ld,x_{2^{\ell}})=0$ in $A$.

(iii)$\to$(i). This is obvious for $\ell=1$. Let $\ell>1$ and $A$ satisfies $s_\ell(x_1,\ld,x_{2^\ell})=0$. Note that $A^{(2)}=A^2$ is simply a linear span of all products
$uv$, $u,v\in\cF_\infty$. Therefore, to check $s_{\ell-1}(x_1,\ld,x_{2^{\ell-1}})=0$ in
$A^2$, it is sufficient to replace the arguments only by such products. After this, the validity of $s_{\ell-1}(x_1,\ld,x_{2^{\ell-1}})=0$
in $A^2$ immediately follows from the validity of  $s_\ell(x_1,\ld,x_{2^\ell})=0$ in $A$. So by induction,
$A^{(\ell+1)}= (A^2)^{(\ell)}=\{ 0\}$.
 
\endproof

By definition, an algebra $A$ is called \emph{solvable} if it satisfies one of the equivalent
conditions of Proposition \ref{defsol}. The minimal $\ell$ is called the  \emph{solvable length}
of $A$. In particular,  $\ell =1$ for algebras with zero multiplication. The containment
$A_{j}^2\subset A_{j+1}$ implies that $A_{j+1}$  is and ideal of $A_j$ for $j=1,\dots,\ell$.

An algebra is called \emph{locally solvable} if any of its finitely generated subalgebras is solvable. 

A variety $\frV$ is called  solvable ( locally solvable), if any algebra in $\frV$ is solvable (resp., locally solvable). The solvable length of algebras in solvable variety has to be uniformly
bounded. (Use Cartesian products for a proof!)

\bigskip

In contrast with the case of nilpotent algebras, in the case of a solvable algebra $A$, it is not necessary that $A$ has non-zero abelian ideals. As an example, let $A$ be a 3-dimensional commutative algebra with a basis $a, b, c$ and the multiplication table 
\[
a^2 = b,\;  ab = b^2 = c,\;  ac = b,\;  bc = c,\;  c^2 = 0.
\]
Here $A^2 = \la b,c\ra$, $(A^2)^2 = \la c\ra$, $((A^2)^2)^2\ez$. One can easily see that the last nonzero term of the commutator series $(A^2)^2=\la c\ra$ is not an ideal, but only a subideal. Also, there $A$ has no abelian ideals.

But we still can obtain a property similar to that in Remark \ref{nilvar}.

\begin{Prop}\label{clSolvSubVar}
Let $\frV=\var A$, where $|A|\le\infty$. Then the set of all solvable algebras in $\V$ is a subvariety of $\frV$.
\end{Prop}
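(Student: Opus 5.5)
The plan is to prove that there is a constant $L$, depending only on $A$, such that every solvable algebra in $\frV$ satisfies $s_L(x_1,\ld,x_{2^L})=0$; here $A$ is finite (the hypothesis $|A|\le\infty$ is read as $|A|<\infty$). Granting this, the solvable algebras of $\frV$ are exactly the algebras of the subvariety of $\frV$ defined by the extra identity $s_L=0$, by Proposition \ref{solv}. The reduction to finite algebras is easy. If $C\in\frV$ is solvable, each finitely generated subalgebra of $C$ is solvable, and it is finite because $\frV$ is locally finite. If all these finite subalgebras satisfy $s_L=0$, then so does $C$, since each value of $s_L$ is computed inside a finitely generated subalgebra. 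So it suffices to bound uniformly the solvable length of finite solvable algebras in $\frV$.

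For this I will copy the induction of Theorem \ref{tSHA}. Let $\cS$ be the set of monolithic sections of $A$. I induct over the finitely many subsets $\cT\subset\cS$ that are closed under taking monolithic sections, and assume $\var\cT$ is not generated by a proper closed subset. As in that proof, choose $B\in\cT$ that is not a proper section of other members, with $\cT'=\cT\setminus\{B\}$ closed. By induction, finite solvable algebras in $\frU=\var\cT'$ have solvable length at most $h$.

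Now take a finite solvable $C\in\var\cT$ and write $C=G_n/I$ with $G_n$ free in $\var\cT$. Take a minimal representation $G_n\subset D$ as in Lemma \ref{minrep}. The monoliths $M_i$ of the factors isomorphic to $B$ are minimal ideals of $G_n$, their sum $N$ lies in $G_n$, and $G_n/N\in\frU$. Let $\bar N$ be the image of $N$ in $C$. Since $N$ is a sum of minimal ideals of $G_n$, the ideal $\bar N$ is a sum of minimal ideals of $C$, and after discarding redundant ones it is a direct sum $\bar N=\bar M_1\oplus\cdots\oplus\bar M_r$. Each $\bar M_j$ is a homomorphic image of some $M_i$, so $|\bar M_j|\le |A|$ by Corollary \ref{cc}. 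The algebra $C/\bar N$ is a solvable image of $G_n/N\in\frU$, hence $C^{(h+1)}\subset\bar N$.

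The remaining step handles minimal ideals that are solvable but not abelian. Here I only need that $\bar M_j\bar M_k\subset\bar M_j\cap\bar M_k=\{0\}$ for $j\ne k$, so that $\bar N^{(s)}=\bigoplus_j\bar M_j^{(s)}$ for every $s$. Each $\bar M_j$ is a subalgebra of the solvable algebra $C$, so it is solvable. Its commutator series therefore strictly decreases to $\{0\}$, which gives $\bar M_j^{(d+1)}=\{0\}$ with $d=\dim A$. Hence
\[
C^{(h+d+1)}=\bigl(C^{(h+1)}\bigr)^{(d+1)}\subset \bar N^{(d+1)}=\{0\},
\]
so the solvable length of $C$ is bounded by $h+d+1$, independently of $C$. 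This completes the induction. The base case $\cT=\varnothing$ is trivial, and taking $\cT=\cS$ gives the constant $L$ for $\var A$.

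I expect the main obstacle to be exactly the point noted before the proposition: a minimal ideal of a solvable algebra need not be abelian, so one cannot argue with abelian chief factors as for groups. The way around it, as above, is that the commutator series of a direct sum of ideals is computed summand by summand. Then the bound on orders of chief factors from Corollary \ref{cc} bounds the length contributed by $\bar N$. A secondary point to check is that the image of $N$ in $C$ is really a direct sum of minimal ideals; this follows because homomorphic images of minimal ideals are minimal ideals or zero.
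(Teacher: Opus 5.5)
Your proof is correct and uses essentially the paper's ingredients. These are the bound on the orders of minimal ideals, which bounds by $\dim A$ the solvable length of each (possibly nonabelian) solvable minimal ideal, the componentwise computation of the commutator series of a sum of minimal ideals, and additivity of solvable length over an ideal and its quotient. The only difference is that you re-run the induction from the proof of Theorem~\ref{tSHA} with solvable length in place of socle height, whereas the paper quotes Theorem~\ref{tSHA} (bounded socle height) and applies additivity along the socle series of $C$.
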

\begin{proof}
From Lemma \ref{minrep}, we know that the orders of all chief sections of finite algebras in $\frV$ are bounded by $|A|$. It follows that the lengths of the solvable series of solvable chief sections $M$ are bounded because the length of  $M \supset M^2 \supset (M^2)^2\supset\cdots$ cannot be greater than  $\dim M$. Therefore the solvability lengths of the solvable socles, which are the sums of minimal ideals, has bounded solvable length. Since the socle height of all finite algebras in $\frV$ is bounded by Theorem \ref{tSHA} and the solvable length of an extension of algebras does not exceed the sum of solvable lengths of the participating algebras, it follows that the solvable length of any finite solvable algebra in $\frV$ is uniformly bounded from above. Since $\frV$ is locally finite, the same is true for all solvable algebras in $\frV$. As a result, the set of all solvable algebras in $\frV$, which is obviously  closed under subalgebras and factor-algebras, is also closed under Cartesian product. So, the class of solvable algebras in a finitely generated variety $\V$ is a subvariety of $\V$, as claimed.
\end{proof}

\begin{Thm}\label{tA2leA}
 A variety $\frV$ of   algebras is solvable if and only if $A^2$ is a proper subalgebra in $A$ for any nonzero algebra $A\in \frV$.
\end{Thm}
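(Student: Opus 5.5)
The easy direction is direct. If $\frV$ is solvable, then by the remark after Proposition \ref{solv} there is a uniform bound $\ell$ with $A^{(\ell+1)}=\{0\}$ for every $A\in\frV$. If some nonzero $A\in\frV$ had $A^2=A$, then $A^{(k)}=A$ for all $k$, so $A=A^{(\ell+1)}=\{0\}$, a contradiction. Hence $A^2\ne A$ for every nonzero $A\in\frV$.

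For the converse I argue by contraposition: assuming $\frV$ is not solvable, I want to produce a nonzero $A\in\frV$ with $A^2=A$. I would copy the proof of Theorem \ref{eclNonNilp}. Solvable subvarieties are defined by a single identity $s_\ell=0$ (Proposition \ref{solv}). So if $\{\frU_i\}$ is a descending chain of non-solvable subvarieties, the intersection $\bigcap\frU_i$ is again non-solvable. Indeed, its verbal ideal in the countably generated $\frV$-free algebra is the union of the verbal ideals of the $\frU_i$. If $s_\ell$ lay in that union, it would lie in one member, making that $\frU_i$ solvable. Zorn's lemma together with the Galois correspondence (Proposition \ref{pGC}) then gives a non-solvable subvariety $\frU\subset\frV$ all of whose proper subvarieties are solvable. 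It suffices to find the required algebra in $\frU$.

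Let $F$ be the $\frU$-free algebra of countable rank, and set $T=\bigcap_k F^{(k)}$. Each $F^{(k)}$ is fully invariant, hence so is $T$, so $T$ is the verbal ideal of a subvariety $\frW\subseteq\frU$ with $F/T$ free in $\frW$. Suppose first that $T\ne\{0\}$. Then $\frW\ne\frU$, so $\frW$ is solvable, say of length $\ell$. This gives $F^{(\ell+1)}\subseteq T\subseteq F^{(\ell+1)}$, hence $T=F^{(\ell+1)}$. Consequently $T^2=F^{(\ell+2)}=T$, and $A=T$ is a nonzero algebra of $\frV$ with $A^2=A$.

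The main obstacle is the remaining case $T=\{0\}$, where $F$ is residually solvable but its solvable quotients $F/F^{(k)}$ have unbounded lengths. My first attempt would show that each $F^{(k)}$, as an algebra, still generates $\frU$. Otherwise $\var F^{(k)}$ is a proper subvariety of $\frU$, hence solvable, and then $F$ itself would be solvable, which is impossible since $F$ generates $\frU$. I would then use minimality of $\frU$ once more on a fully invariant ideal built from these terms, for example by transfinitely continuing the derived series $F^{(\alpha)}$ until it stabilizes at a perfect, fully invariant ideal. If that route stalls, the fallback is to exploit relative freeness, as in Lemma \ref{lRelX}. A nonzero element of $F^{(k)}$ for large $k$, rewritten in fresh free variables, should yield an identity placing $F$ in a proper, hence solvable, subvariety. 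That contradiction would exclude $T=\{0\}$.
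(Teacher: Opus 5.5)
There is a genuine gap, and it is the case you flag as the main obstacle. The case $T=\{0\}$ cannot be ruled out, so no argument carried out inside the free algebra $F$ of a minimal non-solvable variety can finish the proof.

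Take for $\frV$ the variety of commutative associative algebras over a field of characteristic $2$ with $x^2=0$ (Example~\ref{exxtop}). Its free algebra $F$ has a basis of square-free monomials. Let $W$ be a nonzero fully invariant ideal of $F$. Using the endomorphisms $x_i\mapsto x_i$ or $0$ and inclusion--exclusion, $W$ contains a nonzero content component of any of its nonzero elements. Such a component is a scalar multiple of a square-free monomial, so after renaming $W$ contains $x_1\cdots x_s$ for some $s$. Hence every proper subvariety is nilpotent, and your Zorn step returns $\frU=\frV$ itself.

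But $F^{(k)}=F^{2^{k-1}}$ lies in degrees $\ge 2^{k-1}$, so $T=\{0\}$. In fact a lowest-degree argument shows that $F$ has no nonzero subalgebra $B$ with $B^2=B$ at all. So the transfinite derived series also dies out. Your fallback cannot produce a contradiction either, because here $T=\{0\}$ genuinely holds while $\frU$ is non-solvable.

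The missing idea is that the perfect algebra must be built outside $F$. The paper does this directly, with no minimality reduction. In the Cartesian product of countably many copies $F(i)$ of the free algebra, it defines sequences $a_{ij}$ whose components are values of $s_1,s_2,\ldots$ on fresh free generators. These are arranged so that each $a_{ij}$ agrees with a product $a_{i+1,j'}a_{i+1,j''}$ in all but finitely many coordinates. Factoring the subalgebra generated by the $a_{ij}$ by the finitely supported sequences gives $A\in\frV$ with $A^2=A$. Moreover $A\ne\{0\}$, because the components of $a_{11}$ are the elements $s_k$, which are nonzero in $F$ since $\frV$ is not solvable.

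Your case $T\ne\{0\}$ is also flawed. For $k\ge 3$ the terms $F^{(k)}$ are fully invariant subalgebras but in general not ideals. The paper's three-dimensional commutative example has $(A^2)^2$ not an ideal, and in the absolutely free algebra $x_1((x_2x_3)(x_4x_5))\notin(\cF^2)^2$. So $T$ need not be an ideal, and there is no subvariety $\frW$ with $F/T$ relatively free in it. Passing to the ideal $\widehat T$ generated by $T$ only gives $F^{(\ell+1)}\subseteq\widehat T$, not $F^{(\ell+1)}\subseteq T$. If the derived series of $F$ happens to stabilize at a finite step, the stable term is already the required nonzero perfect algebra. The real content of the theorem is the non-stabilizing case, and the sequence construction handles that case uniformly.
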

\begin{proof}
If $A^{(2)}= A^2=A$, then by the inductive definition, every subalgebra $A^{(k)}$ is equal to $A$ and $A$ is not solvable unless $A=\{ 0\}$. 

Conversely, let us assume that $\frV$ is not solvable. 
Then for no $\ell$ an identity of the form $s_\ell(x_1,\ld,x_{2^\ell})=0$ is satisfied in the $\frV$-free algebra $F$ of countable rank. To prove the theorem,
we need to produce a nonzero algebra $A \in \frV$, such that $A^2 = A$.

First, we take the copies  $F(i) = F(x_{i1}, x_{i2}, \ld)$ of the algebra $F$ and construct
their Cartensian product  $C = \prod_{\ell=1}^\infty F(i)$ whose elements are infinite
sequences, where the $i$-th component belongs to $F(i)$. Then we define a series of
sequences in $C$, as follows.

The first one  is the sequence 
\[
a_{11} = (x_{11}, s_1(x_{21},x_{22}),  s_2(x_{31},x_{32},x_{33}, x_{34}),\ld).
\]
Two more sequences follow: 
\[
a_{21} = (0, x_{21},  s_1(x_{31},x_{32}), s_2(x_{41}, x_{42}, x_{43}, x_{44}),\ld)
\]
 and
\[
a_{22} = (0, x_{22},  s_1(x_{33},x_{34}), s_2(x_{45}, x_{46}, x_{47}, x_{48}),\ld).
\]

Using the identities
 
\[
s_j(z_1,\dots, z_{2^j})= s_{j-1}(z_1,\dots, z_{2^{j-1}})s_{j-1}(z_{2^{j-1}+1},\dots, z_{2^j}),
\]
 we can see that the components of product $a_{21}a_{22}$, except for the first one,
are the components of $a_{11}$. Then we define 
\[
a_{31} = (0,0,x_{31}, s_1(x_{41},x_{42}),\dots)\mbox{ 
and }a_{32}=(0,0,x_{32}, s_1(x_{43},x_{44}),\dots)
\]
so that $a_{31}a_{32}$ and $a_{21}$ have
the same components except for the second one. Similarly, we construct $a_{33}$ and $a_{34}$
such that $a_{33}a_{34}$ is equal to $a_{22}$ up to the second component. For each of
the four sequences $a_{3j}$, $j=1,..,4$, we similarly define eight elements  $a_{4j}$, $j=1,\ld8$ of $C$, and so on.

Let $B$ be the subalgebra of $C$ generated by all $a_{ij}$, $i=1,2,\dots$. Denote by $D$ the
 ideal of sequences with finitely many non-zero components. Then the algebra $A=C/(C\cap D)$
is non-zero, and the image $\bar a_{ij}$ of every generator $a_{ij}$ in $A$ is equal to a
product of some $\bar a_{i+1,j'}$ and $\bar  a_{i+1,j''}$. So all the generators of $A$ belongs
to $A^2$, and so $A^2=A$, as desired.
\end{proof}

\begin{example}\label{exxtop} An associative-and-commutative variety $\frV$ defined by the additional identity  $x^p=0$ over a field of characteristic $p>0$ is locally nilpotent but not solvable. The annihilator of its free algebra of infinite rank is trivial. The construction of Theorem \ref{tA2leA} gives us a non-zero algebra $A\in\frV$ such that $A=A^2$.
\end{example}

We conclude this section by considering the enveloping associative algebras (see Section \ref{sssEAV}) for one natural class of solvable algebras.  A finite-dimensional algebra $A$ is called {\it supersolvable} if
there is a series of ideals
\begin{equation}\label{super}
A=A_0\supset A_1\supset A_2\supset\dots \supset A_k=\{ 0\}
\end{equation}
where
the factors $A_{i-1}/A_i$ are one-dimensional algebras with zero multiplications.
(The definition resembles the definition of supersolvable groups.)

For example, the Lie algebra of triangular $m\times m$-matrices is supersolvable. If $A$ is a finite supersolvable algebra, then by Proposition
\ref{solv} and Corollary \ref{cc}, every finite algebra in the variety
$\var A$ is supersolvable.

\begin{Prop}\label{compso} Let $A$ be a finite solvable algebra. Then the
following are equivalent.
\begin{enumerate}
  \item[$\mathrm{(i)}$] $A$ is supersolvable;
  \item[$\mathrm{(ii)}$] there is a series of ideals $\{ 0\}=U_0\subset U_1\subset \dots \subset U_k=U$ in the associative enveloping algebra $U=U(A)$, such that $\dim U_i/U_{i-1}=1$ for $i=1,\dots,k$;
  \item[$\mathrm{(iii)}$] simple algebras in the  variety of associative algebras $\frU=\var U(A)$ are one-dimensional.
\end{enumerate}
\end{Prop}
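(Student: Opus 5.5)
The plan is to prove (i)$\Rightarrow$(ii)$\Rightarrow$(iii)$\Rightarrow$(i). Throughout, $U=U(A)$ is generated by the operators $L(a),R(a)$, and every ideal of $A$ is a $U$-submodule of $A$.

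For (i)$\Rightarrow$(ii), take the series (\ref{super}) and a basis $e_1,\dots,e_k$ of $A$ adapted to it. Each $L(a)$ and $R(a)$ maps $A_{i-1}$ into $A_i$, because $A_{i-1}/A_i$ lies in the annihilator of $A/A_i$; this holds since the factor is a one-dimensional ideal with zero multiplication, and one-dimensional ideals of a solvable algebra are annihilated. I would double-check this point. It is cleaner to argue only that each $A_i$ is $U$-invariant with one-dimensional factors. Then $U$ embeds in the algebra of upper triangular matrices $T$ with respect to this basis. The algebra $T$ has a chain of ideals with one-dimensional factors, for instance obtained by refining $T\supset N\supset N^2\supset\cdots$, where $N$ is the strictly triangular part and $T/N$ is diagonal. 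Intersecting this chain with $U$ gives a chain of ideals of $U$ with factors of dimension at most $1$. After deleting repetitions, this is the required series.

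For (ii)$\Rightarrow$(iii), the associative algebra $U$ has a composition series of ideals with one-dimensional factors. Hence every composition factor of $U$ is one-dimensional. By the associative analogue of Corollary \ref{cc}, that is, Birkhoff's theorem applied inside $\var U$, every finite algebra in $\frU$ has chief factors of order at most $\max|U_i/U_{i-1}|=q$. A simple algebra in $\frU$ is its own chief factor, so it is one-dimensional. Infinite simple algebras do not arise in the finite setting needed here, because $\frU$ is locally finite.

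For (iii)$\Rightarrow$(i), this is the main step. Take a chief series of $A$ and a chief factor $M=A_{i-1}/A_i$. Since $A$ is solvable, $M$ is a minimal ideal of the solvable algebra $A/A_i$. I would first show that $M^2=0$. Here $M^2$ is an ideal of $A/A_i$: this is problematic for nonassociative algebras, so instead I would use the solvable series and pass to a series whose factors are abelian $U$-modules, refining the commutator series by $U$-submodules. Then $M$ is an irreducible module over the image $\bar U$ of $U$ in $\End M$, and $\bar U$ lies in $\frU$, being a quotient of a subalgebra of $U$. If $\bar U=0$ or $\bar U$ is nilpotent, irreducibility forces $\dim M=1$. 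Otherwise, $\bar U/\mathrm{rad}\,\bar U$ is a nonzero semisimple associative algebra in $\frU$, and by (iii) all its simple components are $\F$. So $\bar U$ acts on $M$ through commuting scalars, and irreducibility gives $\dim M=1$. Refining a series of $U$-submodules with abelian factors, one that also respects the ideal structure, into such one-dimensional pieces gives (\ref{super}). The expected obstacle is making sure that the factors one obtains are genuinely ideals of $A$ with zero multiplication. The fix I would try first is to work with the $U$-submodules of the abelian factors $A^{(j)}/A^{(j+1)}$, intersected appropriately. If that fails, I would restrict to a chief series and prove directly that each chief factor of a solvable finite algebra is annihilated modulo the term below it, or at least has zero multiplication.
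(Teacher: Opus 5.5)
\textbf{Gap in (iii)$\Rightarrow$(i).} You set $M^2=0$ for every chief factor $M$ as a preliminary goal, and neither route you offer for it works.

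\begin{itemize}
\item The terms $A^{(j)}$ of the commutator series need not be ideals of $A$ once $j\ge 3$. So they are not $U$-submodules, and there is nothing to refine ``by $U$-submodules''.
\item Your fallback claim is false: a chief factor of a finite solvable algebra need not be annihilated, nor even have zero product. Take the commutative solvable algebra of Subsection \ref{ssSolvable}, with $a^2=b$, $ab=b^2=c$, $ac=b$, $bc=c$, $c^2=0$. The ideal $A^2=\langle b,c\rangle$ is minimal: a line $\langle\beta b+\gamma c\rangle$ with $\beta\ne 0$ is not stable under multiplication by $c$, and $\langle c\rangle$ is not stable under multiplication by $a$. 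Yet $(A^2)^2=\langle c\rangle\ne 0$.
\end{itemize}
As written, the last implication therefore ends with an unresolved ``expected obstacle'' and a false lemma as the fallback.

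\textbf{What is missing.} The module argument never needs $M$ to be abelian, and solvability is used only at the very end. Take a chief series $A=A_0\supset\cdots\supset A_k=\{0\}$ and $M=A_{i-1}/A_i$.
\begin{itemize}
\item $U$ acts on $M$ because both terms are ideals.
\item The $U$-submodules of $M$ are exactly the ideals of $A$ lying between $A_i$ and $A_{i-1}$, so $M$ is irreducible.
\item If $\bar U=0$, every subspace is a submodule and $\dim M=1$.
\item Otherwise $(\mathrm{rad}\,\bar U)M$ is a submodule different from $M$, since equality would give $M=(\mathrm{rad}\,\bar U)^sM=0$. Hence it is zero, and faithfulness gives $\mathrm{rad}\,\bar U=0$.
\item So $\bar U$ is a product of simple algebras of $\frU$, i.e. of copies of $\F$ by (iii), and an irreducible module over $\F^r$ is one-dimensional.
\end{itemize}
Only now does solvability enter. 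A one-dimensional section with $v^2=\lambda v$, $\lambda\ne 0$, equals its own square and so is not solvable. Hence every chief factor is a one-dimensional algebra with zero product, which gives (\ref{super}). This is exactly the paper's proof: a chief factor of dimension $>1$ would make $U/\mathrm{Ann}_U(M)$ a primitive, hence simple, algebra of dimension at least $2$ in $\frU$.

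\textbf{The other two implications.} Your (i)$\Rightarrow$(ii) and (ii)$\Rightarrow$(iii) follow the paper, via triangularization and Corollary \ref{cc}. Drop the aside that $L(a)$ and $R(a)$ map $A_{i-1}$ into $A_i$: it already fails for the two-dimensional Lie algebra with $[x,y]=y$ and $A_1=\langle y\rangle$. Keep your step of intersecting a chain of ideals of the triangular algebra with $U$; it makes explicit a point the paper leaves implicit.
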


\proof (i)$\to$(ii) Since the image of a term $A_i$ of the series (\ref{super})
under any left or right multiplication by an element $a\in A$ belongs to $A_i$,
one can choose a basis in $A$, such that any element of the enveloping algebra $U$
has a triangular matrix with respect to this basis. So we are done, since the
associative algebra of triangular matrices possesses the required series of ideals.

(ii)$\to$(iii)  By Corollary \ref{cc}, the dimensions of  chief sections of finite algebras in $\frU$ cannot be greater than the dimensions of chief sections of $U$. So the simple algebras in $\frU$ must be one-dimensional.

(iii)$\to$ (i) If $A$ has a chief section $V$ of dimension $m>1$, then $V$ is a simple module
of dimension $m$ over the enveloping algebra $U$. In this case, $A$ maps onto a primitive algebra $A'$ which has a faithful simple module $V$ of dimension $m$. By \cite[Chapter 3]{Her}, $A'$ is a simple algebra of dimension at least $2$, a contradiction. So there is a chief series (\ref{super}) in $A$, and $A$ is supersovable.

\endproof

\section{Dimension functions of locally finite varieties}\label{esFS}

One of the most natural  numerical invariants associated with a locally finite variety of arbitrary
universal algebras is the function $f(n)=f_{\frV}(n)$, whose value at an integer $n$ is the order of the  $\frV$-free algebra $F_n$ of rank $n$.  In \cite{HRM}, this function is called the \emph{free spectrum} of  $\frV$. If $\frV$ is a locally finite variety  of \emph{linear} algebras over a field $\F$ then it is more convenient to replace $f(n)$ with the {\it dimension function}
$d(n)=d_\frV(n) = \dim_{\bf F} F_n(\V)$; in this case $f(n) = q^{d(n)}$, where $q=|\bf F|$. If $\V=\var A$ then we write $d_\V(n)=d_A(n)$ and call $d_A(n)$ the \emph{dimension function of the algebra} $A$. Thus, $d(n)$ is the supremum of the dimensions of all $n$-generated algebras in the variety $\frV$. 

\subsection{First estimates of dimension functions.}

We  start with some estimates of the growth of the function $d(n)$.  Let us define the \emph{contents} $\cont(v)$ of a monomial $v\in \cF(X)$  as the (finite) subset $Y\subset X$ such that $v\in\cF(Y)$ but $v\not\in\cF(Z)$ where $Z$ is a proper subset of $Y$.   For example, $\cont((x_3x_2)x_3)=\{x_2,x_3\}$.

Given an arbitrary relatively free algebra $F=F(X)$ and a finite subset $J\subset X$, we denote by $F[J]$ the linear span of the images of all monomials $v$ with $\cont(v)=J$, under the epimorphism ${\cal F}(X)\to F(X)$ induced by the identity map of $X$.  To see that $F[J]$ is a  subalgebra of $F(X)$, assume $x\in J$, $u,v\in F[J]$ but $uv\not\in F[J]$, that is, $uv$ does not depend on $x$. If we map $x$ to $0$ and all other variables into themselves then each $u,v$ will be mapped into zero but $uv$ will be mapped into itself, a contradiction. 

\begin{Lemma}\label{grad} Any relatively free algebra $F=F(X)$ is a vector space direct sum of the subalgebras $F[J]$ over all the finite $J\subset X$. If $I$, $J$ are arbitrary subsets of $X$ such that  $|I|=|J|$ and $\dim F(J)<\infty$, then $\dim F[I]=\dim F[J]$.
\end{Lemma}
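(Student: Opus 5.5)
First I show that the sum of the subspaces $F[J]$ is all of $F$ and that it is direct. That the sum is everything is immediate. Every element of $F(X)$ is the image of a linear combination of monomials, and each monomial $v$ has a well-defined finite content $\cont(v)$. Grouping the monomials by content writes any element as a finite sum of elements of the various $F[J]$.

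For directness I will use the substitution endomorphisms. For a finite $J\subset X$, let $\pi_J$ be the endomorphism of $F$ that is the identity on $J$ and sends $X\setminus J$ to $0$; it exists by the universal property of $F$. If $\cont(v)=K$, then the image of $v$ under $\pi_J$ is the image of $v$ when $K\subset J$, and is $0$ otherwise, since some variable of $v$ is sent to $0$ and the value is then $0$ (no constant terms). Hence on $F[K]$ the map $\pi_J$ is the identity if $K\subseteq J$ and zero otherwise. Now define, by inclusion--exclusion,
\[
e_J=\sum_{K\subseteq J}(-1)^{|J\setminus K|}\pi_K .
\]
On $F[L]$ with $L\subseteq J$, the operator $e_J$ acts as the scalar $\sum_{L\subseteq K\subseteq J}(-1)^{|J\setminus K|}$, which is $1$ if $L=J$ and $0$ otherwise. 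On $F[L]$ with $L\not\subseteq J$, every $\pi_K$ with $K\subseteq J$ is zero, so $e_J$ is zero as well. Suppose $\sum_L u_L=0$ with $u_L\in F[L]$ and only finitely many terms. Applying $e_J$ gives $u_J=0$ for each $J$, so the sum is direct. The only point needing care is that each term of the relation lies in some $F[L]$ with $L$ finite, which holds by construction.

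For the second claim, let $\sigma:I\to J$ be a bijection, extended to a bijection of $X$ (or just to a map sending $X\setminus I$ to $0$). By the universal property it induces a homomorphism $F\to F$ that carries monomials of content $I$ to monomials of content $J$, so it maps $F[I]$ into $F[J]$. The inverse bijection gives a map in the opposite direction, and the two composites act as the identity on $F[I]$ and on $F[J]$ respectively. Therefore $F[I]\cong F[J]$ as vector spaces. Finiteness enters only because $F[J]\subseteq F(J)$, so these dimensions are finite and the equality is a numerical statement.

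I expect no real obstacle. The one delicate point is checking that $\pi_K$ kills $F[L]$ when $L\not\subseteq K$, that is, that the images of monomials of content $L$ vanish under $\pi_K$. This is exactly the same argument the paper already uses to show that $F[J]$ is a subalgebra.
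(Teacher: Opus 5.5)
Your proof is correct. For the second claim it is the paper's argument: the paper just invokes the automorphism of $F(X)$ induced by a permutation of $X$ carrying $J$ onto $I$. Your variant, using two substitutions that kill the complements, avoids extending the bijection to all of $X$, although for finite $I,J$ that extension causes no trouble.

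For directness you use the same basic tool as the paper: the substitution endomorphism $\pi_J$ is the identity on $F[K]$ when $K\subseteq J$ and zero otherwise. You organize it differently, though.

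\begin{itemize}
\item The paper argues by contradiction. Given a relation $a_1+\dots+a_k=0$ with nonzero $a_s\in F[J_s]$ for distinct $J_s$, it chooses $a_1$ so that $|J_1|$ is minimal. Then no other $J_s$ is contained in $J_1$, so the single map $\pi_{J_1}$ fixes $a_1$ and kills every other term, forcing $a_1=0$.
\item Your inclusion--exclusion operators $e_J=\sum_{K\subseteq J}(-1)^{|J\setminus K|}\pi_K$ need no minimality choice. They also give more: an explicit formula for the projection of $F$ onto each component $F[J]$, as a signed combination of finitely many substitution endomorphisms.
\end{itemize}

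The paper's route is shorter; yours gives the projections directly.
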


\begin{proof} Since $F(X)$ is generated by $X$, it follows that $F(X)$ is the sum of subalgebras $F[J]$. Now let us assume that there is a linear dependence 
\begin{equation}\label{elindep}
a_1+\dots+a_k=0,
\end{equation}
where all summands are non-zero and belong to different summands. We may assume that
$a_1\in F[J]$, such that the cardinality of $J$ is minimal for all
the summands. Consider the mapping $X\to F(X)$ which is identical on $J$, and maps
$x_i\mapsto 0$ if $x_i\notin J$. Let $\vp$ be the extension of this map to an endomorphism of $F(X)$. Since every $a_s$ with $s>1$, is a linear combination of the products of generators,
where at least one of the factors does not belong to $J$, we have $\vp(a_s)=0$. Also, $\vp(a_1)=a_1$.  Applying $\vp$ to (\ref{elindep}), we obtain $a_1=0$, a contradiction. This completes the proof of the first statement of the lemma.

The second statement is also true because there is an automorphism $\alpha$ of $F(X)$ such that $\alpha(J)=I$.
\end{proof}

Since $F[I]F[J] \subset F[I\cup J]$, Lemma \ref{grad} provides us with the grading
\begin{equation}\label{gra} 
F(X) = \bigoplus_{J\subset X, |J|<\infty} F[J]
\end{equation}
of $F(X)$ by the semigroup of finite subsets of $X$ with the join operation. It is a commutative
 idempotent semigroup.

\begin{Prop} \label{formdn} Let $F_n=F(X)$ be a finite-dimensional relatively free
algebra over any field with the free basis $X=\{ x_1,\dots, x_n\}$ and let $d_k = \dim F[J_k]$
for $J_k =\{x_1,\dots,x_k\}$, $k=1,\dots,n$. Then
\begin{equation}\label{dn}
\dim F_n = \sum_{J\subset X} \dim F[J] = \sum_{k=1}^n  d_k {n\choose k} 
\end{equation}
\end{Prop}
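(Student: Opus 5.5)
The formula follows directly from Lemma \ref{grad} by grouping the subsets $J\subset X$ according to their cardinality. First I apply the first statement of Lemma \ref{grad} to $F_n=F(X)$ with $X=\{x_1,\ld,x_n\}$. It gives the vector space decomposition (\ref{gra}), $F_n=\bigoplus_{J\subset X}F[J]$. Since $X$ is finite, every subset is finite and the sum is finite. Hence $\dim F_n=\sum_{J\subset X}\dim F[J]$, which is the first equality in (\ref{dn}).

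One small point concerns $J=\varnothing$. A monomial always contains at least one variable, and nonassociative polynomials have no terms of degree zero. So no monomial has empty content, and $F[\varnothing]=\{0\}$. The summation therefore effectively runs over nonempty $J$, that is, over $k=1,\ld,n$, matching the right-hand side of (\ref{dn}).

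Next I group the nonempty subsets by cardinality $k=|J|$, for $1\le k\le n$. For each such $J$, the second statement of Lemma \ref{grad} applies with $I=J_k=\{x_1,\ld,x_k\}$: the hypothesis $\dim F(J)<\infty$ holds because $F(J)\subset F_n$ is finite-dimensional. Thus $\dim F[J]=\dim F[J_k]=d_k$. The underlying justification is that a bijection $X\to X$ carrying $J$ onto $J_k$ induces an automorphism of $F_n$ mapping $F[J]$ onto $F[J_k]$, since an automorphism permuting the variables preserves contents.

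There are exactly ${n\choose k}$ subsets of size $k$, so summing gives $\sum_{k=1}^n d_k{n\choose k}$. The argument is essentially bookkeeping once Lemma \ref{grad} is available. The only points requiring attention are the empty-content case and confirming that the permutation automorphism preserves the pieces $F[J]$, and both are immediate.
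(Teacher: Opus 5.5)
Your proposal is correct and matches the paper's proof: the first equality comes from the decomposition (\ref{gra}) of Lemma \ref{grad}, and the second from grouping the ${n\choose k}$ subsets of each size $k$, which all have dimension $d_k$ by the second part of Lemma \ref{grad}. Your remark that $F[\varnothing]=\{0\}$ is a harmless clarification that the paper leaves implicit.
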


\proof The first equality in (\ref{dn}) follows from (\ref{gra}). The second is true since there are ${n\choose k}$ subsets of cardinality
$k$ in the set $\{x_1,\dots,x_n\}$ and  by Lemma \ref{grad} all of them have
the same dimension $d_k$.
\endproof

It is well known that the dimension function of nilpotent varieties is a polynomial functions  up to the big O. It fact, it is equal to a polynomial. Let us define polynomials $p_k(n)$ by 
\[
 p_k(n)={n\choose k} = \frac{1}{k!}\prod_{i=0}^{k-1} (n-i) 
\]

 \begin{Cor}\label{polynom}
If $\frV$ is a nilpotent variety of class $c$, then the dimension
function $d_\frV(n)$ is a polynomial function of degree $c$.
\end{Cor}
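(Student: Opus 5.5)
The plan is to use Proposition \ref{formdn}, which gives
\[
d_\frV(n)=\dim F_n=\sum_{k=1}^n d_k{n\choose k}=\sum_{k\ge 1} d_k\,p_k(n).
\]
Here $d_k=\dim F[J_k]$ for $J_k=\{x_1,\ld,x_k\}$, and $F$ is the free algebra of sufficiently large (even countable) rank in $\frV$. By Lemma \ref{grad} the numbers $d_k$ do not depend on the ambient rank. The reason is that $F_k$ is a retract of $F_n$, and the grading (\ref{gra}) of $F_k$ is the restriction of that of $F_n$, so $F[J_k]$ is the same space whether we compute it in $F_k$ or in $F_n$. This independence lets one fixed sequence $d_1,d_2,\ld$ serve for all $n$. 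So it suffices to show that $d_k=0$ for $k>c$ and $d_c\ne 0$. Then $d_\frV(n)=\sum_{k=1}^c d_k p_k(n)$ for all $n\ge 1$; the terms with $k>n$ vanish anyway since $p_k(n)=0$. This is a polynomial, and since $\deg p_k=k$ its degree is exactly $c$, with leading coefficient $d_c/c!$.

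For the vanishing I would argue as follows. Since $\frV\subset\frN_c$, every monomial of degree $\ge c+1$ is zero in $F$, so $F$ is spanned by the images of monomials of degree $\le c$. A monomial of degree $\le c$ involves at most $c$ distinct variables, so its content has cardinality $\le c$. Hence $F[J]=\{0\}$ whenever $|J|>c$, that is, $d_k=0$ for $k>c$.

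The main point is nonvanishing, $d_c\ne 0$. Since $\frV$ has class exactly $c$, some algebra $A\in\frV$ has $A^c\ne\{0\}$, and hence some product of $c$ elements of $A$, with some bracketing, is nonzero. So for some monomial $w$ of degree $c$ the element $w(x_1,\ld,x_c)$ is nonzero in $F$. Here I take the monomial multilinear, with each of $x_1,\ld,x_c$ occurring once; this is legitimate because we may substitute arbitrary elements for distinct variables, and Lemma \ref{lRelX} applies. The content of $w$ is $J_c$, so its image lies in $F[J_c]$, and that image is nonzero by the choice of $w$. Therefore $d_c\ge 1$.

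I expect the only delicate step to be the independence of $d_k$ from the rank, which justifies calling $d_\frV$ a single polynomial in $n$, and it is settled by the retraction argument above. The remaining steps are direct consequences of the results already cited.
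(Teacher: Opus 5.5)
Your proof is correct and follows the paper's argument. Both use $d(n)=\sum_k d_k\binom{n}{k}$ from Proposition~\ref{formdn}, kill $d_k$ for $k>c$ because products of $c+1$ factors vanish, and get $d_c\neq 0$ from a nonzero product of $c$ factors; your check that the $d_k$ do not depend on the rank (via the retraction onto $F_k$) just makes explicit what the paper covers with its parenthetical remark about $n<c$.
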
 
\proof Any product of $c+1$ factors equals $0$ in an algebra from $\frV$.
So we have $d_{c+1}=d_{c+2}=\dots =0$ in (\ref{dn}). The coefficient $d_c\ne 0$
since otherwise all products of $c$ factors have to be zero in the algebras of $\frV$,
and the nilpotency class of $\frV$ would be less than $c$. Since $p_c(n)={n\choose c}$
is a polynomial of degree $c$ in $n$ and $p_k(n)={n\choose k}$ has a smaller degree
for $k<c$, the Proposition is proved. (The argument also works for $n<c$ since
$p_k(n)=0$ if $n<k$.)
\endproof

In the case of varieties of linear algebras, the following Corollary and the next Example
revise the estimate of \cite[Theorem 12.3]{HRM}, making it sharp.

 \begin{Cor}\label{2n1}
Let  $\frV$ be a locally finite variety of   algebras with the
dimension function $d(n)$. If $\frV$ is not nilpotent  then $d(n)\ge 2^n -1$.
\end{Cor}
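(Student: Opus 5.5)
By Proposition \ref{formdn}, $d(n)=\sum_{k=1}^n d_k{n\choose k}$, where $d_k=\dim F[J_k]$ and $J_k=\{x_1,\dots,x_k\}$. Since $\sum_{k=1}^n{n\choose k}=2^n-1$, it suffices to show that $d_k\ge 1$ for every $k\ge 1$. In other words, for each $k$ we need a monomial $v$ with $\cont(v)=\{x_1,\dots,x_k\}$ whose image in the $\frV$-free algebra $F$ is nonzero.

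We argue by contradiction. Suppose $d_k=0$ for some $k$. Then every monomial whose content is exactly $\{x_1,\dots,x_k\}$ vanishes in $F$. By Lemma \ref{lRelX} and the automorphisms permuting generators (Lemma \ref{grad}), every monomial in exactly $k$ distinct variables is then an identity of $\frV$.

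Now take any monomial $w$ of degree $\ge k$, say in variables $y_1,\dots,y_r$ with possible repetitions. We want to show that $w$ is also an identity. Consider the multilinear monomial $\tilde w$ obtained from $w$ by renaming every occurrence with a distinct fresh variable; $\tilde w$ has degree $\deg w\ge k$. Identify variables of $\tilde w$ so that exactly $k$ distinct variables remain: for instance, keep the first $k-1$ occurrences distinct and rename all the remaining occurrences to a single new variable $z$. The result is a monomial $u$ in exactly $k$ variables, so $u=0$ holds in $\frV$.

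Next we use an endomorphism of the free algebra that recovers $w$ from $u$. The occurrences that were collapsed to $z$ carried different variables in $w$, so a single substitution for $z$ cannot recover $w$. The more convenient route is therefore to work multilinearly: in $\tilde w$, substitute for each variable an arbitrary element. Since substitutions of identities are identities, showing that all monomials of degree $\ge k$ vanish comes down to showing that multilinear ones do.

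For a multilinear monomial $\tilde w$ of degree $m\ge k$, choose a submonomial structure to collapse. Write $\tilde w$ as a binary tree with $m$ leaves. Pick disjoint subtrees (subproducts) covering all leaves such that their number is exactly $k$; this is possible because one can repeatedly merge two sibling leaves into their parent node, reducing the leaf count by one each time, down to any number between $1$ and $m$. Replacing each chosen subproduct by a new variable gives a monomial $u$ in exactly $k$ distinct variables, so $u=0$ is an identity. Substituting the subproducts back for these variables shows $\tilde w=0$ in $\frV$. Hence all multilinear monomials of degree $m\ge k$ vanish in $\frV$.

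It remains to pass from multilinear monomials to arbitrary products, since the nilpotency identities $x_1\cdots x_{k}=0$ (with all bracketings) are themselves multilinear monomials of degree $k$. Every algebra in $\frV$ therefore satisfies $A^{k}=\{0\}$, i.e. $\frV\subset\frN_{k-1}$, and $\frV$ is nilpotent, contradicting the hypothesis. Note that this step in fact needs only the degree-$k$ identities, so the merging argument for degree $m>k$ is only for robustness.

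The main obstacle is the reduction from ``every monomial with content exactly $J_k$ vanishes'' to ``every multilinear monomial of degree $k$ vanishes''. This is actually immediate, since a multilinear monomial of degree $k$ has content of size exactly $k$, so the only real care needed is the bookkeeping with Lemma \ref{lRelX} and the permutation automorphisms. Once $d_k\ge1$ for all $k$, the formula gives $d(n)\ge\sum_{k=1}^n{n\choose k}=2^n-1$.
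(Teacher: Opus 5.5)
Your proof is correct and is the paper's argument: $d_k=0$ makes every monomial with content $\{x_1,\dots,x_k\}$, in particular every bracketing of $x_1\cdots x_k$, an identity of $\frV$, so $\frV\subset\frN_{k-1}$ is nilpotent; hence $d_k\ge 1$ for all $k$, and Proposition~\ref{formdn} gives $d(n)\ge\sum_{k=1}^n{n\choose k}=2^n-1$. The middle part of your write-up (the abandoned collapse to $z$ and the subtree-merging argument for degree $m>k$) is superfluous, as you note yourself, and can simply be deleted.
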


\proof If in the formula (\ref{dn}), $d_k=0$ for some $k$ then the variety $\frV$
is nilpotent of class at most $k+1$. Therefore, by the assumption, we have
$d_k\ge 1$ for every $k\ge 1$. Then we see from (\ref{dn}) that
\[
d(n)=\dim F_n \ge \sum_{k=1}^n {n\choose k} = 2^n-1.
\]
\endproof

\begin{Example}\label{ex2nm1}
The associative and commutative variety $\frV$ over a field of characteristic 2, with an additional identity $x^2=0$, is locally nilpotent  but not nilpotent. A basis of $F_n(\frV)$ is formed by the product $x_{i_1}\cdots x_{i_s}$, where $s\ge 1$ and $1\le i_1<\ld <i_s\le n$. Hence,  $d(n)  = 2^n - 1 $. This shows that the estimate of the Corollary \ref{2n1} cannot be improved, in the general case.
\end{Example}

\subsection{Dimension functions for simple algebras.}\label{ssDFSA}

To estimate the dimension function of the variety generated by a finite simple algebra $A$, we need the following.

\begin{Lemma} \label{dnsimple} Let $A$ be a finite simple algebra with non-zero multiplication over a field $\F$ of order $q$, and $G=\Aut(A)$ the  automorphism group of  $A$. Denote by $t(n)$ the maximal integer such that the direct power $A^{t(n)}$ can be generated by $n$ elements. Then
\begin{enumerate}
  \item[$\mathrm{(i)}$] for all sufficiently large $n$, we have 
  \[
  t(n)\ge\frac{|A|^n}{|G|}\left(1-O(q^{-n}\right);
  \]
  \item[$\mathrm{(ii)}$] if $A$ is a minimal algebra then for all $n\ge 1$, we have
  \[
  t(n) = \frac{|A|^n-1}{|G|}.
  \]
\end{enumerate}
\end{Lemma}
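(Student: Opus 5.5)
The plan is to read $n$-tuples of $A$ as homomorphisms from the free algebra $F_n=F_n(\var A)$ to $A$, and to count surjective ones. An $n$-tuple $(a_1,\ld,a_n)\in A^n$ is the same as a homomorphism $\lambda:F_n\to A$. Suppose $\lambda_1,\ld,\lambda_t$ are surjective homomorphisms, each onto $A$, with pairwise distinct kernels. Then the combined map $F_n\to A^t$, $f\mapsto(\lambda_1(f),\ld,\lambda_t(f))$, has subdirect image, and that image is generated by the $n$ tuples $(\lambda_1(x_j),\ld,\lambda_t(x_j))$.

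By Lemma \ref{lDirectSimple1} the image is isomorphic to $A^r$ for some $r\le t$. To get $r=t$, argue with kernels. The image is a subdirect product of $t$ copies of the simple algebra $A$, and no two coordinate projections have the same kernel. The induction in the proof of Lemma \ref{lDirectSimple1} discards a factor $A_t$ only when $B\cap A_t=\{0\}$. In that case the $t$-th projection factors through the projection onto the remaining factors $A_1\times\cdots\times A_{t-1}$. By Lemma \ref{lDirectSimple} (i) its kernel is then an intersection containing the kernel of some single other coordinate, and since both kernels are maximal they coincide, which is a contradiction. Hence the image is all of $A^t$, and so $t(n)\ge$ the number of distinct kernels of epimorphisms $F_n\to A$.

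Two epimorphisms have the same kernel exactly when they differ by an automorphism of $A$, and $G$ acts freely on epimorphisms. So the number of kernels equals $e(n)/|G|$, where $e(n)$ is the number of generating $n$-tuples of $A$. Conversely, if $A^t$ is $n$-generated, compose with the $t$ projections. This gives $t$ epimorphisms with distinct kernels: they are distinct because the kernel of the map onto $A^t$ is the intersection of these kernels, and repeated kernels would make the image a proper quotient. Therefore
\[
t(n)=e(n)/|G|
\]
exactly.

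For (ii), when $A$ is minimal, every nonzero tuple generates $A$. Hence $e(n)=|A|^n-1$, which gives the formula.

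For (i), bound the non-generating tuples. Every non-generating tuple lies in some proper subalgebra $B\subsetneq A$, and there are at most $2^{|A|}$ such subalgebras, a constant. Each satisfies $|B|\le|A|/q$. So
\[
|A|^n-e(n)\le 2^{|A|}(|A|/q)^n=|A|^n\,O(q^{-n}),
\]
which is the required estimate; a sharper exponent is possible but unnecessary.

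I expect the main obstacle to be the first step, namely showing that distinct kernels force the full direct product rather than a diagonal-type subalgebra. This is handled through Lemma \ref{lDirectSimple} (i) as above, using maximality of the kernels, which holds because $A$ is simple with $A^2\ne\{0\}$.
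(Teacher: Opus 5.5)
Your proof is correct, and its core mechanism is the paper's. Pairwise inequivalent generating $n$-tuples (for you, epimorphisms $F_n\to A$ with distinct kernels) generate the whole direct power. The paper proves this by induction on the number of coordinates, using that every epimorphism $A^{k-1}\to A$ is a coordinate projection followed by an isomorphism. Conversely, two equivalent coordinate tuples confine the generated subalgebra to a diagonal, which is the paper's upper-bound argument in (ii).

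What you do differently is run both directions for an arbitrary finite simple $A$ with $A^2\ne\{0\}$, which gives the exact formula $t(n)=e(n)/|G|$. The paper uses the diagonal upper bound only in the minimal case and states (i) merely as a lower bound, although its argument never uses minimality.

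Your organization makes (ii) a one-line consequence of $e(n)=|A|^n-1$. It also removes the need for the paper's observation that the fixed points of an automorphism form a subalgebra, because $G$ acts freely on generating tuples anyway. Part (i) then reduces to the same kind of count of tuples lying in boundedly many proper subspaces or subalgebras. The paper's version has the modest advantage of staying inside the direct product: it exhibits the $n$ generators of $A^N$ explicitly as columns, without free algebras or kernels.

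The step you flagged is right, but your sentence about an intersection containing a single kernel is compressed. A cleaner version follows from Lemma \ref{lDirectSimple} (i). That lemma implies $A^r$ has exactly $r$ ideals with quotient isomorphic to $A$. The $t$ pairwise distinct coordinate kernels on the image therefore force $t\le r$. Hence $r=t$, and the image is all of $A^t$ by comparing orders.
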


\begin{proof} $\mathrm{(i)}$ We may assume that $n$ is at least the number of generators in $A$. For each maximal proper subspace $A'\subset A$, we have $|A'|=\frac{|A|}{q}$ and the number of $n$-tuples of elements in $A'$ is equal to $|A'|^n$. If $s$ is the number of proper subspaces in $A$, then the number of $n$-tuples generating $A$ is at least $|A|^n-s|A'|^n=|A|^n(1-O(q^{-n}))$.

Let us call two $n$-tuples generating $A$ {\it equivalent} if one of them can be mapped into another by an automorphism of $A$. Then one can find $N=\frac{|A|^n}{|G|}\left(1-O(q^{-n})\right)$ pairwise inequivalent $n$-tuples in $A$. Let us choose a maximal set of pairwise inequivalent $n$-tuples and denote it by $Y$. One can enumerate the $n$-tuples in $Y$ by the numbers from $1$ to
$N$.

Now, let $A_j$ be an isomorphic copy of  $A$, $j=1,\dots,N$. In the direct product $P$ of all $A_j$, we select the elements 
\begin{eqnarray*}
  x_1&=& (x_{11},\ld,x_{1N}) \\
  x_2 &=& (x_{21},\ld,x_{2N}) \\
  \ld &=& \;\,\ld\ld\ld\ld\\
  x_n &=&  (x_{n1},\ld,x_{nN})
\end{eqnarray*}
where the coordinates of the $j$th column is the $j$th generating tuple of $A$. 

Let $B$ be the subalgebra of $P$ generated by the elements $x_1,\dots, x_n$ and $C(k)$ be the projection of $B$ into the subalgebra $C(k)=A_1\times\dots\times  A_k\subset P$. We will prove by induction on $k$ that $B$ projects {\it onto} $C(k )$. Then, for $k=N$, we will obtain $B = C(N)=P$, and so $P$ is generated by $n$ elements, proving Claim (i) of the Lemma. For the base of induction, where $k=1$, our claim holds since  $x_{11},\ld,x_{n1}$ are the generators of $A_1=C(1)$.

Assume now that $k>1$ and that $B$ projects onto $C(k-1)$. In this case, $C(k-1)$ is generated by the projections $z_i$ of $x_i$ to $C(k-1)$. By Lemma \ref{lDirectSimple}, the kernel of any epimorphism $\vp: C(k-1)\to A$ is a product of $k-2$ direct factors $A_i$ of $C(k-1)$. So $\vp$ is a composition of the projection $\pi_j$ of $C(k-1)$ onto a factor $A_j$, where $j\le k-1$ and an isomorphism $A_j\to A$. The images of $z_1,\dots, z_n$ under $\pi_j$ are $x_{1j},\ld,x_{nj}$. So their images under $\vp$ form a $n$-tuple equivalent to the tuple $(x_{1j},\ld,x_{nj})$, where $j<k$.
 
 Let $D$ be the projection of $B$ to $C(k)$ generated by the images $y_i$
 of $x_i$. If the projection $\psi$ of $D$ onto $C(k-1)$ given by $y_i\mapsto z_i$ is an isomorphism of $D$ onto $C(k-1)$ then the image of the $n$-tuple $(y_1,\ld,y_n)$
 under an arbitrary epimorphism $D\to A$ is also equivalent to one of
 the tuples  $(x_{1j},\ld,x_{nj})$, where $j<k$. But this is not the case
 since $D$ has a projection onto $A_k$ under which $y_1,\ld,y_n$ map to
 $x_{1k},\ld,x_{nk}$, and the tuple  $(x_{1k},\ld,x_{nk})$ is not
 equivalent to any of the tuples mentioned above.
 
 Thus, $\psi$ has a non-trivial kernel $K$ that has to be contained in $A_j$. Since $A_k$ is a simple algebra we have $K = A_k$. Therefore $|D| = |A_k||C(k-1|= |C_k|$, and so $D=C(k)$, because $D$ is a subalgebra of $C(k)$. 
 
 \medskip
  
   $\mathrm{(ii)}$ Since $A$ has no non-zero proper subalgebras, it is generated by an
   arbitrary set of non-zero elements. Therefore one gets $|A|^n-1$
   generating $n$-tuples. If $\alpha$ is a nontrivial automorphism of $A$ then the set of fixed points of $\alpha$ is a subalgebra of $A$, that is, $A$ or $\{ 0\}$. Hence, if $a\in A$ and $a\ne 0$, then $\alpha(a)\ne a$. As a result, the $G$-orbit of every
   non-zero element of $A$ has cardinality equal to $G$. So we have
   exactly $N=\frac{|A|^n-1}{|G|}$ equivalence classes  of generating
   $n$-tuples for every $n\ge 1$. This proves that $t(n)\ge \frac{|A|^n-1}{|G|}$.
    To obtain the opposite inequality, it suffices to observe
   that  a set $x_1,\ld, x_n$ cannot generate the direct product
   $P=A_1 \times\dots\times A_t$ of copies of $A$, if the projections
   on the diferent factors, say $(x_{11},\dots,x_{n1})$ and
   $(x_{12},\dots,x_{n2})$ are equivalent. Indeed, in this case
   there is an isomorphism $\alpha: A_1\to A_2$ such that
   $\alpha(x_{k1}) = x_{k2}$ for $k=1,\dots, n$. It follows
   that the projection of the subalgebra $B$, generated by  $x_1,\ld, x_n$,
   into $A_1\times A_2$ is the diagonal of this product defined
   by the isomorphism $\alpha$. Hence $B\ne P$.
   
   The lemma is proved.
   
 \end{proof}
 
 \begin{Thm} \label{limA} If $A$ is a finite simple algebra over a field $\bf F$, $A^2\ne 0$, and 
${\frV}= \var A$, then there exists a positive number $c$ such that for all natural $n$ one has
\[
 c|A|^n \dim_{\bf F}A < d_\frV(n) < |A|^n \dim_{\bf F}A.
 \]
 In particular,
 $\lim_{n\to \infty}\sqrt[n]{d(n)} = |A|$.
\end{Thm}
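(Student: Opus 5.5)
Both bounds follow from results already in hand: the upper bound from Birkhoff's construction, the lower bound from Lemma \ref{dnsimple} combined with Theorem \ref{tSDI}.

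\emph{Upper bound.} By Theorem \ref{etBirkhoof_Estimate}, $d_\frV(n)\le (\dim A)(|A|^n-1)<|A|^n\dim A$, which is already strict.

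\emph{Lower bound.} By Theorem \ref{tSDI}(ii), the free algebra decomposes as $F_n=P\oplus Q$, where $P$ is a direct power of $A$. Let $t(n)$ be the largest integer such that $A^{t(n)}$ is $n$-generated. Since $A^{t(n)}\in\frV$, it is a homomorphic image of $F_n$, so $d_\frV(n)\ge t(n)\dim A$.

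Lemma \ref{dnsimple}(i) gives $t(n)\ge \frac{|A|^n}{|G|}(1-O(q^{-n}))$ for all $n\ge n_0$, with $G=\Aut A$. Hence for large $n$ we get $d_\frV(n)\ge \frac{1}{2|G|}|A|^n\dim A$.

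For the finitely many $n<n_0$ we only need $d_\frV(n)>0$. This holds because $F_n$ maps onto the nonzero algebra generated by any single nonzero element of $A$, so $F_n\ne 0$ for every $n\ge1$. Taking
\[
c=\min\Bigl(\tfrac{1}{2|G|},\ \min_{n<n_0} \frac{d_\frV(n)}{|A|^n\dim A}\Bigr)>0,
\]
and shrinking $c$ slightly so that the inequality is strict, gives $c|A|^n\dim A<d_\frV(n)$ for all $n$.

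\emph{The limit.} Taking $n$-th roots, $c^{1/n}(\dim A)^{1/n}\to1$ and $(\dim A)^{1/n}\to1$, so $\sqrt[n]{d(n)}\to|A|$ by squeezing.

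The only substantive input is the growth estimate of Lemma \ref{dnsimple}(i). The small-$n$ cases need some care, but as shown above they are handled simply by positivity of $d_\frV(n)$.
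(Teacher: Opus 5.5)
Your proof is correct and is essentially the paper's argument: the upper bound comes from Birkhoff's estimate (Theorem \ref{etBirkhoof_Estimate}), and the lower bound from Lemma \ref{dnsimple}(i) via $d_\frV(n)\ge t(n)\dim A$. Your explicit handling of the finitely many small $n$, using $d_\frV(n)>0$, fills in a detail the paper leaves implicit; the appeal to Theorem \ref{tSDI}(ii) is harmless but not needed.
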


\proof By Lemma \ref{dnsimple}, there is a positive number $c$ such that if $A^{t(A)}$ is $n$-generated then  $t(A)>  c|A|^n $. This proves the left inequality. The right inequality
follows from the standard Birkhoff inequality (\ref{eBirkhoff}). 

\endproof

Birkhoff's estimate of the dimension function $d_\frV(n)$ of a finitely generated variety $\frV=\var A$ implies that there is $\limsup_{n\to\infty}\sqrt[n]{d(n)}<\infty$. We call this limit the {\it exponent} $\mathrm{Exp}(\V)$ of $\frV$ or $\mathrm{Exp}(A)$ if $\V=\var A$. By Corollaries \ref{polynom}, \ref{2n1}, the exponent is $1$ iff the finite algebra $A$ is  nilpotent, otherwise it is at least $2$. By Theorem \ref{limA}, the exponent is an integer if $A$ is a simple algebra.

\begin{problem}\label{pbmIntegralExp} If $A$ is a finite algebra, does there exist a limit of $\sqrt[n]{d_A(n)}$ as  $n\to\infty$ ? Is it always true that $\mathrm{Exp}(A)$ is an integer? This limit has to coincide with the limit of $\dfrac{d_A(n)}{d_A(n-1)}$, if the latter exists.
\end{problem}
\endproof

Recall the Birkhoff's function $b(n) = (|A|^n -1)\dim A$ (See subsection \ref{ssBCN}.)
  
\begin{Thm}\label{etFreeSpectrum1} Let $A$ be a finite algebra  over a field
$\bf F$, $A^\ne 0$, and $d(n)$ the dimension function of the variety $\frV= \var A$. Then $d(n) \le b(n)$ and the functions $d(n)$ and $b(n)$ coincide if and only if  $A$ is a minimal algebra having no nontrivial automorphisms.

If  $A$ is a minimal algebra, then the algebra $F_n = F_n(\frV)$ is the direct sum of 
$\frac{|A|^n-1}{|\Aut A|}$ ideals isomorphic to $A$.
\end{Thm}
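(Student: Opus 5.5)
The plan splits the statement into three parts: the inequality $d(n)\le b(n)$, the second (structural) claim about minimal $A$, and the ``if and only if''. The inequality is Theorem \ref{etBirkhoof_Estimate}, so nothing new is needed there.

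For the second claim, let $A$ be minimal. By Corollary \ref{minalg}, the finite algebra $F_n=F_n(\frV)$ is a direct power $A^t$. Since $F_n$ is $n$-generated, Lemma \ref{dnsimple}(ii) gives $t\le \frac{|A|^n-1}{|G|}$ with $G=\Aut A$. Conversely, the same lemma provides an $n$-generated algebra $A^{t(n)}\in\frV$ with $t(n)=\frac{|A|^n-1}{|G|}$. This algebra is a homomorphic image of $F_n$, so $\dim F_n\ge t(n)\dim A$. Hence $F_n\cong A^{t(n)}$, and the number of ideals isomorphic to $A$ is exactly $\frac{|A|^n-1}{|\Aut A|}$. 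The ``if'' direction now follows at once: when $G$ is trivial, $d(n)=(|A|^n-1)\dim A=b(n)$.

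For ``only if'', assume $d(n)=b(n)$ for all $n$; even $n=1$ should suffice. I use Remark \ref{subBirk}. The image of $F_n$ in the Birkhoff algebra lies in $\bigoplus_{\lambda\in\Phi}A_\lambda$, where $A_\lambda=\lambda(F_n)$. Equality forces $A_\lambda=A$ for every nonzero $\lambda$. Taking $n=1$ and $\lambda(x_1)=a$ for an arbitrary $a\ne 0$ shows that every nonzero $a$ generates $A$, so $A$ is minimal. Then the second claim gives $d(n)=\frac{|A|^n-1}{|G|}\dim A$, and comparing with $b(n)$ forces $|G|=1$.

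The main point to check is the inequality in Remark \ref{subBirk}, namely that $\iota(F_n)$ lies in $\prod_\lambda A_\lambda$, so that $\dim F_n\le\sum_\lambda\dim A_\lambda$. This holds because the $\lambda$-coordinate of $\iota(w)$ is $\lambda(w)\in A_\lambda$. A second thing to verify is that the hypothesis $A^2\neq 0$ ensures $A$ is simple with nonzero multiplication, so that Corollary \ref{minalg} and Lemma \ref{dnsimple}(ii) apply. Corollary \ref{minalg} does not actually need $A^2\ne0$, but Lemma \ref{dnsimple} is stated under that assumption.
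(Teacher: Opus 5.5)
Your proposal is correct and follows essentially the same route as the paper. Both use the Birkhoff bound, then Remark \ref{subBirk} to force minimality, then Corollary \ref{minalg} together with Lemma \ref{dnsimple}(ii) to identify $F_n$ with $A^{t(n)}$ and to force $|\Aut A|=1$. The only difference is that you argue the ``only if'' direction directly, noting that equality already at $n=1$ suffices. The paper instead phrases it contrapositively, as strictness of $d(n)<b(n)$ when $A$ has a proper nonzero subalgebra or a nontrivial automorphism.
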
 

\proof By Theorem \ref{etBirkhoof_Estimate}, the inequality $d(n) \le b(n)$ always holds and it is strict if $A$ has a non-zero proper subalgebra by Remark \ref{subBirk}. If $A$ is minimal algebra and the automorphism group $G$ of $A$ is non-trivial, this inequality is also strict by Corollary \ref{minalg} and Lemma \ref{dnsimple}.
 
If the algebra $A$ is minimal and has $G=\{1\}$, then by Lemma \ref{dnsimple} (ii), the variety $\frV$ contains an $n$-generated subalgebra of dimension  $(|A|^n -1)\dim_{\bf F} A$ for every $n\ge 1$,
 and so we obtain the equality of the functions $d(n)$ and $b(n)$.

For the second statement, we recall, that by Corollary \ref{minalg}, $F_n$ is the direct sum
of ideals isomorphic to $A$. So this is an $n$-generated sum with the maximal number of summands  $t(n)$. It remains to apply Lemma \ref{dnsimple} (ii).

\endproof

\subsection{Dimension functions of arbitrary finite algebras.}
For non-simple algebras we have less spectacular exponential estimates.

\begin{Thm}\label{epNongn}
Let $A$ be a non-nilpotent finite algebra over a field $\F$, $|\F|=q$, $\frV$ the variety generated by $A$, and $d(n)$ the dimension function of  $\frV$. Then 
$d(n)\ge c q^n$ for some $c>0$ and all positive integers $n$, and so the exponent of $\frV$ is at least $q$.
\end{Thm}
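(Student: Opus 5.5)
The plan is to reduce to a non-nilpotent algebra in $\frV$ that is as small as possible, and then to produce, inside free algebras of $\frV$, many independent copies of a non-nilpotent simple section. Since $A$ is finite and non-nilpotent, $A$ has a chief series. If all chief factors $M/N$ (with $N\subset M$ consecutive ideals) satisfied $AM+MA\subset N$, this would be an annihilator series and $A$ would be nilpotent by Proposition \ref{nilp}. So some chief factor $M/N$ of $A$ is not central in $A/N$. Replacing $A$ by a non-nilpotent algebra $C\in\frV$ of minimal order, as in the proof of Theorem \ref{exNonnilpMalcev}, we get that all proper factor-algebras of $C$ are nilpotent. Its minimal ideal $M$ does not annihilate $C$ by Lemma \ref{lNilpSimpleProp} (ii), and $C$ is monolithic because it embeds in the product of two nilpotent factors otherwise. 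It suffices to prove the estimate $d_{\var C}(n)\ge cq^n$, since $d_\frV\ge d_{\var C}$.

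There are two cases. If $M^2\ne\{0\}$, then $M$ is a simple algebra in $\frV$ with nonzero multiplication. Then $\var M\subset\frV$, and Theorem \ref{limA} already gives $d(n)\ge c|M|^n\ge cq^n$. The main case is therefore $M^2=\{0\}$, where $M$ is an irreducible $U(C)$-module on which $C$ acts nontrivially.

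For the abelian case I plan to construct, for each $n$, an $n$-generated algebra in $\frV$ of dimension $\ge cq^n$. The construction uses direct powers of $C$, modelled on Lemma \ref{dnsimple} (i) and on the construction with the diagonal $B$ and $N=M_1\oplus M_2$ used in Theorem \ref{cSubNilp}. Fix generators $g_1,\dots,g_r$ of $C$ and take $n\ge r+\dim M$. Consider $P=C^N$, and the $n$ elements whose first $r$ coordinates are the diagonal elements $(g_i,\dots,g_i)$. The remaining $n-r$ elements are chosen so that in coordinate $j$ they run through a family of vectors of $M$ indexed by linear functionals; concretely, I index the factors by the $\approx q^{n-r}$ nonzero vectors $\lambda\in\F^{\,n-r}$ up to scalars, and I put $\lambda_k m_0$ in the $k$th extra generator at factor $\lambda$, for a fixed $0\ne m_0\in M$. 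Let $B$ be the generated subalgebra. It contains the diagonal copy of $C$. For each $\lambda$, the $U(C)$-submodule generated by the extra generators, with multiplication by diagonal elements, lies in $M^N$. Since $M$ is an irreducible nontrivial $C$-module and the vectors $(\lambda_k)$ for non-proportional $\lambda$ are linearly independent in the appropriate sense, this submodule is a direct sum over the distinct $\lambda$'s. I would then argue that it is large: the submodule contains $\bigoplus_\lambda U\cdot(\text{combination})$, and after acting by $U(C)$ and taking suitable linear combinations of the extra generators we can isolate each factor's copy of $C M_\lambda$. This would give $\dim B\ge (\#\lambda)\cdot 1\ge c q^{n}$.

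I expect the main obstacle to be the separation step, that is, showing that the $U(C)$-span really splits coordinatewise rather than staying inside a small "diagonal" submodule. The linear combinations $\sum_k\mu_k x_{r+k}$ have coordinate $(\mu\cdot\lambda)m_0$ at factor $\lambda$. Taking $\mu$ orthogonal to all but one class is impossible in general, so pure linearity may not suffice. The fallback is to use the nonlinear action: products like $x_{r+k}x_{r+l}$ vanish (since $M^2=0$), but products with the diagonal $C$ are linear. In that situation I would instead vary the diagonal part, using different generating tuples of $C$ in different factors, exactly as in Lemma \ref{dnsimple}. One then counts inequivalent $n$-tuples whose images generate $C$ modulo $M$ yet differ in their $M$-components, and appeals to the fact that a subdirect product in which the monoliths $M$ are not all contained must be a graph of an isomorphism, hence identified tuples. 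Counting the $q^{(n-r)\dim M}/|\Aut C|$ classes then gives at least $cq^n$ factors, as required.
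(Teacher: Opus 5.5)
\textbf{There is a genuine gap in the main (abelian-monolith) case.} Neither of your constructions can give more than linear growth, and the fact you invoke to finish is false.

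In your first construction, $B=\Delta(C)+M\otimes V$, where $V\subset\F^N$ is spanned by your $n-r$ coordinate vectors. So $\dim B\le\dim C+(n-r)\dim M$, as you suspected.

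The fallback has the same defect. Since $M^2=\{0\}$, the action of an element of $C$ on $M$ depends only on its class modulo $M$. You keep $g_1,\dots,g_r$ fixed and vary only $M$-components, which is what your count $q^{(n-r)\dim M}/|\Aut C|$ reflects. Then:
\begin{itemize}
\item the first $r$ generators generate $\Delta(C)$;
\item every generator acts on $M^N$ by the same operator in each coordinate;
\item hence $B=\Delta(C)+\sum_k U^1\mu_k$, where $\mu_k\in M^N$ are the extra generators and $U^1=U(C)+\F\,\mathrm{id}$ acts diagonally.
\end{itemize}
Thus $\dim B\le\dim C+(n-r)(\dim U(C)+1)$. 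So $B$ cannot contain the monoliths of your roughly $q^{(n-r)\dim M}$ factors, no matter how inequivalent the tuples are under $\Aut C$.

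The principle ``a subdirect product not containing the monoliths must be a graph of an isomorphism'' is Lemma \ref{lDirectSimple1}, which is for \emph{simple} factors. It fails for monolithic ones. For split $C=D\ltimes M$, the set $\{(d+m_1,\,d+m_2,\,d+m_1+m_2)\mid d\in D,\ m_i\in M\}$ is a subdirect product in $C^3$ meeting each factor trivially, and its projection to any two factors is not a graph.

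\textbf{The missing idea} is a mechanism that separates coordinates inside $\prod_j M_j$. It has to come from the \emph{action} of the generators on the monolith, compared up to automorphisms of the associative envelope. The paper proceeds as follows.
\begin{itemize}
\item Take $W=L(C)|_M\ne\{0\}$ (switching to right multiplications if necessary); it generates $U=U_L(C)\subset\End M$.
\item Choose $N>|W|^n/(2|\Aut U|)\ge cq^n$ spanning $n$-tuples of $W$, pairwise inequivalent under $\Aut U$.
\item Lift each to a spanning $n$-tuple of $C$, and let $B\subset C^N$ be generated by the resulting $x_1,\dots,x_n$.
\item Inequivalent tuples have different relation ideals of the same codimension. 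So for $j\ne j'$ there is an associative polynomial $p_{jj'}$ vanishing on the $j'$th operator tuple but not on the $j$th.
\item Take $a\in B\cap\prod_j M_j$ with nonzero first coordinate and minimal support. It exists in a power of $B$, because $C/M$ is nilpotent and $B$ projects onto each factor.
\item Since $M_1$ is a minimal ideal and $B$ projects onto $C_1$, you can make $a_1$ any nonzero element of $M_1$ without enlarging the support.
\item If, say, the second coordinate is also nonzero, then $p_{12}(L(x_1),\dots,L(x_n))\,a$ keeps the first coordinate nonzero and kills the second. This contradicts minimality.
\end{itemize}
Hence every $M_j\subset B$ and $\dim B\ge N$.

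This argument uses only that $M$ is an irreducible $U(C)$-module, so it also covers $M^2\ne\{0\}$. There your appeal to Theorem \ref{limA} is unjustified unless $M=C$. A minimal ideal of a nonassociative algebra need not be simple, and if $M\ne C$ your minimality assumption even forces $M$ to be nilpotent.
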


\begin{proof}
  By choosing $A$ minimal non-nilpotent, we may assume that $A$  has a minimal ideal $I$ and $A/I$ is nilpotent. Then $I$ does not annihilate the whole of $A$ by Lemma \ref{lNilpSimpleProp}. Without loss of generality, we may assume that $AI\nez$. Also we can  assume that $n\ge n_0 = \dim A$.
  
  Recall (see Section \ref{sssEAV}) the left enveloping algebra $U_L(A)$, an associative subalgebra in $\End_{\bf F} I$ generated by all operators of the left multiplications $L(a)$, where $a\in A$. It is generated by the image $W\ne \{ 0\}$ of $A$ under the linear map $L:A\to \End_{\bf F} I$. 
  
  Note that given any set of $n$ elements $L(b_1),\ld,L(b_n)$  spanning $W$, one can select $n$ elements $a_1,\ld,a_n\in A$, spanning $A$, so that $L(a_1)=L(b_1),\ld,L(a_n)=L(b_n)$. Indeed, if, say, $\{L(b_1),\ld,L(b_k)\}$, $k\le n$, is a maximal linearly independent subset of that set, then any $a_1,\ld,a_k$ with $L(a_1)=L(b_1),\ld,L(a_k)=L(b_k)$ are linearly independent in $A$. If $X=\la a_1,\ld,a_k\ra$ and $K=\Ker L$ then $A=X\oplus K$, $\dim K=m-k$. Choose $\{ c_{k+1},\ld,c_n\}$ in $X$ so that $L(c_{k+1})=L(b_{k+1}),\ld,L(c_n)=L(b_n)$. Choose a basis $\{ d_{k+1},\ld,d_m\}$ in $Y$. Now the desired set $a_1,\ld,a_n$ is the following:
  \[
 a_1,\ld,a_k,a_{k+1}=c_{k+1}+d_{k+1},\ld,a_m=c_m+d_m, a_{m+1}=c_{k+1},\ld,a_n=c_n.
  \]
  
  The number of  $n$-tuples of elements in $W$ equals $|W|^n$. An $n$-tuple $w_1,\ld,w_n$ spans $W$ if and only if there is no maximal proper subspace $W'$ such that $w_1,\ld,w_n\in W'$. For each maximal proper subspace  $W'\subset  W$, the number of $n$-tuples of elements in $W'$ equals $|W'|^n$, where  $|W'|=\frac{|W|}{q}$. If  $s$ is the number of subspaces like $W'$, then the number of $n$-tuples generating $W$ will be at least $|W|^n-s|W'|^n$. Even for not so large values of $n$, we have that the number of generating $n$ tuples is greater than $\frac{1}{2}|W|^n$.

Let us call two $n$-tuples spanning $W$ equivalent if one can be mapped into another by an automorphism of $U$. Obviously, one can find more than  $\frac{|W|^n}{2m}$ pairwise inequivalent generating $n$-tuples in $W$, where $m$ is the order of the automorphism group $\Aut U$. Let us denote by $Y$ any of such sets of pairwise inequivalent $n$-tuples. It was noted earlier that there is a preimage in $A$ spanning $A$ for every $n$-tuple from $Y$. Choose a set $Z$  consisting of one preimage in $A$ for each $n$-tuple in $Y$, as above. So we have that  $|Z|>c|W|^n$ where $c = \frac{1}{2m}$, and the images of different  $n$-tuples from  $Z$ in $U$ are pairwise inequivalent. We enumerate the $n$-tuples of $Z$ by the numbers from $1$ to $N > c |W|^n$.

Now, for any $j=1,\ld,N$, we take an isomorphic copy $A_j$ of $A$ spanned by
the $n$-tuple $(x_{1j},\dots,x_{nj})$ having number $j\le N$, a copy $I_j$ of $I$, and a copy $U_j$ of $U$. In the direct product $P$ of all $A_j$ we select $N$-tuples  
\[
x_i = (x_{i1},\ld,x_{iN}),\,i=1,\ld,n,
\]
Let $y_{1j}=L(x_{1j}),\ld, y_{nj}=L(y_{nj})$ be the images of $x_{1j},\ld,x_{nj}$ in $U_j$.

Notice that for any $j'\ne  j$ the sets of relations of $U_{j'}$ and $U_j$ with respect to the selected $n$-tuples of generators are different. Indeed, if not, by Dyck's Lemma there is an isomorphism of these (associative) algebras transforming one of these tuples into another, which contradicts the choice of $n$-tuples in $Y$ and $Z$. Moreover, it follows from  $\dim U_{j'}= \dim U_j$ 
that there is an associative polynomial $p_{jj'}$ in $n$ variables, whose values are zero in the selected generators of  $U_{j'}$, but not $U_{j}$.

Let us consider the subalgebra $B\subset P$ generated by the $n$-tuple $x_1,\ld,x_n$. Obviously, $B\in\var A$. We will show that $B$ contains all the ideals  $I_j, j=1,\ld,N$. This will restrict the dimension of $B$ from below by an exponential function $c|W|^n\ge cq^n$. For the proof, we assume that $j=1$.
  
Since $A_j$ is not nilpotent and $A_j/I_j$ is nilpotent, some powers of $P$ and of $B$ are in  the product of $I_j$'s,
  and there is an element $a\in B$ belonging to this product with non-zero
  first component  $a_1\in I_1$. We chose such $a$ with minimal set of
  non-zero components $a_j$. Since $I_1$ is a minimal ideal in $A_1$ and $B$
  projects onto $A_1$, one can change $a$ and $a_1$ for $a'$ and $a'_1$ with
  the same properties, where $a_1'$ is an arbitrary non-zero element in $I_1$.
  
  If also some $a_2\ne 0$, we recall the associative polynomials $p_{ij}$ and use $p_{12}$. The value of $p_{12}$ in $y$'s annihilates all chosen generators of $U_2$ but not the generators of $U_1$. Hence there is $a'_1\in I_1$ which is not annihilated by $p_{12}(y_{11},\ld,y_{n1})$ but $p_{12}(y_{12},\ld,y_{n2})$ annihilates
  any element from $I_2$. Notice that the linear operators $y_{i1}$ and $y_{i2}$
  acts on $I_1$ and $I_2$, resp., as left multiplication by $x_i$. Hence
  the operator $p_{12}(L(x_1),\ld,L(x_n))$ applied to $a'$ leaves the
  first component of $a'$ non-zero, but decreases the number of other
  non-zero components. Clearly, the element $p_{12}(L(x_1),\ld,L(x_n))(a')$ also belongs to both $B$ and the product of the ideals $I_j$, contrary to the choice of $a$.
  Thus, $B$ contains a non-zero element of $I_1$, and so $I_1\subset  B$,
  which completes the proof.
  
\end{proof}

The estimate of the exponent of $\var A$ in Theorem \ref{epNongn} looks weak when compared
with the  value obtained in Theorem \ref{limA}. It seemed plausible that the number $q$ in the conclusion of Theorem \ref{epNongn} could be replaced by the maximum of orders of chief factors of  $A$. In that case, Theorem \ref{limA} would be an easy consequence. Unfortunately, that expectation turned out to be false. Below, we provide a counterexample.

\begin{Example}\label{Liemetab} Here we show that
  there exists a finite algebra $A$ with monolith $M$ of arbitrary high order $q^d$ such that $\mathrm{Exp}(A)=q$.
Let $A$ be a Lie algebra over a field $\F$ of order $q$ which is a semidirect product of an abelian ideal $M$ and a one-dimensional subalgebra $\langle a\rangle$. It follows from the existence of irreducible polynomials of arbitrarily high degree over a finite field $\F$ that one can choose $M$ irreducible of arbitrarily high dimension $d$. Thus, $M$ is a monolith of $A$ of dimension $d$.  

Let $F_n$ be a free algebra of rank $n$ in $\var A$. It follows from Corollary \ref{etBirkhoof_General} and Lemma \ref{minrep}  that $F_n^2$ is the direct sum of minimal ideals of dimension $d$. On each of these ideals, $F_n$ acts by adjoint action as the projection of $F_n$ on the respective semidirect factor of the minimal representation of $F_n$, more precisely, as a one-dimensional factor-algebra of $F_n$. Since we have less than $q^n$ of epimorphisms of $F_n$ to a one-dimensional algebra, in the decomposition of $F_n^2$ as the direct sum of minimal ideal there will be less than $q^n$ pairwise non-isomorphic $F_n$-modules, in other words, $F_n^2$ has less than $q^n$ homogeneous components which are the sums of isomorphic $F^n$-modules.
To proceed we need

\begin{Lemma}\label{lGenRem} Let $P$ be a finite left module over an associative or Lie ring $R$ such that $P=\underbrace{Q\times\cdots\times Q}_t$ where $Q$ is an $R$ module of order $m>0$. If $P$ can be generated as an $R$-module by $r$ elements then $t<mr$.
\end{Lemma}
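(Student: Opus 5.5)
The plan is to reduce to the case where $Q$ is a simple module. Then I would show that the coordinates of the generators must be linearly independent over the endomorphism ring of $Q$, and finish with a dimension count. This is much stronger than the naive bound $t\le m^r$, which only uses that the $t$ coordinate projections $P\to Q$ are determined by the images of the $r$ generators.

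\emph{Reduction.} Since $Q$ is finite and nonzero, it has a maximal proper submodule $Q'$, so $S=Q/Q'$ is a simple $R$-module with $|S|\le m$. The natural epimorphism $P=Q^t\to S^t$ carries generators to generators, so $S^t$ is also generated by $r$ elements. Hence it suffices to prove $t\le r\log_2|S|$ for the simple module $S$, because $r\log_2|S|\le r\log_2 m<rm$. Here I use $|S|\ge 2$, which holds since $S\ne 0$.

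\emph{Linear algebra over $E=\End_R(S)$.} By Schur's lemma, $E$ is a division ring, and it is finite, hence a field by Wedderburn; the count below needs only that $E$ is a division ring. Let $g_1,\ld,g_r$ generate $S^t$. For each coordinate $i=1,\ld,t$, put $v_i=(g_1(i),\ld,g_r(i))\in S^r$, and regard $S^r$ as a (left) $E$-vector space with coordinatewise action. The key claim is that $v_1,\ld,v_t$ are linearly independent over $E$. Suppose instead that $\sum_i\varphi_i(v_i)=0$ with $\varphi_i\in E$, not all zero, say $\varphi_j\neq 0$. Consider the subset
\[
\{x\in S^t \,|\, \textstyle\sum_i \varphi_i(x_i)=0\}.
\]
It is an $R$-submodule of $S^t$, being the kernel of the module homomorphism $x\mapsto\sum_i\varphi_i(x_i)$, and by assumption it contains every $g_k$. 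It is proper: since $\varphi_j\ne 0$ and $S$ is simple, $\varphi_j$ is bijective, so the element with $x_j\ne 0$ and all other coordinates $0$ is not in it. This contradicts the fact that the $g_k$ generate $S^t$. The same argument works verbatim for Lie rings, since only module homomorphisms are used.

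\emph{Count.} From independence, $t\le\dim_E S^r=r\dim_E S$. Moreover $\dim_E S\le\log_{|E|}|S|\le\log_2|S|$, giving $t\le r\log_2 m<mr$. The main point needing care is the independence claim, in particular checking that the kernel of $x\mapsto\sum\varphi_i(x_i)$ is genuinely a proper submodule. That uses simplicity of $S$, which is exactly why the reduction step comes first.
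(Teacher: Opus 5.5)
Your proof is correct, but it takes a genuinely different route from the paper. The paper never passes to a simple quotient. It only bounds the order of an $r$-generated submodule of $P=Q^t$. View $b\in P$ as a function $\{1,\ldots,t\}\to Q$. It takes fewer than $m$ distinct nonzero values, so grouping the coordinates by value places $b$ in a sum of fewer than $m$ ``diagonal'' submodules. Each such submodule consists of the elements that are constant on a block $J$ of coordinates and zero off $J$, so it is isomorphic to $Q$. Hence $b$ generates a submodule of order less than $m^m$, and $r$ elements generate one of order less than $m^{rm}$. Then $m^t=|P|<m^{rm}$ gives $t<mr$.

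The paper's argument uses nothing about $Q$ except its cardinality: no maximal submodule, no Schur's lemma, no linear algebra. That is why it fits in three lines. Your argument costs a little more structure, but in exchange it gives the much sharper bound $t\le r\dim_E S\le r\log_2 m$. This is logarithmic rather than linear in $m$, and for simple $Q$ it is actually attained, by the density theorem. It also isolates the real reason for the bound: an $E$-linear relation among the coordinate vectors $v_i$ would cut out a proper submodule containing all the generators.

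The lemma's only use in the paper is Example \ref{Liemetab}, and either bound suffices there. The bound enters $\dim F_n^2$ only through a factor polynomial in $n$, which disappears when taking $n$th roots.
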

\begin{proof}
 If we fix $b\in P$, and view it as a function $\{ 1,2,\ld,t\}\to Q$, then $b$ has  $<m$ different nonzero values. In this case, $b$ is contained in the sum of $<m$ diagonal submodules isomorphic to $Q$. Hence, it generates a submodule of order $<m^{m}$. But then $r$ elements of $P$ can only generate a submodule of order $<m^{rm}$, so that $m^t<m^{rm}$ and then $t<mr$.
\end{proof} 

Now we come back to $F_n^2$. As a module over $F_n$, it is generated by $\frac{n(n-1)}{2}$ products (=commutators) of free generators. Each homogenious component, being a direct summand of $F_n^2$, can be generated by the same number of elements. By Lemma \ref{lGenRem}, the number of direct summands of dimension $d$ in each component is less than $q^d\frac{n(n-1)}{2}$.

Since the number of homogenious components is less than $q^n$, the whole of ideal $F_n^2$ contains  $<q^nq^dn(n-1)/2$ of direct summands of dimension $d$. So $\dim F_n^2<q^nq^dd\frac{n(n-1)}{2}$. Taking the $n$th root of the latter value and having in mind the lower estimate of Theorem \ref{epNongn}, we derive that the exponent of the variety $\var A$ equals $q$, much less than $q^d$.
\end{Example}

\subsection{Free algebras of minimal varieties and their subalgebras.}\label{sssFAMV}
A minimal locally finite variety $\frV$ is generated by a minimal finite algebra (Corollary \ref{minvar}).
For every $n$, we have the standard embedding of the $\frV$-free algebra $F_n=F(x_1,\dots,x_n)$ into
the $\frV$-free algebra $F_{n+1}=F(x_1,\dots,x_n, x_{n+1})$ identical on the free generators $x_1,\dots,x_n$. In this subsection,  using Lemma \ref{dnsimple}, we describe this embedding
in terms of an embedding of direct powers of the algebra $A$. We finish this section by proving Theorem \ref{tSFAMV} whose statement is in contrast to Theorem \ref{tEmbedToFree}.

Let $A$ be a finite minimal algebra with nonzero product, $\frV=\var A$. By Corollary \ref{minalg} the free algebra $F=F_n(\frV)$ is equal to $A_1\times\cdots\times A_s$ where $A_i\cong A$ for all $i=1,\ld,s$, $s=f(n)$. Similarly, $F_{n+1} = A'_1 \times\cdots\times A'_{s'}$, where   $s'= f(n+1)$   and all the factors are also isomorphic to $A$.

By Lemma \ref{Diagonal}, every $A_i$ is a diagonal  of the sum $\sum_{j\in J_i}A'_j$ with mutually disjoint sets of indices $J_i$. Now  every permutation of factors  $A_i$ defines an automorphism of $F_n$. Also every automorphism of $F_n$ can be extended to an automorphism of $F_{n+1}$ by mapping  $x_{n+1}\mapsto x_{n+1}$. It follows that all the diagonals $A_i$, for  $i=1,\ld,s$, are automorphically conjugate in $F_{n+1}$.

In particular, the ideals $K_i$ generated by these diagonals must have the same dimension, for all $i=1,\ld,s$. Since $K_i= \sum_{j\in J_i}A'_j$ , all the subsets $J_i$ have the same cardinality, which we denote by  $m=m(n)$.

It is obvious that the factor-algebra of $F_{n+1}$ by the ideal generated by the image of $F_n$ under the standard embedding, is isomorphic to $F_1$, which has in its direct factorization $f(1)$ factors isomorphic to $A$. On the other hand, it follows from the above, that the ideal generated by $F_n$ in $F_{n+1}$ has exactly $ms$ direct factors isomorphic $A$. Therefore, $f(n+1) = m(n)f(n)+f(1)$.

Now by Lemma \ref{dnsimple} (ii), for any $n\ge 0$, we have  $f(n) =\frac{|A|^n-1}{r},$  where $r = |\Aut A|$. As a result, 
\[
\frac{|A|^{n+1}-1}{r} = m(n)\,\frac{|A|^n-1}{r} + \frac{|A|-1}{r},
\]
so that $m=m(n)=|A|$, for any $n\ge 0$. In other words, for any $n\ge 1$, the standard embedding $F_n\to F_{n+1}$ identifies every direct factor of $F_n$ isomorphic to $A$ with a diagonal of the product of $|A|$ factors in the direct decomposition of $F_{n+1}$. Thus, we have explicitly described the embeddings of $F_n$ into $F_{n+1}$ in the language of the direct powers of $A$. We also have that the free algebra of countable rank in $\var A$ is the direct limit $F_1\to F_2\to\ld$ of such embeddings of the direct powers of $A$, where
every $F_n$ is diagonally embedded into its own ideal closure in $F_{n+1}$ which is the direct
product of $|A|$ ideals of $F_{n+1}$ isomorphic to $F_n$.
\medskip

\begin{example}\label{exLagrange} The simplest example of a minimal algebra is a finite field $\F$, as an algebra over itself. Not only it has no proper subalgebras but also no notrivial automorphisms. In this case,  $F_n$  is the factor-algebra of the algebra $\F[x_1,\ld,x_n]$ of polynomials with zero free term by the ideal generated by all polynomials  $x_i^q-x_i$, where $q=|\F|$. Its elements are uniquely represented by the polynomials of degree less than $q$ in each variable, with constant term zero. In order to write  $F_1$ as the direct product of $q-1$ ideals $A_j$ isomorphic to $\F$, we introduce for each nonzero  $\alpha\in \bf F$ the Lagrange's interpolation polynomial 
\[
f_\alpha(x)=\prod_{\beta\in \F\setminus{ \alpha}}(x-\beta)
\]
 and consider the set $A_\alpha$ of all multiples of $f_\alpha(x)$. It is easy to observe that $A_\alpha$ is an algebra isomorphic to $\F$ , it consists of all truncated polynomials taking zero value at every $\beta\ne\alpha$. The isomorphic mapping of $A_\alpha$ into $\F$ can be described as follows: given $g(x)\in A_\alpha$, map $g(x)$ to $g(\alpha)$.

One can see that decomposing a polynomial as the sum of components  with respect to the direct sum of algebras $A_\alpha$ amounts to considering the ``Lagrange interpolation'' 
\[
f=\sum_{\alpha\in\F\setminus \{0\}}\lambda_{\alpha}f_\alpha, \;\;\; \lambda_{\alpha}\in \bf F.
\] 

Under the standard embedding $F_1=F(x)$ into $F_2=F(x,y)$ the factor $A_{\alpha}$ experiences a diagonal embedding into its own ideal closure $ A_{\alpha}F(y)$ in  $F_2$. In its turn, this closure decomposes as the sum of ideals isomorphic to $\F$.  To obtain this decomposition, one should apply the same argument now to $F(y)$.
\end{example}

For any minimal algebra $A$, switching from the standard representation of the elements in $F_n$ as nonassociative polynomials in free generators to its decomposition with respect to the direct summands $A_j$ isomorphic to $A$, is a full analogue of Lagrange's interpolation. But the explicit computation of the generalized Lagrange's functions  $f_j=f_j(x_1,\dots,x_n)$, that is, such elements of $F_n$, which have only one nonzero projection to the simple factors, seems, in the general case, to be quite a challenging problem.

In particular, what can be said about the degrees of these polynomials ? So we formulate

\medskip

\begin{problem}\label{pbmFALinSpan} Given a finitely generated variety $\frV$, does there exist a constant $c=c(\frV)$ such that for any $n$, the free algebra $F_\frV(x_1,\ld,x_n)$ is the linear span of monomials of degree at most $cn$ in $x_1,\ld,x_n$?
\end{problem}

 We apply the description of the embeddings $F_n\to F_{n+1}$ in the following.

\begin{Thm}\label{tSFAMV} Let $A$ be a minimal finite algebra. Then every countable algebra of the
variety $\var A$ can be isomorphically embedded in the $\var A$-free algebra $F$ of countable
rank.
\end{Thm}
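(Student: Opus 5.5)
The plan is to write the countable algebra $C\in\var A$ as an increasing union of finite subalgebras. Then I build compatible embeddings of these into the free algebras $F_n$, using the explicit description of the standard embeddings $F_n\to F_{n+1}$ obtained above. Since $\frV=\var A$ is locally finite and $C$ is countable, $C=\bigcup_k C_k$ with $C_1\subset C_2\subset\cdots$ finite subalgebras; each $C_k$ is finitely generated. By Corollary \ref{minalg} every $C_k$ is a direct power $A^{a_k}$. Also $F=\varinjlim F_n$ along the standard embeddings. So it suffices to construct integers $n_1<n_2<\cdots$ and monomorphisms $\vp_k:C_k\to F_{n_k}$ such that $\vp_{k+1}|_{C_k}$ equals $\vp_k$ followed by the standard embedding $F_{n_k}\to F_{n_{k+1}}$. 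The union of the $\vp_k$ is then an embedding $C\to F$. The first step $\vp_1$ exists for large $n_1$, since $C_1$ is a quotient of some $F_n$ and, being a direct power of a minimal algebra, is a retract/direct factor of $F_n$ by Lemma \ref{lDirectSimple}. Alternatively, one can embed it directly as a sum of $a_1$ distinct simple factors of $F_{n_1}$ once $f(n_1)\ge a_1$.

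The key structural input is Lemma \ref{Diagonal}. An injective homomorphism between direct powers of $A$ is completely encoded by disjoint index blocks $J_i$ and, for each index $j\in J_i$, an isomorphism from the $i$th factor onto the $j$th one. In other words, each factor goes to a twisted diagonal $\Delta^\tau$ over its block. Conversely, any such data defines an embedding. Apply this to the inclusion $\psi:C_k=A^{a}\to C_{k+1}=A^{a'}$ (blocks $J_1,\ld,J_a$, twists $\alpha_j$, and a leftover set $J_0$ of factors of $C_{k+1}$ on which $C_k$ projects to zero). Apply it also to $\vp_k:C_k\to F_{n}$ (blocks $K_i$). Apply it finally to the standard embedding $F_n\to F_N$, which by the computation above sends each simple factor of $F_n$ to a (twisted) diagonal over a block of exactly $|A|^{N-n}$ factors of $F_N$. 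Composing, the $i$th factor of $C_k$ sits in $F_N$ as a twisted diagonal over a block $L_i$ of size $|K_i|\,|A|^{N-n}$. The complement of the ideal generated by $F_n$ consists of $f(N)-|A|^{N-n}f(n)=f(N-n)$ factors.

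Now choose $N=n_{k+1}$ so large that $|A|^{N-n}\ge \max_i|J_i|$ and $f(N-n)\ge |J_0|$. Partition each $L_i$ into $|J_i|$ nonempty pieces $L_{i,j}$, $j\in J_i$. Map the $j$th factor of $C_{k+1}$ to the diagonal over $L_{i,j}$, twisted by the twists already present on $L_{i,j}$ composed with $\alpha_j^{-1}$, so that the composite on the $i$th factor of $C_k$ reproduces the given diagonal over $L_i$. Send the factors indexed by $J_0$ to distinct single simple factors of $F_N$ outside the ideal generated by $F_n$. By the converse of Lemma \ref{Diagonal} (disjoint blocks, isomorphisms on each block), this defines a monomorphism $\vp_{k+1}:C_{k+1}\to F_N$ that extends $\vp_k$ as required.

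The main obstacle I expect is the bookkeeping in this last step. First, one must verify that twisted diagonals over disjoint blocks always assemble into a homomorphism. This holds because the blocks are orthogonal ideals, so products of different factors vanish on both sides. Second, one must verify that the twists can be chosen so that restriction to $C_k$ agrees exactly with the composite $C_k\to F_n\to F_N$, rather than merely up to automorphism. I would handle this by writing every map as a family of isomorphisms $A\to A$ indexed by target coordinates and checking coordinatewise.
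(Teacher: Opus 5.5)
Your proposal is correct and follows essentially the paper's argument: exhaust $C$ by finite subalgebras, encode embeddings between direct powers of $A$ as twisted block diagonals via Lemma \ref{Diagonal}, use that the standard embedding makes each factor of $F_n$ a diagonal over $|A|^{N-n}$ factors of $F_N$, and regroup each block into $|J_i|$ pieces (the paper's first remark). Your bookkeeping is in fact more complete than the paper's: you place the factors of $C_{k+1}$ on which $C_k$ projects to zero into the $f(N-n)$ factors of $F_N$ outside the ideal generated by $F_n$, a case the paper's proof passes over, and you match the twists so the extension is exact rather than up to isomorphism; just set aside the trivial case $A^2=\{0\}$ first, as the paper does, since Lemma \ref{Diagonal} needs nonzero multiplication.
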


\begin{proof}  By $F_n$,
$n=1,2,\dots$, we denote a $\var A$-free algebras of rank $n$,  $n=1,2,\dots$. $F_n$ is embedded in $F_{n+1}$ in a standard way. Let $F$ denote  the free $\var A$-algebra of countable rank.

 One can assume that $A^2\nez$ since otherwise the statement is obvious. We start with two remarks.

If an algebra $A$ is embedded as a diagonal in the direct product $P=A_1\times\dots\times A_k$ of isomorphic copies $A_i$ of $A$, then for every $\ell=1,\dots, k$, there is a subalgebra $Q\subset P $
with the direct decomposition $Q=A'_1\times\dots\times A'_{\ell}$, $A_i'\cong A$, such that
$A$ is a diagonal subalgebra of $Q$. Indeed, if suffices to take $\ell=k-1\ge 1$. If $S$ be the projection of $A$  to the subalgebra $A_{k-1}\times A_k$ of $P$, then $A$ is a diagonal of the subalgebra $Q=A_1\times\dots\times A_{k-2}\times S$.

The second remark we need is the following. We have shown earlier in this section, that each direct factor of the free algebra $F_n$ which is isomorphic to $A$ can be embedded as a diagonal into a direct product of $|A|$ direct factors of $F_{n+1}$. In turn, each of these latter is a diagonal in the product of $|A|$ direct factors of $F_{n+2}$, and therefore a direct factor of $F_n$ becomes a diagonal of a product of $|A|^2$ direct factors of $F_{n+2}$, and so on.

Let now $B$ be a subalgebra of $F_n$ and also $B$ a subalgebra of a finite algebra $C \in \var A$. Then by Corollary \ref{minalg},  $B$ is the  direct product $A_1\times\dots\times A_k$ , $C$ is the  direct product
$A'_1\times\dots\times A'_l$ and $F_n = A_1''\times\dots\times A''_m$, where all factors in the products are isomorphic to $A$. By Lemma \ref{Diagonal}, every $A_i$ is a diagonal of
some $\sum_{j\in J_i} A'_j$ with disjoint sets $J'_i$ and similarly  $A_i'$ is the diagonal of $\sum_{j\in J'_i} A''_j$.

By the second remark,  $A_i$ becomes a diagonal of some $S_i=\sum_{j\in J''_i} A'''_j$ in a direct decomposition of  $F_m$, where  one can choose $m>n$ so that $|J''_{i'}|\ge |J_{i}|$ for any $i$ and $i'$. By the first remark, $A_i$ is
a diagonal of a subalgebra $B_i$ in $S_i$ with $|J_i|$ direct factors, and so $B_i\cong \sum_{j\in J_i} A'_j$
and therefore the  sum of $B_i$'s  contains $B$ and is isomorphic to $C$. Thus, for every two finite algebras $B\subset C\in \var A$
and  an embedding of $B$ into $F_n$, there is $m\ge n$ such that the embedding of $B$ into $F_n$ extends to the embedding of $C$ to $F_m$.

Let now $B$ be a countable algebra in $\var A$. Since it is locally finite, we have an acsending series
of finite algebras $0=B_0\subset B_1\subset B_2\dots$ such that $\cup_{i=1}^\infty B_i = B$
and $F_1\subset F_2\subset \dots$
such that  $\bigcup_{i=1}^\infty F_i = F$. We have proved that if we have an embedding $\iota_i:B_i\to F_{n_i}$,  this can be extended to an embedding $\iota_{i+1}:B_{i+1}\to F_{n_{i+1}}$,
where $n_{i+1}>n_i$. Since every $\iota_{i+1}$ is equal to $\iota_i$ when restricted
to $B_i$, one gets an embedding of the whole of $B$ into $F$, as required.
\end{proof}

\subsection{Dimension functions in varieties of associative algebras.}\label{cross}

Here we obtain an explicit formula for the dimension function of
the product of two finitely generated varieties of associative algebras.

Given two varieties of associative algebras $\frU$ and $\frV$ over a field $\F$, one defines their \emph{product} $\frV\frU$ as the class of all associative algebras satisfying all identities of the form $u(x_1,\ld, x_m)v(x_{m+1},\ld,x_{m+n})=0$, where $u(x_1,\ld,x_m)=0$ is an identical relation in $\frU$ and $v(x_{m+1},\ld,x_{m+n})=0$ is an identical relation in $\frV$. This multiplication of  
varieties is associative, and we denote by $\texttt{Ass}(\F)$ the semigroup of all associative varieties, except for the variety of all associative algebras, under this multiplication. For example, the variety  of all associative nilpotent algebras of class $ c$ is the $(c+1)$th power of the trivial variety given by the identity $x=0$. 

Since we consider
associative algebras only in this subsection,  it suffices here to use associative polynomial without constant terms. They form the free associative algebra $F=F[x_1,x_2,\dots]$.

\begin{Remark} \label{RemBL} Let the algebra $F^1$ be obtained from $F$ by adjoining of $1$. A theorem of Bergman - Lewin \cite{BeLe} states that the set of all proper nonzero ideals of $F^1$ is a free semigroup under
the ideal multiplication. Then in Theorem 7, they prove that $T$-ideals form a free subsemigroup
of this semigroup. Observe that in the proof of Theorem 7, the authors uses only those automorphisms
and endomorphisms of $F^1$ which leave the subalgebra $F$ invariant. Therefore, although the set of 
$T$-ideals is smaller than the set of verbal ideals of $F$, exactly
the same proof allows us to assert that the non-zero verbal ideals of $F$ form a free 
semigroup under multiplication. Thus, the {\it semigroup $\texttt{Ass}(\F)$ is free}.
\end{Remark}

Let us start with the following observation.

\begin{Prop} \label{subsemi} Let $\F$ be a finite field. Then the set of finitely generated  varieties of associative algebras over $\F$ is a free subsemigroup in  $\texttt{Ass}(\F)$ 
\end{Prop}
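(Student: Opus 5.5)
The plan is to combine Remark \ref{RemBL} with a closure argument. By that remark, $\texttt{Ass}(\F)$ is a free semigroup. A subset of a free semigroup that is closed under multiplication need not be free. However, a subsemigroup $T$ of a free semigroup $S$ is free provided it is \emph{closed under factors}: whenever $xy\in T$ with $x,y\in S$, then $x,y\in T$. In that case $T$ is freely generated by those of its elements that are indecomposable in $S$, and the unique factorization in $S$ restricts to $T$. So there are two things to prove: that finitely generated varieties are closed under products, and that if $\frV\frU$ is finitely generated then so are $\frV$ and $\frU$.

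\textbf{Step 1: factor closure.} Suppose $\frW=\frV\frU$ is finitely generated, so $\frW=\var C$ with $C$ finite. We have $\frU\subseteq\frW$ and $\frV\subseteq\frW$. Indeed, if $A\in\frU$ then $A$ satisfies $u(x_1,\ld,x_m)=0$, hence every product $u\cdot v$ vanishes on $A$; the argument for $\frV$ is symmetric, using $v=0$ on the right factor. Therefore $\frU$ and $\frV$ are subvarieties of the locally finite variety $\var C$. A subvariety of a locally finite variety over a finite field is generated by its finite algebras. We still need a \emph{single} generating finite algebra, i.e.\ that $\frU$ is finitely generated. The approach is to note that $\frU$ is generated by its $k$-generated free algebra for $k$ large enough. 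Here I intend to use that for associative algebras over a finite field, any subvariety of a finitely generated variety is finitely based and finitely generated (Lvov/Kruse \cite{Lvov,Kruse}). Alternatively, realize $\frU$ as the class of algebras $A$ for which the ideal generated by $V(A)$ in a suitable construction lies in $\frW$. This step is the main obstacle. What I would try first is to take $F_k(\frW)$ for $k\ge$ the number of generators of $C$, and check that $F_k(\frU)=F_k(\frW)/U(F_k(\frW))$ generates $\frU$. This uses the fact that identities of $\frU$ are determined by their consequences in a bounded number of variables, which holds for subvarieties of $\var C$ because $\var C$ has only finitely many subvarieties containing a given finite set of $k$-variable identities.

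\textbf{Step 2: closure under product.} For finite associative $A,B$, I want $\var A\cdot\var B$ to be finitely generated. The plan is to use the standard triangular construction. Let $F_n(\var A)$ act on a free $\var B$-bimodule, and form the algebra of matrices $\begin{pmatrix} A' & M\\ 0 & B'\end{pmatrix}$ with $M$ a suitable free bimodule over $\F$. Then check that the product variety is generated by the finite algebra $\begin{pmatrix} F_k(\var A)^1 & * \\ 0 & F_k(\var B)^1\end{pmatrix}$ for large $k$. An alternative is to show directly that $\frV\frU$ is locally finite with a uniform bound on the number of variables needed. For this: an $n$-generated algebra $D\in\frV\frU$ has $U(D)$ finite-codimensional with $D/U(D)\in\frU$ finite, and $U(D)$ is a finitely generated (one-sided) ideal lying in $\frV$. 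Hence $D$ is finite, and we then bound the degree of needed identities. If all of this goes through, $\texttt{Ass}$-freeness and factor closure give that the set of finitely generated varieties is a free subsemigroup.
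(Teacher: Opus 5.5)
Your framework matches the paper's. $\texttt{Ass}(\F)$ is free by Remark~\ref{RemBL}; a factor-closed subsemigroup of a free semigroup is freely generated by the free generators it contains; and factor closure follows from $\frU,\frV\subseteq\frV\frU$ together with the Kruse--L'vov theorem that subvarieties of finitely generated associative varieties are finitely generated. That citation is all Step~1 needs. Your fallback (taking $k$ to be the number of generators of $C$, and ``finitely many subvarieties containing a given finite set of identities'') is unjustified and should be dropped.

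The genuine gap is Step~2, closure under products, which carries the real content and which you only state as a plan. The triangular construction restates the goal. You do not specify the corner $*$, nor why one fixed finite algebra of that shape satisfies no identity beyond those of $\frV\frU$. A triangular realization of $F_n(\frV\frU)$ naturally needs a corner depending on $n$, which gives at best local finiteness. Your alternative also stops at local finiteness; ``we then bound the degree of needed identities'' is not an argument. Local finiteness alone cannot suffice: the variety of Example~\ref{ex2nm1} over a finite field of characteristic $2$ is locally finite but not finitely generated, because its finite algebras are nilpotent of unbounded class, which Remark~\ref{nilvar} forbids in a finitely generated variety. Moreover, your intermediate claim that $U(D)$ lies in $\frV$ is false, since products of associative varieties are not extension classes. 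For $\frU=\frV$ the trivial variety, $\frV\frU=\frN_1$, and $U(D)=D\notin\frV$ for every nonzero $D\in\frN_1$. All you have is $U(D)V(D)=\{0\}$.

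The missing idea is a criterion that upgrades local finiteness to finite generation, and the paper supplies it. It uses the Kruse--L'vov theorem: a locally finite variety of associative algebras whose nilpotent algebras have bounded class is finitely generated. It then verifies the hypotheses for the product variety:
\begin{enumerate}
\item The join $\frU\cup\frV=\var(A\times B)$ is finitely generated, so its nilpotent algebras have class at most some $c$.
\item Since $(U\cap V)^2\subseteq UV$, every algebra in the product is an extension of an algebra in $\frU\cup\frV$ by an ideal with zero multiplication.
\item This gives local finiteness, because an extension of a locally finite associative algebra by a locally finite ideal is locally finite.
\item It also bounds the class of nilpotent algebras in the product by $2c+1$.
\end{enumerate}
Without this, or an actual proof that a specific finite triangular algebra generates $\frV\frU$, Step~2 is not established.
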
 

\begin{proof} We have $\frV=\var A$ and $\frV=\var B$ for some finite algebras $A$ and $B$. Then the union $\frU\cup\frV$ in the lattice of varieties is generated by $A\times B$. Hence $\frU\cup\frV$ is a  finitely generated variety. Now, let $U$, $V$ be the verbal ideals of the free associative algebra of countable rank $F$, corresponding to $\frU$, respectively, $\frV$. Then $\frU\cup\frV$ is defined by  $U\cap V$ and $\frU\frV$ is defined by $UV$. Obviously, $U\cap V /UV$ is an ideal of $F/UV$ with zero product. Thus each algebra in $\frU\frV$ is an extension of an algebra in $\frU\cup\frV$ by an abelian ideal. In associative algebras, any extension  of a locally finite algebra by a locally finite ideal is locally finite \cite[10.2.1]{Jac}. So $\frU\frV$ is a locally finite variety. By (\cite{Kruse,Lvov}),  or by Remark \ref{nilvar}, the nilpotency classes of nilpotent algebras are bounded by a constant $c$ in the variety $\frU\cup\frV$. So $2c+1$ bounds the nilpotent
classes of nilpotent algebras in $\frU\frV$. Now if the nilpotent classes of nilpotent  algebras in a locally finite variety of associative algebras are bounded then this variety is finitely generated (Kruse - L'vov Theorem (see \cite{Kruse,Lvov}) So is $\frU\frV$. Thus, the set of finitely generated associative  varieties is a subsemigroup of the free semigroup $\texttt{Ass}(\F)$.

If a product of two varieties  $\frU\frV$ is finitely generated, then both  $\frU$ and $\frV$ are
finitely generated, because a subvariety of a finitely generated variety of associative algebras  is finitely generated \cite{Kruse,Lvov}. Since
the semigroup $\texttt{Ass}(\F)$ is freely generated by all indecomposable varieties,
it follows that the subsemigroup of finitely generated varieties is generated by the finitely generated
indecomposable varieties. Hence this subsemigroup is free itself.

\end{proof}

\begin{Thm} \label{dnUV} Let $\frU$ and $\frV$ be a locally finite varieties of associative algebras
over a field $\bf F$ with dimension functions $d_1(n)$ and $d_2(n)$, respectively. Then the
dimension function $d(n)$ of the product $\frU\frV$ is $(n-1)d_1(n)d_2(n) + n(d_1(n)+d_2(n)+1)$.
\end{Thm}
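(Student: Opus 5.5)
The plan is to realize the $\frU\frV$-free algebra of rank $n$ inside a triangular matrix algebra, in the spirit of Lewin's embedding, and then count dimensions by a linear-algebra argument. I believe the formula splits as
\[
(n-1)d_1d_2+n(d_1+d_2+1)=n(d_1+1)(d_2+1)-d_1d_2 ,
\]
which suggests where each term comes from.

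\emph{Setup.} Let $F=F[x_1,\ld,x_n]$ and let $U,V$ be the verbal ideals of $\frU,\frV$ in $F$, so that $F_n(\frU\frV)=F/UV$. Put $R=(F/U)^1$ and $S=(F/V)^1$, of dimensions $d_1+1$ and $d_2+1$. Let $M$ be the free $R$-$S$-bimodule on $t_1,\ld,t_n$, so $\dim M=n(d_1+1)(d_2+1)$. The assignment $x_i\mapsto\bpm x_i&t_i\\0&x_i\epm$ extends to a homomorphism
\[
F\to\bpm R&M\\0&S\epm,\qquad f\mapsto\bpm \bar f&D(f)\\0&\bar f\epm .
\]
Here $D$ is the Fox-type derivation determined by $D(x_i)=t_i$ and $D(fg)=\bar f\,D(g)+D(f)\,\bar g$.

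\emph{Reduction to $D(F)$.} First I show that $UV$ lies in the kernel of this homomorphism, which is immediate since $\bar u=0$ in $R$ for $u\in U$ and $\bar v=0$ in $S$ for $v\in V$. Next I show the kernel is exactly $UV$; this is Lewin's theorem, and I would reprove it via Bergman--Lewin style arguments in the free algebra. Finally I observe that $f\mapsto D(f)$ is already injective modulo $UV$, so that $\dim F/UV=\dim D(F)$. This last point holds because $f$ is recovered from $D(f)$ through the map $\delta$ below, applied to the diagonal entries.

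\emph{Counting $\dim D(F)$.} Define the bimodule map $\delta:M\to R\otimes S$ by $rt_is\mapsto rx_i\otimes s-r\otimes x_is$. A direct computation gives $\delta(D(f))=\bar f\otimes1-1\otimes\bar f$. The count then rests on three facts.
\begin{enumerate}
\item[(a)] $\operatorname{im}\delta=\Ker(\ve\otimes\ve)$, where $\ve$ is the augmentation. This holds because $R\otimes S$ is spanned by products of $x_i\otimes1$ and $1\otimes x_i$, so $\dim\operatorname{im}\delta=(d_1+1)(d_2+1)-1$.
\item[(b)] $\Ker\delta\subseteq D(F)$.
\item[(c)] $D(F)=\delta^{-1}(W)$, where $W=\{a\otimes1-1\otimes b\,|\,a\in F/U,\ b\in F/V\}\subseteq\operatorname{im}\delta$ has dimension $d_1+d_2$.
\end{enumerate}
Granting these,
\[
\dim D(F)=\dim\Ker\delta+\dim W=\bigl(n(d_1+1)(d_2+1)-(d_1+1)(d_2+1)+1\bigr)+d_1+d_2 ,
\]
and this equals $n(d_1+1)(d_2+1)-d_1d_2$, which is the claimed formula.

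\emph{Main obstacle.} The hard step is (b), together with the injectivity modulo $UV$. These amount to exactness of the sequence $M\to R\otimes S\to\F$ at $M$ relative to the derivations of the free algebra, i.e.\ that every element of $\Ker\delta$ is a derivative. For the free algebra $F$ itself this is the standard exactness of the Fox-derivative resolution of the augmentation ideal. I would first prove it there and then try to descend along $F\to R$ and $F\to S$ using that $U$ and $V$ are ideals, with $\Ker\delta$ lifting compatibly. If this descent fails, the fallback is to cite Lewin's embedding theorem directly and deduce the dimension count from it.
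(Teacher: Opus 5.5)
Your target number is right, but the count as written rests on a false step (a), and the step you identify as the crux, (b), is not actually supplied.

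\textbf{Step (a) is false.} It fails whenever $\frU\cap\frV$ is nontrivial. Let $F^1$ be $F$ with $1$ adjoined. The image of $\delta$ is spanned by the elements $rx_i\otimes s-r\otimes x_is$, so $\operatorname{coker}\delta=R\otimes_{F^1}S\cong F^1/(U+V)$. This has dimension $1+e$ with $e=\dim F/(U+V)$, not $1$. Modulo $\operatorname{im}\delta$ you may only move an $x_i$ across the tensor sign, not kill it. For instance, for $U=V=F^2$ one gets $\dim\operatorname{im}\delta=n^2+n$, not $n^2+2n$.

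For the same reason $W\not\subseteq\operatorname{im}\delta$. Indeed, $a\otimes1-1\otimes b\in\operatorname{im}\delta$ iff lifts of $a,b$ to $F$ differ by an element of $U+V$. Hence $W\cap\operatorname{im}\delta=\delta(D(F))=\{\bar f\otimes1-1\otimes\bar f\,|\,f\in F\}$, which has dimension $\dim F/(U\cap V)=d_1+d_2-e$.

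The correct bookkeeping is $\dim D(F)=\dim\Ker\delta+\dim(W\cap\operatorname{im}\delta)$. Here the first term is $e$ larger and the second $e$ smaller than what you used. Your formula comes out right only because these two errors cancel.

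\textbf{Step (b) is not supplied.} It carries essentially the whole content of the theorem, and your fallback does not provide it. Lewin's embedding theorem gives only that the kernel is $UV$, i.e.\ that $D$ is injective modulo $UV$. Computing $\dim D(F)$ needs $\Ker\delta\subseteq D(F)$ in addition. A naive descent along $F\to R$, $F\to S$ fails because tensoring is not exact; what works is a Tor computation:
\begin{itemize}
\item Apply $-\otimes_{F^1}S$ to the standard resolution $0\to\bigoplus_iF^1t_iF^1\to F^1\otimes F^1\to F^1\to0$, which splits as right modules. This gives a free resolution of the left module $S$.
\item Then $R\otimes_{F^1}-$ produces exactly $\delta\colon M\to R\otimes S$, so $\Ker\delta\cong\mathrm{Tor}_1^{F^1}(R,S)\cong(U\cap V)/UV$.
\item Compare with the resolution $0\to V\to F^1\to S\to0$; $V$ is free since $F^1$ is a fir. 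Use the chain map $v\mapsto\tilde D(v)\otimes1$ on $V$ and $a\mapsto a\otimes1$ on $F^1$, where $\tilde D$ is the universal derivation into $\bigoplus_iF^1t_iF^1$; the map on $V$ is a left module map because $\bar v=0$ in $S$.
\item This comparison shows the isomorphism is induced by $f\mapsto D(f)$, which gives (b) and Lewin's kernel statement at once.
\end{itemize}

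With these repairs your route works. The paper's route is shorter: $d(n)=d_2+\dim V/UV$, where $V$ is a free left $F^1$-module of rank $(n-1)d_2+n$ by Lewin's Schreier formula, and $\dim V/UV=(d_1+1)\rank V$. The paper's displayed rank formula carries an extra $n-1$, but its stated final formula is the one given by $(n-1)d_2+n$.
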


\proof It is well-known, see \cite{FIR}, that the free associative algebra is a free ideal ring. In particular, every submodule of a free module over $F(X)$ is free.  The algebra
$F(X)$ is a free module over itself freely generated by $X$. If we attach 1 to $F(X)$ in a standard way, then the algebra $F'(X)$ thus obtained is a free $F(X)$-module of rank 1. If $s$ is the codimension of $J$ in $F(X)$ then $s+1$ is the codimension of $J$ in $F'(X)$. Now let $L$ be the free $F'(X)$-module of rank $r$ and $J$ be a left ideal of $F(X)$ of codimension $s$. Then $J$ has codimension $s+1$ in $F'(X)$. We have $L\cong (F'(X))^r$ (direct power) and $L/JL\cong (F'(X)/JF'(X))^r=(F'(X)/J)^r$. As a result, if $J$ is a left ideal of codimension $s$ in $F(X)$ then the codimension of the left submodule $JL$ in $L$ is given by
\begin{equation}\label{sr}
\dim L/JL=(s+1)r .
\end{equation}
To proceed, we will use an analog of Schreier's formula for the ranks of the subgroups of free groups established by Lewin \cite{Lew}.  Lewin's formula says that if $F(X)$ is a the free associative algebra of rank $n$ over a field $\bf F$, $M$ a free (left) $F(X)$-module of finite rank, and $N$ a submodule of finite codimension in $M$, then $N$ is a free $F(X)$-module and
\begin{equation}\label{sl}
\rank N = (n-1)(\dim(M/N)+1) + \rank M
\end{equation}

Returning to the varieties $\frU$ and $\frV$, let $U$ and $V$ be the verbal ideals corresponding to $\frU$ and $\frV$ in the free associative algebra $F=F(X)$ of rank $n$. Then $d(n) =\dim F/UV= \dim F/V +\dim V/UV$. Here the first summand equals $d_2(n)$. Then $V$ is a free $F(X)$-modules, and by the formula (\ref{sl}), $\rank V = (n-1)(d_2(n)+1)+ n$. Since the codimension of $U$ in $F(X)$ is $d_1(n)$, the formula (\ref{sr}) gives $\dim V/UV = (d_1(n)+1)((n-1)(d_2(n)+1)+ n)$, whence
\begin{eqnarray*}
  d(n) &=& (d_1(n)+1)((n-1)(d_2(n)+1)+ n)+d_2(n)  \\
 &=& (n-1)d_1(n)d_2(n) + n(d_1(n)+d_2(n)+1).
\end{eqnarray*}
\endproof

\begin{Remark} \label{UVVU}
\begin{enumerate}
  \item[$\mathrm{(i)}$]  It follows from Proposition \ref{subsemi} and Theorem \ref{dnUV}, that if a finitely generated variety $\frV$ has exponent $e$, then the finitely generated variety ${\frV}^2$ has exponent $e^2$ .
  \item[$\mathrm{(ii)}$] By Bergman - Lewin's theorem, the equality $\frU\frV=\frV\frU$ holds iff both varieties $\frU$ and $\frV$ are some powers of the same associative variety $\frW$. Nevertheless, by Theorem \ref{dnUV},
the \emph{dimension functions of the products $\frU\frV$ and $\frV\frU$ are equal for any pair of
locally finite associative varieties $\frU$ and $\frV$}.
\item[$\mathrm{(iii)}$] It follows from Corollary \ref{polynom} and Theorem \ref{dnUV} that if two
nilpotent varieties of associative algebras have nilpotency classes $c_1$ and $c_2$, then
the nilpotency class of their product is exactly $c_1+c_2+1$.
\end{enumerate}

\end{Remark}

\subsection{Few more numerical estimates.}\label{sCCNALF} We conclude Section \ref{esFS} with few remarks concerning the growth in locally finite varieties.

\begin{Prop}\label{pNumber} If a locally finite variety $\frV$ of algebras over a field $\bf F$ has an algebra with nonzero product then the growth of the number of pairwise nonisomorphic $n$-generated algebras in $\frV$ is at least exponential as a function in $n$.
\end{Prop}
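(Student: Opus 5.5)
Since $\frV$ has an algebra with nonzero product, it contains a finite non-nilpotent algebra or a nonzero nilpotent finite algebra of class $\ge 2$. In either case the free algebras $F_n=F_n(\frV)$ satisfy $d(n)\to\infty$: by Corollary~\ref{polynom} and Corollary~\ref{2n1}, $d(n)$ grows at least like a polynomial of degree $\ge 2$, or at least like $2^n-1$. My plan is to produce many pairwise non-isomorphic $n$-generated algebras as factor-algebras of $F_n$ by ideals containing $F_n^2$, or, more robustly, by ideals squeezed between suitable terms.

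\textbf{Case where $\frV$ is not locally nilpotent.} Here I would work with a finite simple (or minimal) algebra. Pick a finite non-nilpotent $A\in\frV$ of minimal order; then some chief factor of $A$ is a simple algebra $S$ with $S^2\ne\{0\}$, and $S\in\frV$ by Corollary~\ref{cc} (it is a section). For $t\le t(n)$, with $t(n)$ from Lemma~\ref{dnsimple}, the direct power $S^t$ is $n$-generated. The powers $S^1,\ld,S^{t(n)}$ are pairwise non-isomorphic by cardinality. By Lemma~\ref{dnsimple}(i), $t(n)\ge c|S|^n$, which already gives exponential growth, at least $c|S|^n\ge c\,2^n$.

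\textbf{Case where $\frV$ is locally nilpotent but has nonzero product.} The one-dimensional zero algebra $Z$ and a nilpotent two-generated algebra $N$ with $N^2\ne\{0\}$ both lie in $\frV$. The direct powers trick alone gives only polynomially many algebras by order. Instead I would count factor-algebras $F_n/I$ with $F_n^3\subset I\subset F_n^2$, or simpler ones: quotients $C=F_n/I$ where $F_n^2\supset I\supset F_n^3+\cdots$ so that $C/C^2\cong\F^n$ and $C^2\cong F_n^2/I$ is annihilated by $C$. Then $C$ is determined by a bilinear map $\mu:\F^n\times\F^n\to C^2$ induced from $N$. Concretely, take a nonzero bilinear form $\beta(x,y)$ realized in $N$ (the product $N\times N\to N^2\to\F$ after quotienting $N^2$ by a hyperplane and $N$ by $N^3$). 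Then for every $n\times n$ matrix $M$ of rank $\le$ something I obtain the algebra $\F^n\oplus\F$ with product $x\cdot y=x^TMy$, which lies in $\frV$ as a quotient of a subalgebra of a direct power of $N$. Isomorphism classes correspond to congruence classes of $M$ under $\GL_n$ and scaling, and these number at least $q^{n^2}/|\GL_n|\cdot\ldots$. This is too small, so instead I would choose the target dimension $k\approx n$: algebras $\F^n\oplus\F^k$ with product given by $k$ forms, which are parametrized by $q^{kn^2}$ tuples modulo $|\GL_n||\GL_k|\le q^{n^2+k^2}$. That yields $q^{\Omega(n^3)}$ classes, which is certainly exponential.

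The main obstacle is membership: showing that every algebra of the form $\F^n\oplus\F^k$ with arbitrary bilinear product into an annihilating $\F^k$ lies in $\frV$. I would handle it by noting that such an algebra $D$ is $n$-generated with $D^3=\{0\}$ and class $2$. I would realize it as a section of a direct power of the three-dimensional-ish algebra extracted from $N$, using the fact that $\frV$ contains the "generic" class-2 quotient $F_n/(F_n^3+W)$. Equivalently, I would show that $F_n^2/F_n^3$ has dimension $n^2$, i.e.\ no nontrivial identity of degree $2$ holds. This may fail, since $\frV$ could satisfy $xy=yx$ or $xy=-yx$. If it fails, I restrict to the symmetric or alternating forms, which still give $q^{\Omega(kn^2)}$ tuples and hence exponentially many classes. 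As a fallback, the weaker count using only $t(n)$-style direct products of distinct small algebras over varying $k$ already suffices for an exponential lower bound.
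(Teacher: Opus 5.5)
Your first case has a genuine gap. You assert that a non-nilpotent finite $A\in\frV$ of minimal order has a chief factor which is a simple algebra $S$ with $S^2\ne\{0\}$. But a chief factor $L/K$ is only a minimal ideal of $A/K$ and may have zero multiplication, and non-nilpotent does not mean non-solvable.

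Take for $\frV$ the variety generated by the two-dimensional Lie algebra $A$ over $\F$ with $[a,b]=b$. It is locally finite and not locally nilpotent. Yet every algebra in it is metabelian, while a simple $S$ with $S^2\ne\{0\}$ has $S^2=S$ (as $S^2$ is an ideal) and so is not solvable. Hence $\frV$ contains no such $S$, and Lemma~\ref{dnsimple} never applies.

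Your second construction cannot be moved over to this variety either. Every finite $L\in\frV$ is a factor-algebra of a subalgebra $L_0\subseteq A^N$. The operators $\mathrm{ad}\,\ell$, $\ell\in L_0$, act on $\langle b_1,\ldots,b_N\rangle\supseteq L_0^2$ as diagonal matrices. The associative algebra they generate is commutative semisimple, so it has an identity $e$. Since $[\ell_1,\ell_2]=(\mathrm{ad}\,\ell_1)\beta_2-(\mathrm{ad}\,\ell_2)\beta_1$, where $\beta_i$ is the $\langle b_1,\ldots,b_N\rangle$-component of $\ell_i$, you get $z=ez\in[L_0,L_0^2]$ for all $z\in L_0^2$. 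Thus $L_0^2=L_0^3$, all nilpotent algebras of $\frV$ have zero product, and there are no class-$2$ algebras to count.

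Your fallback does not work here either. Every nonzero $B$ in this variety has $B\ne B^2$, so a direct product of $k$ nonzero factors needs at least $k$ generators. Direct products drawn from a fixed stock of small algebras therefore give only polynomially many $n$-generated algebras.

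What is missing is the paper's idea: instead of looking for a simple section, combine the exponential growth of the free algebra with the boundedness of chief factors. If $A\in\frV$ is finite and non-nilpotent, Theorem~\ref{epNongn} gives $\dim F_n(\var A)\ge cq^n$. By Corollary~\ref{cc}, every chief factor of a finite algebra in $\var A$ has dimension at most $\dim A$. So a chief series of $F_n(\var A)$ has at least $c'q^n$ terms, and the factor-algebras of $F_n(\var A)$ by these terms are $n$-generated algebras of $\frV$ of pairwise distinct dimensions. Your direct-power count is a valid alternative only when $\frV$ happens to contain a simple algebra with nonzero product, where it even gives base $|S|$.

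Your second case is sound in outline, but set it up as counting factor-algebras $F_n/I$ with $F_n^3\subseteq I\subseteq F_n^2$ modulo $\Aut(F_n/F_n^3)$. Membership in $\frV$ is then automatic. Also $\dim F_n^2/F_n^3\ge\binom n2$ follows from the content decomposition of Lemma~\ref{grad}, because $x_1x_2\ne0$ in $F_n/F_n^3$. Otherwise $\frV\cap\frN_2$ would consist of algebras with zero product, whereas $N/N^3$ has nonzero product since $N^3\ne N^2$. Your unproved classification of degree-$2$ identities then becomes unnecessary.
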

\begin{proof} By Theorem \ref{epNongn}, if $\frV$ contains a non-nilpotent finite algebra $A$, then the order of the $\frV$-free algebra $F_n$ of rank $n$ is at least $cq^n$ where $c>0$ and $q=|{\bf F}|$. However the sections of a chief series of $F_n$ have bounded dimensions  by Lemma \ref{esMinRep}. Therefore the number of the ideals in the chief series is at least $c'q^n$ for some $c'>0$, and we have at least $c'q^n$ $n$-generated factor-algebras of $F_n$ with different dimensions.

If  $\frV$ contains a finite nilpotent algebra $A$ with non-zero multiplication, then it contains
the free algebras $F_n$ of the nilpotent variety $\var A$ of class $c\ge 2$. In \cite[Equation 20]{BOGP}), the dimension of the power $F_n^c$ was bounded from below by $kn^c$ for some $k>0$, and by (\cite[Equation (50)]{BOGP} the number of pairwise nonisomorphic factor-algebras of $F_n$ was bounded from below by 
\[
q^{\frac{(\dim F^c_n)^2}{4} - n^2 -1}.
\]
 Since $c\ge 2$, this number
is bounded from below by $q^{\kappa n^4}$, where $\kappa>0$.

\end{proof}

\begin{Remark} In general case, the simple exponential estimate of the Proposition \ref{pNumber}
cannot be improved. By Corollary \ref{minalg}, all $n$-generated algebras in the variety
$\var A$, where $A$ is a finite minimal   algebra, are direct powers $A^{t(n)}$ of $A$ where by Birkhoff's theorem  $t(n)$ is bounded by an exponential function of $n$. So one gets the same upper bound  for the number of non-isomorphic $n$-generated algebras in $\var A$.
\end{Remark}

\begin{Remark} Recall that a variety of algebras $\frV$ is called Schreier if all subalgebras
of $\frV$-free algebras are $\frV$-free themselves. For a survey and the bibliography of results on Schreier varieties of algebras see \cite{MSY}. The simple estimates of Proposition \ref{pNumber} show that a locally finite variety of algebras with nontrivial multiplication cannnot be a Schreier variety.
Indeed, if it is locally nilpotent then the free algebras of rank $n\ge 2$ contains two-dimensional
subalgebras with zero multiplication. Otherwise the number of ideals of different dimensions
in $F_n(\frV )$ is at least $c'q^n$, while the number of non-isomorphic $\frV$-free subalgebras of ranks $k=1,\dots,n$ in it is just $n$.
\end{Remark}

We finish this section by a short discussion of the order of the automorphism groups of free algebras $F_n(\frV)$ of a locally finite variety $\frV$ of algebras over a finite field $\F$. These grow fast. To see how fast, we set $\alpha(n)=|\Aut F_n(\frV)|$. A rough estimate from below is given by the inequality   $\alpha(n)\ge q^{d(n-1)}$, where $q=|\F|$, because any map 
\[
x_1\mapsto x_1, \dots, x_{n-1} \mapsto x_{n-1}, x_n\mapsto x_n +a,\mbox{ where } a\in F_{n-1}=F(x_1,\dots,x_{n-1})
\]
extends to an automorphism of $F_n$.

Using Corollary \ref{2n1}, this argument implies the first claim in the following.
\begin{Remark}\label{clAutDouble}
If $\frV$ is a non-nilpotent locally finite variety then $\alpha(n)$ grows at least as fast as the double exponent. If $\frV=\var A$, for a finite algebra $A$, $\alpha(n)$ cannot grow faster than the double exponent.
\end{Remark}

The second claim follows from Birkhoff's estimate in  Theorem \ref{etBirkhoof_Estimate} because    the number of all the homomorphisms $F_n\to F_n$ is equal to $|F_n|^n = q^{nd(n)}$.
 
The claim of Remark \ref{clAutDouble} contrasts with the situation where  $\frV$ is nilpotent because in that case, the growth of  $\alpha(n)$ is bounded by a simple exponent. The precise formula is given in \cite[Section 8]{BOGP}: 
\[
\alpha(n) = |\GL_n({\bf F})|q^{n(d(n)-n)}
\] 
where, in this case, by Corollary \ref{polynom} the dimension $d(n)$ is a polynomial .

\section{Generic properties of finite algebras}\label{esGPNA}

In the next  subsections, we will single out several properties of finite algebras that hold in \emph{almost all} algebras. Such properties are called \emph{generic}. Besides, at the end of this section, we compare the rates of growth of the numbers of $m$-dimensional finite algebras, as $m\to\infty$, in three classes: all non-associative algebras, all nilpotent algebras, and all solvable algebras.

\subsection{Preliminary remarks.} 
 
Informally speaking, one says that a property of algebras of a class $\cal C$ is generic if
“almost every” object of $\cal C$ enjoys this property. The words “almost every” are understood differently, depending on the class $\cal C$. At the same time, it is always true that if several properties are generic then their finite intersection is also generic. Moreover, sometimes, it is easier to prove that some property is generic than to provide examples. For instance, we prove below that each of the following properties of finite non-associative algebras is generic: $A$ is simple, $A$ is a one-generated algebra, $A$ has no non-trivial automorphisms. It follows that a generic finite algebra enjoys all these characteristics.

Below we use the following formal definition of being general.
Let $\vp(m)$ be the number of non-isomorphic $m$-dimensional algebras
over a finite field $\bf F$. Given an abstract property $\cal P$,
we denote by $\vp_{\cal P}(m)$ the number of $m$-dimensional algebras over $\bf F$ with property $\cal P$. If 
\[
\lim_{m\to\infty} \frac{\vp_{\cal P}(m)}{\vp(m)} =1,
\]
 then we call $\cP$ {\it generic}.
Loosely speaking, one also can say that a generic algebra satisfies $\cal P$.

Recall that every $m$-dimensional algebra $A$ can be treated as
a tensor of type $(2,1)$ on an $m$-dimensional vector space $V$.
The corresponding trilinear function $T$ of type $(2,1)$ is
defined by $T(f,x,y)= f(xy)$, where $x,y \in V$,
$f\in V^*$, and $V^*$ is the dual space of $V$. With respect to
a basis $E=(e_1,\dots,e_m)$ of $V$, the tensor $T$
has $m^3$ coordinates $c_{ij}^l=e^l(e_ie_j)$, where $e^l$ belongs
to the dual basis $E^*$. The individual coordinates $c_{ij}^\ell$ are called \emph{structure constants}. The multiplication of the basis  vectors
in $A$ is given by the \emph{structural $m^3$-tuple} $C_E$:
\begin{equation}\label{eqCIJK}
C_E = (c_{ij}^l)_{i,j,l=1}^m\mbox{ so that } e_ie_j = \sum_{l=1}^m c_{ij}^l e_l, \;\; i,j =1,\dots,m
\end{equation}

Vice versa, these formulae uniquely define a bilinear multiplication
on $V$ for an arbitrary  $m^3$-tuple $C_E = (c_{ij}^l)_{i,j,l=1}^m$.

Let $\cal C$ be the set of all $m^3$-tuples $(c_{ij}^l)_{i,j,l=1}^m$.
We call two elements $(c_{ij}^l)_{i,j,l=1}^m$ and
$((c'_{ij})^l)_{i,j,l=1}^m$ of $\cal C$ {\it equivalent} if they 
are the structural $m^3$-tuples of two isomorphic
algebras with respect to some bases  $E$ and $E'$.
Since there is an obvious isomorphism between two algebras
with the same structural $m^3$-tuples, this relation is
transitive, and so it is an equivalence, indeed. By the same
reason, an equivalence class is equal to the set of all
structural $m^3$-tuples $C_E$ for a single
$m$-dimensional algebra $A$ with respect to the all bases $E$ of $A$.

Thus, the number of $m^3$-tuples in an equivalence class does not exceed the number of bases in an $m$-dimensional vector space, that is, bounded from above by $|\GL(V)|$, and there is a one-to-one
correspondence between the set of isomorphism classes of $m$-dimensional algebras and the set of equivalence classes in $\cC$.

We call a subset $\cal D$ of the set $\cal C$ {\it perfect} if it is the union of several equivalence classes.

\begin{Lemma}\label{perfect} 
Let  $\cal D$ be a subset of the set $\cal C$. Suppose every algebra of an abstract  class $\cal K$ has a basis $E$ such that the structural $m^3$-tuple $C_E$ belong to $\cal D$. Also suppose that every  $m^3$-tuple from $\cal D$ defines an algebra from $\cal K$. Then the following are true for the number $r$ of pairwise nonisomorphic algebras in $\cal K$
\begin{enumerate}
  \item[$\mathrm{(i)}$]  $\frac{|{\cal D}|}{|\GL_m({\bf F})|}\le r\le |\cal D|$;
  \item[$\mathrm{(ii)}$] if the subset $\cal D$ is perfect and every algebra in $\cal K$
has no non-trivial automorphisms, then $r = \frac{|{\cal D}|}{|\GL_m({\bf F})|}$.
\end{enumerate}
\end{Lemma}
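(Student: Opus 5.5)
We view this as an orbit count for the natural action of $\GL_m(\F)$ on $\cal C$. Fix the vector space $V=\F^m$ with standard basis. Each $m^3$-tuple in $\cal C$ is a multiplication on $V$. A change of basis by $g\in\GL_m(\F)$ transforms the structural tuple $C_E$ into $C_{gE}$. Hence $\GL_m(\F)$ acts on $\cal C$, and by the discussion preceding the lemma the orbits of this action are exactly the equivalence classes, that is, the isomorphism classes of $m$-dimensional algebras. The first thing to check is that the action is well defined: the tuple $C_{gE}$ depends only on the algebra and on $g$.

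For (i), consider the map from $\cal D$ to isomorphism classes of algebras that sends a tuple to the algebra it defines. By hypothesis every image lies in $\cal K$, and every algebra of $\cal K$ is hit, since it has a basis $E$ with $C_E\in\cal D$. So $r$ equals the number of equivalence classes meeting $\cal D$. This number is at most $|\cal D|$. Each class meets $\cal D$ in at most as many tuples as the class contains, and a class has at most $|\GL_m(\F)|$ elements, one for each choice of basis. Summing over the $r$ classes gives $|{\cal D}|\le r\,|\GL_m(\F)|$, which is the lower bound.

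For (ii), suppose $\cal D$ is perfect. Then $\cal D$ is the disjoint union of the $r$ full orbits. By orbit–stabilizer, each orbit has size $|\GL_m(\F)|/|\mathrm{Stab}|$. The stabilizer of a tuple $C_E$ consists of those $g$ with $C_{gE}=C_E$. Such a $g$ means that the linear map sending $E$ to $gE$ preserves the multiplication, so it is an automorphism of the algebra. This identifies the stabilizer with $\Aut A$. If $\Aut A$ is trivial, every orbit has exactly $|\GL_m(\F)|$ elements, and $|{\cal D}|=r|\GL_m(\F)|$ follows.

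The only delicate step is identifying the stabilizer with $\Aut A$ and distinct bases with distinct group elements. This amounts to the observation that two bases give the same structural tuple if and only if the linear map between them is an automorphism. I expect this to be routine.
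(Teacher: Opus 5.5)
Your proposal is correct and follows essentially the same route as the paper. Part (i) comes from the correspondence between isomorphism classes and equivalence classes of structural tuples, together with the bound $|\GL_m(\F)|$ on the size of each class; part (ii) comes from the fact that two bases giving the same structural tuple differ by an automorphism, which you package as orbit--stabilizer with stabilizer $\Aut A$, where the paper argues directly that a class with fewer than $|\GL_m(\F)|$ elements would force a nontrivial automorphism.
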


\begin{proof} 
$\mathrm{(i)}$ The first statement follows from the correspondence
between the isomorphism classes of $m$-dimensional algebras over $\bf F$ and the equivalence classes of structural $m^3$-tuples, and the fact that
the size of every equivalence class is bounded by  $|\GL_m(\bf F)|$.

$\mathrm{(ii)}$ We note that every equivalence class contained in
$\cal D$ contains exactly  $|\GL_m(\bf F)|$ elements. Indeed,
otherwise an algebra $A\in \cal K$ has to have the same $m^3$-tuples
of structure constants with respect to different bases $E$ and $E'$
of $A$. Hence the mapping $E\to E'$ would extends to an automorphism
of $A$, a contradiction.
\end{proof}
For a field $\F$ of order $q$, the order of the group $\GL_m(\F)$ equals
\[
q^{m^2}\prod_{i=1}^m (1-q^{-i}).
\]
 Since the product $\prod_{i=1}^{\infty} (1-q^{-i})$ converges to a positive constant $d$ as fast as a geometric sequence, we have
\begin{equation}\label{eqGL}
|\GL_m(\F)| = q^{m^2-c+O(q^{-m})},
\end{equation}
where 
\begin{equation}\label{eqcd}
c = -\log_q d>0\mbox{ and }-\log_q \prod_{i=m+1}^{\infty} (1-q^{-i})= O(q^{-m})>0.
\end{equation}

To simplify the notation,
we write $\exp_q(N)$ in place of $q^{N}$, $N$ a real number.

\subsection{Generic algebras have no nontrivial automorphisms.}\label{ssSGP}

Our first theorem about the generic properties of finite algebras is the following.

\begin{Thm}\label{tGNonAut} Let $\vp(m)$ be the number of pairwise nonisomorphic algebras of dimension $m$ over a field $\F$ of order $q$ and $\psi(m)$ the number of those of them which admit a nontrivial automorphism. Using $c$ from (\ref{eqcd}), we have
\[
\vp(m) = \exp_q(m^3 - m^2 + c - O(q^{-m})).
\]
Also, as $m\to\infty$, $\frac{\psi(m)}{\vp(m)}$ converges to 0 exponentially fast.
\end{Thm}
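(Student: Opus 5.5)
We work inside the set $\cC$ of all structural $m^3$-tuples, so $|\cC|=q^{m^3}$, and count via Lemma \ref{perfect}. The plan is to bound the number $N$ of pairs $(C,g)$, where $C\in\cC$ and $g\in\GL_m(\F)$, $g\ne 1$, is an automorphism of the algebra with structure tuple $C$. Let $\cD$ be the set of tuples whose algebra has a nontrivial automorphism; $\cD$ is perfect. The aim is to show $|\cD|\le N\le q^{m^3-\delta m}$ for some fixed $\delta>0$, that is, exponentially smaller than $q^{m^3}$.

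For a fixed $g\ne 1$, the condition that $g$ is an automorphism of the algebra given by $C$ is linear in $C$. It says that the bilinear map $\mu:V\times V\to V$ satisfies $\mu(gx,gy)=g\mu(x,y)$, i.e. $\mu$ is a fixed point of the linear action of $g$ on $\mathrm{Hom}(V\otimes V,V)$. Hence the number of such $C$ is $q^{\dim\ker(\rho(g)-1)}$, where $\rho(g)$ is the induced operator $\mu\mapsto g\mu(g^{-1}\cdot,g^{-1}\cdot)$. The key step is the codimension bound
\[
\mathrm{codim}\,\ker(\rho(g)-1)\ge \kappa m^2 \quad\mbox{for all } g\ne 1,
\]
with a fixed $\kappa>0$; we expect something like $m^2$ up to constants. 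For this we split into two cases. If $g$ is not a scalar, choose $x$ with $gx\notin\F x$. Then we count independent linear constraints: the values $\mu(e_i,e_j)$ on pairs are permuted or twisted by $g$, and the fixed-point condition ties together at least about $m^2/2$ coordinate blocks, each of dimension $m$. This costs roughly $m^3/2$ in dimension, though for $g$ of small rank distance to $1$ (for example a transvection) only order $m^2$ is lost. Safest is to write $g=1+h$ with $r=\mathrm{rank}\,h\ge1$; one then shows the codimension is at least about $r m^2$. If $g=\lambda\ne1$ is a scalar, the condition becomes $\lambda^{2}\mu=\lambda\mu$, which forces $\mu=0$ or $\lambda=1$, so the codimension is $m^3$.

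Summing over $g$ is the main obstacle. The number of $g$ with $\mathrm{rank}(g-1)=r$ is at most $q^{2rm}$, while each such $g$ contributes at most $q^{m^3-c'rm^2}$. Hence
\[
N\le\sum_{r\ge1}q^{2rm+m^3-c'rm^2}=q^{m^3-\Omega(m^2)}.
\]
This is far better than needed. The delicate point is proving the codimension bound uniformly in $r$ (linear in $r$ times $m^2$). To get it, we first restrict $\mu$ to $V\times U$ with $U$ a complement of $\ker h$, where $\dim U=r$. There we check that the map $\mu\mapsto \rho(g)\mu-\mu$ has rank at least $\kappa rm^2$, by exhibiting explicit test tuples. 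As a fallback, a bound $\ge c'm^2$ uniformly, together with the crude count $|\GL_m|\le q^{m^2}$, gives $N\le q^{m^3+m^2-c'm^2}$; this suffices provided $c'>1$, which holds for the transvection case ($\approx 2m^2$ lost) and should be verified generally.

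Finally, the tuples outside $\cD$ form a perfect set whose algebras have trivial automorphism group. By Lemma \ref{perfect}(ii) they give exactly $(q^{m^3}-|\cD|)/|\GL_m(\F)|$ isomorphism classes. By Lemma \ref{perfect}(i), $\psi(m)\le|\cD|\le q^{m^3-\Omega(m^2)}$. Using (\ref{eqGL}), $|\GL_m(\F)|=q^{m^2-c+O(q^{-m})}$, so
\[
\vp(m)=\frac{q^{m^3}(1-q^{-\Omega(m^2)})}{q^{m^2-c+O(q^{-m})}}+\psi(m)=\exp_q\bigl(m^3-m^2+c-O(q^{-m})\bigr).
\]
Also $\psi(m)/\vp(m)\le q^{m^2-\Omega(m^2)}\cdot q^{-\Omega(m^2)}$ after the sharper rank-$r$ bound, which tends to $0$ exponentially fast, as claimed.
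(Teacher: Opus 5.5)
You organise the count by pairs $(C,g)$, i.e.\ you bound $N=\sum_{g\ne 1}q^{\dim\ker(\rho(g)-1)}$, with $g$ sorted by $r=\rank(g-1)$ and at most $q^{2rm}$ elements per class. That is a viable alternative to the paper's sorting of $\beta$ by its rank as an $\F[t]$-module. The gap is that the one estimate carrying all the content, $\mathrm{codim}\,\ker(\rho(g)-1)\ge\kappa\, r m^2$, is only expected, never proved, and your heuristics for it do not work.

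\emph{The heuristics.} A non-scalar $g$ does not cost about $m^3/2$: a transvection is non-scalar, yet its fixed space has codimension only $3m^2-O(m)$. Nor does an arbitrary complement $U$ of $\ker(g-1)$ give $m^2$ constraints per dimension. What does is a subspace $U$ with $U\cap gU=0$. Then $\mu(z,gu)=g\mu(g^{-1}z,u)$ determines all of $\mu(\cdot,gU)$ from $\mu(\cdot,U)$, so the codimension is at least $m^2\dim U$.

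\emph{What is needed.} Such a $U$, with $\dim U$ proportional to $\min_\lambda\rank(g-\lambda)$, exists by a greedy argument, or by Lemma 10 of \cite{BOGP} as in the paper. This only covers $g$ far from every scalar, so two more cases are required.
If $\rank(g-\lambda)$ is small for some $\lambda\ne 1$: you need $\mu(K_\lambda\times K_\lambda)\subseteq\ker(g-\lambda^2)$, where $K_\lambda=\ker(g-\lambda)$. The space $\ker(g-\lambda^2)$ meets $K_\lambda$ trivially, so its dimension is at most $\rank(g-\lambda)$.
If $\lambda=1$: each $L_x=\mu(x,\cdot)$ with $x\in\ker(g-1)$ must commute with $g$. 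The centralizer of $g$ has codimension at least $2r(m-r)$ in $\End V$, which gives codimension $\ge 2r(m-r)^2$.
This case analysis is exactly what the paper's Steps 1 and 2 supply, using Lemma 10, Proposition 6 and Lemma 11 of \cite{BOGP}. Without it your proposal is only the bookkeeping around the theorem.

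\emph{The bookkeeping.} It also has a hole. Lemma \ref{perfect}(i) gives only $\psi(m)\le|\mathcal{D}|\le N$, which loses a factor $|\GL_m(\F)|\approx q^{m^2}$. You need $\psi(m)=o(q^{m^3-m^2})$, both for the ratio and for the formula for $\vp(m)$. With your accounting that requires $N\le q^{m^3-(1+\varepsilon)m^2}$, which in your fallback means $c'>2$, not $c'>1$. Your final inequality $\psi(m)/\vp(m)\le q^{m^2-\Omega(m^2)}\,q^{-\Omega(m^2)}$ is therefore unjustified.

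The fix is orbit counting. Writing $A_C$ for the algebra with structure tuple $C$, you have $|\GL_m(\F)|\,\psi(m)=\sum_{C\in\mathcal{D}}|\Aut A_C|\le|\mathcal{D}|+N\le 2N$; equivalently, Burnside's lemma gives $|\GL_m(\F)|\,\vp(m)=q^{m^3}+N$. With this, any $\kappa>0$ suffices. For large $m$ one gets $N\le q^{m^3-\kappa m^2+2m+1}$, hence $\psi(m)/\vp(m)\le q^{-\kappa m^2+O(m)}$. Also $\vp(m)=q^{m^3}\bigl(1+q^{-\kappa m^2+O(m)}\bigr)/|\GL_m(\F)|$, and the stated formula follows from (\ref{eqGL}).
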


\begin{proof}

Every automorphism $\beta$ of a finite-dimensional algebra $V$ over the field $\F$ is a linear map $V\to V$, which turns $V$ into a module over the polynomial algebra $\F[t]$. The least number of the generators of this module is called the \emph{rank} $r$ of the automorphism $\beta$. 

First, we estimate the number of structural $m^3$-tuples that define algebras in which $\beta$ is an automorphism.

\medskip

\emph{Step 1.} We fix a nonsingular linear operator $\beta$ of rank $r \le \frac{4m}{5}$. It follows from \cite[Lemma 10]{BOGP} that there are subspaces $U\subset W$ in $V$, such that  $s=\dim U \ge \frac{m}{15}$ and $V = W +\beta(U)$ is the direct sum. In this case, one can choose a basis $E=( e_1,\ld,e_m)$ in $V$, such that the first $s$ vectors $e_1,\ld,e_s$ form a basis in $U$, while the last $s$ vectors  $e_{m-s+1}=\beta(e_1),\ld,e_m=\beta(e_s)$ form a basis in $\beta(U)$.

If one knows the structure constants with respect to $E$ that define
all the products $e_ie_j$ except for $e_i,e_j\in \beta(U)$, then
the latter $ms^2$ constants are also  uniquely determined since
\[
 e_ie_j = \beta(e_k)\beta(e_l)=\beta(e_ke_l)
  \]
for $e_k,e_l\in U\subset W,$  and the right-hand side is known. Hence there
are at most $\exp_q(m^3-ms^2)\le \exp_q\left(m^3 -\frac{m^3}{225}\right)$ different $m^3$-tuples of structure constants defining in the basis $E$ those algebras where $\beta$ is an automorphism. The same upper bound
holds for the number of non-isomorphic algebras admitting this
automorphism $\beta$.
The number of linear operators $\beta$ is obviously bounded by
$\exp_q(m^2)$, and so Step 1 gives, by Lemma \ref{perfect} (i), less than $\exp_q \left(m^3 -\frac{m ^3}{225}+m^2\right)$ algebras admitting an automorphism
of rank $r \le \frac{4m}{5}$.

\medskip

\emph{Step 2}. We consider now a linear  operator $\beta$ on the space $V$ with the condition $r > \frac{4m}{5}$. By  \cite[Proposition 6]{BOGP} it follows, that $V$ contains an invariant subspace $U$ of dimension $s= 2r-m$, on which $\beta$ acts as a scalar operator, and $U$ has a $\beta$-invariant direct complement  in $V$. Hence the centralizer $C(\beta)$ in $\GL(V)$ contains a subgroup, isomorphic to $\GL(U)$. As a result, the index of $C(\beta)$ satisfies

\begin{equation}\label{eqGLV}
[\GL(V): C(\beta)]\le \exp_q (m^2 -(2r-m)^2)= \exp_q(4r(m-r)).
\end{equation}

Let us estimate the number $N(r)$ of different linear operators $\beta$  with given rank $r > \frac{4m}{5}$. In the canonical basis, $\beta$ is a block-diagonal square matrix with one square block of size $m-s$ and one more, the scalar block $\lambda I_s$. Such matrices belong to the subspace of dimension $(m-s)^2+1=4(m-r)^2+1$. Considering equation (\ref{eqGLV}), we obtain 

\begin{equation}\label{Nr}
N(r)\le \exp_q(4(m-r)^2+1)\exp_q(4r(m-r))=\exp_q(4m(m-r)+1). 
\end{equation}

Applying \cite[Lemma 10]{BOGP} again to  such $\beta$, we find subspaces $Z$ and $W$ in $V$, such that $Z\subset W$,  $\dim W = \left[\frac{2m+r}{3}\right]$,  $V = W \oplus\beta(Z)$, and

\[l= \dim Z = m-\left[\frac{2m+r}{3}\right] \ge \frac{m-r}{3}.\]

Again, we choose a basis $E=(e_1,\ld,e_m)$ in $V$, such that its first $\ell$ vectors form a basis in $Z$, while the last are the images of the first ones under $\beta$, hence forming the basis in $\beta(Z)$. By \cite[Lemma 11]{BOGP}, there is an $r$-dimensional subspace $U'\subset V$, on which $\beta$ acts as a scalar multiplication by $\lambda$. Its intersection with $Z$ is trivial. Set $Y=U'\cap W$. By Grassmann's formula, the dimension of $Y$  is no less than $\left[\frac{2m+r}{3}\right]+ r -m$. Since $r>\frac{4m}{5}$, we have $\dim Y \ge\frac{11m}{15}-\frac{2}{3}$. We may assume that the basis of $Y$ is included in the basis $E$.

If $e_i\in Y$ and $e_j=\beta(e_k)\in \beta(Z)$, then under the assumption that
$\beta$ is an automorphism of an algebra on the space $V$, we
have

\[e_ie_j =\lambda^{-1}\beta(e_i)\beta(e_k) = \lambda^{-1}\beta(e_ie_k),\]
and so all such products $e_ie_j$ (and also products  $e_je_i$,
where $e_i\in Y$ and $e_j\in \beta(Z)$),
are determined by $\beta$ and by the products of other pairs of the basis vectors. Therefore to define an algebra with the automorphism
$\beta$ one needs at most 
\[
m^3 - 2m\left(\frac{11m}{15}-\frac{2}{3}\right)\frac{m-r}{3}= \frac{23m^3 + 22m^2r +20m^2 -20mr}{45}
\]
scalars. Hence we obtain at most
\[N(r)\exp_q\frac{23m^3 + 22m^2r+20m^2 -20mr}{45}\le\] \[\exp_q\left(\frac{23m^3 + 22m^2r+20m^2 -20mr}{45}+ 4(m(m-r)+1)\right)
\]
of non-isomorphic  $m$-dimensional algebras admitting an automorphism of rank $r>\frac{4m}{5}$.

The function in the exponent at the right side is linear in $r$, with coefficient  of $r$ being $22m^2-200m$. So it increases with $r$, if $m\ge 10$. It follows that, with $m$ sufficiently large, the function has maximum value on the interval $[0,m-3]$  when $r=m-3$. In this case the number of non-isomorphic algebras is bounded by  
\[
\exp_q\frac{23m^3 + 22m^2(m-3)+60 +540m +45}{45} = \exp_q\left(m^3 -\frac{22m^2}{15} +12m+\frac 73\right).
\]

We need better estimates for $r=m-1$ and $r=m-2$. In both cases, by choosing $Z$ and $W$, as before, we have $\dim Z =1$, and in $W$ there is a $\beta$-invariant subspace  $Y$ of dimension $t\ge m-3$, on which $\beta$ acts as a scalar. Also we choose the basis
$E$ as above.
If a basis vector $e_i$ belongs to $Y$ and $e_n =\beta(e_1)\in\beta(Z)$, then $e_ie_n =\beta(e_i)\beta(e_1) =\beta(e_ie_1)$. Hence $ m $ sructure constants for
$e_ie_n$ are determined by $\beta$ and by the constants
$c_{ij}^k$ where $i,j<n$. The same argument works for $e_je_i$. 
Therefore for each of the values $r=m-1$ and $r=m-2$
we obtain at most  $\exp_q(m^3-2m(m-3))$ non-isomorphic
algebras admitting the automorphism $\beta$. 

The sum $N(m-1)+N(m-2)$ is less than $\exp_q(8m+2)$ by (\ref{Nr}).
Hence the number of non-isomorphic
algebras admitting  automorphism of ranks $m-1$ or $m-2$ is less
than $\exp_q(m^3-2m(m-3)+8m+2)$.

Note that on the algebras with nonzero product the map $\lambda\,\id$ is an automorphism only if $\lambda=1$. Thus to obtains our cumulative estimate in the case $r\ge \frac{4m}{5}$, we need to sum up our estimates over $r$ from 0 to $m-1$ only. We will obtain, that the number of algebras, which  admit an automorphism of rank $r>\frac{4m}{5}$, is not greater than 
$\exp_q(m^3 - \frac{7m^2}{5}-1)$,
for sufficiently large $m$.

Combining this with the estimate of Case 1, we obtain that  there are less than 
\begin{equation}\label{75}
\exp_q\frac{m^3 - 7m^2}{5}
\end{equation}
 algebras, admitting a nontrivial automorphism. 

\medskip

\emph{Step 3}. Now we can complete the proof.  
Let $\cX$ be the set of all structural $m^3$-tuples corresponding to the algebras 
 admitting a non-trivial automorphism. It follows from the last
  estimate and Lemma \ref{perfect} (i) that the perfect set $\cal X$ 
 has cardinality less than 
 $\exp_q(m^2) \exp_q\left(m^3 - \frac{7m^2}{5}\right)=\exp_q\left(m^3 - \frac{2m^2}{5}\right)$.
 So the perfect complement $\cal Y$ of $\cX$ in $\cal C$ is
 of cardinality at least $\exp_q(m^3)- \exp_q\left(m^3 - \frac{2m^2}{5}\right)$ which is $\exp_q(m^3-O(\exp_q(-m^2)))$.
 
 By this estimate for $\cal Y$, Lemma \ref{perfect} (ii), and formula (\ref{eqGL}), the number of non-isomorphic algebras
 of dimension $m$ having no non-trivial automorphism is 
\[
\frac{\Card({\cal Y})}{|\GL_m({\bf F})|}= \exp_q(m^3-m^2+c + O(q^{-m})).
\]
 Since the number of algebras with non-trivial automorphisms was bounded by $\exp_q\left(m^3 \frac{- 7m^2}{5}\right)$, the formula for $\vp(m)$ has been proven. 
 
 Comparing the estimates from the last two sentences, we obtain the proof of the second 
 claim of the theorem.
 
 \end{proof}

\subsection{Generic algebras are simple.}\label{essSimplicity}  A rather surprising generic property of finite linear algebras is the contents of the following.

\begin{Thm}\label{tGAS} 
A finite generic algebra is simple.
\end{Thm}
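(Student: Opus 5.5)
We will count structural $m^3$-tuples rather than isomorphism classes, following the scheme of Theorem \ref{tGNonAut}. Let $\cX\subset\cC$ be the set of structural $m^3$-tuples defining non-simple algebras; $\cX$ is perfect. We aim to show that $|\cX|\le \exp_q(m^3-m^2+O(m))$. Then by Lemma \ref{perfect} (i) the number of pairwise nonisomorphic non-simple $m$-dimensional algebras is at most $|\cX|$, which is tiny compared with $\vp(m)=\exp_q(m^3-m^2+c-O(q^{-m}))$. This comparison is too crude, however, since $|\cX|$ itself must be compared with $\exp_q(m^2)\vp(m)$. So we need the sharper goal $|\cX|/|\cC|\to 0$ together with an argument relating isomorphism classes to tuples. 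Here is the cleaner route. Intersect with the generic set $\cY$ of Theorem \ref{tGNonAut}: algebras without nontrivial automorphisms whose equivalence classes all have exactly $|\GL_m(\F)|$ elements. Non-simple algebras in $\cY$ are counted exactly by $|\cX\cap\cY|/|\GL_m(\F)|\le|\cX|\exp_q(-m^2+c')$, while those with automorphisms are already negligible. It therefore suffices to prove $|\cX|\le \exp_q(m^3-\delta m)$ for some $\delta>0$, i.e.\ $|\cX|/q^{m^3}\to 0$.

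To bound $|\cX|$, note that a non-simple algebra either has zero multiplication or has a proper nonzero ideal $I$, say of dimension $k$, $1\le k\le m-1$. (An algebra with zero product, of dimension at least $2$, also has such ideals, and it contributes a single tuple anyway.) Fix a subspace $I$ of dimension $k$; the number of choices is at most $\exp_q(k(m-k)+1)$. Choose a basis $e_1,\ld,e_m$ adapted to $I$, with $e_1,\ld,e_k\in I$; the tuples are expressed in the fixed standard basis, but a change of basis is a bijection of $\cC$, so the counting is unaffected. The condition that $I$ is an ideal forces $e_ie_j\in I$ whenever $i\le k$ or $j\le k$. There are $m^2-(m-k)^2=k(2m-k)$ such pairs, and each product must have its $m-k$ coordinates outside $I$ equal to zero. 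Hence the number of tuples for which this fixed $I$ is an ideal is $\exp_q(m^3-k(2m-k)(m-k))$. Summing over $I$ gives
\[
|\cX|\le 1+\sum_{k=1}^{m-1}\exp_q\bigl(m^3-k(2m-k)(m-k)+k(m-k)+1\bigr).
\]

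The key step is checking the exponent. Write $g(k)=k(m-k)(2m-k-1)$. Since $2m-k-1\ge m$ for $k\le m-1$, we get $g(k)\ge m\,k(m-k)\ge m(m-1)$ for $1\le k\le m-1$. So each summand is at most $\exp_q(m^3-m^2+m+1)$, and the sum over at most $m$ values of $k$ is $\exp_q(m^3-m^2+O(m))$. This comfortably gives $|\cX|/q^{m^3}\to 0$ exponentially fast. Combined with the previous paragraph, the number of nonisomorphic non-simple algebras is at most $\psi(m)+|\cX|/|\GL_m(\F)|\le \psi(m)+\exp_q(m^3-2m^2+O(m))$, which is $o(\vp(m))$. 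The main obstacle I anticipate is not this estimate but the bookkeeping that converts tuple counts into isomorphism-class counts without losing a factor $|\GL_m(\F)|$. I would handle it exactly as in Step 3 of Theorem \ref{tGNonAut}, splitting off the algebras with nontrivial automorphisms via the already proved bound on $\psi(m)/\vp(m)$.
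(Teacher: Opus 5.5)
Your argument is correct, but it passes from structure tuples to isomorphism classes differently from the paper.

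\textbf{The paper's route.} It fixes the single subspace $U=\langle e_1,\ld,e_d\rangle$ and uses only the upper bound in Lemma \ref{perfect}~(i). Every algebra with a $d$-dimensional ideal has a basis adapted to it, so up to isomorphism there are at most $|\cT(U)|=\exp_q(m^3-d(m-d)(2m-d))$ such algebras. It then maximizes this cubic in $d$ and compares with $\vp(m)$. This avoids counting subspaces and any automorphism bookkeeping. The cost is a lost factor of roughly $q^{m^2-d(m-d)}$, the order of the stabilizer of $U$ in $\GL_m(\F)$, because a rigid algebra appears in $\cT(U)$ once for every basis adapted to its ideal.

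\textbf{Your route.} You show that the tuples admitting \emph{some} proper ideal form a fraction $\exp_q(-m^2+O(m))$ of $\cC$, and only then divide by $|\GL_m(\F)|$. That division is exact on the rigid part by Lemma \ref{perfect}~(ii), and the non-rigid algebras are absorbed by $\psi(m)=o(\vp(m))$ from Theorem \ref{tGNonAut}.

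\textbf{Why your precision matters.} At $d=m-1$ one has $d(m-d)(2m-d)=m^2-1$. So the paper's endpoint value is $p(m-1)=m^3-m^2+1$, which is not below $m^3-2m^2+3m$ as the paper asserts. Moreover $\exp_q(m^3-m^2+1)$ is not $o(\vp(m))$; for $q\ge 3$, where $c<1$, it even exceeds $\vp(m)$. As written, the paper's estimate therefore only disposes of $1\le d\le m-2$. Your inequality $k(m-k)(2m-k-1)\ge m(m-1)$ treats codimension-one ideals together with all the others.

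\textbf{Two minor points.}
\begin{enumerate}
\item The number of $k$-dimensional subspaces can exceed $q^{k(m-k)+1}$ when $q=2$: there are $35>2^5$ two-dimensional subspaces of $\ZZ_2^4$. It is always below $q^{k(m-k)+2}$, so this only affects your $O(m)$ term.
\item The false start at the beginning of your first paragraph can be deleted. The reduction to $|\cX|/q^{m^3}\to 0$ is all you actually use.
\end{enumerate}
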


\begin{proof}
Let $V$ be vector space with basis $E = ( e_1,\ld,e_d,\ld,e_m)$ and $U=\la e_1,\ld,e_d\ra$ a $d$-dimensional subspace of $V$. We denote by $\cT(U)$ the subset of $m^3$-tuples $(c_{ij}^l)_{i,j,l=1}^m$ from $\cal C$ such that $U$ is an ideal in the algebra defined by $(c_{ij}^l)_{i,j,l=1}^m$ with respect to the basis $E$.  

For $1\le i\le d$, $1\le j\le m$ we have $e_ie_j\in U$. Invoking (\ref{eqCIJK}), we have $c_{ij}^k=0$, as soon as  $k>d$. The number of such equations is $dm(m-d)$. Similar condition for the structure constants is given by the products $e_je_i$, where $i\le d$. We must subtract the number the same equations in these two systems,
which are given by $1\le i\le j\le d$. So the total number of such restrictions equals  
$2dm(m-d)- d^2(m-d) = d(m-d)(2m-d)$. So
there are at most 
\[
\exp_q(m^3-d(m-d)(2m-d))=\exp_q(m^3-d^3+3md^2-2m^2d)
\]
$m$-dimensional algebras having a $d$-dimension ideal.

The cubic polynomial $p(x)=  - x^3+3mx^2- 2m^2x+m^3$ has the
derivative $p'(x) = -3x^2+6mx-2m^2$ with one zero $x_0 = \left(1-\frac{1}{\sqrt 3}\right)m$ on the segment $[1, m-1] $, and
$p(x_0)\le \frac{4m^3}{5}$. The values of $p(x)$ at the ends
of this segment are equal to $m^3 - 1 +3m-2m^2$ and
$m^3 -(m-1)^3+3m(m-1)2-2m^2(m-1)$. Both these values
are less than $m^3-2m^2 +3m$ for sufficiently large $m$,
which, consequently,  bounds from above the maximum of $p(x)$ on $[1,m-1]$, for all sufficiently large values of $m$..

It follows that the number of $m$-dimensional algebras
containing a $d$-dimensional ideal is less than 
$\exp_q(m^3-2m^2+3m)$ for all sufficiently large $m$.
Summing over $d=1,\ld,m-1$, we see that the number
of nonisomorphic $m$-dimensional algebras containing a proper
ideal is less than $m\exp_q(m^3-2m^2+3m)$. This
number is exponentially negligible in comparison with
the number of all $m$-dimensional algebras given by
Theorem \ref{tGNonAut}, as required.
\end{proof}

\subsection{Generic algebras are cyclic.}\label{ssyclic}

It is not clear, whether or not a generic finite-dimensional algebra  over a finite field has one-dimensional subalgebras.  The estimates given below are sufficient for the following weaker claim. We say that an algebra $A$ is \emph{cyclic}, if it can be generated by one element.

\begin{Thm}\label{tcyclic} 
Generic finite-dimensional algebras over a finite field are cyclic
and contain no proper subalgebras of dimension greater than 1. 
\end{Thm}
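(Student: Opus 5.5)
We follow the counting strategy of Theorem \ref{tGAS}. We bound the number of structural $m^3$-tuples whose algebra has a proper subalgebra of dimension $d$ with $2\le d\le m-1$. Lemma \ref{perfect}(i) turns this into a bound on isomorphism classes, which we compare with $\vp(m)=\exp_q(m^3-m^2+c-O(q^{-m}))$ from Theorem \ref{tGNonAut}. Cyclicity is then derived from the absence of large proper subalgebras.

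\emph{Step 1 (subalgebras of dimension $d\ge 2$).} Fix a basis $E$ with $U=\la e_1,\ld,e_d\ra$. The condition that $U$ be a subalgebra is $c_{ij}^l=0$ for $i,j\le d$ and $l>d$. These are $d^2(m-d)$ linear conditions, so at most $\exp_q(m^3-d^2(m-d))$ tuples make $U$ a subalgebra in the basis $E$. By Lemma \ref{perfect}(i) applied to the class of algebras having some $d$-dimensional subalgebra (every such algebra has a basis adapted to it), the number of nonisomorphic such algebras is at most $\exp_q(m^3-d^2(m-d))$. For $2\le d\le m-1$ the function $d^2(m-d)$ is at least $\min(4(m-2),(m-1)^2)=4(m-2)$. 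This is \textbf{not} enough: we need to beat $m^3-m^2$, i.e.\ we need $d^2(m-d)>m^2$ plus a margin, and this fails for small $d$ (e.g.\ $d=2$ gives $4m-8$).

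\emph{Step 2 (the main obstacle).} The small-$d$ range is the main obstacle, and the fix is to count pairs (algebra, subspace) inside the set of tuples rather than fixing the basis. The number of $d$-dimensional subspaces of $V$ is at most $\exp_q(d(m-d)+1)$, and for each subspace the proportion of the $\exp_q(m^3)$ tuples making it a subalgebra is $q^{-d^2(m-d)}$. So the number of tuples with some $d$-dimensional subalgebra is at most $\exp_q(m^3-d^2(m-d)+d(m-d)+1)=\exp_q(m^3-d(d-1)(m-d)+1)$. For $2\le d\le m-1$ we have $d(d-1)(m-d)\ge 2(m-2)$, which tends to infinity. Summing over $d$ gives a set of tuples of proportion at most $m\,q^{-2m+5}\to 0$ among all $\exp_q(m^3)$ tuples. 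This set is perfect. Its complement is perfect too, and by the proof of Theorem \ref{tGNonAut} almost all tuples define algebras without automorphisms. By Lemma \ref{perfect}(ii) the isomorphism classes in the complement, restricted to automorphism-free algebras, number $\frac{\Card}{|\GL_m(\F)|}=\vp(m)(1-o(1))$. Hence a generic algebra has no proper subalgebra of dimension $\ge 2$.

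\emph{Step 3 (cyclicity).} Let $A$ be such an algebra and $a\ne 0$. The subalgebra generated by $a$ is either $A$ or one-dimensional, the latter meaning $a^2=\lambda a$. So $A$ fails to be cyclic only if every nonzero $a$ satisfies $a^2\in\F a$. We show this is nongeneric by counting, again via Lemma \ref{perfect}. The condition $a^2\in \F a$ for all $a$ implies, for the basis vectors $e_i$, that $e_i^2\in\F e_i$, giving $m(m-1)$ conditions. It also implies the same for the $e_i+e_j$, giving further independent conditions on $e_ie_j+e_je_i$ modulo $\la e_i,e_j\ra$. In all, at least on the order of $m^3/2$ linear conditions hold in a suitable basis. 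The class of such algebras is therefore of size $\exp_q(m^3-\Omega(m^3))$ up to the $|\GL_m|$ factor, negligible against $\vp(m)$. Alternatively, it suffices to note that the basis conditions $e_i^2\in\F e_i$ alone give $m(m-1)>m^2-m$ restrictions plus the further $\binom m2(m-2)$ from sums, which suffices. Intersecting the two generic properties completes the proof.
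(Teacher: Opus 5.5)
Your proof is correct, and its skeleton matches the paper's. Your Step 2 is the paper's Case 2: a union bound over $d$-dimensional subspaces, a codimension-$d^2(m-d)$ linear condition for each, then discarding tuples whose algebras have automorphisms and dividing by $|\GL_m(\F)|$ via Lemma \ref{perfect}(ii). Your Step 3 is the paper's Case 1, together with its closing remark that any $a$ with $\F a$ not a subalgebra must generate $A$.

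The genuine difference is how you count subspaces. The paper uses the crude bound $q^{m\ell}$, so the exponent it has to beat is $m\ell-\ell^2(m-\ell)$. This is negative only when $\ell$ stays away from $m$; at $\ell=m-1$ it equals $m-1>0$. That is why the paper restricts Case 2 to $\ell\le m/2$. It then handles $m/2<\ell\le m-1$ in a separate Case 3, building the algebra from its $\ell$-dimensional subalgebra and invoking the count of $\ell$-dimensional algebras from Theorem \ref{tGNonAut}.

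Your Gaussian-binomial estimate $q^{d(m-d)+O(1)}$ turns the exponent into $-d(d-1)(m-d)\le -2(m-2)$, uniformly for $2\le d\le m-1$. So a single computation covers every dimension, and Case 3 becomes unnecessary. As a bonus, it yields the rate $q^{-2m+O(1)}$ at $d=2$. With $q^{m\ell}$, the case $\ell=2$ only gives $\exp_q(m^3-m^2-m+O(1))$ algebras. That is enough for genericity, but weaker than the bound $\exp_q(m^3-m^2-2m)$ stated in the paper.

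There are two small slips.
First, for $q=2$ the bound $\exp_q(d(m-d)+1)$ on the number of subspaces is false (e.g.\ $\binom{4}{2}_2=35>2^5$). Use $\binom{m}{d}_q\le q^{d(m-d)}\prod_{i\ge1}(1-q^{-i})^{-1}<q^{d(m-d)+2}$ instead, which changes nothing else.
Second, in your closing ``alternative'', $m(m-1)$ equals $m^2-m$ rather than exceeding it. So the conditions $e_i^2\in\F e_i$ alone do not beat $\vp(m)$ through Lemma \ref{perfect}(i). You really need the extra $\binom{m}{2}(m-2)$ conditions coming from the elements $e_i+e_j$. Alternatively, since this property is isomorphism-invariant, you can use the proportion-of-tuples argument from your Step 2.
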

\begin{proof} The proof is split into several cases. 

In Case 1, we show that $\exp_q(\frac{m^3}{2} +m^2)$  bounds from above the number of algebras in which all  1-dimensional subspaces are subalgebras.

In Case 2, we show that $m\exp_q(m^3-m^2-2m)$ bounds from above the number of nonisomorphic algebras, having subalgebras of dimensions from 2 to $\frac{m}{2}$.

In Case 3, we show that  $m\exp_q(m^3 -\frac{3}{2}m^2)$ bounds from above the number of nonisomorphic algebras, having subalgebras of dimensions $\ell$, where $\frac{m}{2}<\ell\le m-1$.

Adding all above three number, we obtain, for large enough $m$, less than $q^{m^3-m^2 -\frac{3m}{2}}$ nonisomorphic $m$-dimensional algebras.  In the remaining cases, there is a one-dimensional subspace $\la a\ra$, which is not a subalgebra. In this case,  $a$  generates a subalgebra of  dimension $\ell\ge 2$. But such  subalgebras do not exist, unless $\ell=m$.
  
 Since by Theorem \ref{tGNonAut}, the number of all nonisomorphic algebras of dimension $m$ is greater than  $q^{m^3-m^2}$, we derive that the share of algebras, which cannot be generated by one element is less than $q^{-m}$, for all large enough $m$. Similarly,
 the estimates of Case 2 and Case 3 prove that a generic algebra
 has no proper subalgebras of dimension greater than $1$. So the statement of the theorem follows from
 the estimates of these three cases.
 
 Now we treat the three cases, one by one.
 
 \medskip
 
\emph{ Case} 1. We consider $m$-dimensional algebras, in which all  one-dimensional subspaces are subalgebras. Fix a basis $E=(e_1,\dots,e_m)$. 

For any $e_i\in E$, $e_i^2=c_{ii}^ie_i$, $i=1,\ld,m$. Now if $1\le i,j\le m$, then there is $\sigma_{ij}\in\F$ such that $(e_i+e_j)^2=\sigma_{ij}(e_i+e_j)$. So 
 \[
 \sigma_{ij}(e_i+e_j)=(e_i+e_j)^2=e_i^2+e_j^2+e_ie_j+e_je_i = c_{ii}^ie_i+c_{jj}^je_j +e_ie_j+e_je_i
 \]

 Therefore the product $e_je_i$ is determined by the product $e_ie_j$ and the
constants $c_{ii}^i, c_{jj}^j, \sigma_{ij}$. Hence the multiplication is uniquely
defined by the constants $c_{ij}^k$ for $i\le j$ and the constants $\sigma_{ij}$
with $i<j$. The total number of such constants is 
\[
\frac{m^2(m+1)}{2} +\frac{m(m-1)}{2} =
\frac{m^3}{2} + \frac{m^2 -m}{2}.
\]
 So the number of pairwise non-isomorphic algebras in Case 1
is less than\\ $\exp_q\left(\frac{m^3}{2} +m^2\right)$, as required.
 
\medskip  

\emph{ Case }2. Now we estimate, for each $2\le\ell\le\frac{m}{2}$, the number of nonisomorphic  algebras $A$ on a vector space $V$, having an $\ell$-dimensional subalgebra. For this goal, we fix a basis $E=(e_1,\dots,e_m)$ in $V$ and
estimate the cardinality of the set of structural $m^3$-tuples $\cal D$, such that
the algebra $A$ given in the basis $E$ by a structural $m^3$-tuple $(c_{ij}^{k})_{i,j,k=1}^m \in \cal D$, has an $\ell$-dimensional subalgebra.
Then $\cal D$ is a perfect set since it does not depend on the
choice of bases.

Let $U$ be an $\ell$-dimensional subspace of $V$. Choose an auxiliary basis  $F=(f_1,\ld,f_m)$ of $V$ such that $(f_1,\ld,f_{\ell})$ is a basis of $U$. Denote by ${\cal D}_F$
the set of $m^3$-tuples $C'=\{(c')_{ij}^{k}\}$ such that the algebra
defined by $C'$ with respect to $F$ contains the linear space $U$ as a subalgebra.

A structural $m^3$-tuple $\{(c')_{ij}^{k}\}$ belongs to ${\cal D}_F$ if and only if
it satisfies the conditions $(c')_{ij}^{k}=0$ for $i,j\le \ell$ and $k>\ell$. Altogether, we have $\ell^2(m-\ell)$ such equalities. It follows that every $m^3$-tuple from the set  ${\cal D}_F$ is defined by $m^3-\ell^2(m-\ell)$
constants, and so the cardinality of ${\cal D}_F$ is at most \[
\exp_q(m^3-\ell^2(m-\ell)).
\]
 Rewriting every $m^3$-tuple from  ${\cal D}_F$
in the basis $E$, we obtain a set ${\cal D}_U$ of $m^3$-tuples $C=\{c_{ij}^{k}\}_{i,j,k=1}^m $ such that the space $U$ becomes a subalgebra
in the algebra $A$ given in the basis $E$ by $C$. Obviously, $\cal D$
is the union of the sets ${\cal D}_U$ over all $\ell$-dimensional
subspaces $U$ of $V$.

The summation over all $\ell$-dimensional subspaces, whose number is less than   $q^{m\ell}$, provides us with less than $\exp_q(m^3-\ell^2(m-\ell)+m\ell)$ elements in
${\cal D}_U$. Let $\cal D'$ (resp. $\cal D''$) be the subsets of $\cal D$
defining algebras with (resp, without) non-trivial automorphisms.

The number of algebras defined by the $m^3$-tuples from $\cal D'$
is less than $\exp_q\left(m^3-\frac{7m^2}{5}\right)$ for sufficiently large $m$ by formula
(\ref{75}). By Lemma \ref{perfect}(ii) and formula (\ref{eqGL}), the number of algebras defined by $\cal D''$ is equal to \[
\frac{|{\cal D}''|}{|\GL(V)|}\le \frac{|{\cal D}|}{|\GL(V)|}\le \exp_q(m^3-\ell^2(m-\ell)+m\ell-m^2+m)
\]
for large enough $m$. Thus, for large enough $m$, there are less than \[\exp_q\left(m^3-\frac{7m^2}{5}\right)+\exp_q(m^3-\ell^2(m-\ell)+m\ell-m^2+m)< \exp_q(m^3-m^2 -2m)\] 
non-isomorphic algebras having an $\ell$-dimension subalgebra, because $\ell\in \left[2,\frac{m}{2}\right]$. The summation over $\ell$ from this segment gives us the upper bound
$m\exp_q(m^3-m^2 -2m)$, as desired.

\medskip

 \emph{Case} 3. We estimate the number of $m$-dimensional algebras with subalgebras of dimension $\ell$, where $m>\ell > \frac{m}{2}$. Such an $m$-dimensional algebra  $A$ can be obtained from an $\ell$-dimensional subalgebra $B$ by adding  $m-\ell$ basis vectors and taking all structure constants  $c_{ij}^k$, except those where $i,j\le\ell$. Altogether, it is sufficient to have $m(m^2-\ell^2)$  additional constants. 
 
 By Theorem \ref{tGNonAut}, there exists less than  $\exp_q(\ell^3-\ell^2 + const)$ nonisomorphic $\ell$-dimensional algebras. It follows, that there exists at most $\exp_q(m(m^2-\ell^2)+\ell^3-\ell^2+const)$ nonisomorphic algebras, having $\ell$-dimensional subalgebra. This  value is less than $\exp_q(m^3 -\frac{3}{2}m^2)$ for all sufficiently large $m$  and  $\frac{m}{2}<\ell\le m-1$. Taking summation over all such $\ell$, we obtain less than $m\exp_q\left(m^3 -\frac{3}{2}m^2\right)$ pairwise nonisomorphic algebras, as desired.

Now the proof is complete.
  
 \end{proof}
 The proof provides no information on one-dimensional subalgebras. So we raise the following
 \begin{problem}\label{pbmNumberWithoutProper} Let $\omega(m)$ be the number of $m$-dimensional
pairwise nonisomorphic algebras over a field of order $q$, without proper subalgebras. Is it true that \[
\limsup_{m\to\infty}\frac{\omega(m)}{q^{m^3-m^2}} >0?
\]
\end{problem}

 \subsection{Quasi-identities of generic algebras.}\label{essQ} 
 
 Given an absolute free  algebra ${\cal F}= {\cal F}(x_1,x_2,\dots)$  over a field $\bf F$ and nonassociative polynomials
 \[
 u_1(x_1,\ld,x_n),\ld, u_m(x_1,\ld,x_n),v(x_1,\ld,x_n)\in {\cal F},
 \]
 the formula
 \begin{eqnarray}\label{eQuasi-identity} 
 &&(\forall x_1,\ld,x_n) u_1(x_1,\ld,x_n)=0\&\ld\& u_m(x_1,\ld,x_n)=0\nonumber\\
 &&\longrightarrow v(x_1,\ld,x_n)=0
 \end{eqnarray}
 is called a \emph{quasi-identity}. We say that an algebra $A$ over $\F$ satisfies (\ref{eQuasi-identity}) if for any $a_1,\ld,a_n\in A$ the equalities $u_1(a_1,\ld,a_n)=0,\ld, u_m(a_1,\ld,a_n)=0$ in $A$ imply $v(a_1,\ld,a_n)=0$.

Note that in (\ref{eQuasi-identity}), it is allowed that $m=0$, that is the preamble of the statement can be empty. In that case, (\ref{eQuasi-identity}) is simply an identical relation $v(x_1,\ld,x_n)=0$. So if an algebra $B$ satisfies all the quasi-identities of an algebra $A$ then it also satisfies all the identities of $A$. One denotes by $\qvar A$ the \emph{quasi-variety generated by $A$}, that is, the class of all algebras over $\F$ satisfying all quasi-identities satisfied by $A$. One has $\qvar A\subset\var A$.
 
 A very simple example of a quasi-identity: $x^2=x\longrightarrow x=0$ is satisfied in any algebra without nonzero idempotents.
 
It follows from the definition, that the Cartesian products and subalgebras of algebras satisfying some set of quasi-identities also satisfy them. So by Birkhoff's Theorem \ref{eBirkhoff}, free algebras of $\var A$ are in $\qvar A$
for any algebra $A$. For a finite algebra $A$, the structure of the quasi-variety $\qvar A$ is very simple: any $C\in\qvar A$, not necessarily finite, is a subalgebra of a Cartesian power of $A$ (see  \cite[Corollary 9 in \S 11]{ASM}). So these two operations are sufficient to obtain any algebra of $\var A$, if
$\var A = \qvar A$ for a finite algebra $A$; no forming of the homomorphic images
is needed. Surprisingly, we have

 \begin{Thm}\label{epQuasi} The equality $\var A = \qvar A$ is a generic property of
 algebras $A$  of dimension $m$ over a finite field $\F$. 
\end{Thm}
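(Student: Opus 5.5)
We combine the generic properties already established and show that a finite simple algebra with trivial automorphism group and no proper subalgebras of dimension $>1$ (or, more simply, a generic algebra) satisfies $\var A=\qvar A$. Since $\qvar A\subset\var A$ always holds and both classes are locally finite and closed under subalgebras and Cartesian products, it suffices to show that every finite algebra $C\in\var A$ embeds into a finite direct power of $A$. By the result quoted from \cite{ASM}, $\qvar A$ consists of the subalgebras of Cartesian powers of $A$, and a locally finite algebra all of whose finite subalgebras lie in $\qvar A$ lies in $\qvar A$ because quasi-identities involve finitely many elements.

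By Theorems \ref{tGAS} and \ref{tcyclic}, a generic $A$ is simple with $A^2\nez$, and every proper subalgebra of $A$ has dimension at most $1$. Theorem \ref{tSDI}(i) then gives $C=P\oplus Q$, where $P$ is a direct power of $A$ and $Q\in\frU$, the variety generated by the proper subalgebras of $A$. These are one-dimensional algebras, either zero algebras or copies of $\F$, together with $\{0\}$. Hence $\frU$ is one of four varieties: trivial, zero-multiplication, $\var\F$, or their join. In the last case, $\frU$ is generated by the inherently semisimple family $\{\F,\text{zero algebra}\}$, so Theorem \ref{epSSSA} shows that $Q$ is a direct product of copies of $\F$ and of the one-dimensional zero algebra $Z$. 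It therefore suffices to show that each one-dimensional subalgebra type actually occurring in $A$ embeds into $A$. This is immediate: if $\F\in\frU$, then $\F$ is isomorphic to a subalgebra of $A$, since $\frU$ contains $\F$ only if some proper subalgebra of $A$ has nonzero product. The same holds for $Z$. Consequently $P\oplus Q$ embeds into a direct power of $A$, with each factor $\F$ or $Z$ going into its own copy of $A$. Thus $C\in\qvar A$.

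There is one subtle point. If $\var\F$ and the zero-multiplication variety are both in $\frU$, the join may a priori contain extensions that are not direct products. Theorem \ref{epSSSA} rules this out, because both generators are inherently semisimple (they are one-dimensional). A second check is that the decomposition of Theorem \ref{tSDI} applies to every finite $C$, not only to free algebras; part (i) of that theorem states exactly this.

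The main obstacle is therefore only the case in which $A$ has proper subalgebras of dimension $\ge 2$. By Theorem \ref{tcyclic}, the share of such algebras tends to $0$, so this case is excluded generically, and the intersection of finitely many generic properties is generic. Automorphism-freeness (Theorem \ref{tGNonAut}) is not even needed. If one wanted to avoid Theorem \ref{tcyclic}, one would have to embed arbitrary algebras of $\frU$ into powers of $A$, which fails in general. This is why the key input is the absence of subalgebras of dimension greater than $1$.
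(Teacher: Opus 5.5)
Your argument is correct and rests on the same ingredients as the paper's proof. It uses that simplicity and having no proper subalgebras of dimension $>1$ are generic (Theorems~\ref{tGAS} and \ref{tcyclic}), then Theorem~\ref{epSSSA}, then local finiteness to pass from finite algebras to all of $\var A$. The paper avoids your detour through Theorem~\ref{tSDI} by noting that $A$ itself is then inherently semisimple, so Theorem~\ref{epSSSA} applies to $\var A$ directly and the case analysis of $\frU$ is unnecessary; in particular, your unargued ``the same holds for $Z$'' (which needs that $\var\F$ satisfies the Fermat identity $x^q=x$) is never required.
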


\begin{proof} 

By Theorem \ref{tcyclic}, a finite generic algebra $A$ has no proper
subalgebras except for one-dimensional ones, and $A$ is simple by Theorem \ref{tGAS}.
Therefore $A$ is an inherently semisimple algebra. By Theorem \ref{epSSSA},
any finite algebra $B$ from the variety $\var A$ is a direct product of
algebras isomorphic to subalgebras of $A$, whence $B\in \qvar A$. Since
the variety $\var A$ is locally finite, it follows that every algebra
from $\var A$ also satisfies all the quasi-identities of  $A$.
Thus, we obtain the inclusion $\var A\subset \qvar A$. Since the opposite
inclusion is trivial, the theorem is proved. \end{proof}

\subsection{The numbers of nilpotent and solvable algebras.}\label{sssNA} 

Recall that $\vp(m)$ denotes the number of non-isomorphic $m$-dimensional algebras
over a field $\bf F$ of order $q$. Let $\nu(m)$ (let $\sigma(m)$) be the mumber
of nilpotent (resp., of solvable) algebras among them. It follows from the results
below that
\[ 
\lim_{m\to\infty} \frac{\log\nu(m)}{\log \vp(m)} =\frac 13\mbox{ \ and\  }
\lim_{m\to\infty} \frac{\log\sigma(m)}{\log\vp(m)} =\frac 23.
\]

\begin{Prop}\label{epNNA} 
The number $\nu(m)$ of nilpotent algebras of dimension $m$ equals $\exp_q\left(\frac{m^3}{3} + O(m^2)\right)$.
\end{Prop}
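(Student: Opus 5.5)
We need both an upper and a lower bound of the form $\exp_q\left(\frac{m^3}{3}+O(m^2)\right)$ for $\nu(m)$.

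\emph{Upper bound.} Let $A$ be a nilpotent $m$-dimensional algebra. As in Lemma \ref{lPolFun}, we choose a basis $E=(e_1,\ld,e_m)$ compatible with a refinement of the power series $A\supset A^2\supset\cdots$, so that $e_k$ spans $A_{k-1}$ modulo $A_k$, where $A=A_0\supset A_1\supset\cdots\supset A_m=\{0\}$ is a central flag. Then $A_iA_j\subset A_{i+j+1}$ in the shifted indexing. Hence $e_ie_j\in A_{\max(i,j)}$, and in fact $e_ie_j\in\la e_{i+j},\ld,e_m\ra$ (up to an index shift). So $c_{ij}^l=0$ unless $l>i+j-1$, roughly. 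The number of admissible triples $(i,j,l)$ with $l\ge i+j$ is $\sum_l\#\{(i,j): i+j\le l\}\approx\sum_l l^2/2\approx m^3/6$. That is too small, so I should check the weaker but safe constraint first: only $l>\max(i,j)$. This gives $\sum_l (l-1)^2\approx m^3/3$ free constants. I will use this weaker constraint, since it is exactly what a central flag $A_iA\cup AA_i\subset A_{i+1}$ guarantees; a strictly triangular structure already suffices for the annihilator-series nilpotency. Therefore every nilpotent algebra has a basis with structural tuple in the set $\mathcal D$ of tuples satisfying $c_{ij}^l=0$ whenever $l\le\max(i,j)$. We have $|\mathcal D|=\exp_q\left(\sum_{l=1}^m(l-1)^2\right)=\exp_q\left(\frac{m^3}{3}+O(m^2)\right)$, and Lemma \ref{perfect}(i) gives $\nu(m)\le|\mathcal D|$.

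\emph{Lower bound.} Conversely, every tuple in $\mathcal D$ defines an algebra in which $\la e_{k},\ld,e_m\ra$ forms an annihilator-type series, so the algebra is nilpotent by Proposition \ref{nilp}(iv). The hypotheses of Lemma \ref{perfect}(i) thus hold with $\mathcal K$ the nilpotent algebras, and $\nu(m)\ge|\mathcal D|/|\GL_m(\F)|\ge\exp_q\left(\frac{m^3}{3}+O(m^2)-m^2\right)$ by (\ref{eqGL}). This matches the upper bound up to $O(m^2)$ in the exponent.

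\emph{Main obstacle.} The delicate point is to verify carefully that the ordering condition matches both directions. For the upper bound, a basis adapted to the upper annihilator series (via Lemma \ref{lLAS}) must satisfy $e_ie_j\in\la e_{\max(i,j)+1},\ld\ra$ after reversing the order. For the lower bound, the converse must hold. I expect the indexing bookkeeping to be routine, and the counting $\sum(l-1)^2=m^3/3+O(m^2)$ to be immediate.
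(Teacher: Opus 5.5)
Your argument is correct and is essentially the paper's proof. The paper also takes a full flag of ideals with one-dimensional factors satisfying $AI_j+I_jA\subset I_{j-1}$ (you index it in reverse) and counts the $\sum_{l=1}^m (l-1)^2=\frac{(m-1)m(2m-1)}{6}$ free structure constants, via the recursion $\mu(m)=\exp_q((m-1)^2)\mu(m-1)$ rather than a direct sum. It then gets both bounds from Lemma \ref{perfect}(i) and (\ref{eqGL}), exactly as you do.

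Do delete the detour claiming $A_iA_j\subset A_{i+j+1}$ for a full flag refining the power series. That claim is false in general: only the unrefined powers satisfy $A^sA^t\subset A^{s+t}$. The constraint you actually use, $c_{ij}^l=0$ for $l\le\max(i,j)$, is the right one, and it holds because any refinement of the power series (itself an annihilator series by Lemma \ref{power}) is again an annihilator series.
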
 
\begin{proof}
 In a nilpotent algebra $A$ of dimesion $m$, there is a  series of ideals $\{ 0\}=I_0\subset I_1\subset \ld\subset I_m = A$ with factors of dimension 1 and conditions $A I_j \subset  I_{j-1}$ and $I_j A \subset  I_{j-1}$ for $j=1,\ld,m$. And, conversely, an algebra $A$ with such series is nilpotent (see Section \ref{ssEDN}).

As a result, $A$ is an algebra with a basis $E=\{e_1,\ld,e_m\}$ such that for any  $j=1,\ld,m$ the products $e_je_\ell$ and $e_le_j$ are linear combinations of basis vectors with indexes $<j$. We denote by $\mu(n)$ the number of collections of structure constants, under which these conditions hold with respect to the  basis $E$.

For the algebra $A/I_1$ we have a basis $\{e_2+I_1,\ld,e_m+I_1\}$ with analogous conditions, which yields $\mu(m-1)$ structural $m^3$-tuples with respect to this basis, that is, where the structure constants $c_{ij}^k$ without indexes equal 1. To obtain all the constants for $A$, we can complement them by any constants of the form $c_{ij}^1$, where $i,j>1$ ( $c_{1j}^\ell$ and $c_{i1}^\ell$ are zero). These additional constants can be chosen in $\exp_q((m-1)^2)$ ways. It follows that  $\mu(m) =\exp_q((m-1)^2)\mu(m-1)$, hence by induction  $\mu(m) = \exp_q(\lambda(m))$, where $\lambda(m) = \frac{(m-1)m(2m-1)}{6}$. This value is  $\frac{m^3}{3}+O(m^2)$. By Lemma \ref{perfect} (i) and the equality (\ref{eqGL}), $\nu(m)$ can also be written as $\exp_q\left(\frac{m^3}{3}+O(m^2)\right)$.

\end{proof}

\begin{Remark} In the classical varieties, the number of non-isomorphic nilpotent  algebras of dimension $m$ also grows fast,  whereas,  the simple algebras  of dimension $m$  are scarce. But in the case of arbitrary linear algebras,  the inspection of Theorem \ref{tGAS} and Proposition \ref{epNNA} reveals that, on the contrary, the number of simple algebras, grows approximately as the cube of the number of nilpotent algebras!
\end{Remark}

\begin{Prop}\label{epNSA} 
The number $\sigma(m)$ of solvable algebras of dimension $m$ is equals to $\exp_q\left(\frac{2m^3}{3} + O(m^2)\right)$.
\end{Prop}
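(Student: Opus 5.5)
We prove matching upper and lower bounds on $\sigma(m)$, modelled on the proof of Proposition \ref{epNNA}. By Lemma \ref{perfect} (i) and (\ref{eqGL}), dividing by $|\GL_m(\F)|=\exp_q(m^2+O(1))$ only changes the exponent by $O(m^2)$. So it suffices to count structural $m^3$-tuples with respect to a fixed basis $E$, for a suitable family of tuples, up to a factor $\exp_q(O(m^2))$.

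\emph{Lower bound.} Split $m=k+(m-k)$ with $k=\lfloor m/3\rfloor$ or a similar proportion, to be optimized. Let $U=\la e_1,\ld,e_k\ra$ and $W=\la e_{k+1},\ld,e_m\ra$. Consider algebras in which $U^2=0$, $UW+WU\subset U$ and $W^2\subset U$. Then $A^2\subset U$ and $(A^2)^2=0$, so every such algebra is solvable of length at most $2$. The free structure constants are as follows: all $c_{ij}^l$ with $l\le k$ and at least one of $i,j>k$. This gives $k(m^2-k^2)$ constants, and all other constants are zero. Maximizing $k(m^2-k^2)$ over $k$ gives $k=m/\sqrt3$ and the value $\frac{2m^3}{3\sqrt3}$, which is less than $\frac{2m^3}{3}$. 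So this crude family is not enough, and I need a richer one. Instead I take a solvable series in which $W$ is itself allowed to carry an arbitrary solvable structure modulo $U$, and I let $U$ be an abelian ideal. Then every constant $c_{ij}^l$ with $l\le k$ is free except those with $i,j\le k$, which gives $k(m^2-k^2)$ as before. On top of that, the constants with $i,j,l>k$ can be any solvable structure on $W$, which recursively contributes $\sigma$-count on $m-k$. This yields the recursion
\[
\log_q\sigma(m)\ge k(m^2-k^2)+\log_q\sigma(m-k)-O(m^2).
\]
Taking $k=1$ repeatedly, exactly as in Proposition \ref{epNNA} but now with $m^2-1$ free constants $c_{ij}^1$ (all except $c_{11}^1$) instead of $(m-1)^2$, I get $\sum_{j\le m}(j^2-1)=m^3/3$. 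That is still too small, so the gain must come from rows as well. With $I_1=\la e_1\ra$ an abelian ideal, the products $e_1e_j$ and $e_je_1$ must lie in $I_1$, while the products $e_ie_j$ with $i,j>1$ are arbitrary modulo an $(m-1)$-dimensional solvable structure. The count for $k=1$ is therefore again $(m-1)^2+\ldots$, which is insufficient.

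The genuine improvement comes from an abelian ideal $U$ of dimension about $m/2$ whose quotient $W$ is arbitrary, not merely solvable. Since $U^2=0$ and $W$ is a solvable algebra modulo $U$, taking $W$ to be nilpotent modulo $U$ gives $\exp_q(\frac{(m-k)^3}{3})$ choices there. Together with the $k(m^2-k^2)$ free constants above, I then optimize $k$ in $k(m^2-k^2)+\frac{(m-k)^3}{3}$. If this still falls short of $\frac{2m^3}{3}$, I would fall back on the recursion with $k$ proportional to $m$ and solve $f(m)=\max_k[k(m^2-k^2)+f(m-k)]$. Solving this optimization correctly is the step I expect to be the main obstacle: it must give exactly $\frac23$.

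\emph{Upper bound.} A solvable algebra $A$ has $A^2\ne A$, so it has a proper ideal $A^2$ of some dimension $d<m$ with $A/A^2$ having zero product. More usefully, $A$ has the solvable series $A=A^{(1)}\supset A^{(2)}\supset\cdots$. I count tuples as follows. Choose a basis adapted to the series, at a cost of $\exp_q(O(m^2))$ for the choice of flag. The constraints $A^{(i)}A^{(i)}\subset A^{(i+1)}$ kill the constants $c_{ab}^l$ with $a,b$ in the block of $A^{(i)}$ and $l$ outside the block of $A^{(i+1)}$. I then bound the number of remaining free constants by maximizing over block sizes $d_1>d_2>\cdots$. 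The number of killed constants is at least $\sum_i \dim(A^{(i)})^2(\dim A^{(i)}-\dim A^{(i+1)})$, and I expect this sum to be at least about $m^3/3$, with the worst case a continuum of small steps, giving $\int_0^m x^2\,dx$. Summing over the at most $2^m$ possible dimension sequences costs only $\exp_q(O(m))$. The result is $\sigma(m)\le\exp_q(\frac{2m^3}{3}+O(m^2))$. The lower bound should then be matched by a series with one-dimensional steps that imposes only these constraints. In that family the constants $c_{ab}^l$ are free whenever $l>\min$-type conditions are violated, which mirrors the nilpotent count with the constraint applied to one index only; this explains the change from $\frac13$ to $\frac23$.
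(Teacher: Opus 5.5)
Your upper bound is sound. With $d_i=\dim A^{(i)}$, the killed sets $\{a,b\le d_i,\ d_{i+1}<l\le d_i\}$ are disjoint, and $d_i^2(d_i-d_{i+1})\ge\int_{d_{i+1}}^{d_i}x^2\,dx$, so at least $m^3/3$ constants vanish.

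\textbf{The gap is in the lower bound.} Every family you actually analyze is built from abelian layers that are ideals of $A$ (or of the successive quotients). Such families cannot reach $\frac23$. The recursion $f(m)=\max_k[k(m^2-k^2)+f(m-k)]$ satisfies $f(m)\le c^*m^3$ by induction, where
\[
c^*=\sup_{0<x\le 1}\frac{x-x^3}{1-(1-x)^3}=\sup_{0<x\le 1}\frac{1-x^2}{3-3x+x^2}=\frac{2(\sqrt7-2)}{3}\approx 0.43 .
\]
So the step you flag as the main obstacle (``it must give exactly $\frac23$'') is false, not just hard. Two side remarks: the nilpotent-quotient variant gives only $\frac{5}{12}m^3$, and an ``arbitrary'' quotient $W$ would not give a solvable algebra at all.

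\textbf{The missing idea.} The terms of the solvable series need not be ideals of $A$. By Proposition \ref{solv}(ii), any descending chain of subspaces with $A_j^2\subset A_{j+1}$ already forces solvability. So fix $E$, put $I_j=\langle e_1,\dots,e_j\rangle$, and let $\mathcal D$ be the set of tuples with $c_{ab}^l=0$ whenever $l\ge\max(a,b)$, i.e.\ $I_j^2\subset I_{j-1}$. The nilpotent family of Proposition \ref{epNNA} is the same condition with $\min$ in place of $\max$; this is the source of $\frac13$ versus $\frac23$.

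Both directions then hold for $\mathcal D$:
\begin{itemize}
\item Every tuple in $\mathcal D$ defines a solvable algebra, by the observation above.
\item Every solvable algebra has a basis whose tuple lies in $\mathcal D$. Take any flag refining the derived series; if $A^{(k+1)}\subseteq I_{j-1}\subset I_j\subseteq A^{(k)}$, then $I_j^2\subseteq (A^{(k)})^2=A^{(k+1)}\subseteq I_{j-1}$.
\end{itemize}
The number of free constants is $\sum_{j=1}^m(2j-1)(j-1)=\frac{2m^3}{3}+O(m^2)$. Lemma \ref{perfect}(i), together with $|\GL_m(\F)|=\exp_q(m^2+O(1))$, then gives both bounds at once; this is exactly the paper's proof.

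Your final sentence gestures at this family, but you do not state the correct constraint ($\max$, not $\min$). You also do not check that every tuple in it is solvable, and that check is precisely what separates it from the ideal-based families that fail. Once you have this family, your separate upper-bound argument is no longer needed.
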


\begin{proof}
  In a solvable algebra $A$ of dimension $m$, there is a  composition series of subideals $\{ 0\}=I_0\subset I_1\subset \ld\subset I_m = A$ with all factors of dimension 1 and conditions $I_j^2 \subset  I_{j-1}$  for $j=1,\ld,m$. And, conversely, an algebra $A$ with such series is solvable.

As a result, $A$ is an algebra, having a basis $E=\{e_1,\ld,e_m\}$ such that, for any  $j=1,\ld,m$, $\ell\le j$, the products $e_je_\ell$ and $e_le_j$ are linear combinations of basis vectors with indexes $<j$. We denote by $\rho(n)$ the number of structural $m^3$-tuples, satisfying these conditions, with respect to the basis $E$.

For the basis $E'=(e_1,\ld,e_{m-1})$ of $I_{m-1}$, there are $\rho(m-1)$  structural $m^3$-tuples, with the  given property. Additional constants $c_{mj}^\ell$ and $c_{im}^\ell$ must satisfy just the conditions $c_{mj}^m = 0$ and $c_{im}^m =0$. Hence we obtain $\exp_q(2m^2-3m +1)$ possibilities for the choices of additional constants, that is, $\rho(m) =\exp_q(2m^2-3m+1)\rho(m-1)$ In this case, by induction, $\rho(m)= \exp_q\left(\frac{2m^3}{3} +O(m^2)\right)$.

By Lemma \ref{perfect} (i) and the equality (\ref{eqGL}), $\sigma(m)$ can also be written as\\ $\exp_q\left(\frac{2m^3}{3}+O(m^2)\right)$.

\end{proof}

As a result, the number of nonisomorphic solvable algebras of dimension  $n$ is approximately equal to the square of the number of nonisomorphic nilpotent algebras of this dimension.

\end{document}